\newcommand\ub[1]{\underaccent{\bar}{#1}}
\theoremstyle{plain} 
\newtheorem{theorem}{Theorem}[section]
\newtheorem{lemma}[theorem]{Lemma}
\newtheorem{proposition}[theorem]{Proposition}
\newtheorem{corollary}[theorem]{Corollary} 
\newtheorem{introtheorem}{Theorem}
\newtheorem{introcorollary}[introtheorem]{Corollary}
\theoremstyle{definition} 
\newtheorem{notation}[theorem]{Notation}
\newtheorem{remark}[theorem]{Remark}
\numberwithin{equation}{section}
\setlist[itemize]{itemsep=0ex,label=--}
\setlist[enumerate]{itemsep=0ex,topsep=0ex}
\newcommand{\NN}{\mathbb{N}} 
\newcommand{\ZZ}{\mathbb{Z}} 
\newcommand{\CC}{\mathbb{C}}
\newcommand{\Aa}{\mathcal{A}}
\newcommand{\Ll}{\mathcal{L}}
\newcommand{\FO}{\mathbb{F}O} 
\newcommand{\FU}{\mathbb{F}U} 
\newcommand{\W}{W} 
\DeclareSymbolFont{bbold}{U}{bbold}{m}{n}
\DeclareSymbolFontAlphabet{\mathbbold}{bbold}
\newcommand{\GGamma}{\mathbbold{\Gamma}}
\renewcommand{\dim}{\operatorname{dim}}
\renewcommand{\Re}{\operatorname{Re}}
\newcommand{\Tr}{\operatorname{Tr}}
\newcommand{\tr}{\operatorname{tr}}
\newcommand{\Span}{\operatorname{Span}}
\newcommand{\Corep}{\operatorname{Corep}}
\newcommand{\Hom}{\operatorname{Hom}}
\newcommand{\id}{\mathord{\operatorname{id}}} 
\newcommand{\I}{\operatorname{I}} 
\newcommand{\ts}{\textstyle}
\newcommand{\ccirc}{{\circ\circ}}
\newcommand\itv[5]{\left #1#2,#4\right#5}
\newcommand\x[2]{#1{\scriptstyle #2}}
\title[Maximal amenability in free quantum group factors]{Maximal amenability of the radial subalgebra \\ in free quantum group factors}
\author{Roland Vergnioux}
\address{Roland Vergnioux, Normandie Univ, UNICAEN, CNRS, LMNO, 14000 Caen, France} 
\email{roland.vergnioux@unicaen.fr}
\author{Xumin Wang}
\address{Institute for Advanced Study in Mathematics, Harbin Institute of Technology, Harbin 150001, China; and  Department Of Mathematical Sciences, Seoul National University, Seoul 08826, Republic of Korea} 
\email{xumin.wang1124@gmail.com}
\keywords{Quantum groups, von Neumann algebras, MASA}
\subjclass[2020]{46L65, 
20G42, 
46L10} 
\begin{document}

\begin{abstract}
  We show that the radial MASA in the orthogonal free quantum group algebra
  $\Ll(\FO_N)$ is maximal amenable if $N$ is large enough, using the Asymptotic
  Orthogonality Property. This relies on a detailed study of the corresponding
  bimodule, for which we construct in particular a quantum analogue of R\u
  adulescu's basis. As a byproduct we also obtain the value of the Puk\'anszky
  invariant for this MASA.
\end{abstract}

\maketitle

\tableofcontents

\section*{Introduction}

The orthogonal free quantum groups $\FO_N$, for $N\in\NN^*$, are discrete
quantum groups which were introduced by Wang \cite{Wang_FreeProd} via their
universal $C^*$-algebra defined by generators and relations:
\begin{displaymath}
  C^*_u(\FO_N) = A_o(N) = C^*(u_{i,j}, 1\leq i,j\leq N \mid u = \bar u, uu^* = u^*u = 1).
\end{displaymath}
Here $u = (u_{i,j})_{i,j}$ is the matrix of generators, $u^*$ is the usual
adjoint in $M_N(C^*_u(\FO_N))$, and $\bar u = (u_{i,j}^*)_{i,j}$. There is a
natural coproduct
$\Delta : C^*_u(\FO_N) \to C^*_u(\FO_N)\otimes C^*_u(\FO_N)$ which encodes the
quantum group structure, and which turns $C^*_u(\FO_N)$ into a Woronowicz
$C^*$-algebra \cite{Woronowicz_CQG}. In particular $C^*_u(\FO_N)$ is equipped
with a canonical $\Delta$-invariant tracial state $h$. In this article we are
interested in the von Neumann algebra
$\Ll(\FO_N) = \lambda(C^*_u(\FO_N))'' \subset B(H)$ generated by the image of
$C^*_u(\FO_N)$ in the GNS representation $\lambda$ associated with $h$. We still
denote $u_{i,j} \in \Ll(\FO_N)$ the images of the generators.

The von Neumann algebras $\Ll(\FO_N)$, and their unitary variants $\Ll(\FU_N)$,
can be seen as quantum, or matricial, analogues of the free group factors
$\Ll(F_N)$. More precisely if we denote $FO_N = (\ZZ/2)^{*N}$, with canonical
generators $a_i$, $1\leq i\leq N$, we have a surjective $*$-homomorphism
$\pi : C^*_u(\FO_N) \to C^*_u(FO_N)$, $u_{i,j} \mapsto \delta_{i,j} a_i$
compatible with coproducts. It turns out that this analogy is fruitful also at
an analytical level: one can show that $\Ll(\FO_N)$ shares many properties with
$\Ll(FO_N)$ and $\Ll(F_N)$, although the existence of $\pi$, which has a huge
kernel, is useless to prove such properties. For instance, $\Ll(\FO_N)$ is non
amenable for $N\geq 3$ \cite{Banica_LibreUnitaire}, and in fact it is a full and
prime II$_1$ factor \cite{VaesVergnioux} without Cartan subalgebras
\cite{Isono_NoCartan}. On the other hand it is not isomorphic to a free group
factor \cite{BrannanVergnioux}.

\bigskip

The II$_1$ factor $M = \Ll(\FO_N)$ has a natural ``radial'' abelian subalgebra,
$A = \chi_1'' \cap M$ where $\chi_1 = \chi_1^* = \sum_1^N u_{i,i}$ is the sum of
the diagonal generators. It was shown, already in \cite{Banica_LibreUnitaire},
that $\chi_1/2$ is a semicircular variable with respect to $h$, in particular
$\|\chi_1\| = 2$ in $\Ll(\FO_N)$. Since $\epsilon(\chi_1) = N$ in
$C^*_u(\FO_N)$, this implies the non-amenability of $\FO_N$ for $N\geq 3$. The
subalgebra $A\subset M$ is the quantum analogue of the radial subalgebra of
$\Ll(F_N)$, generated by the sum $\chi_1 = \sum_1^N (a_i + a_i^*)$ of the
generators $a_i\in F_N$ and their adjoints, which is known to be a maximal
abelian subalgebra (MASA) since \cite{Pytlik_Radial}.

The position of $A$ in $M$ was already investigated in \cite{FreslonVergnioux},
where it was shown, for $N\geq 3$, to be a strongly mixing MASA.  Note that
$\FO_N$ admits deformations $\FO_Q$, where $Q\in M_N(\CC)$ is an invertible
matrix such that $Q\bar Q =\pm I_N$. When $Q$ is not unitary, the corresponding
von Neumann algebra $M = \Ll(\FO_Q)$ is a type III factor, at least for small
deformations \cite{VaesVergnioux}. One can still consider the abelian subalgebra
$A = \chi_1''\cap M$, but if $Q$ is not unitary it is not maximal abelian
anymore, as shown in \cite{KrajczokWasilewsi}. More precisely, in this case the
inclusion $A\subset M$ is quasi-split in the sense of \cite{DoplicherLongo}.

\bigskip

The aim of the present article is to pursue the study of \cite{FreslonVergnioux} in the non-deformed case.  Our main result is the following theorem, proved at the end of
Section~\ref{sec_support_loc}. Here, and in the rest of the article, we fix a free ultrafilter $\omega$ on $\NN$, but the result also holds for the Fr\'echet filter $\omega = \infty$.

\begin{introtheorem} \label{thm_main} There exists $N_0\in\NN$ such that if $N\geq N_0$ the radial subalgebra $A\subset M = \Ll(\FO_N)$ satisfies the Asymptotic
  Orthogonality Property: for every $y\in A^\bot\cap M$ and for every bounded sequence of elements $z_r \in A^\bot\cap M$ such that
  $\forall a\in A ~ \|[a,z_r]\|_2 \to_\omega 0$, we have $(yz_r\mid z_ry)\to_\omega 0$.
\end{introtheorem}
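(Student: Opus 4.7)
The plan is to adapt Popa's AOP strategy: construct a quantum analogue of R\u adulescu's $A$-$A$-bimodule basis of $L^2(M)\ominus L^2(A)$, and then derive the asymptotic orthogonality from the resulting bimodule decomposition combined with the diffuseness of $B$. The essential structural input is the Peter--Weyl decomposition of $L^2(M)$: the irreducible corepresentations of $\FO_N$ are indexed by $k\in\NN$ with fusion rules $u^{(p)}\otimes u^{(q)}\simeq\bigoplus_{l} u^{(l)}$ for $l\in\{|p-q|,|p-q|+2,\ldots,p+q\}$; the characters $(\chi_k)_{k\geq 0}$ form an orthogonal basis of $L^2(A)$ via the Chebyshev recursion $\chi_1\chi_k=\chi_{k+1}+\chi_{k-1}$; and $L^2(M)\ominus L^2(A)$ is the closed span of the off-trace matrix coefficients of the $u^{(k)}$.

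The technical core of the argument is the construction of a family $(\xi_\alpha)_{\alpha\in I}\subset L^2(M)\ominus L^2(A)$ whose closed $A$-$A$-subbimodules $\overline{A\xi_\alpha A}$ are pairwise orthogonal, sum to $L^2(M)\ominus L^2(A)$, and are each isomorphic to the coarse $A$-$A$-bimodule $L^2(A)\otimes L^2(A)$. The natural candidates are built from matrix coefficients $u^{(k)}_{i,j}$, indexed by $k$ and by side multi-indices $i,j$ chosen to avoid the ``radial direction'' carried by $\chi_1$, and corrected by suitable polynomials in $\chi_1$ of lower degree so as to diagonalise left and right multiplication by $\chi_1$. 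Verifying orthogonality of the resulting bimodules requires a careful computation based on the fusion rules together with Weingarten-type formulas for $h$ on products of matrix coefficients.

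Given such a basis, every $\zeta\in L^2(M)\ominus L^2(A)$ expands uniquely as $\zeta=\sum_\alpha a_\alpha\xi_\alpha b_\alpha$ with $a_\alpha,b_\alpha\in L^2(A)$ and Pythagorean identity $\|\zeta\|_2^2=\sum_\alpha\|a_\alpha\|_2^2\|b_\alpha\|_2^2$. Writing $y=\sum_\alpha c_\alpha\xi_\alpha d_\alpha$ and $z_r=\sum_\alpha a^r_\alpha\xi_\alpha b^r_\alpha$, the scalar product $(yz_r\mid z_ry)$ splits, by orthogonality of the $\overline{A\xi_\alpha A}$, into contributions living in single coarse components, where it becomes a sum of inner products in $L^2(A)\otimes L^2(A)$. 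The hypothesis $\|[b,z_r]\|_2\to_\omega 0$ for every $b\in B$ translates, in each component, into asymptotic invariance of $a^r_\alpha\otimes b^r_\alpha$ under the diagonal action $b\cdot(a\otimes b')=ba\otimes b'b$. Since $B\subset A$ is diffuse, the HS-commutant of $B$ on $L^2(A)$ is trivial, so no nonzero vector of $L^2(A)\otimes L^2(A)$ is $B$-invariant for this action; a standard ultralimit argument then forces $(yz_r\mid z_ry)\to_\omega 0$.

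The main obstacle is clearly the first step. In the free group setting R\u adulescu's basis arises directly from the combinatorics of reduced words; for $\FO_N$ the fusion rules produce genuine direct sums, so no family of matrix coefficients is bimodule-orthogonal on the nose, and even the identification of each $\overline{A\xi_\alpha A}$ with the coarse bimodule requires delicate quantitative control. We expect the lower bound $N\geq N_0$ to enter precisely at this point: the constants controlling the bimodule isomorphisms involve ratios of quantum dimensions and fusion multiplicities, which become manageable only once $N$ is large enough for the Temperley--Lieb-type exponential decay to dominate the combinatorial growth.
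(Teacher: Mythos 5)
Your proposal correctly identifies the first ingredient of the paper's argument: a quantum analogue of R\u adulescu's basis, leading to an isomorphism of $A,A$-bimodules $H^\circ\simeq L^2(A)\otimes\ell^2(\W)\otimes L^2(A)$, and you rightly anticipate that the uniform control of the isomorphism constants — the Riesz basis property for the non-orthogonal family $(x_{i,j})$ — is where the restriction $N\geq N_0$ enters. This matches Sections~\ref{sec_bimod} and~\ref{sec_gram}. Two small remarks: a general element of $\overline{A\xi_\alpha A}\simeq L^2(A)\otimes L^2(A)$ is not a simple tensor, so the expansion $\zeta=\sum_\alpha a_\alpha\xi_\alpha b_\alpha$ with $a_\alpha,b_\alpha\in L^2(A)$ and the Pythagorean identity $\|\zeta\|_2^2=\sum_\alpha\|a_\alpha\|_2^2\|b_\alpha\|_2^2$ are both wrong as stated; you need $\ell^2$-matrices of scalar coefficients, not pairs of $L^2$-vectors. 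This is fixable, but it is a sign that the bimodule structure is being used too naively.

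The genuine gap is the final inference. Triviality of the $B$-central vectors in each coarse component (which is indeed correct since $B$ is diffuse) together with $\|[b,z_r]\|_2\to_\omega 0$ only forces $z_r\to_\omega 0$ \emph{weakly} in $L^2(M)$; it gives no control on $\|E_\alpha z_r\|_2$, and even less on $(yz_r\mid z_ry)=h(y^*z_r^*yz_r)$, which is a fourth-order moment depending on the \emph{multiplication} in $M$ and not merely on the $A,A$-bimodule decomposition of $L^2(M)$. If an argument of the kind you sketch worked, it would prove strong AOP for any singular MASA whose Puk\'anszky invariant is $\{\infty\}$ with coarse components, which is false. What is missing are exactly Popa's steps (2) and (3): the subspaces $V_m=\overline{\Span}\{x_{i,j}\mid i,j\geq m\}$ and the approximate multiplicative orthogonality $yV_m\perp V_m y$ as $m\to\infty$, proved by a detailed analysis of products of corepresentation coefficients inside $\Corep(\FO_N)$ (Section~\ref{sec_orthogonality}, Theorem~\ref{thm_ortho_global}); and the compactness statement $\|F_mu_jF_m\|\to 0$ for the border projections $F_m$ and unitaries $u_j\in A$ converging weakly to $0$ (Section~\ref{sec_compactness}). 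Only after combining these with a covering argument — using diffuseness of $B$ to produce almost pairwise-orthogonal conjugates $u_iF_mu_i^*$, which forces $\|F_mz_r\|_2\to_\omega 0$ and hence pushes $z_r$ into $V_m$ modulo a small $L^2$-error — can one conclude. There is no ``standard ultralimit argument'' that bypasses this.
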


The Asymptotic Orthogonality Property (AOP) originates from Popa's seminal
article \cite{Popa_MaxInjective} where it was established for
$A = a_1''\subset \Ll(F_N)$, the generator MASA in free group factors, and
proved to imply maximal amenability. It is often stated in a non-symmetric
way, for scalar products of the form $(yz_r \mid z'_ry')$, but the version above
is sufficient for our purposes. We can indeed formulate the following corollary,
which is a quantum analogue of the result of \cite{CFRW_RadialMaxAmenable} about
the radial MASA in free group factors.

\begin{introcorollary}
  There exists $N_0\in\NN$ such that if $N\geq N_0$ the radial subalgebra $A\subset M = \Ll(\FO_N)$ is maximal amenable: for any amenable
  subalgebra $P\subset M$ such that $A\subset P$, we have $A = P$.
\end{introcorollary}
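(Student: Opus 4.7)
The plan is to deduce the corollary formally from Theorem~\ref{thm_main} by invoking the implication, due to Houdayer and recalled just above the statement, that the strong Asymptotic Orthogonality Property of $A\subset M$ already forces the absorbing amenability property. Under this strategy no new analytic input is needed beyond Theorem~\ref{thm_main}, and the proof reduces to assembling the pieces.

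First I would apply Theorem~\ref{thm_main}, for $N\geq N_0$, to obtain the strong AOP of $A\subset M$. Houdayer's result then converts this directly into the absorbing amenability property, which is exactly the first assertion of the corollary: any amenable subalgebra $P\subset M$ such that $P\cap A$ is diffuse satisfies $P\subset A$.

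For the maximal amenability statement, I would verify that $A$ itself is a diffuse amenable subalgebra and then feed this into the absorbing amenability property applied to an arbitrary amenable $P\supseteq A$. Amenability of $A$ is automatic since $A$ is abelian; diffuseness follows from the fact, recalled in the introduction, that $\chi_1/2$ is a standard semicircular variable with respect to $h$, so that the spectral measure of $\chi_1$ is the semicircle distribution on $[-2,2]$ and in particular has no atoms, whence $A=\chi_1''\cap M$ is diffuse. Now if $P\subset M$ is any amenable subalgebra containing $A$, then $P\cap A=A$ is diffuse, so absorbing amenability forces $P\subseteq A$, and hence $P=A$. Since the corollary follows from Theorem~\ref{thm_main} and Houdayer's criterion by a purely formal argument, the only real obstacle lies in Theorem~\ref{thm_main} itself; at the level of the corollary the only point that requires a (trivial) check is the diffuseness of $A$, which is immediate from the semicircularity of $\chi_1$.
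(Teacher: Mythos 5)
Your overall strategy matches the paper's: both deduce the corollary from Theorem~\ref{thm_main} via Houdayer, and both derive maximal amenability from absorbing amenability together with diffuseness of $A$. Your justification of diffuseness (semicircularity of $\chi_1/2$) is a correct elaboration of what the paper leaves implicit.

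However, there is a genuine gap in the first half. You treat the implication ``strong AOP $\Rightarrow$ absorbing amenability'' as a ready-made black box, citing the informal sentence in the introduction. But Houdayer's Theorem~8.1 in \cite{Houdayer_Bogoljubov} requires \emph{two} hypotheses on the inclusion $A\subset M$ relative to the subalgebra $B = P\cap A$: (i) the inclusion is weakly mixing through $B$, and (ii) the inclusion has the AOP relative to $B$. Your proof only addresses (ii), which indeed follows from the strong AOP of Theorem~\ref{thm_main} once $B$ is diffuse. Hypothesis (i) does not follow from AOP considerations: the paper verifies it by invoking the strong mixing of the radial MASA $A\subset M$, established in \cite[Theorem~5.7]{FreslonVergnioux}, which implies weak mixing through any diffuse subalgebra $B\subset A$. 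This is a substantive external input that your argument omits. Without it, one cannot apply Houdayer's theorem, so the implication you rely on is not yet available. (The paper also mentions an alternative route via \cite[Proposition~1]{Wen_Disjointness} and strong solidity of $M$, but that too requires inputs beyond Theorem~\ref{thm_main}.)
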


\begin{proof}
  Since $A$ is already known to be a singular MASA by
  \cite[Corollary~5.8]{FreslonVergnioux}, this follows directly from
  \cite[Corollary~2.3]{CFRW_RadialMaxAmenable}, whose proof uses only
  ``symmetric'' scalar products $(yz_r\mid z_ry)$.
\end{proof}

\bigskip

The proof of Theorem~\ref{thm_main} follows a strategy which can also be traced
back to Popa's work on the generator MASA of free
group factors. One can identify the following ingredients:
\begin{enumerate}
\item a good description of the $A,A$-bimodule $H = \ell^2(F_N)$ ;
\item a decreasing sequence of subspaces $V_m \subset H$ such that, for
  $y\in A^\bot\cap M$ fixed and $m$ big enough, $y V_m \bot V_m y$ ;
\item the fact that elements $z\in A^\bot\cap M$ almost commuting to $A$ are almost supported in $V_m$.
\end{enumerate}

\bigskip

In the classical case the arguments for each of the above steps rely on the
combinatorics of reduced words in the free group. In the quantum case the
techniques are completely different and consist in performing analysis in the
Temperley-Lieb category, which is naturally associated with $\FO_N$ as we recall
in the preliminary Section~\ref{sec_prelim}. We give below more details about
the strategy used for each of the three steps, in the classical and quantum
cases, and present the organization of the article.

The more precise goal for (1) is to exhibit an orthonormal basis $\W$ of the
$A,A$-bimodule $A^\bot\cap H$ with good combinatorial properties, which will
allow to carry out computations. In the case of the generator MASA
$a_1''\subset\Ll(F_N)$, this basis is just given by the set of reduced words in
$F_N$ which do not start nor end with $a_1$ nor $a_1^{-1}$. In the case of the
radial MASA in $\Ll(F_N)$, a convenient basis was constructed by R\u adulescu
\cite{Radulescu_Radial} to show that the radial MASA is singular with
Puk\'anszky invariant $\{\infty\}$.

In Section~\ref{sec_bimod} we construct an analogue
$\W = \bigsqcup_{k\geq 1} \W_k$ of the R\u adulescu basis for our free quantum
groups. Surprisingly one has to take into account additional symmetries of $H$
given by the rotation maps $\rho_k$ which already played a (minor) role in
\cite{FreslonVergnioux}. Using this construction and a result from
\cite{FreslonVergnioux}, we can already deduce (Corollary~\ref{crl_pukanszky})
that the Puk\'anszky invariant of the radial MASA in $\Ll(\FO_N)$ is
$\{\infty\}$, a result that was missing in \cite{FreslonVergnioux}.

From $x\in \W$ one can generate a natural $\CC$-linear basis $(x_{i,j})_{i,j\in\NN}$ of the cyclic submodule $AxA$. In R\u adulescu's case, $(x_{i,j})$ is
orthogonal as soon as $x\in \W_k$ with $k\geq 2$, and for $k=1$ it is nevertheless a Riesz basis. In our case, $(x_{i,j})$ is never orthogonal and we have to
show that it is a Riesz basis, uniformly over $x\in \W$. This is accomplished in Section~\ref{sec_gram}, which is the most challenging technically, and we manage
to reach this conclusion only if $N$ is large enough.

The core of the strategy then lies in ingredient (2). In the case of the generator MASA in the free group factor $\Ll(F_N)$, $V_m$ is simply the subspace of $H$ generated by the
reduced words of $F_N$ that begin and end with a ``large'' power $a_1^k$ of the generator, $|k|\geq m$, without being themselves a power of $a_1$. We have then
clearly $V_my\bot yV_m$ if $y\in A^\bot\cap M$ is supported on reduced words of length at most $m$.

In the case of the radial MASA in $\Ll(F_N)$, $V_m$ is defined in terms of the R\u adulescu basis as the subspace generated by the elements $x_{i,j}$,
$x\in \W$, $i$, $j\geq m$. We adopt the same definition in the quantum case, using our analogue of the R\u adulescu basis, and we show in
Section~\ref{sec_orthogonality} that the orthogonality property $V_m y\bot y V_m$ holds in an approximate sense as $m\to\infty$. Note that we use one of the two
main technical tools from \cite{FreslonVergnioux}, in an improved version (Lemma~\ref{lem_trace_proj}).

In the case of the classical generator MASA, the step (3) follows by observing that if $z\in M$ almost commutes to the powers $a_1^k$ of the generator, then its
components supported on a subset $S\subset F_N$ and on the subset $a_1^kSa_1^{-k}$ have approximately the same norm. If $S = S_m$ is the set of words starting
with a power at most $m$ (in absolute value) of $a_1$, for many values of $k$ the subsets $a_1^kS_ma_1^{-k}$ will be pairwise disjoint, so that the norms of the
corresponding components of $z$ will be small. One can then show that $z$ is ``almost'' contained in $V_m$, in a quantitative way.

In our case, we similarly relate various components of $z$ using the commutator $[\chi_1,z]$, see Proposition~\ref{prop_commut_relations} in
Section~\ref{sec_support_loc}. This requires to determine the structure constants for the left and right action of $\chi_1$ on the basis $(x_{i,j})$ for a given
$x\in W$. Then the components of $z$ that we are able to relate in this way are not as simply ``localized'' as in the classical cases, and moreover the
coefficients in these relations are only recursively specified and require a quite delicate analysis to reach the conclusion. For all this it is naturally
necessary to know that the families $(x_{i,j})$ are Riesz bases, uniformly with respect to $x\in W$.

Assembling the results obtained in Sections~\ref{sec_orthogonality} and~\ref{sec_support_loc} it is then easy to prove Theorem~\ref{thm_main}.

\section{Preliminaries}
\label{sec_prelim}

We denote by $\NN$ the set of non-negative integers. Unless otherwise stated, all indices used in the statements belong to $\NN$.

\bigskip

In this article, a discrete quantum group $\GGamma$ is given by a Woronowicz $C^*$-algebra $C^*(\GGamma)$ \cite{Woronowicz_CQG}, i.e.\ a unital $C^*$-algebra
equipped with a unital $*$-homomorphism $\Delta : C^*(\GGamma) \to C^*(\GGamma)\otimes C^*(\GGamma)$ satisfying the following two axioms: i)
$(\Delta\otimes\id)\Delta = (\id\otimes\Delta)\Delta$ (co-associativity); ii) $\Delta(C^*(\GGamma))(1\otimes C^*(\GGamma))$ and
$\Delta(C^*(\GGamma))(C^*(\GGamma)\otimes 1)$ span dense subspaces of $C^*(\GGamma)\otimes C^*(\GGamma)$ (bicancellation). This encompasses classical discrete
groups, as well as duals of classical compacts groups $G$, given by $C^*(\GGamma) = C(G)$.

In this setting, the existence and uniqueness of a bi-invariant state $h\in C^*(\GGamma)^*$, i.e.\ satisfying the relations
$(h\otimes\id)\Delta = 1h = (\id\otimes h)\Delta$, were proved by Woronowicz \cite{Woronowicz_CQG} when $C^*(\GGamma)$ is separable, and by Van Daele
\cite{VanDaele_Haar} in general. We can consider the GNS representation $\lambda$ associated with $h$ and we shall mainly work with the corresponding von
Neumann algebra $M = \Ll(\GGamma) = \lambda(C^*(\GGamma))''$ represented on the Hilbert space $H = \ell^2(\GGamma)$. We still denote $h$ the factorization of
the invariant state to $M$. As the notation suggests, in the classical case $\Ll(\Gamma)$ is the usual group von Neumann algebra with its canonical trace,
whereas for the dual of a compact group $G$ we have $\Ll(\GGamma) = L^\infty(G)$ with the Haar integral.

\bigskip

A corepresentation of $\GGamma$ is an element $u\in M\bar\otimes B(H_u)$ such
that $u_{13}u_{23} = (\Delta\otimes\id)(u)$. We will work exclusively with
unitary and finite-dimensional corepresentations. We denote
$\Hom(u,v)\subset B(H_u,H_v)$ the space of intertwiners from $u$ to $v$,
i.e.\ maps $T$ such that $(1\otimes T)u = v(1\otimes T)$. A corepresentation $u$
is irreducible if $\Hom(u,u) = \CC \id$; two corepresentations $u$, $v$ are
equivalent if $\Hom(u,v)$ contains a bijection. The tensor product of $u$ and
$v$ is $u\otimes v := u_{12}v_{13}$, with $H_{u\otimes v} = H_u \otimes H_v$. We
have defined in this way a tensor $C^*$-category denoted $\Corep(\GGamma)$ with
a fiber functor to Hilbert spaces.

Let $u \in M\otimes B(H_u)$ be a corepresentation of $\GGamma$. For $\zeta$,
$\xi\in H_u$ we can consider the corresponding coefficient
$u_{\zeta,\xi} = (\id\otimes\zeta^*) u (\id\otimes\xi) = (\id\otimes\Tr)
(u(1\otimes\xi\zeta^*))\in M$. More generally for $X\in B(H_u)$ we denote
$u(X) = (\id\otimes\Tr)(u(1\otimes X))$ --- although it would perhaps be more
natural to denote this element $u(\varphi)$ where
$\varphi = \Tr(\,\cdot\, X) \in B(H_u)^*$.

In the present article we will work only with unimodular discrete quantum groups, equivalently, the canonical state $h$ will be a trace. In this case the
Peter-Weyl-Woronowicz orthogonality relations read, for $u$ irreducible:
\begin{equation}
  \label{eq_peter_weyl}
  (u(X) \mid u(Y)) = (\dim u)^{-1} (X\mid Y),
\end{equation}
where we use on the left the scalar product associated with $h$, $(x\mid y) = h(x^*y)$, and on the right the Hilbert-Schmidt scalar product
$(X\mid Y) = \Tr(X^*Y)$. On the other hand we have $(u(X)\mid v(Y)) = 0$ if $u$, $v$ are irreducible and not equivalent.

The product in $M$ can be computed according to the evident formula
$u(X) v(Y) = {(u\otimes v)}$ ${(X\otimes Y)}$. We have moreover $u(TX) = v(XT)$ for
$X\in B(H_u, H_v)$ and $T\in \Hom(v,u)$. As a result, if we choose intertwiners
$T_i \in \Hom(w_i, u\otimes v)$ such that $T_i^*T_i = \id$ and
$\sum_i T_i T_i^* = \id$, we obtain the formula
$u(X)v(Y) = \sum_i w_i(T_i^*(X\otimes Y)T_i)$, which we can use to compute the
product of coefficients of irreducible corepresentations as a linear combination
of coefficients of irreducible corepresentations.

\bigskip

In this article we consider the orthogonal free quantum groups $\GGamma = \FO_N$
defined in the Introduction, and assume $N\geq 3$. Associated to $N$ is the
parameter $q\in\itv]0,1[$ such that $q+q^{-1} = N$, which plays an important
role in the computations. We have $q\to 0$ as $N\to\infty$. Banica
\cite{Banica_LibreOrtho} showed that the $C^*$-tensor category $\Corep(\FO_N)$
is equivalent, as an abstract tensor category, to the Temperley-Lieb category
$TL_\delta$ at parameter $\delta = N$, and that $\FO_N$ is realized via
Tannaka-Krein duality by the fiber functor $F : TL_N \to \mathrm{Hilb}$ which
sends the generating object to $H_1 := \CC^N$, with corepresentation
$u = (u_{i,j})_{i,j}$ given by the canonical generators of $\Ll(\FO_N)$, and the
generating morphism to
$F(\cap) = t := \sum_i e_i\otimes e_i \in H_1\otimes H_1$, where $(e_i)_i$ is
the canonical basis of $\CC^N$. See \cite[Section~2.5]{NeshveyevTuset_Book} for details about this category.

This means that we have a pictorial representation of elements
$A\in \Hom(H_1^{\otimes k},H_1^{\otimes l})$. More precisely, denote $NC_2(k,l)$
the set of non-crossing pair partitions of $k+l$ points. For each partition
$\pi\in NC_2(k,l)$ there is a morphism
$T_\pi \in \Hom(H_1^{\otimes k},H_1^{\otimes l})$ whose matrix coefficients
$(e_{i_1}\otimes\cdots\otimes e_{i_l} \mid T_\pi(e_{j_1}\otimes\cdots\otimes
e_{j_k}))$ are equal to $1$ if ``the indices $i_s$, $j_t$ agree in each block of
$\pi$'', and to $0$ otherwise. Then, for $N\geq 3$ the maps $T_\pi$ with
$\pi\in NC_2(k,l)$ form a linear basis of
$\Hom(H_1^{\otimes k},H_1^{\otimes l})$. Elements $\pi\in NC_2(k,l)$, and the
corresponding morphisms $T_\pi$, are usually depicted inside a rectangle with
$k$ numbered points on the upper edge and $l$ numbered points on the bottom edge
by drawing non-crossing strings joining the two elements in each block of $\pi$.

More generally, the collection of spaces $B(H_1^{\otimes k},H_1^{\otimes l})$ is an (even) planar algebra, meaning that linear maps obtained by composing and
tensoring given maps $X_i \in B(H_1^{\otimes k_i},H_1^{\otimes l_i})$ with maps $T_\pi$ can be represented by means of a rectangular Temperley-Lieb diagram as
above with $p$ internal boxes representing the maps $X_i$. For instance, if $X$, $Y \in B(H_1^{\otimes 2})$ we have, drawing dashed internal and external boxes,
and solid Temperley-Lieb strings:
\begin{displaymath}
  (t^*\otimes t^*\otimes\id)(\id\otimes X\otimes Y)(t\otimes t\otimes\id) =
  ~\vcenter{\hbox{
\setlength{\unitlength}{2072sp}%
\begingroup\makeatletter\ifx\SetFigFont\undefined%
\gdef\SetFigFont#1#2#3#4#5{%
  \reset@font\fontsize{#1}{#2pt}%
  \fontfamily{#3}\fontseries{#4}\fontshape{#5}%
  \selectfont}%
\fi\endgroup%
\begin{picture}(2184,1824)(3499,-4528)
\thinlines
{\color[rgb]{0,0,0}\put(4771,-3256){\oval(540,540)[tr]}
\put(4771,-3256){\oval(540,540)[tl]}
}%
{\color[rgb]{0,0,0}\put(5041,-3346){\line( 0, 1){ 90}}
}%
{\color[rgb]{0,0,0}\put(4501,-3256){\line( 0,-1){ 90}}
}%
{\color[rgb]{0,0,0}\put(4051,-3256){\oval(540,540)[tr]}
\put(4051,-3256){\oval(540,540)[tl]}
}%
{\color[rgb]{0,0,0}\put(4321,-3346){\line( 0, 1){ 90}}
}%
{\color[rgb]{0,0,0}\put(3781,-3256){\line( 0,-1){ 90}}
}%
{\color[rgb]{0,0,0}\put(4051,-3976){\oval(540,540)[bl]}
\put(4051,-3976){\oval(540,540)[br]}
}%
{\color[rgb]{0,0,0}\put(4321,-3886){\line( 0,-1){ 90}}
}%
{\color[rgb]{0,0,0}\put(3781,-3976){\line( 0, 1){ 90}}
}%
{\color[rgb]{0,0,0}\put(4771,-3976){\oval(540,540)[bl]}
\put(4771,-3976){\oval(540,540)[br]}
}%
{\color[rgb]{0,0,0}\put(5221,-3346){\line( 0, 1){630}}
}%
{\color[rgb]{0,0,0}\put(5041,-3886){\line( 0,-1){ 90}}
}%
{\color[rgb]{0,0,0}\put(4501,-3886){\line( 0,-1){ 90}}
}%
{\color[rgb]{0,0,0}\put(5221,-3886){\line( 0,-1){630}}
}%
{\color[rgb]{0,0,0}\put(4141,-3886){\dashbox{86}(540,540){}}
}%
{\color[rgb]{0,0,0}\put(3781,-3346){\line( 0,-1){540}}
}%
{\color[rgb]{0,0,0}\put(3511,-4516){\dashbox{86}(2160,1800){}}
}%
{\color[rgb]{0,0,0}\put(4861,-3886){\dashbox{86}(540,540){}}
}%
\put(4411,-3751){\makebox(0,0)[b]{\smash{{\SetFigFont{11}{13.2}{\rmdefault}{\mddefault}{\updefault}{\color[rgb]{0,0,0}$X$}%
}}}}
\put(5131,-3751){\makebox(0,0)[b]{\smash{{\SetFigFont{11}{13.2}{\rmdefault}{\mddefault}{\updefault}{\color[rgb]{0,0,0}$Y$}%
}}}}
\end{picture}%
    }}~ \in B(H_1).
\end{displaymath}

\bigskip

The irreducible objects of the Temperley-Lieb category, and hence the irreducible corepresentations of $\FO_N$, can be labeled by integers $k\in\NN$ up
to equivalence, in such a way that $u_0 = 1\otimes \id_\CC$ is the trivial corepresentation, $u_1 = u$ is the generating object, and the following fusion
rules are satisfied:
\begin{displaymath}
  u_k\otimes u_l \simeq u_{|k-l|} \oplus u_{|k-l|+2} \oplus \cdots \oplus u_{k+l}.
\end{displaymath}

We denote $H_k$ the Hilbert space associated with $u_k$ and $d_k = \dim H_k$. We write $\Tr_k$, $\tr_k$ the standard and normalized traces on
$B(H_k)$. Note that $d_0 = 1$ and $d_1 = N$. The remaining dimensions can be computed using
the fusion rules and are given by $q$-numbers:
\begin{equation}
  \label{eq_dimensions}
  d_k = [k+1]_q := \frac{q^{k+1} - q^{-(k+1)}}{q-q^{-1}}.
\end{equation}

The irreducible characters are $\chi_k = (\id\otimes\Tr_k)(u_k) \in M$. It follows from the fusion rules and the Peter-Weyl-Woronowicz formula that they form an
orthonormal basis of the $*$-subalgebra $\Aa$ generated by $\chi_1 = \sum u_{i,i}$, which is weakly-$*$ dense in $A = \chi_1''$. 

According to the fusion rules, $u_k$ appears with multiplicity $1$ as a
subobject of $u_1^{\otimes k}$. We agree to take for $H_k$ the corresponding
subspace of $H_1^{\otimes k}$, and we denote $P_k\in B(H_1^{\otimes k})$ the
orthogonal projection onto $H_k$: this is the $k$th Jones-Wenzl projection. We
have $P_k (P_a\otimes P_b) = P_k$, i.e.\ $H_k$ is a subspace of
$H_a\otimes H_b$, as soon as $k=a+b$. We shall use the notation $\id_k$ for the
identity map {\em both on $H_k$ or on $H_1^{\otimes k}$} ; the space it is
acting on should be clear from the context. We will also use the embeddings
$H_k\subset H_a\otimes H_b \subset H_1^{\otimes k}$, when $a+b=k$, to identify
an element $X\in B(H_k)$ with the corresponding elements of $B(H_a\otimes H_b)$
and $B(H_1^{\otimes k})$. This is especially used to take partial traces of $X$
such as $(\Tr_a\otimes\id)(X)$ or $(\Tr_1\otimes\id)(X)$, where $\Tr_k$ always
stands for the trace of $B(H_k)$ as indicated above.

As another consequence of the fusion rules, there is a unique line of fixed vectors in $H_k\otimes H_k$. We already know the generator $t = t_1$ of $\Hom(H_0,H_1\otimes H_1)$. This
map satisfies the {\em conjugate equations} $(\id_1\otimes t^*)(t\otimes \id_1) = \id_1 = (t^*\otimes\id_1)(\id_1\otimes t)$. We slightly abuse notation by defining recursively $t_1^1 = t_1$,
$t_1^k = (\id_1^{\otimes k-1}\otimes t_1\otimes \id_1^{\otimes k-1})t_1^{k-1} \in \Hom(H_0, H_1^{\otimes 2k})$, so that $\Hom(H_0,H_k\otimes H_k)$ is generated
by $t_k := (P_k\otimes P_k)t_1^k = (\id_k\otimes P_k)t_1^k = (P_k\otimes \id_k)t_1^k$. Note that we have then $t_k^*(X\otimes\id_k)t_k = \Tr_k(X)$ for $X\in B(H_k)$, in particular $\|t_k\| = \sqrt{d_k}$.

Using the intertwiner $t$ one can also investigate more precisely the position of $H_n$ in $H_{n-1}\otimes H_1$, and this gives rise for instance to the Wenzl
recursion relation \cite[Prop.~1]{Wenzl_Projections}, see also \cite[Equation~(3.8)]{FrenkelKhovanov_Graphical}  and \cite[Notation~7.7]{VaesVergnioux}:
\begin{equation}\label{eq_wenzl_recursion}
  P_{n} = (P_{n-1}\otimes \id_{1}) + \sum_{l=1}^{n-1} (-1)^{n-l} \frac{d_{l-1}}{d_{n-1}} \left(\id_{1}^{\otimes (l-1)}\otimes t\otimes \id_{1}^{\otimes (n-l-1)}\otimes t^*\right)(P_{n-1}\otimes \id_{1}).
\end{equation}

One can go further and define the {\em basic intertwiner} $V_m^{k,l} = (P_k\otimes P_l)(\id_{k-a}\otimes t_a\otimes\id_{l-a}) P_m$ which spans
$\Hom(H_m,H_k\otimes H_l)$, where $m = k+l-2a$. It is not isometric but its norm can be computed explicitly, see \cite[Lemma~4.8]{Vergnioux_RD}. Following
\cite{FreslonVergnioux}, we denote $\kappa_{m}^{k,l} = \|V_m^{k,l}\|^{-1}$. This yields the following explicit formula to compute the product of coefficients of
irreducible corepresentations:
\begin{equation}
  \label{eq_prod_FO}
  u_k(X)u_l(Y) = \sum_{a=0}^{\min(k,l)} \left(\kappa_m^{k,l}\right)^2 u_m\left(V_m^{k,l *}(X\otimes Y)V_m^{k,l}\right),
\end{equation}
where we still agree to write $m = k+l-2a$. This motivates the following notation (which is indeed connected with the convolution product in $c_c(\FO_N)$ up to
constants).

\begin{notation}
  \label{not_convol}
  For $X\in B(H_k)$, $Y\in B(H_l)$, $m = k+l-2a$ we consider the following element of $B(H_m)$:
  \begin{displaymath}
    X*_mY = V_m^{k,l *}(X\otimes Y)V_m^{k,l} = P_m(\id_{k-a}\otimes t_a^*\otimes\id_{l-a})(X\otimes Y)(\id_{k-a}\otimes t_a\otimes\id_{l-a})P_m.
  \end{displaymath}
\end{notation}

\bigskip

One can perform analysis in the tensor category $\Corep (\FO_N)$. Recall for instance Lemma~\ref{lem_highest_weight} from \cite{VaesVergnioux} below, with some
more precise information about constants.

\begin{lemma}
  \label{lem_dimensions}
  For any $k\in\NN$ we have $q^{-k}\leq d_k\leq q^{-k}/(1-q^2)$.
\end{lemma}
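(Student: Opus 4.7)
The plan is to work directly from the explicit formula \eqref{eq_dimensions}, since both inequalities reduce to elementary estimates on a $q$-number. The key algebraic step I would carry out first is to factor a power of $q^{-1}$ out of the numerator and denominator of $[k+1]_q$. Writing
\begin{displaymath}
  d_k = \frac{q^{k+1} - q^{-(k+1)}}{q - q^{-1}} = \frac{q^{-(k+1)}(1 - q^{2(k+1)})}{q^{-1}(1 - q^{2})} = q^{-k} \cdot \frac{1 - q^{2(k+1)}}{1 - q^{2}},
\end{displaymath}
collapses the problem to showing that the ratio $(1 - q^{2(k+1)})/(1 - q^{2})$ lies in the interval $[1, 1/(1-q^2)]$.

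The upper bound is immediate: since $q \in (0,1)$, the factor $1 - q^{2(k+1)}$ is at most $1$, so dividing by $1 - q^2 > 0$ gives $d_k \leq q^{-k}/(1-q^2)$. For the lower bound I would compare $1 - q^{2(k+1)}$ with $1 - q^2$: since $k \geq 0$ and $q \in (0,1)$, we have $q^{2(k+1)} \leq q^2$, hence $1 - q^{2(k+1)} \geq 1 - q^2$, and the ratio is at least $1$, yielding $d_k \geq q^{-k}$. The case $k = 0$ is handled trivially since $d_0 = 1$ and both bounds reduce to $1 \leq 1 \leq 1/(1-q^2)$.

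There is essentially no obstacle here; the only point requiring a little care is the sign bookkeeping when rewriting $[k+1]_q$, since $q < 1$ makes the raw expression $(q^{k+1}-q^{-(k+1)})/(q-q^{-1})$ a quotient of two negative quantities. As an alternative route, one could derive the same bounds inductively from the fusion-rule recursion $d_{k+1} = N d_k - d_{k-1}$ with $N = q + q^{-1}$, whose characteristic roots are $q$ and $q^{-1}$; but this ultimately leads to the same closed form, so the direct computation above is the most efficient.
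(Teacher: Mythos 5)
Your proof is correct and follows exactly the route the paper intends: the paper's proof consists of the single line ``Clear from~\eqref{eq_dimensions}'', and your factorization of the $q$-number is precisely the elementary calculation being alluded to. The sign bookkeeping you flag is handled correctly.
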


\begin{proof}
  Clear from~\eqref{eq_dimensions}.
\end{proof}

\begin{lemma}
  \label{lem_highest_weight}
  Fix $q_0\in\itv]0,1[$ and assume that $q\in\itv]0,{q_0}]$.  Then there exists a constant $C$ depending only on $q_0$ such that
  $\|(P_{a+b}\otimes\id_c)(\id_a\otimes P_{b+c}) - P_{a+b+c}\|\leq Cq^b$ for all $a$, $b$, $c\in\NN$.
\end{lemma}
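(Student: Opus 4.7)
My plan is to reduce the norm estimate to a spectral (angular) computation on an $\FO_N$-invariant operator, and then to evaluate it explicitly via the Temperley--Lieb planar calculus. Set $X := P_{a+b}\otimes\id_c$, $Y := \id_a\otimes P_{b+c}$, $Z := P_{a+b+c}$. Because $H_{a+b+c}$ embeds as the highest-weight summand of both $H_{a+b}\otimes H_c$ and $H_a\otimes H_{b+c}$, the projection $Z$ is dominated by $X$ and by $Y$, giving $XZ=ZX=Z=YZ=ZY$. A direct expansion then yields the identity $XY - Z = (X-Z)(Y-Z)$, and using the absorption relations $\|XY - Z\|^2 = \|(XY-Z)(YX-Z)\|_{op} = \|XYX - Z\|_{op}$, the operator norm of a self-adjoint positive operator.

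Next I would diagonalise $XYX - Z$. It is supported on $H_{a+b}\otimes H_c$, which decomposes multiplicity-freely under the $\FO_N$-action as $\bigoplus_m H_m$ for $m \in \{|a+b-c|,\, |a+b-c|+2,\, \ldots,\, a+b+c\}$. Since $X$, $Y$, $Z$ all commute with the action, Schur's lemma forces $XYX|_{H_m}$ to be scalar multiplication by some $\mu_m \in [0,1]$, with $\mu_{a+b+c} = 1$. Hence $\|XY - Z\|^2 = \max_{m<a+b+c}\mu_m$, and the task reduces to the uniform bound $\mu_m \leq C^2 q^{2b}$. Geometrically, $\mu_m$ is the squared cosine of the angle between the copies of $H_m$ inside $H_{a+b}\otimes H_c$ and inside $H_a\otimes H_{b+c}$.

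To compute $\mu_m$ I would use the normalised isometric embeddings $\tilde V_m^{k,l} := \kappa_m^{k,l} V_m^{k,l}$. Since $\Hom(H_m, H_a\otimes H_{b+c})$ is one-dimensional, there is a unique scalar $\nu_m$ with $(\id_a\otimes P_{b+c})\,\tilde V_m^{a+b,c} = \nu_m\,\tilde V_m^{a,b+c}$, and $\mu_m = |\nu_m|^2$. Expanding via the formula $V_m^{k,l} = (P_k\otimes P_l)(\id_{k-s}\otimes t^s\otimes\id_{l-s})P_m$ (with $s = (k+l-m)/2$) and evaluating the resulting planar diagram via the partial-trace identity $(\id_{n-1}\otimes\Tr)(P_n) = (d_n/d_{n-1})P_{n-1}$ should produce an explicit closed form for $\nu_m$ as a product of normalisers $\kappa_m^{k,l}$ and $q$-dimension ratios.

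The hard part will be showing that this explicit expression is bounded by $Cq^b$ uniformly in $m$, $a$, $c$. The decay in $b$ arises because the caps $t^s$ inside $\tilde V_m^{a+b,c}$ and $\tilde V_m^{a,b+c}$ sit on different sides of the middle $b$-block, so their composition inevitably routes caps across this block; each such cap contributes a dimension ratio of order $q$ via Lemma~\ref{lem_dimensions}, producing $q^b$ after accumulation. A delicate point is that the normalisers $\kappa_m^{k,l}$ can themselves be of unfavourable size, so one needs the precise asymptotics of the $\kappa$'s from \cite[Lemma~4.8]{Vergnioux_RD} to check they do not cancel the gain. An alternative route is induction on $c$: the base case $c=0$ is trivial since $P_{a+b}(\id_a\otimes P_b) = P_{a+b}$, and the inductive step uses the Wenzl recursion~\eqref{eq_wenzl_recursion} applied to $P_{b+c}$, whose correction terms carry prefactors $d_{l-1}/d_{b+c-1}$ of order $q^{b+c-l}$ by Lemma~\ref{lem_dimensions}, with the leading term handled by the inductive hypothesis.
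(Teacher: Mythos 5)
Your structural reduction is correct and constitutes a genuinely different route from the paper's. Setting $X = P_{a+b}\otimes\id_c$, $Y = \id_a\otimes P_{b+c}$, $Z = P_{a+b+c}$, the absorption relations $XZ=ZX=YZ=ZY=Z$ do hold, the identity $\|XY-Z\|^2 = \|XYX-Z\|$ is valid, $XYX$ is indeed supported on $H_{a+b}\otimes H_c$ (because $Y$ maps $H_a\otimes H_b\otimes H_c$ into $H_a\otimes H_{b+c}$ and $X$ then lands in $H_{a+b}\otimes H_c$), and since $H_{a+b}\otimes H_c$ is multiplicity-free Schur's lemma gives the scalars $\mu_m$ and the clean reformulation $\|XY-Z\|^2=\max_{m<a+b+c}\mu_m$. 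For comparison, the paper does not reprove the lemma at all: it cites \cite[Lemma~1.4]{VaesVergnioux} (whose argument is an induction on $c$ through the Wenzl recursion, close to your "alternative route") and devotes its entire proof to verifying that the constants $C$, $D$ occurring there can be taken uniformly over $q\in\itv]0,{q_0}]$ --- a purely constant-tracking exercise, since the statement needs to hold as $N\to\infty$.

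The genuine gap is that all the quantitative content of your proposal --- the uniform bound $\mu_m\leq C^2q^{2b}$ over $a$, $c$ and $|a+b-c|\leq m<a+b+c$ --- is announced but never established. You acknowledge this yourself ("the hard part will be", "should produce"). The scalar $\nu_m$ is a $q$-deformed Racah ($6j$-type) coefficient, and while exponential decay in $b$ is plausible from your cap-routing heuristic (and is borne out in small cases, e.g.\ $a=b=c=1$ gives $\|XY-Z\|=1/N\sim q$), the heuristic is delicate exactly at the points you flag: the normalisers $\kappa_m^{k,l}$, which by Lemma~\ref{lem_kappa} scale like $d_s^{-1/2}\sim q^{s/2}$, and the various $q$-dimension ratios from Lemma~\ref{lem_dimensions} compete with the per-cap gain, and the net exponent must be controlled over the full range of $m$ with a constant independent of $a$, $c$. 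Without that computation the argument is an outline, not a proof. Your alternative Wenzl-induction sketch has a separate unaddressed issue: after expanding $P_{b+c}$ and applying the inductive hypothesis, the leading term becomes $P_{a+b+c-1}\otimes\id_1$, which still has to be related to $P_{a+b+c}$; closing that gap requires matching the correction terms of the two Wenzl expansions of $P_{a+b+c}$, which is precisely the content of the VaesVergnioux argument the paper relies on.
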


\begin{proof}
  This is \cite[Lemma~A.4]{VaesVergnioux}, we only have to check that the
  constant $C$ remains bounded as $q\to 0$. The proof of
  \cite{VaesVergnioux} explicitly gives the following upper bound:
  \begin{displaymath}
    \|(P_{a+b}\otimes\id_c)(\id_a\otimes P_{b+c}) - P_{a+b+c}\|\leq  q^b
    \Big( \prod_0^\infty(1+Dq^k)\Big) \Big(\sum_0^\infty Cq^k\Big),
  \end{displaymath}
  where $C$ and $D$ a priori depend on $q$. Let us show that one can choose $C$
  and $D$ uniformly over $\itv]0,{q_0}]$. Using Lemma~\ref{lem_dimensions} we have
  \begin{displaymath}
    q^{-b-c} \frac{[2]_q[a]_q}{[a+b+c+1]_q} \leq
    q^{-b-c} \frac{q^{-1}q^{-a+1}}{q^{-a-b-c}(1-q^2)^2} \leq \frac 1{(1-q_0^2)^2}.
  \end{displaymath}
  Similarly:
  \begin{align*}
    q^{-b-c} \Big|\frac{[2]_q[a+b]_q}{[a+b+c+1]_q} - \frac{[2]_q[b]_q}{[b+c+1]_q} \Big|
    &= q^{-b-c} \frac{[2]_q[a]_q[c+1]_q}{[a+b+c+1]_q[b+c+1]_q} \\
    &\leq q^{-b-c}\frac{q^{-1}q^{-a+1}q^{-c}}{q^{-a-b-c}q^{-b-c}(1-q^2)^3}
      \leq \frac{q_0^{b}}{(1-q_0^2)^3} \leq \frac 1{(1-q_0^2)^3}.
  \end{align*}
  In \cite{VaesVergnioux}, the only constraint on $C$ is to be an upper bound for these two
  quantities, hence it can indeed be chosen to depend only on $q_0$. On the other
  hand, $D$ should be an upper bound for
  \begin{displaymath}
    q^{-c}\frac{[2]_q[b]_q}{[b+c+1]_q} \leq q^{-c}\frac{q^{-1}q^{-b+1}}{q^{-b-c}(1-q^2)^2}
    \leq \frac 1{(1-q_0^2)^2},
  \end{displaymath}
  hence it can also be chosen to depend only on $q_0$.
\end{proof}

We also have estimates on the constants $\kappa$, already proved in \cite{Vergnioux_RD}. The formulae for $\kappa_m^{k,l}$ show that, again, the constant $C$ is
uniform for $q$ varying in an interval $\itv]0,{q_0}]$ with $q_0<1$, but we will not need this fact.

\begin{lemma}
  \label{lem_kappa}
  There exists a constant $C$, depending only on $q$, such that we have $1\leq \sqrt{d_a} \kappa_m^{k,l} \leq C$ for all $k$, $l$ and $m = k+l-2a$.
\end{lemma}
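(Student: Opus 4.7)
The plan is to get the lower bound $1 \leq \sqrt{d_a}\,\kappa_m^{k,l}$ by a direct operator-inequality argument, and to read the upper bound off the explicit formula for $\|V_m^{k,l}\|^2$ established in \cite[Lemma~4.8]{Vergnioux_RD}.

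\medskip

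For the lower bound, since $\Hom(H_m,H_k\otimes H_l)$ is one-dimensional we have $V_m^{k,l\,*}V_m^{k,l} = \|V_m^{k,l}\|^2\,\id_{H_m}$, so it suffices to bound $V^*V$ from above by $d_a \,P_m$. Expanding the definition,
\begin{displaymath}
  V_m^{k,l\,*}V_m^{k,l} = P_m(\id_{k-a}\otimes t_a^*\otimes\id_{l-a})(P_k\otimes P_l)(\id_{k-a}\otimes t_a\otimes\id_{l-a})P_m.
\end{displaymath}
Using $P_k\otimes P_l \leq \id$ and the conjugate-equation identity $t_a^*t_a = d_a$ (viewed as a scalar on $H_0$), one gets $V^*V \leq P_m(\id\otimes t_a^* t_a\otimes\id)P_m = d_a\,P_m$, hence $\|V_m^{k,l}\|\leq \sqrt{d_a}$ and $\sqrt{d_a}\,\kappa_m^{k,l}\geq 1$.

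\medskip

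For the upper bound I would invoke the explicit closed form for $\|V_m^{k,l}\|^2$ from \cite[Lemma~4.8]{Vergnioux_RD}, which expresses it as a rational expression in the quantum integers $[a+1]_q$, $[k+1]_q$, $[l+1]_q$, $[k-a+1]_q$, $[l-a+1]_q$, $[m+1]_q$. Applying Lemma~\ref{lem_dimensions} to frame each $[n]_q$ between $q^{-n+1}$ and $q^{-n+1}/(1-q^2)$, the leading $q$-powers telescope to give the power $q^{-a}$ matching $\sqrt{d_a}$, and the remaining $(1-q^2)$-factors combine into a constant $C$ depending only on $q$. This yields $\|V_m^{k,l}\|^2 \geq d_a/C^2$, i.e. $\sqrt{d_a}\,\kappa_m^{k,l}\leq C$. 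The same bookkeeping shows that if $q$ is restricted to an interval $]0,q_0]$ with $q_0<1$ then $C$ can be chosen to depend only on $q_0$, as noted in the paragraph preceding the lemma.

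\medskip

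There is essentially no technical obstacle here: the lower bound is a one-line operator estimate, and the upper bound is just a matter of unwinding the prior formula and substituting the $q$-number bounds of Lemma~\ref{lem_dimensions}. The only mild care needed is in the cancellations of quantum-integer factors to ensure that the extracted constant really does depend only on $q$ (or on $q_0$ in the uniform statement).
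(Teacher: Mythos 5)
Your proposal is correct, and your lower-bound argument is actually more self-contained than what the paper does: the paper's ``proof'' of Lemma~\ref{lem_kappa} is simply a citation to \cite{Vergnioux_RD}, \cite{BrannaVergniouxYoun} and \cite{BrannanCollins_Entangled}, where both bounds are read off the explicit closed-form expression for $\|V_m^{k,l}\|^2$. For the upper bound you follow the same route (invoke the formula and estimate each $q$-integer via Lemma~\ref{lem_dimensions}), which is exactly what those references do. For the lower bound your argument is a genuine alternative: since $V_m^{k,l*}V_m^{k,l}\in\Hom(u_m,u_m)=\CC\,\id$, one has $V^*V=\|V\|^2 P_m$ on $H_1^{\otimes m}$, and discarding the subprojection $P_k\otimes P_l$ together with $t_a^*t_a = d_a$ gives $V^*V\le d_a P_m$ directly, hence $\|V_m^{k,l}\|\le\sqrt{d_a}$ and $\sqrt{d_a}\,\kappa_m^{k,l}\ge 1$ with no need to inspect the explicit formula at all. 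This is a small but real improvement in self-containment; the only thing to be careful of is that the upper bound still rests on the cited formula, so one should verify the cancellation pattern in that specific expression rather than take it for granted -- but that is precisely the verification the paper itself delegates to the references.
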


\begin{proof}
  See the proof of \cite[Lemma~4.8]{Vergnioux_RD}, \cite[p.~1583]{BrannaVergniouxYoun}, \cite[Equation (6) and Proposition 3.1]{BrannanCollins_Entangled}.
\end{proof}

\noindent The following estimate appeared also in connection with Property RD
\cite{Vergnioux_RD}. Recall that $\|\cdot\|_2$ denotes the Hilbert-Schmidt norm on matrix spaces.

\begin{lemma}
  \label{lem_RD}
  Consider integers such that $m = k+l-2a$. Then for any $X\in B(H_1^{\otimes k})$, $Y\in B(H_1^{\otimes l})$ we have
  $\|(\id_{k-a}\otimes t_a^*\otimes \id_{l-a})(X\otimes Y)(\id_{k-a}\otimes t_a\otimes \id_{l-a})\|_2 \leq \|X\|_2 \|Y\|_2$ and
  ${\|(\id\otimes\Tr_a)(X)\|_2} \leq \sqrt{d_a} \|X\|_2$.
\end{lemma}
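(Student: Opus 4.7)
The plan is to reduce both estimates to routine applications of Cauchy--Schwarz, after expressing the operators in an orthonormal basis of $H_1^{\otimes a}$ adapted to the subspace $H_a$. Concretely, I fix an orthonormal basis $(f_i)_{i\in I_a}$ of $H_a$ and extend it to an orthonormal basis of $H_1^{\otimes a}$; the identity $t_a^*(X\otimes\id_a)t_a=\Tr_a(X)$ for $X\in B(H_a)$ then forces the explicit form $t_a=\sum_{i\in I_a}f_i\otimes f_i$.

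For the first inequality, I decompose
\[
X=\sum_{\alpha,\beta}X_{\alpha\beta}\otimes|f_\alpha\rangle\langle f_\beta|,\qquad Y=\sum_{\gamma,\delta}|f_\gamma\rangle\langle f_\delta|\otimes Y^{\gamma\delta},
\]
with $X_{\alpha\beta}\in B(H_1^{\otimes(k-a)})$ and $Y^{\gamma\delta}\in B(H_1^{\otimes(l-a)})$. A short computation on pure tensors, using $t_a^*(f_\alpha\otimes f_\gamma)=\delta_{\alpha\gamma}$ for $\alpha\in I_a$ and $0$ otherwise, collapses the expression to
\[
Z:=(\id_{k-a}\otimes t_a^*\otimes\id_{l-a})(X\otimes Y)(\id_{k-a}\otimes t_a\otimes\id_{l-a})=\sum_{i,j\in I_a}X_{ji}\otimes Y^{ji}.
\]
Then $\|Z\|_2^2=\sum_{i,j,i',j'\in I_a}\Tr(X_{ji}^*X_{j'i'})\,\Tr((Y^{ji})^*Y^{j'i'})$, and two applications of Cauchy--Schwarz (once per trace, then once on the resulting product sums), together with $\sum_{\alpha,\beta}\|X_{\alpha\beta}\|_2^2=\|X\|_2^2$ and the analogue for $Y$, yield $\|Z\|_2\leq\|X\|_2\|Y\|_2$.

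The second inequality is even more direct: viewing $X$ as a matrix with respect to the basis $(f_\mu)$ on the relevant $H_a$-factor, write $X=\sum_{\mu,\nu}X_{\mu\nu}\otimes|f_\mu\rangle\langle f_\nu|$, so that $(\id\otimes\Tr_a)(X)=\sum_{\mu\in I_a}X_{\mu\mu}$. Cauchy--Schwarz over the $d_a$ summands, followed by discarding off-diagonal terms, gives
\[
\Bigl\|\sum_{\mu\in I_a}X_{\mu\mu}\Bigr\|_2^2\leq d_a\sum_{\mu\in I_a}\|X_{\mu\mu}\|_2^2\leq d_a\sum_{\mu,\nu}\|X_{\mu\nu}\|_2^2=d_a\|X\|_2^2.
\]

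Neither estimate presents a real obstacle; they are bookkeeping Cauchy--Schwarz bounds once the operators are written in coordinates. The only point worth flagging is the restriction to $i,j\in I_a$ in the expression for $Z$, which comes directly from $t_a\in H_a\otimes H_a$ and is precisely what prevents a stray factor $\|t_a\|^2=d_a$ from appearing on the right-hand side of the first inequality.
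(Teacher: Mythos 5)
Your strategy coincides with the paper's: expand $X$ and $Y$ in bases adapted to $t_a$, observe that conjugating by $\id\otimes t_a\otimes\id$ collapses the expression to a ``diagonal'' sum $\sum_{i,j\in I_a}X_{ji}\otimes Y^{ji}$, and then apply Cauchy--Schwarz twice; the second inequality is the same partial-trace bookkeeping (the paper gives both this computation and an alternative one-line argument via the duality vector $t_L$).

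There is, however, one incorrect deduction. You assert that, for an arbitrary fixed orthonormal basis $(f_i)_{i\in I_a}$ of $H_a$, the identity $t_a^*(X\otimes\id_a)t_a=\Tr_a(X)$ ``forces'' $t_a=\sum_{i\in I_a} f_i\otimes f_i$. It does not: writing $t_a=\sum_{i,j}c_{ij}\,f_i\otimes f_j$, the trace identity says only that the matrix $(c_{ij})$ is unitary, not that it is the identity. The paper circumvents this by writing $t_a=\sum_i e_i\otimes\bar e_i$ with two a priori distinct orthonormal bases $(e_i)$ and $(\bar e_i)$ of $H_a$, decomposing $X$ against matrix units $e_ie_j^*$ and $Y$ against $\bar e_i\bar e_j^*$; then $t_a^*(e_ie_j^*\otimes\bar e_{i'}\bar e_{j'}^*)t_a=\delta_{ii'}\delta_{jj'}$ gives exactly the diagonal collapse you want, and your Cauchy--Schwarz steps run unchanged. (Your formula for $t_a$ can in fact be arranged, by choosing $(f_i)$ to be fixed by the antiunitary involution $\zeta\mapsto(\zeta^*\otimes\id)t_a$ on $H_a$ --- which exists because the category is orthogonal --- but that is a further choice of basis, not a consequence of the trace identity as you claim.) The gap is local and the repair is immediate, but the step as written is false.
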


\begin{proof}
  The proof of \cite[Theorem~4.9]{Vergnioux_RD} applies, although it was there used only for $X\in B(H_k)$, $Y\in B(H_l)$. Let us repeat it. Consider an
  orthonormal basis $(f_i)_i$ of $H_a$, then the basis $(\bar f_i)_i$ defined by putting $t_a = \sum_i f_i\otimes \bar f_i$ is orthonormal as well: indeed its
  Gram matrix is Woronowicz modular matrix $F_a$, which is equal to the identity in our unimodular case. Put $E_I = f_i f_j^* $ and
  $\bar E_I = \bar f_i\bar f_j^*\in B(H_a)$ for $I = (i,j)$, these are orthonormal bases of $B(H_a)$ for the Hilbert-Schmidt structure and we have
  $t_a^*(E_I\otimes\bar E_J)t_a = \delta_{I,J}$. Decompose $(\id_{k-a}\otimes P_a)X(\id_{k-a}\otimes P_a) = \sum X_I\otimes E_I$ with
  $X_I\in B(H_1^{\otimes k-a})$ and similarly $(P_a\otimes\id_{l-a})Y(P_a\otimes\id_{l-a}) = \sum \bar E_J\otimes Y_J$. We have then
  $\sum \|X_I\|_2^2 = \|(\id_{k-a}\otimes P_a)X(\id_{k-a}\otimes P_a)\|_2^2\leq \|X\|_2^2$ and similarly $\sum \|Y_J\|_2^2 \leq \|Y\|_2^2$. Finally we have by
  the triangle inequality and Cauchy-Schwarz :
  \begin{align*}
    \|(\id\otimes t_a^*\otimes \id)(X\otimes Y)(\id\otimes t_a\otimes \id)\|_2^2
    &= \|{\ts\sum_{I,J}} t_a^*(E_I \otimes \bar E_J)t_a \times (X_I\otimes Y_J)\|_2^2 \\
    &\leq \big({\ts\sum_I} \|X_I\|_2\|Y_I\|_2\big)^2 \\
    &\leq ({\ts\sum_I} \|X_I\|_2^2) ({\ts\sum_I} \|Y_I\|_2^2)
      \leq \|X\|_2^2 \|Y\|_2^2.
  \end{align*}

  The second inequality of this lemma follows by taking $l=a$ and $Y = \id_a$, but can also be proved more directly by noticing that in the canonical isometric
  isomorphism $B(K\otimes L) \simeq K\otimes L\otimes \bar L\otimes \bar K$, the partial trace $\id\otimes \Tr_L$ corresponds to the map
  $\id\otimes t_L^*\otimes \id$, where $t_L :  \CC\to L\otimes\bar L$ is the canonical duality vector whose norm is $\sqrt{\dim L}$.
\end{proof}

We will use again one of the two main estimates from \cite{FreslonVergnioux} about $\Corep(\FO_N)$. For $a$, $b$, $c\in\NN$ consider
$\Pi_{a,b,c} = (\id_a\otimes\tr_b\otimes\id_c)(P_{a+b+c}) \in B(H_a\otimes H_c)$ --- this time the analysis deals with $\Corep(\FO_N)$ together with its
canonical fiber functor. Proposition~3.2 of \cite{FreslonVergnioux} shows that $\Pi_{a,b,c}$ is almost scalar as $b\to\infty$. We give below an improvement of
the corresponding constants.

\begin{lemma}
  \label{lem_trace_proj}
  For every $q_0 \in\itv]0,1[$ there exist constants $C > 0$, $\alpha\in\itv]0,1[$ such that, for all $a$, $b$, $c\in\NN$ and $q\in\itv]0,{q_0}]$ we have
  $\|\Pi_{a,b,c} - \lambda(\id_a\otimes\id_c)\|\leq C q^{\lfloor\alpha b\rfloor}$ for some scalar $\lambda \in\CC$.
\end{lemma}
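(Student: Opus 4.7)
The plan is to refine the approach of \cite[Proposition~3.2]{FreslonVergnioux}, tracking constants carefully so that they depend only on $q_0$ and exhibit a geometric decay rate $q^{\lfloor\alpha b\rfloor}$ for an explicit $\alpha\in\itv]0,1[$. Fix such an $\alpha$ (say $\alpha=1/3$) and set $b_1=b_3=\lfloor\alpha b\rfloor$, $b_2=b-2b_1\geq 0$; for small $b$ the bound is trivial since $\|\Pi_{a,b,c}\|\leq 1$, so I may assume $b$ large. The core idea is to construct an operator $\tilde Q\in B(H_1^{\otimes(a+b+c)})$ approximating $P_{a+b+c}$ to error $O(q^{\lfloor\alpha b\rfloor})$ and having the structural property that its partial trace $(\id_a\otimes\tr_b\otimes\id_c)(\tilde Q)$ is a scalar multiple of $\id_a\otimes\id_c$.

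To build $\tilde Q$, I apply Lemma~\ref{lem_highest_weight} iteratively with splits whose middle block has size at least $b_1$. A first application with split $(a, b_1, b-b_1+c)$ yields $\|P_{a+b+c}-(P_{a+b_1}\otimes\id)(\id_a\otimes P_{b+c})\|\leq Cq^{b_1}$; analogous steps from the right and further applications to the middle $P$-factor reshape the expression so that the boundary projections $P_{a+b_1}$ and $P_{b_3+c}$ are sandwiched around a central factor essentially equal to $\id\otimes P_{b_2}\otimes\id$. Uniformity of the constants in Lemma~\ref{lem_highest_weight} (established above) ensures that each error is $\leq Cq^{b_i}\leq Cq^{b_1}$ with $C$ depending only on $q_0$, and since each intermediate factor has operator norm $\leq 1+Cq^{b_1}$, the composed error is $C'q^{\lfloor\alpha b\rfloor}$ with $C'$ still depending only on $q_0$. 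Given this structure, the partial trace of $\tilde Q$ factorizes: the boundary projections collapse via the Schur-lemma identity $(\id_a\otimes\tr_{b_1})(P_{a+b_1})=(d_{a+b_1}/(d_ad_{b_1}))\id_a$ (viewed on $H_a\otimes H_{b_1}$) and its right-hand analogue, while the central $P_{b_2}$-factor contributes a further scalar from its normalized trace. Multiplying out yields $\lambda=d_{a+b+c}/(d_a d_b d_c)$, consistent with the trace constraint $\Tr(\Pi_{a,b,c})=d_{a+b+c}/d_b$. Since $(\id\otimes\tr_b\otimes\id)$ is unital completely positive and hence operator-norm contractive, the estimate
\[
\|\Pi_{a,b,c}-\lambda(\id_a\otimes\id_c)\|\leq \|P_{a+b+c}-\tilde Q\|\leq C'q^{\lfloor\alpha b\rfloor}
\]
follows.

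The main obstacle is the construction of $\tilde Q$: the Jones-Wenzl projections introduced at different stages all share strands with the middle block, so the approximations do not factorize in one step. Choosing the order of splits carefully, and verifying that the cascaded product simultaneously approximates $P_{a+b+c}$ with the required error and has the structural form needed for its partial trace to collapse to a scalar, is the technical heart of the argument; the uniformity in $q\in\itv]0,{q_0}]$ then comes for free from the uniform constants in Lemma~\ref{lem_highest_weight}.
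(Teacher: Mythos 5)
There is a genuine gap in the construction and use of $\tilde Q$, and it is exactly where the paper's proof is forced to rely on the recursive estimate from \cite[Prop.~3.2]{FreslonVergnioux}.

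You claim to produce $\tilde Q$ with two properties at once: $\|P_{a+b+c}-\tilde Q\|\leq Cq^{\lfloor\alpha b\rfloor}$, and $(\id_a\otimes\tr_b\otimes\id_c)(\tilde Q)=\lambda\,\id_a\otimes\id_c$ exactly. The structure you propose --- boundary factors $P_{a+b_1}$, $P_{b_3+c}$ ``sandwiched around a central factor essentially equal to $\id\otimes P_{b_2}\otimes\id$'' --- consists of operators supported on pairwise disjoint sets of strands ($a\cup b_1$, then $b_2$, then $b_3\cup c$). Those three factors commute, so $\tilde Q = P_{a+b_1}\otimes P_{b_2}\otimes P_{b_3+c}$. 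But $P_{a+b+c}$ is a strictly smaller subprojection of this tensor product, hence $\|\tilde Q - P_{a+b+c}\| = 1$: a tensor product of Jones--Wenzl projections is nowhere near $P_{a+b+c}$. Lemma~\ref{lem_highest_weight} never yields tensor products; it yields chained products like $(P_{a+b_1}\otimes\id)(\id_a\otimes P_{b+c})$ whose factors overlap. Conversely, for any chained approximant that \emph{is} close to $P_{a+b+c}$, the factors share strands with the middle block, the Schur-lemma collapse you invoke does not apply factor by factor, and $(\id_a\otimes\tr_b\otimes\id_c)(\tilde Q)$ is a genuine, non-scalar morphism of $u_a\otimes u_c$. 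So the intermediate object you need does not exist in the form described.

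This is not a cosmetic issue: the whole difficulty of the lemma is exactly that the partial trace over the middle does \emph{not} reduce to Schur scalars, and the paper handles it differently. The paper first uses Lemma~\ref{lem_highest_weight} on both outer blocks only to reduce $\Pi_{a,b,c}$ to $(P_a\otimes P_c)(\id\otimes\Pi_{\lfloor\alpha b\rfloor,b,\lfloor\alpha b\rfloor}\otimes\id)$, at cost $O(q^{\lfloor\alpha b\rfloor})$. It then needs to know that $\Pi_{a',b,c'}$ with $a'=c'=\lfloor\alpha b\rfloor$ is close to a scalar; for that it re-derives the recursive constants $D_{a,c}$ of \cite{FreslonVergnioux}, shows $|\Tr(\Pi'_{a,b,c}f)|\leq D_{a,c}(\Tr f^*f)^{1/2}$, bounds $D_{a,c}\leq(2NK)^{a+c}$, and then balances the exponential growth $D_{\lfloor\alpha b\rfloor,\lfloor\alpha b\rfloor}\sim (2NK)^{2\alpha b}$ against the $q^b$ decay by \emph{defining} $\alpha$ through $(2NK)^{2\alpha}q=q^\alpha$. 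This balancing step is also why you cannot simply ``fix $\alpha=1/3$'': the admissible $\alpha$ depends on $q_0$ (it tends to $1/3$ only as $q_0\to0$). Your proof bypasses the recursion entirely, and that is precisely the hole; without an estimate of $\Pi_{a',b,c'}$ close to a scalar for small $a'$, $c'$ --- supplied by the $D_{a,c}$-recursion --- the argument cannot close.
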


\begin{proof}
  Let us note first that in the case $c = 0$ the map $d_b \Pi_{a,b,c} = (\id_a\otimes t_b^*)(P_{a+b}\otimes\id_b)(\id_a\otimes t_b)$ is an intertwiner of the
  irreducible space $H_a$, hence it is a multiple of the identity. On the other hand, for $c\geq 1$ Proposition~3.2 of \cite{FreslonVergnioux} uses the scalar
  $\lambda=\lambda_{a,c}$ explicitly given by $\lambda_{a,c} = q^{-a-c}/d_a d_c$. Consider $\Pi'_{a,b,c} = d_b\Pi_{a,b,c} - d_b\lambda_{a,c}(\id_a\otimes\id_c)$. A
  direct computation shows that
  \begin{align*}
    \Tr(\Pi'_{a,b,c}) = d_{a+b+c} - q^{-a-c}d_b &= q^{b+2}\frac{q^{-a-c} - q^{a+c}}{1-q^2} \\
    &\leq \frac 1{1-q^2} \sqrt{d_{a+c}d_ad_c} = \frac{\sqrt{d_{a+c}}}{1-q^2} (\Tr \id_a\otimes \id_c)^{1/2}.
  \end{align*}
  Now, \cite{FreslonVergnioux} shows the existence of constants $D_{a,c}$ such that $|\Tr(\Pi'_{a,b,c}f)|\leq D_{a,c} (\Tr f^*f)^{1/2}$ for
  $f\in B(H_a)\otimes B(H_c)$ with $\Tr(f) = 0$. This implies
  \begin{displaymath}
    |\Tr(\Pi'_{a,b,c}f)| \leq (d_{a+c}/(1-q^2)^2+D_{a,c}^2)^{1/2} (\Tr f^*f)^{1/2}
  \end{displaymath}
  for any $f \in B(H_a)\otimes B(H_c)$, hence $\|\Pi_{a,b,c} - \lambda_{a,c}\id\|_2 \leq (d_{a+c}/(1-q^2)^2 + D_{a,c}^2)^{1/2}d_b^{-1}$. Here we use the
  Hilbert-Schmidt norm in $B(H_a\otimes H_c)$, which is bigger than the operator norm.


  Moreover, it is explicitly stated in the proof of \cite[Prop.~3.2]{FreslonVergnioux} that one can take the constants $D_{a,c}$ defined by induction over $c$
  as follows: $D_{a,0} = 0$ and, for $c\geq 1$:
  \begin{displaymath}
    D_{a,c} = K_c \max(d_1^{1/2} D_{a,c-1} + d_1^{3/2} d_{a-1}, d_{a+c}^{1/2}),
  \end{displaymath}
  where $1\leq K_c = 1/(1-q^c) \leq K := 1/(1-q)$. In particular $d_{a+c}\leq D_{a,c}^2$ if $c\geq 1$. Putting $C_1 = \sqrt{2}/(1-q_0^2)$ we have thus, for all
  $a$, $b$, $c\in\NN$, the existence of $\lambda\in\CC$ such that $\|\Pi_{a,b,c}-\lambda\,\id\| \leq C_1 D_{a,c}q^b$.

  One can then show by induction that the constants $D_{a,c}$ satisfy the estimate $D_{a,c} \leq (2NK)^{a+c}$, where $N=d_1=q+q^{-1}$. Indeed
  $K_c d_{a+c}^{1/2} \leq K N^{(a+c)/2} \leq (2KN)^{a+c}$, and for $c\in\NN^*$ we have by induction
  \begin{align*}
    K_c(d_1^{1/2} D_{a,c-1} + d_1^{3/2} d_{a-1}) \leq KN^{1/2}(2NK)^{a+c-1} + KN^{3/2}N^{a-1} \leq (2NK)^{a+c}.
  \end{align*}
  Of course this estimate is quite bad, but one can improve it using \cite[Lemma~A.4]{VaesVergnioux}.

  More precisely, let $\alpha > 0$ be such that $(2KN)^{2\alpha} q = q^\alpha$. Take $a$, $c \geq \alpha b$. Denote $C_0$ the constant given by
  Lemma~\ref{lem_highest_weight}. Then we have
  \begin{displaymath}
    P_{a+b+c} \simeq (P_a\otimes\id_b\otimes P_c)(\id_{a-\lfloor\alpha b\rfloor}\otimes
    P_{b+2\lfloor\alpha b\rfloor}\otimes \id_{c-\lfloor\alpha b\rfloor})
  \end{displaymath}
  up to $2C_0 q^{\lfloor\alpha b\rfloor}$ in operator norm. Applying $\id\otimes\tr_b\otimes\id$, which is contracting, to this estimate we obtain
  \begin{displaymath}
    \Pi_{a,b,c} \simeq (P_a\otimes P_c) (\id_{a-\lfloor\alpha b\rfloor}\otimes
    \Pi_{\lfloor\alpha b\rfloor,b,\lfloor\alpha b\rfloor}\otimes \id_{c-\lfloor\alpha b\rfloor})
    \simeq \lambda (\id_a\otimes \id_c)
  \end{displaymath}
  up to
  $2C_0 q^{\lfloor\alpha b\rfloor} + C_1D_{\lfloor\alpha b\rfloor,\lfloor\alpha b\rfloor}q^b \leq 2C_0 q^{\lfloor\alpha b\rfloor} + C_1(2NK)^{2\lfloor\alpha
    b\rfloor}q^b$ in operator norm, for some $\lambda\in\CC$. Since $q\leq 1\leq 2NK$ we have moreover
  $(2NK)^{2\lfloor\alpha b\rfloor}q^b \leq (2NK)^{2\alpha b}q^b = q^{\alpha b} \leq q^{\lfloor\alpha b\rfloor}$ by definition of $\alpha$. This yields
  $\|\Pi_{a,b,c} - \lambda (\id_a\otimes\id_c)\| \leq (2C_0+C_1) q^{\lfloor \alpha b\rfloor}$. This estimate is also valid if $a$, $c < \alpha b$ because in this
  case $D_{a,c}q^b \leq (2KN)^{2\alpha b}q^b = q^{\alpha b}$. It holds also in the remaining cases by using Lemma~\ref{lem_highest_weight} only on one side.

  Finally we have shown the existence of $D_0>0$, depending only on $q_0$, and $\alpha > 0$ such that for all $a$, $b$, $c$ there exists a constant $\lambda$
  such that $\|\Pi_{a,b,c}-\lambda (\id_a\otimes \id_c)\| \leq D_0 q^{\lfloor\alpha b\rfloor}$. One should be careful that $\alpha$ depends on $q$. In fact it
  can be computed explicitly from the defining relation $(2KN)^{2\alpha} q = q^\alpha$, with $K = 1/(1-q)$ and $N = q+q^{-1}$: one gets
  \begin{displaymath}
    \alpha = \frac 13 \left[1-\frac{2\ln 2}{3\ln q}-\frac 2{3\ln q}\ln\left(\frac{1+q^2}{1-q}\right)\right]^{-1}.
  \end{displaymath}
  From this it follows that $\alpha$ is decreasing from $1/3$ to $0$ as $q$ varies from $0$ to $1$, and the result follows.
\end{proof}

\begin{remark}
  For instance one can take $\alpha = 1/4$ for $q_0 \approx 0.15  $ (or $N_0=7$). We also have $q^\alpha \sim L q^{1/3}$ as $q\to 0$, where $L = \exp(2\ln(2)/9)$.
\end{remark}

\bigskip

We will need in the next section one last tool about the representation category of $\FO_N$. The Wenzl recursion relation~\eqref{eq_wenzl_recursion}, applied
twice, yields the following bilateral version.

\begin{lemma}\label{lem_bilateral_recursion}
  For $n\geq 4$ we have the bilateral Wenzl recursion relation:
  \begin{align*}
    P_n &= (\id_{1}\otimes P_{n-2}\otimes \id_1) + \\
        & \qquad -\frac{d_{n-2}}{d_{n-1}}(\id_{1}\otimes P_{n-2}\otimes\id_1)(t t^*\otimes \id_{1}^{\otimes (n-2)})(\id_{1}\otimes P_{n-2}\otimes\id_1) \\
        & \qquad -\frac{d_{n-2}}{d_{n-1}}(\id_{1}\otimes P_{n-2}\otimes \id_1) (\id_1^{\otimes (n-2)}\otimes t t^*)(\id_{1}\otimes P_{n-2}\otimes \id_1)    \\
        & \qquad + \frac{(-1)^{n-1}}{d_{n-1}} (\id_{1}\otimes P_{n-2}\otimes \id_1)(t\otimes\id_1^{\otimes( n-2)}\otimes t^*)(\id_{1}\otimes P_{n-2}\otimes \id_1) \\
        & \qquad + \frac{(-1)^{n-1}}{d_{n-1}} (\id_{1}\otimes P_{n-2}\otimes \id_1)(t^*\otimes\id_1^{\otimes (n-2)}\otimes t)(\id_{1}\otimes P_{n-2}\otimes \id_1) \\
        & \qquad + \frac{d_1+d_{n-3}d_{n-2}}{d_{n-1}d_{n-2}}(\id_{1}\otimes P_{n-2}\otimes \id_1)(t t^*\otimes\id_1^{\otimes (n-4)}\otimes t t^*)(\id_{1}\otimes P_{n-2}\otimes \id_1).
  \end{align*}
  For $n=3$ the formula still holds, without the last term.
\end{lemma}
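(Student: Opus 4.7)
The idea is to apply the Wenzl recursion~\eqref{eq_wenzl_recursion} twice. First, applying it to $P_n$ gives
\[
P_n = (P_{n-1}\otimes\id_1) + \sum_{l=1}^{n-1}(-1)^{n-l}\frac{d_{l-1}}{d_{n-1}}\bigl(\id_1^{\otimes(l-1)}\otimes t\otimes\id_1^{\otimes(n-l-1)}\otimes t^*\bigr)(P_{n-1}\otimes\id_1).
\]
Next, apply the left-right mirrored form of~\eqref{eq_wenzl_recursion} to the factor $P_{n-1}$ in the leading term. This mirrored recursion---expressing $P_{n-1}$ in terms of $(\id_1\otimes P_{n-2})$ plus cap/cup corrections concentrated near strand~$1$---follows from~\eqref{eq_wenzl_recursion} by conjugation with the factor-reversing flip on $H_1^{\otimes(n-1)}$, using that this flip fixes every Jones-Wenzl projection as well as the symmetric morphism~$t$. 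Substituting yields $(P_{n-1}\otimes\id_1)=(\id_1\otimes P_{n-2}\otimes\id_1) + (\text{corrections})$, and hence $P_n=(\id_1\otimes P_{n-2}\otimes\id_1)+(\text{all corrections})$.

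The key simplification exploits that $H_n\subset H_1\otimes H_{n-2}\otimes H_1$, which gives $P_n=(\id_1\otimes P_{n-2}\otimes\id_1)\,P_n\,(\id_1\otimes P_{n-2}\otimes\id_1)$; this lets us sandwich every correction term by $(\id_1\otimes P_{n-2}\otimes\id_1)$ without changing the equality. Since $P_{n-2}$ is annihilated by any cap or cup acting on strictly interior strands, only those Temperley-Lieb tangles whose cap/cup structure touches strand~$1$ or strand~$n$ can survive. After sandwiching, these are exactly the five tangles $tt^*\otimes\id^{\otimes n-2}$, $\id^{\otimes n-2}\otimes tt^*$, $t\otimes\id^{\otimes n-2}\otimes t^*$, $t^*\otimes\id^{\otimes n-2}\otimes t$, and $tt^*\otimes\id^{\otimes n-4}\otimes tt^*$, alongside the leading term $\id_1\otimes P_{n-2}\otimes\id_1$ itself, matching the six terms of the claimed formula.

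Identifying the coefficients is a direct computation. The four single cap-cup terms receive contributions from the endpoints $l\in\{1,n-1\}$ of the first Wenzl sum and $l'\in\{1,n-2\}$ of the mirrored sum, and the stated coefficients $-d_{n-2}/d_{n-1}$ and $(-1)^{n-1}/d_{n-1}$ fall out directly. For the double cap-cup term $tt^*\otimes\id^{\otimes n-4}\otimes tt^*$, the coefficient $(d_1+d_{n-3}d_{n-2})/(d_{n-1}d_{n-2})$ splits as $d_1/(d_{n-1}d_{n-2})+d_{n-3}/d_{n-1}$; the two summands correspond respectively to a cross product of the outermost corrections of the two Wenzl applications (closing a loop via $t^*t=d_1$) and to an interior contraction that uses the identity $t_{n-2}^*(\id_1\otimes X)t_{n-2}=\Tr_{n-2}(X)$ together with standard $q$-dimension manipulations.

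For $n=3$, the diagram $tt^*\otimes\id^{\otimes n-4}\otimes tt^*$ involves $\id_1^{\otimes-1}$ and is undefined: on three strands one cannot accommodate two disjoint cap-cup pairs. The same computation goes through with this last term simply absent, yielding the claimed five-term formula. The main obstacle is the intricate combinatorial bookkeeping required to track positions, signs, and $q$-dimension factors across the two Wenzl applications, and in particular to combine the two distinct contributions into the coefficient of the double cap-cup term.
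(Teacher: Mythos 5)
Your overall plan (apply Wenzl twice and sandwich by $\id_1\otimes P_{n-2}\otimes\id_1$) matches the paper, but there is a genuine gap in the way you organise the two Wenzl expansions, and it affects the coefficients.

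You expand only the $P_{n-1}$ sitting in the leading term $(P_{n-1}\otimes\id_1)$. But after the first Wenzl application the correction terms have the form $T_l\,(P_{n-1}\otimes\id_1)$, and sandwiching by $Q:=\id_1\otimes P_{n-2}\otimes\id_1$ does \emph{not} convert the remaining $P_{n-1}$ into $\id_1\otimes P_{n-2}$: one only has $Q\,T_l\,(P_{n-1}\otimes\id_1)\,Q = Q\,T_l\,(P_{n-1}\otimes\id_1)$, since $(P_{n-1}\otimes\id_1)Q = P_{n-1}\otimes\id_1$. These terms are therefore not yet in the claimed form $Q\,(\text{TL tangle})\,Q$, and the assertion that after sandwiching ``the surviving tangles are exactly'' the five you list is not justified without expanding the remaining $P_{n-1}$ factors as well. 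The paper does exactly this: it first kills all but the $l\in\{1,n-1\}$ corrections by left-multiplying by $Q$, obtaining three terms $A$, $B$, $C$, and then applies the left Wenzl recursion to every remaining $P_{n-1}$, not just the one in $A$.

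This omission is not cosmetic, because once you expand the remaining $P_{n-1}$'s the diagram $tt^*\otimes\id_1^{\otimes(n-2)}$ receives \emph{two} contributions, one from the expansion of the leading $P_{n-1}$ and one from the expansion of the $P_{n-1}$ inside the ``outermost'' first-Wenzl correction. If you only keep the leading-term contribution, you get the coefficient $-d_{n-3}/d_{n-2}$, not the stated $-d_{n-2}/d_{n-1}$. Combining the two requires the identity $d_{n-3}d_{n-1}+1 = d_{n-2}^2$, which the paper invokes at the end of its proof and which you never mention; so ``the coefficients fall out directly'' is not quite right. Your description of the double cap–cup term as a sum of two contributions is consistent with this bookkeeping once you do expand everywhere, which suggests you have the right picture in mind, but as written your argument skips a necessary step and would not produce the correct coefficient for $tt^*\otimes\id_1^{\otimes(n-2)}$ (or, by the symmetry in the order of expansion, for $\id_1^{\otimes(n-2)}\otimes tt^*$).
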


\begin{proof}
  We assume for this proof that $n\geq 4$. A similar calculation gives the result for $n=3$.
  
  We first multiply the relation~\eqref{eq_wenzl_recursion} on the left by $(\id_1\otimes P_{n-2}\otimes\id_1)$. All terms except $l=1$ and $l=n-1$ vanish
  because they involve $P_{n-2}(\id_i\otimes t\otimes\id_j)$, and we are left with :
  \begin{align*}
    P_{n} &= (P_{n-1}\otimes \id_{1}) + \frac{(-1)^{n-1}}{d_{n-1}} (\id_1\otimes P_{n-2}\otimes\id_1)(t\otimes \id_{1}^{\otimes (n-2)}\otimes t^*)(P_{n-1}\otimes \id_{1}) \\
          &\qquad - \frac{d_{n-2}}{d_{n-1}} (\id_1\otimes P_{n-2}\otimes\id_1)(\id_{1}^{\otimes( n-2)}\otimes t t^*)(P_{n-1}\otimes \id_{1}).
  \end{align*}
  Let us denote $A$, $B$, $C$ the three terms on the right-hand side above, without the numeric coefficients.
  We apply the left version Wenzl's recursion to the
  projections $P_{n-1}$:
  \begin{displaymath}
    P_{n-1} = (\id_{1}\otimes P_{n-2}) + \sum_{k=1}^{n-2} (-1)^{n-1-k} \frac{d_{k-1}}{d_{n-2}} \left(t^*\otimes\id_{1}^{\otimes (n-k-2)} \otimes t\otimes \id_{1}^{\otimes (k-1)}\right)(\id_{1}\otimes P_{n-2}).
  \end{displaymath}
  Multiplying on the left by $(\id_1\otimes P_{n-2})$ this yields
  \begin{align*}
    A = (\id_{1}\otimes P_{n-2}\otimes \id_1) - \frac{d_{n-3}}{d_{n-2}} (\id_{1}\otimes P_{n-2}\otimes\id_1)(tt^*\otimes \id_{1}^{\otimes (n-2)})(\id_{1}\otimes P_{n-2}\otimes\id_1).
  \end{align*}
  We proceed similarly with $B$: only the terms $k=1$, $k=2$ have a non-vanishing contribution and we obtain, applying the conjugate equation:
  \begin{align*}
    B &= (\id_{1}\otimes P_{n-2}\otimes \id_1)(t\otimes\id_1^{\otimes (n-2)}\otimes t^*)(\id_{1}\otimes P_{n-2}\otimes \id_1) \\
      & \qquad + \frac{(-1)^n}{d_{n-2}} (\id_{1}\otimes P_{n-2}\otimes \id_1)(t t^*\otimes\id_1^{\otimes (n-2)})(\id_{1}\otimes P_{n-2}\otimes \id_1) \\
      & \qquad + \frac{(-1)^{n-1}d_1}{d_{n-2}} (\id_{1}\otimes P_{n-2}\otimes \id_1)(t t^*\otimes\id_1^{\otimes (n-4)}\otimes tt^*)(\id_{1}\otimes P_{n-2}\otimes \id_1).
  \end{align*}
  Finally for $C$ only the terms $k=1$, $k=n-2$ survive, yielding:
  \begin{align*}
    C &= (\id_{1}\otimes P_{n-2}\otimes \id_1) (\id_1^{\otimes( n-2)}\otimes tt^*)(\id_{1}\otimes P_{n-2}\otimes \id_1) \\
      & \qquad + \frac{(-1)^n}{d_{n-2}}(\id_{1}\otimes P_{n-2}\otimes \id_1)(t^*\otimes\id_1^{\otimes (n-2)}\otimes t)(\id_{1}\otimes P_{n-2}\otimes \id_1) \\
      & \qquad - \frac{d_{n-3}}{d_{n-2}}(\id_{1}\otimes P_{n-2}\otimes \id_1)(t t^*\otimes\id_1^{\otimes (n-4)}\otimes t t^*)(\id_{1}\otimes P_{n-2}\otimes \id_1).
  \end{align*}
  The result follows by gathering $A$, $B$ and $C$ with their coefficients and using the relation $d_{n-3}d_{n-1}+1 = d_{n-2}^2$.
\end{proof}

\section{Decomposition of the Bimodule}
\label{sec_bimod}

In this section we consider the GNS space $H = \ell^2(\GGamma)$ of $M = \Ll(\GGamma)$ with respect to the Haar trace $h$. We identify $M$ with a dense subspace
of $H$. We shall study $H$ as an $A{,}A$-bimodule for $A = \chi_1''\cap M$. We will more specifically consider the orthogonal $H^\circ\subset H$ of the trivial
bimodule $A \subset H$, and we shall decompose it into simpler, pairwise orthogonal submodules generated by natural elements, see
Proposition~\ref{prp_decomp_bimod}. Moreover we will exhibit for each of these cyclic submodules $\Aa x\Aa$ a linear basis $(x_{i,j})_{i,j}$, see
Proposition~\ref{prp_simple_bimod} and Corollary~\ref{crl_independent}. Recall that $\Aa$ is the unital canonical dense sub-$*$-algebra of $A$ generated by $\chi_1$.

We denote $p_k\in B(H)$ the orthogonal projection onto the subspace $p_kH = u_k(B(H_k))$ spanned by coefficients of $u_k$. Note that $p_k$ belongs in fact to
the dual algebra $\ell^\infty(\GGamma)$, and that the projection $P_k \in B(H_1^{\otimes k})$ introduced in the preceding section is the image of $p_k$ under
the natural representation of $\ell^\infty(\GGamma)$ on the corepresentation space $H_1^{\otimes k}$.

The space $H^\circ$ is spanned by its subspaces $p_k H^\circ$ and we have $p_k H^\circ = H^\circ\cap p_k H = u_k(B(H_k)^\circ)$ where
$B(H_k)^\circ = \{X\in B(H_k) \mid \Tr(X) = 0\}$.  In the case of the classical generator MASA $a_1'' \subset \Ll(F_N)$, the subspace analogous to
$p_k H^\circ$ is spanned by reduced words of length $k$, different from $a_1^{\pm k}$. We introduce below a subspace $H^\ccirc\subset H^\circ$ which is the
quantum replacement for the set of words $g\in F_N$ that do not start nor end with $a_1$.

\begin{notation}
  \label{def_ccirc}
  For $n\geq 1$ we denote
  \begin{displaymath}
    B(H_n)^\ccirc = \{X\in B(H_n) \mid (\Tr_1\otimes\id)(X) = 0 = (\id\otimes\Tr_1)(X)\}.
  \end{displaymath}
  We denote $H^\ccirc$ the closed linear span of the subspaces $u_n(B(H_n)^\ccirc)$ in $H^\circ$.
\end{notation}

\begin{remark}
  \label{rk_ccirc}
  It is well-known that $H_n \subset H_1^{\otimes n}$ is the subspace of vectors $\zeta \in H_1^{\otimes n}$ such that
  $(\id_i\otimes t^*\otimes\id_{n-i-2})(\zeta) = 0$, for all $i = 0, \ldots, n-2$. This follows by induction from the fact that $H_n$ is the kernel of
  $t^*\otimes\id_{n-2} : H_1\otimes H_{n-1} \to H_{n-2}$, according to the fusion rules. As a consequence, an element $X\in B(H_1^{\otimes n})$ arises from an
  element of $B(H_n)$ {\bf iff} we have $(\id_i\otimes t^*\otimes\id_{n-i-2})X = 0$ and $X(\id_i\otimes t\otimes\id_{n-i-2}) = 0$ for all $i$. Graphically this
  means we have $X\in B(H_n)$ {\bf iff} we obtain $0$ by applying to $X$ any planar tangle which connects two consecutive points on the lower or upper edge of
  the internal box corresponding to $X$:
  \begin{displaymath}
\setlength{\unitlength}{4144sp}%
\begingroup\makeatletter\ifx\SetFigFont\undefined%
\gdef\SetFigFont#1#2#3#4#5{%
  \reset@font\fontsize{#1}{#2pt}%
  \fontfamily{#3}\fontseries{#4}\fontshape{#5}%
  \selectfont}%
\fi\endgroup%
\begin{picture}(2094,429)(2464,-73)
{\color[rgb]{0,0,0}\thinlines
\put(2701,-16){\circle*{10}}
}%
{\color[rgb]{0,0,0}\put(2656,-16){\circle*{10}}
}%
{\color[rgb]{0,0,0}\put(2611,-16){\circle*{10}}
}%
{\color[rgb]{0,0,0}\put(3061,-16){\circle*{10}}
}%
{\color[rgb]{0,0,0}\put(3016,-16){\circle*{10}}
}%
{\color[rgb]{0,0,0}\put(2971,-16){\circle*{10}}
}%
{\color[rgb]{0,0,0}\put(2881,299){\circle*{10}}
}%
{\color[rgb]{0,0,0}\put(2836,299){\circle*{10}}
}%
{\color[rgb]{0,0,0}\put(2791,299){\circle*{10}}
}%
{\color[rgb]{0,0,0}\put(2836, 29){\oval(180,180)[bl]}
\put(2836, 29){\oval(180,180)[br]}
}%
{\color[rgb]{0,0,0}\put(2566,-61){\line( 0, 1){ 90}}
}%
{\color[rgb]{0,0,0}\put(3106,-61){\line( 0, 1){ 90}}
}%
{\color[rgb]{0,0,0}\put(2566,254){\line( 0, 1){ 90}}
}%
{\color[rgb]{0,0,0}\put(2656,254){\line( 0, 1){ 90}}
}%
{\color[rgb]{0,0,0}\put(3016,254){\line( 0, 1){ 90}}
}%
{\color[rgb]{0,0,0}\put(3106,254){\line( 0, 1){ 90}}
}%
{\color[rgb]{0,0,0}\put(2476, 29){\framebox(720,225){}}
}%
{\color[rgb]{0,0,0}\put(4231,-16){\circle*{10}}
}%
{\color[rgb]{0,0,0}\put(4186,-16){\circle*{10}}
}%
{\color[rgb]{0,0,0}\put(4141,-16){\circle*{10}}
}%
{\color[rgb]{0,0,0}\put(4051,299){\circle*{10}}
}%
{\color[rgb]{0,0,0}\put(4006,299){\circle*{10}}
}%
{\color[rgb]{0,0,0}\put(3961,299){\circle*{10}}
}%
{\color[rgb]{0,0,0}\put(4411,299){\circle*{10}}
}%
{\color[rgb]{0,0,0}\put(4366,299){\circle*{10}}
}%
{\color[rgb]{0,0,0}\put(4321,299){\circle*{10}}
}%
{\color[rgb]{0,0,0}\put(4186,254){\oval(180,180)[tr]}
\put(4186,254){\oval(180,180)[tl]}
}%
{\color[rgb]{0,0,0}\put(3916,-61){\line( 0, 1){ 90}}
}%
{\color[rgb]{0,0,0}\put(4456,-61){\line( 0, 1){ 90}}
}%
{\color[rgb]{0,0,0}\put(4366,-61){\line( 0, 1){ 90}}
}%
{\color[rgb]{0,0,0}\put(4006,-61){\line( 0, 1){ 90}}
}%
{\color[rgb]{0,0,0}\put(3826, 29){\framebox(720,225){}}
}%
{\color[rgb]{0,0,0}\put(3916,254){\line( 0, 1){ 90}}
}%
{\color[rgb]{0,0,0}\put(4456,254){\line( 0, 1){ 90}}
}%
\put(2836, 74){\makebox(0,0)[b]{\smash{{\SetFigFont{11}{13.2}{\rmdefault}{\mddefault}{\updefault}{\color[rgb]{0,0,0}$X$}%
}}}}
\put(4186, 74){\makebox(0,0)[b]{\smash{{\SetFigFont{11}{13.2}{\rmdefault}{\mddefault}{\updefault}{\color[rgb]{0,0,0}$X$}%
}}}}
\put(3511, 74){\makebox(0,0)[b]{\smash{{\SetFigFont{12}{14.4}{\rmdefault}{\mddefault}{\updefault}{\color[rgb]{0,0,0}$= 0 =$}%
}}}}
\end{picture}%
  \end{displaymath}
  Since $(\Tr_1\otimes\id)(X) \in B(H_1^{\otimes n-1})$ (resp $(\id\otimes\Tr_1)(X)$) is obtained from $X$ by applying the planar tangle connecting the upper
  left and lower left (resp. upper right and lower right) points of the internal box, we conclude that $X\in B(H_1^{\otimes n})$ belongs to $B(H_n)^\ccirc$ {\bf
    iff} we obtain $0$ by applying to $X$ any planar tangle which connects any two consecutive points of the internal box corresponding to $X$. Diagrammatically
  this is represented by the additional constraints:
  \begin{displaymath}
\setlength{\unitlength}{4144sp}%
\begingroup\makeatletter\ifx\SetFigFont\undefined%
\gdef\SetFigFont#1#2#3#4#5{%
  \reset@font\fontsize{#1}{#2pt}%
  \fontfamily{#3}\fontseries{#4}\fontshape{#5}%
  \selectfont}%
\fi\endgroup%
\begin{picture}(2274,429)(2374,-73)
{\color[rgb]{0,0,0}\thinlines
\put(2881,299){\circle*{10}}
}%
{\color[rgb]{0,0,0}\put(2836,299){\circle*{10}}
}%
{\color[rgb]{0,0,0}\put(2791,299){\circle*{10}}
}%
{\color[rgb]{0,0,0}\put(2881,-16){\circle*{10}}
}%
{\color[rgb]{0,0,0}\put(2836,-16){\circle*{10}}
}%
{\color[rgb]{0,0,0}\put(2791,-16){\circle*{10}}
}%
{\color[rgb]{0,0,0}\put(2476,254){\oval(180,180)[tr]}
\put(2476,254){\oval(180,180)[tl]}
}%
{\color[rgb]{0,0,0}\put(2476, 29){\oval(180,180)[bl]}
\put(2476, 29){\oval(180,180)[br]}
}%
{\color[rgb]{0,0,0}\put(3106,-61){\line( 0, 1){ 90}}
}%
{\color[rgb]{0,0,0}\put(2656,254){\line( 0, 1){ 90}}
}%
{\color[rgb]{0,0,0}\put(3016,254){\line( 0, 1){ 90}}
}%
{\color[rgb]{0,0,0}\put(3106,254){\line( 0, 1){ 90}}
}%
{\color[rgb]{0,0,0}\put(2476, 29){\framebox(720,225){}}
}%
{\color[rgb]{0,0,0}\put(2656,-61){\line( 0, 1){ 90}}
}%
{\color[rgb]{0,0,0}\put(3016,-61){\line( 0, 1){ 90}}
}%
{\color[rgb]{0,0,0}\put(2386, 29){\line( 0, 1){225}}
}%
{\color[rgb]{0,0,0}\put(4231,-16){\circle*{10}}
}%
{\color[rgb]{0,0,0}\put(4186,-16){\circle*{10}}
}%
{\color[rgb]{0,0,0}\put(4141,-16){\circle*{10}}
}%
{\color[rgb]{0,0,0}\put(4231,299){\circle*{10}}
}%
{\color[rgb]{0,0,0}\put(4186,299){\circle*{10}}
}%
{\color[rgb]{0,0,0}\put(4141,299){\circle*{10}}
}%
{\color[rgb]{0,0,0}\put(4546,254){\oval(180,180)[tr]}
\put(4546,254){\oval(180,180)[tl]}
}%
{\color[rgb]{0,0,0}\put(4546, 29){\oval(180,180)[bl]}
\put(4546, 29){\oval(180,180)[br]}
}%
{\color[rgb]{0,0,0}\put(3916,-61){\line( 0, 1){ 90}}
}%
{\color[rgb]{0,0,0}\put(4366,-61){\line( 0, 1){ 90}}
}%
{\color[rgb]{0,0,0}\put(4006,-61){\line( 0, 1){ 90}}
}%
{\color[rgb]{0,0,0}\put(3826, 29){\framebox(720,225){}}
}%
{\color[rgb]{0,0,0}\put(3916,254){\line( 0, 1){ 90}}
}%
{\color[rgb]{0,0,0}\put(4006,254){\line( 0, 1){ 90}}
}%
{\color[rgb]{0,0,0}\put(4366,254){\line( 0, 1){ 90}}
}%
{\color[rgb]{0,0,0}\put(4636, 29){\line( 0, 1){225}}
}%
\put(2836, 74){\makebox(0,0)[b]{\smash{{\SetFigFont{11}{13.2}{\rmdefault}{\mddefault}{\updefault}{\color[rgb]{0,0,0}$X$}%
}}}}
\put(4186, 74){\makebox(0,0)[b]{\smash{{\SetFigFont{11}{13.2}{\rmdefault}{\mddefault}{\updefault}{\color[rgb]{0,0,0}$X$}%
}}}}
\put(3511, 74){\makebox(0,0)[b]{\smash{{\SetFigFont{12}{14.4}{\rmdefault}{\mddefault}{\updefault}{\color[rgb]{0,0,0}$= 0 =$}%
}}}}
\end{picture}%
  \end{displaymath}
\end{remark}

\smallskip

Now we compute the dimension of $B(H_n)^\ccirc$, see Proposition~\ref{prp_dim_ccirc}. This will be useful to prove that the families $(x_{i,j})_{i,j}$ are
linearly independent at Corollary~\ref{crl_independent}. The latter also follows from the stronger results of Section~\ref{sec_gram}, but there we will have to
assume that $N$ is large enough and the proofs are much more involved. Note however that the proof below is not optimal either, in the sense that the underlying
technical result established at Lemma~\ref{lem_dim_inequality} does not hold if $q+q^{-1}\in\itv]2,{2.41}[$, which can occur for the non unimodular groups
$\FO_Q$. We believe that Lemma~\ref{lem_contract_ends} and Proposition~\ref{prp_dim_ccirc} hold true for any group $\FO_Q$ with $q+q^{-1}>2$, i.e.\ excluding the
duals of $SU(2)$ and $SU_{-1}(2)$.

In the statement below we use the leg numbering notation: $t_{1,n}^* = \sum_i e_i^*\otimes\id_1\otimes\cdots\otimes\id_1\otimes e_i^*$, for $n\geq 2$. This
application maps $H_n$ to $H_{n-2}$, as can be seen when $n\geq 4$ by checking the condition $(\id_i\otimes t^*\otimes\id_{n-i-4})t_{1,n}^*(\zeta) = 0$, for any
$\zeta\in H_n$.

\begin{lemma}\label{lem_contract_ends}
  Assume $N\geq 3$. For $n\geq 3$ the map $t_{1,n}^* : H_n \to H_{n-2}$ is surjective.
\end{lemma}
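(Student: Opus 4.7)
The plan is to prove surjectivity by passing to the adjoint: since both spaces are finite-dimensional, $S := t_{1,n}^{*}|_{H_n}$ surjects onto $H_{n-2}$ if and only if its adjoint $S^{*}\colon H_{n-2}\to H_n$, given explicitly by $S^{*}(w) = P_n(t_{1,n}(w)) = P_n\bigl(\sum_{j}e_j\otimes w\otimes e_j\bigr)$, is injective. Before studying $S^{*}$ I would first verify the implicit containment $t_{1,n}^{*}(H_n)\subset H_{n-2}$: for $v\in H_n$ and $0\le i\le n-4$, the adjacent contraction $(\id_{i}\otimes t^{*}\otimes\id_{n-i-4})(t_{1,n}^{*}(v))$ unfolds in coordinates so that the inner $t^{*}$ acts on positions $i+2,i+3$ of $v$, which vanishes because $v\in H_n$ is annihilated by every adjacent contraction.

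Injectivity of $S^{*}$ is equivalent to invertibility of the positive self-adjoint operator $SS^{*} = t_{1,n}^{*}P_n t_{1,n}|_{H_{n-2}}\colon H_{n-2}\to H_{n-2}$, which I would compute by expanding $P_n$ via the bilateral Wenzl recursion of Lemma~\ref{lem_bilateral_recursion}. The key simplifications are $(\id_1\otimes P_{n-2}\otimes\id_1)t_{1,n}(w) = t_{1,n}(w)$ for $w\in H_{n-2}$, which drops the right-hand flanking projection in every term, and $t_{1,n}^{*}\circ(\id_1\otimes P_{n-2}\otimes\id_1) = P_{n-2}\circ t_{1,n}^{*}$, which pulls a $P_{n-2}$ outside on the left. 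A direct evaluation of the six resulting terms gives: the leading term contributes $Nw$ (from $t_{1,n}^{*}t_{1,n}=N\,\id$); the two $tt^{*}\otimes\id^{n-2}$- and $\id^{n-2}\otimes tt^{*}$-type terms each contribute $w$; the two $t\otimes\id^{n-2}\otimes t^{*}$-type terms contribute $P_{n-2}\rho_{\pm 2}(w)$, where $\rho_{\pm 2}$ denotes the cyclic shift of tensor factors of $H_1^{\otimes n-2}$ by $\pm 2$; and the doubly-capped term contributes $P_{n-2}(t_{1,n-2}\,t_{1,n-2}^{*}(w))$. Combining with the coefficients from Lemma~\ref{lem_bilateral_recursion},
\begin{equation*}
SS^{*}w = \Bigl(N-\tfrac{2d_{n-2}}{d_{n-1}}\Bigr)w + \tfrac{(-1)^{n-1}}{d_{n-1}}\bigl(P_{n-2}\rho_{+2}(w)+P_{n-2}\rho_{-2}(w)\bigr) + \tfrac{d_1+d_{n-3}d_{n-2}}{d_{n-1}d_{n-2}}\,P_{n-2}\,t_{1,n-2}\,t_{1,n-2}^{*}(w).
\end{equation*}

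The main obstacle is then to show this operator is invertible for every $N\ge 3$. The strategy is to observe that the dominant coefficient $N-2d_{n-2}/d_{n-1}$ is bounded below by $N-2\ge 1$ and to bound the three correction terms in operator norm against this. The rotation correction has prefactor $1/d_{n-1}$ and $\lVert P_{n-2}\rho_{\pm 2}\rVert\le 1$, so it is harmless; the doubly-capped correction is more delicate, but its prefactor is of order $d_{n-3}/d_{n-1}$ (asymptotically $q^{2}$), and the crude bound $\lVert t_{1,n-2}\rVert,\lVert t_{1,n-2}^{*}\rVert\le\sqrt{N}$ yields a subdominant contribution, with finer control available by induction on $n$ if needed. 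The low-dimensional base cases are especially clean: for $n=3$, $\rho_{\pm 2}$ acts trivially on $H_1^{\otimes 1}$ and the $F$-term is absent, giving $SS^{*}=(N-1)(N+2)/(N+1)\cdot\id$; for $n=4$, $\rho_{\pm 2}$ is again the identity on $H_1^{\otimes 2}$ and the $F$-term vanishes because $t_{1,2}^{*}$ kills $H_2=\ker t^{*}$, giving $SS^{*}=N(N-2)(N+2)/(N^{2}-2)\cdot\id$, both manifestly positive for $N\ge 3$.
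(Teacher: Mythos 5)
Your approach is essentially the same as the paper's: pass to $SS^{*} = t_{1,n}^{*}P_n t_{1,n}|_{H_{n-2}}$, expand $P_n$ via the bilateral Wenzl recursion (Lemma~\ref{lem_bilateral_recursion}), and verify positive invertibility. You arrive at the same formula for $t_{1,n}^{*}P_n t_{1,n}$ as the paper (in fact with the correct sign $(-1)^{n-1}$ from Lemma~\ref{lem_bilateral_recursion}, where the displayed proof of Lemma~\ref{lem_contract_ends} has a minor sign typo $(-1)^{n-2}$); your $\rho_{\pm 2}$ are the paper's $C_{n-2}^{\pm 2}$. Your preliminary verification that $t_{1,n}^{*}(H_n)\subset H_{n-2}$ by checking that adjacent contractions of $t_{1,n}^{*}(v)$ hit interior positions of $v\in H_n$ is a welcome addition that the paper takes for granted, and the closed-form evaluation of $SS^{*}$ for $n=3,4$ is correct.

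The gap is in the invertibility argument for $n\geq 5$. You need the strict inequality
\begin{displaymath}
d_1 - \frac{2d_{n-2}}{d_{n-1}} \;>\; \frac{2}{d_{n-1}} + d_1\,\frac{d_1 + d_{n-3}d_{n-2}}{d_{n-1}d_{n-2}},
\end{displaymath}
uniformly over $n\geq 3$ and $N\geq 3$, to conclude that $SS^{*}\geq\epsilon\,\id$ for some $\epsilon>0$. Your proposal replaces this with the observations that the dominant coefficient is $\geq N-2\geq 1$ and that the doubly-capped prefactor is ``of order $d_{n-3}/d_{n-1}\sim q^2$, subdominant, with finer control available by induction on $n$ if needed.'' This is an asymptotic heuristic, not a proof: the right-hand side is of the same order of magnitude as $N-2$ at $N=3$ (numerically about $0.48$ at $n=5$ against the bound $1$), and the deferral ``if needed'' is exactly where the work lies. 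The paper devotes Lemma~\ref{lem_dim_inequality} to establishing this inequality rigorously (by an induction on $n$ via the quantities $e_n = d_{n-1}-d_{n-2}$ and $f_n = d_{n-5}+1+d_1$), and also notes there that the argument is delicate enough to fail for $q+q^{-1}\in\itv]2,{2.41}[$, which underlines that a genuine quantitative argument is required. As it stands your proposal asserts the conclusion of Lemma~\ref{lem_dim_inequality} without proving it, so the proof is incomplete for $n\geq 5$.
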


\begin{proof}
  We apply $t_{1,n}^* \cdot\, t_{1,n}$ to the bilateral Wenzl recursion formula from Lemma~\ref{lem_bilateral_recursion}. Using the conjugate equations we have
  in $B(H_1^{\otimes (n-2)})$:
  \begin{align*}
    & t_{1,n}^*(t t^*\otimes \id_{1}^{\otimes (n-2)}) t_{1,n} = \id_1^{\otimes (n-2)} = t_{1,n}^*(\id_{1}^{\otimes (n-2)}\otimes t t^*) t_{1,n},  \\
    & t_{1,n}^*(t\otimes\id_1^{\otimes (n-2)}\otimes t^*) t_{1,n} = C_{n-2}^{-2}, \\
    & t_{1,n}^*(t^*\otimes\id_1^{\otimes (n-2)}\otimes t) t_{1,n} = C_{n-2}^2, \\
    & t_{1,n}^*(t t^*\otimes\id_1^{\otimes (n-4)}\otimes t t^*)  t_{1,n} = t_{1,n-2}t_{1,n-2}^* \qquad\text{($n\geq 4$)},
  \end{align*}
  where $C_{n-2} : \xi\otimes\zeta \mapsto \zeta\otimes\xi$ for $\xi\in H_1$, $\zeta\in H_1^{\otimes (n-3)}$. Thus for $n\geq 4$ we obtain
  \begin{align*}
    t_{1,n}^* P_n t_{1,n} &= \ts \left(d_1 - \frac{2d_{n-2}}{d_{n-1}}\right) P_{n-2} + \\
                          & \qquad + \frac{(-1)^{n-1}}{d_{n-1}} P_{n-2}(C_{n-2}^2 + C_{n-2}^{-2}) P_{n-2}
                            + \frac{d_1+d_{n-3}d_{n-2}}{d_{n-1}d_{n-2}} P_{n-2}t_{1,n-2}t_{1,n-2}^*P_{n-2}.
  \end{align*}
  This formula also holds for $n=3$, without the last term. Observe moreover that $\|P_{n-2}C_{n-2}^{\pm 2}P_{n-2}\|$ $\leq 1$ and
  $\|P_{n-2}t_{1,n-2}t_{1,n-2}^*P_{n-2}\| \leq d_1$ by composition. Now, the inequality established in the next Lemma shows that
  $t_{1,n}^* P_n t_{1,n}\geq \epsilon P_{n-2}$ for some $\epsilon >0$. As a result $t_{1,n}^* P_n t_{1,n} \in B(H_{n-2})$ is invertible and the result follows.
\end{proof}

\begin{lemma}\label{lem_dim_inequality}
  Still assuming $N\geq 3$, we have for any $n\geq 3$:
  \begin{displaymath}
    d_1 - \frac{2d_{n-2}}{d_{n-1}} > \frac{2}{d_{n-1}} + d_1 \frac{d_1+d_{n-3}d_{n-2}}{d_{n-1}d_{n-2}}.
  \end{displaymath}
\end{lemma}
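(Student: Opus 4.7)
My plan is to clear denominators by multiplying through by $d_{n-1} d_{n-2} > 0$, which transforms the desired inequality into
\[
d_1 d_{n-1} d_{n-2} - 2 d_{n-2}^2 - 2 d_{n-2} - d_1^2 - d_1 d_{n-3} d_{n-2} > 0, \qquad (\star)
\]
and then to simplify $(\star)$ using two standard $q$-number identities: the fusion relation $d_1 d_k = d_{k+1} + d_{k-1}$ and the ``quantum Pythagoras'' identity $d_{k+1} d_{k-1} = d_k^2 - 1$, both immediate from~\eqref{eq_dimensions}. Applying these to $d_1 d_{n-1}$ and then $d_n d_{n-2}$, the first term of $(\star)$ rewrites as $d_{n-1}^2 + d_{n-2}^2 - 1$.

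For $n \geq 4$ I will apply the same two identities to $d_1 d_{n-3}$ and $d_{n-4} d_{n-2}$ to rewrite the last term as $d_{n-2}^2 + d_{n-3}^2 - 1$. After cancellation, $(\star)$ reduces to
\[
d_{n-1}^2 - d_{n-3}^2 > 2 d_{n-2}^2 + 2 d_{n-2} + d_1^2.
\]
I will then exploit the Chebyshev recursion $d_{n-1} + d_{n-3} = N d_{n-2}$ (equivalent to $d_1 d_{n-2} = d_{n-1} + d_{n-3}$) to factor the left side as $N d_{n-2}(d_{n-1} - d_{n-3}) = N^2 d_{n-2}^2 - 2 N d_{n-2} d_{n-3}$, reducing the task to
\[
d_{n-2}\bigl[(N^2 - 2) d_{n-2} - 2 N d_{n-3} - 2\bigr] > N^2.
\]
A short induction on $k$ using $d_k = N d_{k-1} - d_{k-2}$ shows that $d_k/d_{k-1} \geq N-1$ for all $k \geq 1$, hence $d_{n-3} \leq d_{n-2}/(N-1)$. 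Substituting, the bracket is at least $\bigl(N^2 - 2 - 2N/(N-1)\bigr) d_{n-2} - 2$, whose coefficient simplifies to $N^2 - 4 - 2/(N-1)$ and is at least $4$ for $N \geq 3$. Since $d_{n-2} \geq d_2 = N^2 - 1 \geq 8$ when $n \geq 4$, the left side is then at least $d_{n-2}(4 d_{n-2} - 2) \geq 240$, comfortably larger than $N^2$, and the margin only grows with $N$.

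The case $n = 3$ has to be treated separately, since $d_{n-4}$ is undefined and $d_{n-3} = d_0 = 1$. Substituting directly into $(\star)$ produces $d_1^2 d_2 - 4 d_1^2 - 2 d_1 = N(N^3 - 5N - 2)$, which is positive for $N \geq 3$ because $N^3 - 5N - 2 \geq 10$. The only genuine obstacle is organizational: choosing the right cascade of $q$-identities so that the mess of products of dimensions collapses to a clean inequality, and being careful about the boundary case $n = 3$ where the standard reduction breaks down. Once this is done, positivity reduces to an elementary polynomial estimate in $N \geq 3$.
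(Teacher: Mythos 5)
Your proof is correct, and it takes a genuinely different route from the paper's. The paper introduces two auxiliary sequences $e_n = d_{n-1} - d_{n-2}$ and $f_n = d_{n-5} + 1 + d_1$, checks $e_3 > f_3$, shows $e_{n+1} - e_n \geq f_{n+1} - f_n$ via the Chebyshev recursion, and then multiplies the resulting inequality by $d_1$ and manipulates it through a further chain of estimates to reach the claim. That argument is compact but somewhat opaque, since $e_n > f_n$ is not visibly equivalent to the target inequality. Your approach instead clears denominators, collapses the cubic terms $d_1 d_{n-1}d_{n-2}$ and $d_1 d_{n-3}d_{n-2}$ exactly via the fusion identity $d_1 d_k = d_{k+1} + d_{k-1}$ and the ``quantum Pythagoras'' $d_{k+1}d_{k-1} = d_k^2 - 1$, factors the resulting difference of squares with the Chebyshev recursion, and finishes with a monotonicity bound $d_{n-3} \leq d_{n-2}/(N-1)$ plus a short polynomial check. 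Every step is either an exact identity or a one-line estimate, which makes the structure of the simplification easier to follow, and it tracks the original inequality transparently. Both proofs need $N\geq 3$ at the same place in spirit (yours through the bound $N^2 - 4 - 2/(N-1)\geq 4$ and the $n=3$ polynomial), consistent with the paper's remark that the lemma is not expected for $2 < q+q^{-1} < 2.41$. One very minor presentational remark: your concluding step ``$\geq 240$, comfortably larger than $N^2$'' only directly settles $N\leq 15$; a cleaner uniform finish is to note $d_{n-2}\geq N^2-1 \geq N$ and the coefficient is $\geq 4$, so the left side is at least $N(4N-2) > N^2$ for all $N\geq 1$. That phrasing removes the appeal to ``the margin only grows with $N$.''
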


\begin{proof}
  Denote $e_n = d_{n-1} - d_{n-2}$, $f_n = d_{n-5}+1+d_1$, with the convention
  $d_k = 0$ if $k<0$. For $n=3$ we have $e_3 = N^2-N-1$, $f_3 = 1+N$ and since
  $N\geq 3 > 1+\sqrt 3$ we have $e_3> f_3$.

  On the other hand we have, using the identity $Nd_{n-1}=d_n+d_{n-2}$ valid for $n\in\ZZ^*$:
  \begin{align*}
    e_{n+1}-e_n = d_n - 2d_{n-1}+d_{n-2} = (N-2)d_{n-1}
    \geq d_{n-1}\geq d_{n-4}-d_{n-5} = f_{n+1}-f_n.
  \end{align*}
  An easy induction then shows that we have $e_n > f_n$ for every $n\geq 3$.

  Multiplying this inequality by $d_1$ we find
  \begin{align*}
    & d_1 d_{n-1}- d_1 d_{n-2} > d_1 d_{n-5}+d_1+d_1^2 \\
    \Longleftrightarrow\quad & d_1 d_{n-1} - (d_1-1)d_{n-2} > d_1d_{n-5}+d_{n-2}+d_1+d_1^2 \\
    \Longrightarrow\quad & d_1 d_{n-1} - 2 d_{n-2} > d_1d_{n-3} + 2 + {d_1^2}/{d_{n-2}},
  \end{align*}
  using the facts $d_1 \geq 3$, $d_1 d_{n-5} + d_{n-2}\geq d_{n-4}+d_{n-2} \geq d_1d_{n-3} - 1$, and $d_{n-2} \geq 1$. Note that the inequality
  $d_1 d_{n-5} \geq d_{n-4}$, resulting from the fusion rules, does not hold for $n=4$, but one can check directly that in this case
  $d_1 d_{n-5} + d_{n-2} = d_1d_{n-3} - 1$.
\end{proof}

\begin{proposition}
  \label{prp_dim_ccirc}
  Still assume $N\geq 3$. For $n\geq 2$ we have $\dim B(H_n)^\ccirc = \dim p_n H^\ccirc = d_{2n} - d_{2n-2}$. For $n=1$ we have
  $\dim B(H_1)^\ccirc = \dim p_1 H^\ccirc = d_2$. 
\end{proposition}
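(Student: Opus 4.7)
The case $n=1$ is immediate: $B(H_1)^\ccirc=\{X\in B(H_1):\Tr(X)=0\}$ has dimension $N^2-1=d_2$, and the statement for $p_1 H^\ccirc$ follows from the coefficient isomorphism $X\mapsto u_1(X)$.

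For $n\geq 2$ and $N\geq 3$, my plan is a Hilbert--Schmidt inclusion--exclusion. I introduce the partial-trace maps
\[
  \tau_L,\ \tau_R:B(H_n)\longrightarrow B(H_{n-1}),\qquad \tau_L(X)=(\Tr_1\otimes\id)(X),\quad \tau_R(X)=(\id\otimes\Tr_1)(X).
\]
These land in $B(H_{n-1})$ because $H_n\subset H_1\otimes H_{n-1}$ and $H_n\subset H_{n-1}\otimes H_1$ give $P_n\leq\id_1\otimes P_{n-1}$ and $P_n\leq P_{n-1}\otimes\id_1$; by definition $B(H_n)^\ccirc=\ker\tau_L\cap\ker\tau_R$. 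A direct calculation gives the Hilbert--Schmidt adjoints $\tau_L^*(Y)=P_n(\id_1\otimes Y)P_n$ and $\tau_R^*(Z)=P_n(Z\otimes\id_1)P_n$, so $(B(H_n)^\ccirc)^\perp=\mathrm{im}\,\tau_L^*+\mathrm{im}\,\tau_R^*$ and inclusion--exclusion yields
\[
  \dim B(H_n)^\ccirc = d_n^2-\dim\mathrm{im}\,\tau_L^*-\dim\mathrm{im}\,\tau_R^*+\dim(\mathrm{im}\,\tau_L^*\cap\mathrm{im}\,\tau_R^*).
\]

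It then suffices to show: (a) $\tau_L$ and $\tau_R$ are surjective, so both $\mathrm{im}\,\tau_L^*$ and $\mathrm{im}\,\tau_R^*$ have dimension $d_{n-1}^2$; and (b) the intersection $\mathrm{im}\,\tau_L^*\cap\mathrm{im}\,\tau_R^*$ coincides with the image of the ``double-sandwich'' map $\sigma:B(H_{n-2})\to B(H_n)$, $\sigma(W)=P_n(\id_1\otimes W\otimes\id_1)P_n$, which is moreover injective and hence has image of dimension $d_{n-2}^2$. The inclusion $\mathrm{im}\,\sigma\subseteq\mathrm{im}\,\tau_L^*\cap\mathrm{im}\,\tau_R^*$ is immediate from the identity $\sigma(W)=\tau_L^*(P_{n-1}(W\otimes\id_1)P_{n-1})=\tau_R^*(P_{n-1}(\id_1\otimes W)P_{n-1})$, and the HS-adjoint identity $\sigma^*=\tau_L\tau_R=\tau_R\tau_L$ shows that injectivity of $\sigma$ follows from (a) applied at two consecutive levels. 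Granting (a) and (b), the telescoping identity $d_k^2-d_{k-1}^2=d_{2k}$ --- a consequence of the Clebsch--Gordan decomposition $B(H_k)\cong\bigoplus_{j=0}^k H_{2j}$ --- gives
\[
  \dim B(H_n)^\ccirc = d_n^2-2d_{n-1}^2+d_{n-2}^2 = d_{2n}-d_{2n-2}.
\]

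The substantive work is thus (a) and the reverse inclusion in (b), and this is where Lemma~\ref{lem_contract_ends} enters. The surjectivity of $t_{1,n}^*:H_n\to H_{n-2}$ (equivalently, the invertibility of $t_{1,n}^*P_nt_{1,n}\in B(H_{n-2})$) provides the explicit intertwiner needed to produce pre-images under $\tau_L,\tau_R$, and, given $(Y,Z)\in B(H_{n-1})^2$ with $\tau_L^*(Y)=\tau_R^*(Z)$, to reconstruct a common $W\in B(H_{n-2})$ witnessing that this common element lies in $\mathrm{im}\,\sigma$. I expect this reconstruction --- the reverse inclusion in (b) --- to be the main obstacle. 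For $N=2$, Lemma~\ref{lem_contract_ends} fails and a separate analysis is required: the inclusion--exclusion degenerates (essentially because $\mathrm{im}\,\tau_L^*=\mathrm{im}\,\tau_R^*$) and produces $\dim B(H_n)^\ccirc=d_n^2-d_{n-1}^2=d_{2n}$.
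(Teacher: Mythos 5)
Your framework is sound --- the inclusion--exclusion identity, the HS-adjoint formulas $\tau_L^*(Y)=P_n(\id_1\otimes Y)P_n$, the relation $\sigma^*=\tau_L\tau_R$, and the arithmetic $d_n^2-2d_{n-1}^2+d_{n-2}^2=d_{2n}-d_{2n-2}$ are all correct. But the two claims you defer, (a) surjectivity of $\tau_L,\tau_R$ and (b) the reverse inclusion $\mathrm{im}\,\tau_L^*\cap\mathrm{im}\,\tau_R^*\subseteq\mathrm{im}\,\sigma$, are genuine gaps, and neither follows from Lemma~\ref{lem_contract_ends} in the way you suggest: that lemma concerns contracting the \emph{outer} legs of a single $H_n$, whereas (a) concerns tracing out \emph{one} leg of $B(H_n)$ onto $B(H_{n-1})$, and (b) is a commuting-square-type statement about two partial traces. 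Claim (a) can in fact be established by a separate computation with the one-sided Wenzl recursion $P_n=(\id_1\otimes P_{n-1})-\tfrac{d_{n-2}}{d_{n-1}}(\id_1\otimes P_{n-1})(tt^*\otimes\id_{n-2})(\id_1\otimes P_{n-1})$, which yields $\tau_L\tau_L^*(Y)=(N-2\tfrac{d_{n-2}}{d_{n-1}})Y+(\tfrac{d_{n-2}}{d_{n-1}})^2\tau_L^*\tau_L(Y)\geq(N-2q)Y$, hence $\tau_L\tau_L^*$ is invertible for $N\geq 3$; but (b) seems to require a further argument you do not give, and indeed without the conclusion of the Proposition already in hand I do not see how to establish it directly.

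The paper's proof avoids both issues by treating the two constraints \emph{asymmetrically}. Under the identification $B(H_n)\simeq H_n\otimes H_n$ via $X\mapsto(X\otimes\id)t_n$, the condition $(\id\otimes\Tr_1)(X)=0$ becomes the ``middle contraction'' $(\id_{n-1}\otimes t^*\otimes\id_{n-1})(x)=0$, whose kernel is exactly $H_{2n}$ --- no rank computation needed, just the standard Clebsch--Gordan decomposition of $H_n\otimes H_n$. The remaining condition $(\Tr_1\otimes\id)(X)=0$ then becomes $t_{1,2n}^*(x)=0$ on $H_{2n}$, and Lemma~\ref{lem_contract_ends} (applied with index $2n\geq 4$) says precisely that $t_{1,2n}^*\colon H_{2n}\to H_{2n-2}$ is surjective, so the rank--nullity theorem gives $\dim=d_{2n}-d_{2n-2}$ at once. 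This is why the lemma is tailored to contractions of the \emph{outer} legs: after imposing one of the two conditions you are sitting inside a single irreducible, and then the other condition is exactly the one the lemma handles. I would suggest switching to this route; your inclusion--exclusion is salvageable, but it would require you to prove (a) for all levels and then a separate argument for (b), rather than the single application of Lemma~\ref{lem_contract_ends} the paper uses.
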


\begin{proof}
  Recall the identification $B(H_n) \simeq H_n\otimes H_n$ via $X \mapsto x = (X\otimes\id) t_n$. In this identification the condition $(\id\otimes\Tr_1)(X) = 0$
  reads $(\id_{n-1}\otimes t^*\otimes\id_{n-1})(x) = 0$ and the corresponding kernel is $H_{2n} \subset H_n\otimes H_n$. This holds as well if $N=2$. Then the
  condition $(\Tr_1\otimes\id)(X) = 0$ reads $t_{1,2n}^*(x) = 0$, so that the result follows from the rank theorem and Lemma~\ref{lem_contract_ends}. For $n=1$
  both conditions coincide and we have $B(H_1)^\ccirc = B(H_1)^\circ \simeq H_2$. 
\end{proof}

On the other hand in the case $N=2$ one can check that $t_{1,2n}^*$ vanishes on $H_{2n}$ for all $n$, and thus $\dim B(H_n)^\ccirc = \dim p_n H^\ccirc = d_{2n}$
for all $n \geq 1$.

\bigskip

Recall then the ``rotation operators'' $\rho : B(H_k) \to B(H_k)$ already considered in \cite{FreslonVergnioux} and defined as follows:
$\rho(X) = (P_k\otimes t^*)(\id_1\otimes X\otimes\id_1)(t\otimes P_k)$. It follows from \cite[Lemma~3.1]{FreslonVergnioux} that $\rho$ stabilizes the subspace
$B(H_n)^\circ$ and contracts the Hilbert-Schmidt norm. On $B(H_n)^\ccirc$ it behaves even better: as the next lemma shows, it is a finite order unitary --- in
particular, it is diagonalizable.

\begin{lemma}
  \label{lem_rho}
  The map $\rho$ is a bijection from $B(H_n)^\ccirc$ to itself. Moreover we have $\rho^{2n} = \id$ and $\rho^* = \rho^{-1}$ on $B(H_n)^\ccirc$.
\end{lemma}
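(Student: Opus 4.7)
The strategy is to realize $\rho$ on $B(H_n)^\ccirc$, via an isometric embedding into $H_1^{\otimes 2n}$, as the restriction of a cyclic shift; the stated properties then follow from those of the shift.

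Consider $\phi: B(H_n)\to H_1^{\otimes 2n}$, $\phi(X)=(X\otimes\id_n)t_n$. The identity $t_n^*(X^*Y\otimes\id_n)t_n = \Tr_n(X^*Y)$ recalled in Section~\ref{sec_prelim} shows that $\phi$ is isometric for the Hilbert-Schmidt structure, onto $H_n\otimes H_n$. Using the relations $t_n = (P_n\otimes\id_n)t_1^n = (\id_n\otimes P_n)t_1^n$ from the preliminaries, one also has $\phi(X) = (X\otimes\id_n)t_1^n$. A direct computation in the basis $t_1^n = \sum_I e_I\otimes e_{I^{\text{rev}}}$ shows that capping $\phi(X)$ at positions $(n,n+1)$ yields $\phi\bigl((\id\otimes\Tr_1)X\bigr)$, and capping at positions $(1,2n)$ yields $\phi\bigl((\Tr_1\otimes\id)X\bigr)$. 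Combined with the ``internal'' caps $(i,i+1)$ for $i\in\{1,\ldots,n-1,n+1,\ldots,2n-1\}$ defining $H_n\otimes H_n$, this characterizes $\phi(B(H_n)^\ccirc)$ as the subspace of $H_1^{\otimes 2n}$ annihilated by every cyclically consecutive cap.

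Let $\tau \in B(H_1^{\otimes 2n})$ be the cyclic shift $v_1 \otimes \cdots \otimes v_{2n} \mapsto v_{2n} \otimes v_1 \otimes \cdots \otimes v_{2n-1}$. The preceding characterization is manifestly $\tau$-invariant, $\tau$ is unitary, and $\tau^{2n} = \id$. Therefore $\tilde\rho := \phi^{-1} \tau \phi$ is a well-defined unitary bijection of $B(H_n)^\ccirc$ satisfying $\tilde\rho^{2n} = \id$ and $\tilde\rho^* = \tilde\rho^{-1}$. The lemma thus reduces to the identity
\begin{equation*}
  \phi(\rho(X)) = \tau(\phi(X)) \qquad \text{for all } X \in B(H_n)^\ccirc.
\end{equation*}

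The main obstacle is proving this identity. The approach is to compute $\rho(X)$ explicitly for $X \in B(H_n)^\ccirc$ by unfolding the definition and expanding the outer projections $P_n$ via Wenzl's recursion~\eqref{eq_wenzl_recursion} (or its bilateral version, Lemma~\ref{lem_bilateral_recursion}). The identity part of each expansion combines, via the conjugate equation $(t^* \otimes \id)(\id \otimes t) = \id$, to give matrix coefficients corresponding to the index shift $(j_1, \ldots, j_n, l_1, \ldots, l_n) \mapsto (j_2, \ldots, j_n, l_n, j_1, l_1, \ldots, l_{n-1})$, which on the $\phi$-side is precisely $\tau$. Each correction term involves a partial trace $(\id \otimes \Tr_1)X$ or $(\Tr_1 \otimes \id)X$, both vanishing on $B(H_n)^\ccirc$. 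The pattern is transparent at $n=2$, where $P_2 = \id - \tfrac{1}{N}tt^*$ and a direct computation yields $\rho(X)_{(c,d),(i,j)} = X_{(d,j),(c,i)}$ on $B(H_2)^\ccirc$, matching the cyclic index shift whose fourth power is the identity.
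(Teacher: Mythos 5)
Your approach is correct, and it is genuinely different from the paper's. The paper works directly with the operator $\rho$: it first shows that for $X\in B(H_n)^\ccirc$ the outer Jones--Wenzl projections in $\rho(X)=(P_n\otimes t^*)(\id_1\otimes X\otimes\id_1)(t\otimes P_n)$ can be dropped (the un-projected expression already lands in $\Hom(H_n,H_n)$ once one uses $X=P_nXP_n$ together with $(\Tr_1\otimes\id)X=(\id\otimes\Tr_1)X=0$), then verifies $\rho^{-1}$ and $\rho^*=\rho^{-1}$ by direct conjugate-equation manipulations, and finally proves $\rho^{2n}=\id$ via a pair of antilinear maps $j^n:\zeta\mapsto(\zeta^*\otimes\id)t^n$ satisfying $(\zeta\mid\rho^n(X)\xi)=(j^n\xi\mid Xj^n\zeta)$. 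You instead transfer the whole problem across the Hilbert--Schmidt isometry $\phi:B(H_n)\to H_n\otimes H_n\subset H_1^{\otimes 2n}$, characterize $\phi(B(H_n)^\ccirc)$ as the joint kernel of all cyclically consecutive caps (which is exactly the content of Remark~\ref{rk_ccirc} transported through $\phi$), and reduce all three claims to the single identity $\phi\circ\rho=\tau\circ\phi$ with $\tau$ the cyclic shift of order $2n$. This is conceptually cleaner: it exhibits the $\ZZ/2n$-symmetry at once rather than recovering the order by two separate computations. The one soft spot is that you only sketch the key identity $\phi(\rho(X))=\tau(\phi(X))$, proposing to derive it by expanding the outer $P_n$'s via Wenzl's recursion and arguing the correction terms vanish; this would work but is more laborious than needed, since one must then track how the Wenzl correction caps interact with the outer $t$, $t^*$. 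The shortest route to your identity is precisely the paper's first step --- drop the outer $P_n$'s, reducing $\rho(X)$ to $(\id_n\otimes t^*)(\id_1\otimes X\otimes\id_1)(t\otimes\id_n)$, after which the matrix-coefficient computation you display for $n=2$ generalizes verbatim, giving $\rho(X)_{\vec b,\vec a}=X_{(b_2,\dots,b_n,a_n),(b_1,a_1,\dots,a_{n-1})}$, which matches $\tau$ through $\phi$. With that step made explicit your proof is complete.
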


\begin{proof}
  We first note that for $X\in B(H_n)^\ccirc$ the element $Y = (\id_1\otimes\id_{n-1}\otimes t^*)(\id_1\otimes X\otimes\id_1) (t\otimes\id_{n-1}\otimes\id_1)$
  of $B(H_{n-1}\otimes H_1, H_1\otimes H_{n-1})$ is directly equal to $\rho(X)$. This is clear if $n=1$ since then $\id_1\otimes\id_{n-1} = \id_1 = P_1$. Assume
  $n\geq 2$. Since $t^*(\id\otimes A)t = \Tr_1(A)$ for any $A\in B(H_1)$ we have
  $(t^*\otimes\id_{n-2})Y = (\id\otimes t^*)[(\Tr_1\otimes\id)(X)\otimes\id_1)] = 0$, and similarly $Y(\id_{n-2}\otimes t) = 0$, so that using Remark~\ref{rk_ccirc} we have
  $Y = P_nYP_n = \rho(X)$. Then we compute $(\Tr_1\otimes\id)(Y)$ using again the morphism $t$. Thank to the conjugate equation we have
  \begin{align*}
    (\Tr_1\otimes\id)(Y) &=
                           (t^*\otimes\id_{n-1})(\id_1\otimes\id_1\otimes\id_{n-1}\otimes t^*)
                           (\id_1\otimes\id_1\otimes X\otimes\id_1) \\
                         & \hspace{4em} (\id_1\otimes t\otimes\id_{n-1}\otimes\id_1)
                           (t\otimes \id_{n-2}\otimes\id_1) \\
                         &= (\id_{n-1}\otimes t^*)(X\otimes\id_1)(t\otimes \id_{n-2}\otimes\id_1) = 0,
  \end{align*}
  since $X\in B(H_n)$. Similarly $(\id\otimes\Tr_1)(Y) = 0$ and this proves $\rho(B(H_n)^\ccirc) \subset B(H_n)^\ccirc$. The conjugate equation also implies
  that $(t^*\otimes\id_n)(\id_1\otimes Y\otimes\id_1)(\id_n\otimes t) = X$ so that $\rho$ is a bijection with
  $\rho^{-1}(X) = (t^*\otimes P_n)(\id_1\otimes X\otimes\id_1)(P_n\otimes t)$. This holds as well for $n=1$.

  Let us check that $\rho^{-1}$ is the adjoint of $\rho$ with respect to the Hilbert-Schmidt scalar product. Using twice the conjugate equation we have, for
  $X$, $Y\in B(H_n)^\ccirc$:
  \begin{align*}
    \Tr_n(\rho^{-1}(X)^* Y) &= (\Tr_1\otimes\Tr_{n-1}) [
                              (\id_n\otimes t^*)(\id_1\otimes X^*\otimes\id_1)(t\otimes \id_n) Y] \\
                            &= \Tr_{n-1}[(t^*\otimes \id_{n-1}\otimes t^*)(\id_1\otimes \id_1\otimes X^*\otimes\id_1) \\
                            & \hspace{4.1em} (\id_1\otimes t\otimes\id_n)(\id_1\otimes Y)(t\otimes \id_{n-1})] \\
                            &= \Tr_{n-1}[(\id_{n-1}\otimes t^*)(X^*\otimes\id_1)(\id_1\otimes Y)(t\otimes \id_{n-1})] \\
                            &= \Tr_{n-1}[(\id_{n-1}\otimes t^*)(X^*\otimes\id_1)(\id_1\otimes\id_{n-1}\otimes t^*\otimes\id_1) \\
                            & \hspace{4.1em} (\id_1\otimes Y\otimes\id_1\otimes\id_1)(t\otimes \id_{n-1}\otimes t)] \\
                            &= (\Tr_{n-1}\otimes\Tr_1)[X^*(\id_1\otimes\id_{n-1}\otimes t^*)
                              (\id_1\otimes Y\otimes\id_1)(t\otimes \id_{n-1}\otimes \id_1)] \\
                            &= \Tr_n(X^*\rho(Y)).
  \end{align*}

  Recall the notation $t_1^n \in \Hom(\CC,H_1^{\otimes n}\otimes H_1^{\otimes n})$ from the Preliminaries and consider the associated antilinear map
  $j^n : H_1^{\otimes n} \to H_1^{\otimes n}$ given by $j^n(\zeta) = (\zeta^*\otimes\id_n)t_1^n$. If $(e_i)_i$ is the canonical basis of $H_1 = \CC^N$ we have
  $j^n(e_{i_1}\otimes\cdots\otimes e_{i_n}) = e_{i_n}\otimes\cdots\otimes e_{i_1}$ so that $j^n\circ j^n = \id$ and $j^n(\zeta) = (\id_n\otimes\zeta^*)t^n$. Using
  the fact that $\rho(X) = (\id_1\otimes\id_{n-1}\otimes t^*)(\id_1\otimes X\otimes\id_1) (t\otimes\id_{n-1}\otimes\id_1)$ for $X \in B(H_n)^\ccirc$ we have
  easily $\rho^n(X) = (\id_n\otimes t^{n*})(\id_n\otimes X\otimes\id)(t^n\otimes\id_n)$, which yields ${(\zeta\mid \rho^n(X)\xi)} = (j^n\xi\mid Xj^n\zeta)$ for all
  $\zeta$, $\xi\in H_n$. Applying this identity a second time we get $\rho^{2n}(X) = X$.
\end{proof}

We shall now analyze the submodule $AxA$ when $x$ belongs to $H^\ccirc$. In the analogy with the generator MASA $a_1''\subset\Ll(F_N)$ in a free group factor,
the vectors $x_{i,j}$ below play the role of the words $a_1^iga_1^j\in F_N$, where $g\in F_N$ does not start nor end with $a_1$.

\begin{notation} 
  For $x \in H$ and $i$, $j\in\NN$ we denote $x_{i,j} = \sum_n p_{i+n+j}(\chi_i p_n(x) \chi_j)$. For $X\in B(H_n)$ we denote
  $X_{i,j} = P_{i+n+j} (\id_i\otimes X\otimes \id_j)P_{i+n+j} \in B(H_{i+n+j})$.
\end{notation}

\begin{remark}
  \label{rk_shift_tannaka}
  The sum in the definition of $x_{i,j}$ indeed converges in $H$, since its terms are pairwise orthogonal an satisfy the inequality
  $\|\chi_i p_n(x) \chi_j\| \leq \|\chi_i\| \|\chi_j\| \|p_n(x)\|$. This yields a map $(x\mapsto x_{i,j})$ which is linear and bounded from $H$ to $H$. We will
  mostly use the notation $x_{i,j}$ in the case when $x$ belongs to one of the subspaces $p_nH$.

  Note also that we have by construction $u_n(X)_{i,j} = u_{i+n+j}(X_{i,j})$ for $X\in B(H_n)$. Indeed, denote $x = u_n(X)$ and recall that
  $\chi_i = u_i(\id_i)$, $\chi_j = u_j(\id_j)$.  To compute the component $p_{i+n+j}(\chi_i x\chi_j)$ one has to use an orthonormal basis of isometric
  intertwiners $T : H_{i+n+j} \to H_i\otimes H_n\otimes H_j$. But according to the fusion rules there is only one such intertwiner up to a phase, and by
  construction of the spaces $H_k$ we can take for it the canonical inclusion of $H_{i+n+j}$ into $H_i\otimes H_n\otimes H_j\subset H_1^{\otimes (i+n+j)}$, whose
  adjoint is given by $P_{i+n+j}$.

  Finally, we record the fact that $X_{i,j}$ is the orthogonal projection of $\id_i\otimes X\otimes\id_j \in B(H_1^{\otimes (i+n+j)})$ onto $B(H_{i+n+j})$, with
  respect to the Hilbert-Schmidt scalar product --- indeed for any $Y$, $Z\in B(H_1^{\otimes (i+n+j)})$ we have
  $\Tr(Y^*P_{i+n+j}ZP_{i+n+j}) = \Tr((P_{i+n+j}YP_{i+n+j})^*Z)$.
\end{remark}

\begin{proposition}
  \label{prp_simple_bimod}
  Fix $k\in\NN^*$, $X \in B(H_k)^\ccirc$ an eigenvector of $\rho$ and $x = u_k(X)\in H^\ccirc$. Then we have $\Aa x\Aa = \Span\{x_{i,j}\mid i,j\in\NN\}$.
\end{proposition}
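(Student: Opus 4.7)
The plan is to prove the two inclusions $\Aa x\Aa\supseteq\Span\{x_{i,j}\}$ and $\Aa x\Aa\subseteq\Span\{x_{i,j}\}$ separately. The first I would obtain by induction on $i+j$: the base case $(i,j)=(0,0)$ reduces to $x_{0,0}=x$, and the inductive step follows from the second inclusion via the upper-triangular relation $\chi_ix\chi_j=x_{i,j}+\sum_{i'+j'<i+j}\mu_{i',j'}\,x_{i',j'}$, which gives $x_{i,j}=\chi_ix\chi_j-\sum\mu_{i',j'}x_{i',j'}\in\Aa x\Aa$ by the inductive hypothesis.

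For the main inclusion $\Aa x\Aa\subseteq\Span\{x_{i,j}\}$, I would expand $\chi_ix\chi_j=u_i(\id_i)\,u_k(X)\,u_j(\id_j)$ by iterating~\eqref{eq_prod_FO}, yielding a finite sum $\chi_ix\chi_j=\sum_{a,b}\lambda_{a,b}\,u_{n_{a,b}}(Y_{a,b})$ where $(a,b)$ runs over admissible pairs in the iterated fusion decomposition of $u_i\otimes u_k\otimes u_j$, $n_{a,b}=i+k+j-2(a+b)$, and $Y_{a,b}=T_{a,b}^*(\id_i\otimes X\otimes\id_j)T_{a,b}$ for the intertwiner $T_{a,b}:H_{n_{a,b}}\to H_i\otimes H_k\otimes H_j$ built from $V_{i+k-2a}^{i,k}\otimes\id_j$ and $V_{n_{a,b}}^{i+k-2a,j}$. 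Diagrammatically, $T_{a,b}$ inserts $a$ cups between $H_i$ and $H_k$ (touching the left edge of the $X$-box) and $b$ cups between the resulting block and $H_j$ (touching its right edge), sandwiched between the appropriate Jones--Wenzl projections. The key claim is that each $Y_{a,b}$ is a scalar multiple of $X_{i',j'}$ for a specific pair $(i',j')$ with $i'+k+j'=n_{a,b}$, so that $u_{n_{a,b}}(Y_{a,b})\in\Span\{x_{i',j'}\}$; summing over $(a,b)$ then proves the inclusion.

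The hard part is establishing this key claim. By the $\ccirc$-condition, any bare cup touching an edge of the $X$-box computes as $(\Tr_1\otimes\id)(X)=0=(\id\otimes\Tr_1)(X)$, so the identity piece of each $P_i$ or $P_j$ appearing in $T_{a,b}$ contributes nothing. The non-vanishing contributions come from the $tt^*$-type correction terms in Wenzl's recursion (compare Lemma~\ref{lem_bilateral_recursion}): each such correction combines with a cup of $T_{a,b}$ to form a Temperley--Lieb cap--$X$--cup tangle on one side of the $X$-box, which is precisely the rotation operator $\rho$ of Lemma~\ref{lem_rho} applied to $X$. Since $X$ is a $\rho$-eigenvector, each such rotation multiplies $X$ only by a scalar (a power of the eigenvalue), and the remainder of the diagram collapses onto the canonical embedding $X_{i',j'}=P_{n_{a,b}}(\id_{i'}\otimes X\otimes\id_{j'})P_{n_{a,b}}$, with $(i',j')$ determined by the number of strands of $H_i$ and $H_j$ that remain uncontracted. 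Once this reduction is carried out with the proper scalars (built from the $q$-numbers $d_k$, the constants $\kappa_m^{k,l}$, and the $\rho$-eigenvalue of $X$), both inclusions follow.
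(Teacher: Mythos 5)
Your proposal takes a genuinely different route from the paper's. The paper inducts on $i+j$ and only ever expands the single product $\chi_1 x_{i,j}$: a Temperley--Lieb diagram argument shows that $p_{n-1}(\chi_1 x_{i,j})$ lies in $\Span\{x_{p,q}\mid p+q<i+j\}$, and both inclusions then follow from $\chi_1 x_{i,j} = x_{i+1,j} + p_{n-1}(\chi_1 x_{i,j})$. You instead expand $\chi_i x\chi_j$ directly via iterated fusion and reduce everything to a ``key claim'' about the pieces $Y_{a,b}$. The overall logic is sound: if each $p_n(\chi_i x\chi_j)$ lies in $\Span\{x_{i',j'}\mid i'+k+j'=n\}$, the reverse inclusion is immediate and the forward one follows by your triangular induction.

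There are two problems. First, your key claim is too strong: $Y_{a,b}$ need not be a \emph{scalar multiple} of a single $X_{i',j'}$ --- in general it is a linear combination of several $X_{i',j'}$ with $i'+k+j'=n_{a,b}$ (the paper's post-proof remark already records such a linear-combination phenomenon for $p_{n-1}(x_{i,j}\chi_1)$, which involves $x_{i-1,j}$, $\rho^{\pm1}(x)_{i,j-1}$ \emph{and} $x_{i+1,j-2}$). The weaker version still makes your argument work, but your heuristic that $(i',j')$ is ``determined by the number of uncontracted strands of $H_i$, $H_j$'' is off: for instance $\id_2 *_k X = -\tfrac1N X = -\tfrac1N X_{0,0}$, not a multiple of $X_{1,0}$, because the surviving $tt^*$-term of $P_2$ re-contracts the remaining $H_2$-strand. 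Second, and more substantively, the actual difficulty --- shared by your route and the paper's --- is to show that after expanding \emph{all} the Jones--Wenzl projections involved (including the outer $P_n$'s and the intermediate $P_m$'s, not only $P_i$ and $P_j$), the only non-vanishing Temperley--Lieb terms are of the form $P_n(\id_p\otimes\rho^l(X)\otimes\id_q)P_n$. The paper establishes this by a careful planar argument tracking the string from the top-left external point and systematically ruling out other configurations; your sketch invokes the Wenzl corrections and Lemma~\ref{lem_rho}, which is the right mechanism, but it does not carry out the planar bookkeeping that actually pins down the surviving diagrams. That bookkeeping is the heart of the proof and is the missing ingredient here.
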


\begin{proof}
  Let us prove by induction over $i+j = n-k$ that $x_{i,j}\in \Aa x\Aa$. Assume that $x_{p,q}\in \Aa x\Aa$ if $p+q\leq i+j$ and compute $\chi_1x_{i,j}$. We have
  $p_{n-1}(\chi_1 x_{i,j}) = (\kappa_{n-1}^{1,n})^2u_{n-1}(\id_1*_{n-1}X_{i,j})$ and
  \begin{align} \label{eq_simple_bimod}
    \id_1*_{n-1}X_{i,j} &= (t^*\otimes P_{n-1})(\id_1\otimes X_{i,j})(t\otimes P_{n-1}) \\ \nonumber
                        &= P_{n-1}(t^*\otimes\id_{n-1})(\id_1\otimes P_n)
                          (\id_{i+1}\otimes X\otimes \id_j)(\id_1\otimes P_n)(t\otimes\id_{n-1}) P_{n-1}.
  \end{align}
  Since the Jones-Wenzl projections $P_n$ are intertwiners, we can expand them  into linear combinations of Temperley-Lieb diagrams, so that $\id_1*_{n-1}X_{i,j}$ is a linear
  combination of maps of the form $P_{n-1} T_\pi(X) P_{n-1}$, where $\pi$ is a Temperley-Lieb diagram with $n-1$ upper and lower points and an internal box with
  $2k$ points, and $T_\pi : B(H_1^{\otimes k}) \to B(H_1^{\otimes (n-1)})$ is the associated map. Since we multiply on the left and on the right by $P_{n-1}$ and
  $X = P_kXP_k$ belongs to $B(H_k)^\ccirc$, the term associated with $\pi$ vanishes as soon as a string of $\pi$ connects two upper points, or two lower points, or
  two internal points.

  Now consider the string originating from the first top left external point in a diagram $\pi$ such that $P_{n-1} T_\pi(X) P_{n-1}\neq 0$. If it is not
  connected to the internal box, it has to connect the top left point to the first bottom left external point, otherwise some other string would have to connect
  two upper or two lower external point, because of the non-crossing constraint. We can re-apply this reasoning to the following top left external points, until
  we find an external point connected to $X$, say with index $p+1$ on the top external edge. Moreover up to replacing $X$ by its image $\rho^l(X)$ under some
  iterated rotation we can assume that this external point is connected to the first top left point of the internal box by a vertical edge. Thus our
  diagram has the following form:
  \begin{displaymath}
    P_{n-1} T_\pi(X) P_{n-1} =
    ~\vcenter{\hbox{
\setlength{\unitlength}{2072sp}%
\begingroup\makeatletter\ifx\SetFigFont\undefined%
\gdef\SetFigFont#1#2#3#4#5{%
  \reset@font\fontsize{#1}{#2pt}%
  \fontfamily{#3}\fontseries{#4}\fontshape{#5}%
  \selectfont}%
\fi\endgroup%
\begin{picture}(2274,1966)(3139,-4670)
\thinlines
{\color[rgb]{0,0,0}\put(4411,-3256){\line( 0,-1){ 90}}
}%
{\color[rgb]{0,0,0}\put(4411,-3886){\line( 0,-1){ 90}}
}%
{\color[rgb]{0,0,0}\put(4411,-4156){\line( 0,-1){ 90}}
}%
{\color[rgb]{0,0,0}\put(4951,-4156){\line( 0,-1){ 90}}
}%
{\color[rgb]{0,0,0}\put(3871,-2986){\line( 0,-1){360}}
}%
{\color[rgb]{0,0,0}\put(3871,-3886){\line( 0,-1){ 90}}
}%
{\color[rgb]{0,0,0}\put(3871,-4156){\line( 0,-1){ 90}}
}%
{\color[rgb]{0,0,0}\put(3331,-2986){\line( 0,-1){1260}}
}%
{\color[rgb]{0,0,0}\put(4411,-3076){\line( 0, 1){ 90}}
}%
{\color[rgb]{0,0,0}\put(4951,-2986){\line( 0,-1){ 90}}
}%
{\color[rgb]{0,0,0}\put(4411,-3076){\line( 0, 1){ 90}}
}%
{\color[rgb]{0,0,0}\put(4951,-2986){\line( 0,-1){ 90}}
}%
{\color[rgb]{0,0,0}\put(3601,-2986){\line( 0,-1){1260}}
}%
{\color[rgb]{0,0,0}\put(4141,-3076){\line( 0, 1){ 90}}
}%
{\color[rgb]{0,0,0}\put(4141,-3346){\line( 0, 1){ 90}}
}%
{\color[rgb]{0,0,0}\put(4681,-2986){\line( 0,-1){ 90}}
}%
{\color[rgb]{0,0,0}\put(4681,-3256){\line( 0,-1){ 90}}
}%
{\color[rgb]{0,0,0}\put(3781,-3886){\framebox(990,540){}}
}%
{\color[rgb]{0,0,0}\put(4141,-3886){\line( 0,-1){ 90}}
}%
{\color[rgb]{0,0,0}\put(4681,-3886){\line( 0,-1){ 90}}
}%
{\color[rgb]{0,0,0}\put(5221,-2986){\line( 0,-1){ 90}}
}%
{\color[rgb]{0,0,0}\put(4141,-4156){\line( 0,-1){ 90}}
}%
{\color[rgb]{0,0,0}\put(4681,-4156){\line( 0,-1){ 90}}
}%
{\color[rgb]{0,0,0}\put(5221,-4156){\line( 0,-1){ 90}}
}%
{\color[rgb]{0,0,0}\put(3151,-4246){\framebox(2250,1260){}}
}%
\put(4276,-3751){\makebox(0,0)[b]{\smash{{\SetFigFont{11}{13.2}{\rmdefault}{\mddefault}{\updefault}{\color[rgb]{0,0,0}$\rho^l(X)$}%
}}}}
\put(4276,-2851){\makebox(0,0)[b]{\smash{{\SetFigFont{11}{13.2}{\rmdefault}{\mddefault}{\updefault}{\color[rgb]{0,0,0}$P_{n-1}$}%
}}}}
\put(4276,-4606){\makebox(0,0)[b]{\smash{{\SetFigFont{11}{13.2}{\rmdefault}{\mddefault}{\updefault}{\color[rgb]{0,0,0}$P_{n-1}$}%
}}}}
\end{picture}%
      }}~.
  \end{displaymath}

  By the same reasoning we see that the $(p+2)^{\text{th}}$ external point on the top edge connects to the second top left point of the internal box $\rho^l(X)$
  by a vertical string (if $k\geq 2$). Indeed if it is connected to another point of the internal box, there will be a string joining two  internal
  points, and if it is connected to a bottom external point, there will be either a string connecting two upper external points, or a string connecting two internal points.
  Continuing like this, we see that the only possibility for a non-vanishing diagram is one composed entirely of vertical lines, i.e.\
  $P_{n-1}(\id_p\otimes\rho^l(X)\otimes\id_q)P_{n-1}$, with $l\in\ZZ$ and $p+q = n-k-1$. Since $X$ is an eigenvector of $\rho$, this shows that
  $p_{n-1}(\chi_1 x_{i,j})$ is a linear combination of vectors $x_{p,q}$ with $p+q < i+j$, which belong to $\Aa x\Aa$ by the induction hypothesis. Note that if
  $i=j=0$ we have $p_{n-1}(\chi_1 x_{i,j}) = 0$ ; this can also be checked directly because~\eqref{eq_simple_bimod} then equals $(\Tr_1\otimes\id)(X)$.
  
  We have $p_{n+1}(\chi_1x_{i,j}) = p_{n+1}(\chi_1\chi_i x\chi_j) = x_{i+1,j}$ because $p_{n+1}(\chi_1 y) = 0$ if $y\in p_{k'}H$ with $k'<n$. We have thus
  $x_{i+1,j} = \chi_1x_{i,j} - p_{n-1}(\chi_1x_{i,j})$ and it follows that $x_{i+1,j}$ belongs to $\Aa x\Aa$. One can proceed in the same way on the right to
  show that $x_{i,j+1}$ belongs to $\Aa x\Aa$. By induction we have proved $x_{i,j}\in \Aa x\Aa$ for all $i$, $j$. Moreover from the identities
  $\chi_1x_{i,j} = x_{i+1,j} + p_{n-1}(\chi_1x_{i,j})$, $x_{i,j}\chi_1 = x_{i,j+1} + p_{n-1}(x_{i,j}\chi_1)$ and the fact that $p_{n-1}(\chi_1x_{i,j})$,
  $p_{n-1}(x_{i,j}\chi_1)$ are linear combinations of vectors $x_{p,q}$ it also follows that $\Span\{x_{i,j}\}$ is stable under the left and right actions of
  $\Aa$.
\end{proof}

\begin{remark}
  Using the Jones-Wenzl recursion relations, one can prove more precisely that $p_{n-1}(x_{i,j}\chi_1)$ is a linear combination of $x_{i-1,j}$,
  $\rho^{\pm 1}(x)_{i,j-1}$ and $x_{i+1,j-2}$, where we abusively write $\rho(u_k(X)) := u_k(\rho(X))$.
\end{remark}

\begin{notation}\label{not_basis}
  Choose for all $k\geq 1$ a basis $(X_r)_r \subset B(H_k)^\ccirc$ of eigenvectors of $\rho$, normalized in such a way that $\|u_k(X_r)\|_2=1$.
  Denote $\W_k=(u_k(X_r))_r$ its image in $p_kH^\ccirc$. Put as well $\W = \bigcup_k \W_k$, which is a linearly independent family consisting of unital vectors
  in $H^\ccirc$. For $x\in \W$ we denote $H(x) = \overline{AxA}$, and for $k\in\NN^*$, $H(k) = \overline{A\W_kA}$, using the left and right actions of $A$. The
  previous lemma shows that the vectors $x_{i,j}$ span a dense subspace of $H(x)$.
\end{notation}

\begin{proposition}
  \label{prp_decomp_bimod}
  The family $\W$ spans $H^\circ$ as a closed $A{,}A$-bimodule. Moreover, for $x\neq y \in \W$ we have $H(x) \bot H(y)$.
\end{proposition}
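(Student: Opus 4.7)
The proof has two parts. Both the spanning statement and the orthogonality statement reduce, via the identity $x_{i,j} = u_{i+k+j}(X_{i,j})$ for $x = u_k(X) \in \W_k$ together with the Peter--Weyl orthogonality~\eqref{eq_peter_weyl}, to corresponding statements about the matrix spaces $B(H_n)^\circ$ and the shift maps $\phi_{i,j}^{(k)} \colon B(H_k) \to B(H_n)$ defined by $\phi_{i,j}^{(k)}(X) = X_{i,j} = P_n(\id_i \otimes X \otimes \id_j)P_n$, where $n = i+k+j$.

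For the spanning statement, I would prove by induction on $n$ that $B(H_n)^\circ$ is the linear span of $\{X_{i,j} : X \in \W_k,\ 1 \le k \le n,\ i+k+j = n\}$. The case $n=1$ is immediate from $B(H_1)^\circ = B(H_1)^\ccirc$ (Proposition~\ref{prp_dim_ccirc}). For $n \ge 2$, introduce the partial trace maps $L(Z) = (\Tr_1 \otimes \id)(Z)$ and $R(Z) = (\id \otimes \Tr_1)(Z)$, whose Hilbert--Schmidt adjoints are $L^*(Y) = P_n(\id_1 \otimes Y)P_n$ and $R^*(Y) = P_n(Y \otimes \id_1)P_n$. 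Since $B(H_n)^\ccirc = \ker L \cap \ker R$ by Notation~\ref{def_ccirc}, the orthogonal decompositions $B(H_n) = \ker L \oplus \Im L^* = \ker R \oplus \Im R^*$ yield $B(H_n) = B(H_n)^\ccirc + \Im L^* + \Im R^*$. Because $L^*(P_{n-1}) = P_n$ (using $P_n \le \id_1 \otimes P_{n-1}$), the trivial line lies in $\Im L^*$, so $B(H_n)^\circ = B(H_n)^\ccirc + L^*(B(H_{n-1})^\circ) + R^*(B(H_{n-1})^\circ)$. A short computation with $P_n \le \id_1 \otimes P_{n-1}$ gives $L^*(X_{i',j'}) = X_{i'+1,j'}$ and symmetrically $R^*(X_{i',j'}) = X_{i',j'+1}$, which closes the induction.

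For orthogonality, by Proposition~\ref{prp_simple_bimod} it suffices to show $(x_{i,j} \mid y_{i',j'}) = 0$ when $x = u_k(X) \neq u_l(Y) = y$ in $\W$. By~\eqref{eq_peter_weyl} only the case $n := i+k+j = i'+l+j'$ matters, and the inner product is then proportional to the Hilbert--Schmidt pairing $(X_{i,j} \mid Y_{i',j'})$. The decisive observation is that $\phi_{i,j}^{(k)}$ is a morphism of $\FO_N$-corepresentations, being built from the intertwiner $P_n$ and identities, and consequently so is its Hilbert--Schmidt adjoint. Under the canonical isotypic decomposition $B(H_m) \simeq \bigoplus_{a=0}^m H_{2a}$ (coming from the fusion rules applied to $H_m \otimes H_m$), Proposition~\ref{prp_dim_ccirc} shows that $B(H_m)^\ccirc$ sits inside the top component $H_{2m}$. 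The composition $(\phi_{i,j}^{(k)})^* \phi_{i',j'}^{(l)} \colon B(H_l) \to B(H_k)$ is therefore an $\FO_N$-intertwiner, and Schur's lemma forces it to map each isotypic component $H_{2a}$ of $B(H_l)$ scalarly to the $H_{2a}$ component of $B(H_k)$ (or to zero when $a > k$, since then $B(H_k)$ has no such component).

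Applying this with $X \in B(H_k)^\ccirc \subset H_{2k}$ and $Y \in B(H_l)^\ccirc \subset H_{2l}$: if $k \neq l$, the image of $Y$ under the composition lies either in zero (when $l > k$) or in the $H_{2l}$ component of $B(H_k)$, which is orthogonal to the $H_{2k}$ component containing $X$, so $(X_{i,j} \mid Y_{i',j'}) = 0$. If $k = l$, the composition acts as a scalar $\mu$ on the $H_{2k}$ component, so $(X_{i,j} \mid Y_{i',j'}) = \mu (X \mid Y)$; refining Notation~\ref{not_basis} by choosing $\W_k$ to be a Hilbert--Schmidt orthonormal basis of $\rho$-eigenvectors -- possible because $\rho$ is a finite-order unitary on the finite-dimensional $B(H_k)^\ccirc$ by Lemma~\ref{lem_rho} -- then forces $(X \mid Y) = 0$ for distinct $X, Y \in \W_k$, completing the proof.
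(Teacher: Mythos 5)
Your proof is correct, and it splits cleanly into a part that mirrors the paper and a part that genuinely diverges from it.

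For the spanning statement, you and the paper are running the same induction on $n$, just dual formulations of it: the paper takes $Y \in L_{n+1}^\bot \cap B(H_{n+1})^\circ$ and shows it must lie in $B(H_{n+1})^\ccirc$, hence be zero, while you decompose $B(H_n)^\circ = B(H_n)^\ccirc + L^*(B(H_{n-1})^\circ) + R^*(B(H_{n-1})^\circ)$ directly. Both rest on the same two facts, namely $(\ker L \cap \ker R)^\bot = \Im L^* + \Im R^*$ and $L^*(X_{i',j'}) = X_{i'+1,j'}$, so I would count this as essentially the same argument.

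For the orthogonality statement, however, your argument is a genuinely different route. The paper's proof is elementary: it writes $(\chi_1^p x \chi_1^q \mid \chi_1^r y \chi_1^s) = (x \mid p_k(\chi_1^{p+r} y \chi_1^{q+s}))$, uses Proposition~\ref{prp_simple_bimod} to see that $\chi_1^{p+r} y \chi_1^{q+s} \in \Span\{y_{i,j}\}$ with $y_{i,j} \in p_{i+l+j}H$ and $i+l+j \geq l \geq k$, and concludes that $p_k$ kills everything except possibly a multiple of $y$ itself. Your argument instead observes that each $\phi_{i,j}^{(k)}$ intertwines the adjoint corepresentations on $B(H_k)$ and $B(H_n)$, so $(\phi_{i,j}^{(k)})^* \phi_{i',j'}^{(l)}$ is a morphism, then uses the isotypic decomposition $B(H_m) \simeq \bigoplus_{a=0}^m H_{2a}$, the fact (from the proof of Proposition~\ref{prp_dim_ccirc}) that $B(H_m)^\ccirc \subset H_{2m}$, and Schur's lemma. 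This is more machinery, but it explains conceptually why the orthogonality holds: the spaces $B(H_k)^\ccirc$ for different $k$ live in distinct isotypic components of the adjoint corepresentation, so all the intertwiners in sight must respect that splitting. One bonus of your formulation is that you make explicit the requirement that $\W_k$ be chosen as a Hilbert--Schmidt orthonormal basis of $\rho$-eigenvectors — the paper's Notation~\ref{not_basis} only asks for a basis of eigenvectors, but its proof, like yours, does use $x \bot y$ for distinct $x, y \in \W_k$, so orthogonality of the chosen basis is needed in both arguments (at least within each eigenspace of $\rho$). You also correctly justify that such a choice is possible because $\rho$ is unitary on the finite-dimensional space $B(H_k)^\ccirc$ (Lemma~\ref{lem_rho}).
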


\begin{proof}
  Denote $L_n = \Span\{X_{i,j} \mid X\in \W_k, k\leq n, i+j+k=n\} \subset B(H_n)^\circ$, and let us show by induction over $n\geq 1$ that $L_n =
  B(H_n)^\circ$. For $n = 1$ we have by definition $L_1 = \Span \W_1 = B(H_1)^\ccirc = B(H_1)^\circ$. Assume that $L_n = B(H_n)^\circ$ and take
  $Y \in L_{n+1}^\bot\cap B(H_{n+1})^\circ$. We want to show that $Y = 0$.  We consider first $(\Tr_1\otimes\id)(Y)$. For any generator $X_{i,j}$ of $L_n$ we
  have
  \begin{align*}
    \Tr_n(X_{i,j}^*(\Tr_1\otimes\id)(Y))
    = (\Tr_1\otimes\Tr_n)(P_{n+1}(\id_1\otimes X_{i,j}^*)P_{n+1}Y ) 
    = \Tr_{n+1}(X_{i+1,j}^*Y) = 0,
  \end{align*}
  by assumption on $Y$. Since $L_n = B(H_n)^\circ$, this implies $(\Tr_1\otimes\id)(Y) = 0$. Similarly, $(\id\otimes\Tr_1)(Y) = 0$. As a result,
  $Y\in B(H_{n+1})^\ccirc$. But $B(H_{n+1})^\ccirc \subset L_{n+1}$, and $Y \bot L_{n+1}$, so that we have indeed proved $Y = 0$. Taking into account
  Proposition~\ref{prp_simple_bimod}, this proves that $p_n H^\circ \subset \Span\Aa \W \Aa$ for every $n$ and the first result follows.

  For the second part of the statement, take $x\in \W_k$, $y\in \W_l$ distinct, with $k\leq l$. The subspaces $H(x)$, resp. $H(y)$ are spanned by vectors
  $\chi_1^px\chi_1^q$, resp. $\chi_1^r y\chi_1^s$. We have
  \begin{displaymath}
    (\chi_1^px\chi_1^q \mid \chi_1^r y\chi_1^s) = (x \mid \chi_1^{p+r}y\chi_1^{q+s}) = (x \mid p_k(\chi_1^{p+r}y\chi_1^{q+s})).
  \end{displaymath}
  But $\chi_1^{p+r}y\chi_1^{q+s} \in \Span\{y_{i,j}\}$ and $y_{i,j} \in p_{i+l+j}H$. Since $k\leq l$ this implies that
  $p_k(\chi_1^{p+r}y\chi_1^{q+s}) \in \CC y$ and the result follows since $x\bot y$.
\end{proof}

Denote $B\subset B(H^\circ)$ the commutant of the left and right actions of $A$. Being the commutant of an abelian algebra, it is a type I von Neumann algebra,
which can be decomposed into type I$_n$ algebras. The numbers $n\in\NN^*\cup\{\infty\}$ appearing in this way form the Puk\'anszky invariant of the maximal
abelian subalgebra $A$.

\begin{corollary}\label{crl_pukanszky}
  The bimodule $H^\circ$ is isomorphic to $L^2(A)\otimes \ell^2(W)\otimes L^2(A)$. In particular the Puk\'anszky invariant of $A\subset M$ is $\{\infty\}$.
\end{corollary}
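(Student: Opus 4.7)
My plan is to combine the orthogonal decomposition of $H^\circ$ from Proposition~\ref{prp_decomp_bimod} with an identification of each cyclic summand $H(x)$ as a copy of the coarse $A,A$-bimodule $L^2(A)\otimes L^2(A)$. By Proposition~\ref{prp_decomp_bimod} we have $H^\circ = \bigoplus_{x\in\W} H(x)$ orthogonally as $A,A$-bimodules, and $\W$ is countably infinite since Proposition~\ref{prp_dim_ccirc} ensures $B(H_k)^\ccirc\neq 0$ for every $k\geq 1$.

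Next I would identify each cyclic bimodule $H(x)$ with the coarse one. The natural candidate is the map $a\otimes b\mapsto axb$; checking that it is isometric amounts to verifying $(a_1xb_1\mid a_2xb_2) = h(a_1^*a_2)\,h(b_2b_1^*)$ for $a_i, b_i\in \Aa$, i.e.\ that the joint spectral measure of the commuting pair $(L_{\chi_1}, R_{\chi_1})$ on $x$ factors as the tensor square of the semicircular law associated with $\chi_1/2$. This decoupling is what the strong mixing of $A\subset M$ from \cite[Theorem~5.7]{FreslonVergnioux} is designed to yield: on $H^\circ$, strong mixing forces the joint spectral measure to carry no component concentrated on the diagonal, and a cyclic $A,A$-sub-bimodule whose joint spectral measure is absolutely continuous with respect to $h\otimes h$ must be a single copy of the coarse bimodule.

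Combining the two steps, $H^\circ \simeq L^2(A)\otimes\ell^2(\W)\otimes L^2(A)$. Since the Puk\'anszky invariant of a MASA records the multiplicities of the coarse bimodule appearing in the decomposition of $L^2(M)\ominus L^2(A)$ as an $A,A$-bimodule, the above isomorphism yields $\mathrm{Puk}(A) = \{\dim\ell^2(\W)\} = \{\infty\}$.

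The main obstacle is the coarse-bimodule identification. If the implication from strong mixing cannot be invoked off the shelf from \cite{FreslonVergnioux}, I would instead compute the moments $(\chi_1^p x\chi_1^q\mid \chi_1^{p'}x\chi_1^{q'})$ directly via the product formula~\eqref{eq_prod_FO}, the rotation-eigenvector property of $x$ under $\rho$, and the vanishing conditions defining $B(H_k)^\ccirc$ from Notation~\ref{def_ccirc} and Remark~\ref{rk_ccirc}, to show that these moments decouple asymptotically into products of $A$-moments, which would directly yield the coarse-bimodule isomorphism.
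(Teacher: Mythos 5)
Your overall architecture matches the paper's: decompose $H^\circ = \bigoplus_{x\in\W} H(x)$ via Proposition~\ref{prp_decomp_bimod}, identify each cyclic summand $H(x)$ with the coarse bimodule $L^2(A)\otimes L^2(A)$, and then read off the Puk\'anszky invariant. The problem lies in the middle step: the coarse identification is not correctly justified.

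First, the claim that $a\otimes b\mapsto axb$ is isometric (i.e., that the joint spectral measure of $(L_{\chi_1},R_{\chi_1})$ on $x$ factors exactly as the tensor square of the semicircular law) is simply false here; if it held, the vectors $x_{i,j}$ would already be orthogonal, contradicting the whole point of Section~\ref{sec_gram}. What is actually needed for $H(x)\simeq L^2(A)\otimes L^2(A)$ is that the joint spectral measure be \emph{equivalent} to the product measure (Lebesgue measure on $\itv[{-2},2]^2$ after identification), not that it equal it. Second, your fallback via ``strong mixing'' from \cite[Theorem~5.7]{FreslonVergnioux} is too weak: strong mixing gives decay of the coefficients $(b_1\zeta b_2\mid \zeta)$ but by itself does not rule out the joint spectral measure having a singular component, nor does it upgrade absolute continuity to equivalence. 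And \emph{absolute continuity alone} only gives an embedding of $H(x)$ as a subbimodule of $L^2(A)\otimes L^2(A)$; you would still need the Radon--Nikodym derivative to be nonvanishing a.e.\ to get an isomorphism. The paper's proof cites the stronger statement established in the proof of \cite[Theorem~5.10]{FreslonVergnioux}: for $\zeta\in H^\circ\cap\Aa$, the induced measure on $\itv[{-2},2]\times\itv[{-2},2]$ is equivalent to Lebesgue measure, indeed has a non-zero analytic density. That is the fact you need to invoke; without it (or a direct verification of measure equivalence), the coarse-bimodule identification does not follow.
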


\begin{proof}
  Indeed the proof of in \cite[Theorem~5.10]{FreslonVergnioux} shows that the measure on $\itv[{-2},2]\times \itv[{-2},2]$ induced by a given
  $\zeta\in H^\circ\cap \Aa$ and the action of $A\otimes A$ on $H^\circ$ is equivalent to the Lebesgue measure --- in fact it has a non-zero analytic
  density. As a result, the corresponding cyclic bimodule $H(\zeta)$ is isomorphic to the coarse bimodule $L^2(A)\otimes L^2(A)$. This applies to
  $\zeta = x \in \W$. Now, Proposition~\ref{prp_decomp_bimod} shows that we have an isomorphism of $A,A$-bimodules
  \begin{displaymath}
    H^\circ  \simeq \bigoplus_{x\in \W} L^2(A)\otimes L^2(A) \simeq L^2(A)\otimes\ell^2(\W)\otimes L^2(A).
  \end{displaymath}
  As a result $(A\otimes A)'\cap B(H^\circ) \simeq A\bar\otimes B(\ell^2(\W))\bar\otimes A$ and the value of the Puk\'anszky invariant follows since $\W$ is
  infinite.
\end{proof}

\begin{corollary}\label{crl_independent}
  For $x\in \W$ the vectors $x_{i,j}$ are linearly independent.
\end{corollary}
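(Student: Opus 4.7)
The plan is a dimension count at each level $p_nH$, combining Propositions~\ref{prp_simple_bimod}, \ref{prp_decomp_bimod} and~\ref{prp_dim_ccirc}. The linear independence claim is local in $n$: for $x\in\W_k$ the vector $x_{i,j}$ lies in $p_{i+k+j}H$, and the subspaces $p_nH$ are pairwise orthogonal. Via the Peter-Weyl-Woronowicz bijection $u_n : B(H_n) \to p_nH$ provided by~\eqref{eq_peter_weyl}, it suffices to show that for each $n\geq 1$ the matrices $X_{i,j}\in B(H_n)$ with $x=u_k(X)\in\W_k$, $1\leq k\leq n$ and $i+j=n-k$, are linearly independent in $B(H_n)^\circ$. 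Note that this in fact proves the stronger statement that the $x_{i,j}$ are linearly independent across all $x\in\W$ simultaneously.

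To this end I would introduce the natural evaluation map
\begin{displaymath}
  \Phi_n : \bigoplus_{k=1}^n B(H_k)^\ccirc \otimes \CC^{n-k+1} \longrightarrow B(H_n)^\circ, \qquad X\otimes (i,j) \longmapsto X_{i,j}.
\end{displaymath}
Its surjectivity follows from the density of $\Span\Aa\W\Aa$ in $H^\circ$ given by the first part of Proposition~\ref{prp_decomp_bimod}, together with the identity $\Aa x\Aa=\Span\{x_{i,j}\}$ from Proposition~\ref{prp_simple_bimod}: orthogonal projection onto the finite dimensional space $p_nH^\circ$ expresses each of its elements as a finite linear combination of the $x_{i,j}$ it contains. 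It then remains only to check that the source and target of $\Phi_n$ have the same dimension, which will force $\Phi_n$ to be bijective and conclude.

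For the dimension identity, the target has dimension $d_n^2-1 = \sum_{j=1}^n d_{2j}$, by the fusion rule $u_n\otimes u_n = \bigoplus_{k=0}^n u_{2k}$. The source has dimension $\sum_{k=1}^n (n-k+1)\dim B(H_k)^\ccirc$; substituting the values from Proposition~\ref{prp_dim_ccirc} ($\dim B(H_1)^\ccirc=d_2$ and $\dim B(H_k)^\ccirc=d_{2k}-d_{2k-2}$ for $k\geq 2$), a swap of summation order makes the inner sum telescope, yielding precisely $\sum_{j=1}^n d_{2j}$ as well. The real work has already been done upstream: the nontrivial computation of $\dim B(H_k)^\ccirc$ in Proposition~\ref{prp_dim_ccirc}, relying on the surjectivity Lemma~\ref{lem_contract_ends}, is the main obstacle, and the matching of dimensions then becomes a routine rearrangement.
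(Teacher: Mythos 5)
Your proof is correct and follows essentially the same approach as the paper: reduce to a fixed level $p_n H^\circ$, compare $\dim p_n H^\circ = d_n^2 - 1$ with the count $\sum_{k=1}^n (n-k+1)\dim B(H_k)^\ccirc$ obtained via Proposition~\ref{prp_dim_ccirc}, and observe that the telescoping makes these agree, forcing the spanning family to be a basis. Casting the spanning step as surjectivity of an evaluation map $\Phi_n$ is a clean way to organize what the paper phrases via the orthogonal direct-sum decomposition of $p_nH^\circ$ from Proposition~\ref{prp_decomp_bimod}, but the content is identical.
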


\begin{proof}
  Since the subspaces $p_n H^\circ$ are pairwise orthogonal, it suffices to consider a subfamily $(x_{i,j})$ with $i+k+j = n$ fixed. Note that
  $\#\{x_{i,j} \mid i+k+j = n\} = n-k+1$.  According to Proposition~\ref{prp_decomp_bimod} we have
  \begin{displaymath}
    p_n H^\circ = \bigoplus_{k\leq n}\bigoplus_{x\in \W_k} \Span \{x_{i,j} \mid i+k+j = n\}, 
  \end{displaymath}
  so that $\dim p_nH^\circ \leq \sum_{k=1}^n (n-k+1)\# \W_k$. We will prove that this estimate is an equality, so that
  $\dim\Span \{x_{i,j} \mid i+k+j = n\} = n-k+1$ for all $x\in \W_k$, $k\leq n$, which implies the linear independence.

  Recall from Proposition~\ref{prp_dim_ccirc} that $\# \W_k = \dim B(H_k)^\ccirc = d_{2k}-d_{2k-2}$ for $k\geq 2$, and $\# \W_1 = d_2$. We have then
  \begin{align*}
    {\ts\sum_{k=1}^n} (n-k+1)\# \W_k &= {\ts\sum_{k=1}^n} (n-k+1)d_{2k} - {\ts\sum_{k=2}^n} (n-k+1) d_{2k-2} \\
                                    &={\ts\sum_{k=1}^n} (n-k+1)d_{2k} - {\ts\sum_{k=1}^{n-1}} (n-k) d_{2k} \\
                                    &={\ts\sum_{k=1}^{n}}d_{2k} = d_n^2 - 1 = \dim B(H_n)^\circ = \dim p_nH^\circ.
  \end{align*}
  The computation of the sum in the last line follows from the decomposition of $u_n\otimes u_n$ given by the fusion rules.
\end{proof}

\begin{remark} \label{rk_isomorphism}
  As a result, the map $\Phi : c_c(\NN)\otimes H^\ccirc\otimes c_c(\NN)\to H^\circ$, $\delta_i\otimes x\otimes\delta_j\mapsto x_{i,j}$ is injective with
  dense image. It is however not an isometry. We will see in the next section that, at least for ``large $N$'', it extends to an isomorphism from
  $\ell^2(\NN)\otimes H^\ccirc\otimes\ell^2(\NN)$ to $H^\circ$.
\end{remark}

We end this section with one further property of elements of $H^\ccirc$ which
is established using the action of planar tangles and will be used in
Section~\ref{sec_orthogonality}.

\begin{proposition} \label{prp_propagation} For any $\zeta\in H(k)$, $\zeta'\in H(k')$ and $y\in p_nH$ with $n<|k-k'|$ we have $y\zeta \bot \zeta'$. If $k>n$ we
  have $y\zeta \in H^\circ$.
\end{proposition}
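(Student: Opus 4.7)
The plan is to reduce, by bilinearity and continuity of left multiplication, to the case $\zeta = x_{i,j}$ with $x = u_k(X) \in \W_k$, $X \in B(H_k)^\ccirc$, and similarly $\zeta' = x'_{i',j'}$ with $x' = u_{k'}(X') \in \W_{k'}$, $X' \in B(H_{k'})^\ccirc$, and $y = u_n(Y)$ with $Y \in B(H_n)$. Applying the product formula~\eqref{eq_prod_FO}, $y x_{i,j}$ decomposes as $\sum_a (\kappa_{m_a}^{n,m})^2 u_{m_a}(Y *_{m_a} X_{i,j})$ with $m = i+k+j$ and $m_a = n+m-2a$. By the Peter--Weyl orthogonality relations~\eqref{eq_peter_weyl}, $(y\zeta \mid \zeta')$ reduces to a sum of Hilbert--Schmidt traces of the form $\Tr((Y *_{m_a} X_{i,j})^* X'_{i',j'})$ over those $a$ with $m_a = i'+k'+j'$, so it suffices to show each such trace vanishes.

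I interpret this trace as a closed planar Temperley--Lieb diagram featuring three internal boxes $Y^*$, $X^* \in B(H_k)^\ccirc$ (from $X_{i,j}^* = P_m(\id_i \otimes X^* \otimes \id_j) P_m$), and $X' \in B(H_{k'})^\ccirc$ (from $X'_{i',j'}$), connected by identity strands $\id_i, \id_j, \id_{i'}, \id_{j'}$, the caps/cups $t_a, t_a^*$ arising from $*_{m_a}$, the Jones--Wenzl projections $P_m, P_{m_a}$, and the trace closure. By Remark~\ref{rk_ccirc}, any planar sub-tangle that connects two cyclically-adjacent boundary points of $X$ or $X'$ annihilates the diagram.

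The core argument is a strand-counting plus planar-nesting argument. Assuming WLOG $k' > k + n$, the $2k'$ boundary points of $X'$ strictly exceed the $2k + 2n$ combined boundary points of the other two boxes, so at least $k' - k - n > 0$ strings in the diagram must be self-edges of $X'$. A careful planar non-crossing analysis — selecting an innermost self-edge of $X'$, taking into account the side on which each of $X^*$, $Y^*$ lies and iterating on the sub-regions, while using the absorbing property $P_{m_a}(\id \otimes t \otimes \id) = 0$ of the Jones--Wenzl projection to rule out configurations in which the non-adjacent self-edges are not forced to produce an adjacent one — shows that there must exist a self-edge of $X'$ joining two cyclically-adjacent boundary points. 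Such an adjacency vanishes in each of the four possible cases: a cap on two consecutive top or bottom points of $X'$ kills the diagram via $X' = P_{k'}X'P_{k'}$; the leftmost or rightmost top-bottom pair is killed by $(\Tr_1 \otimes \id)(X') = 0$ or $(\id \otimes \Tr_1)(X') = 0$ respectively, by the $\ccirc$ property. The symmetric case $k' < k - n$ is handled by exchanging the roles of $X$ and $X'$.

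For the second assertion, $y\zeta \in H^\circ$ under $k > n$: since $\chi_l = u_l(\id_l)$ spans $A$, this amounts to $\Tr(Y *_{m_a} X_{i,j}) = 0$ for every admissible $a$, a closed planar diagram involving only the two boxes $Y$ and $X \in B(H_k)^\ccirc$ (note that $k > n$ forces $m_a \geq k - n \geq 1$, so the trivial case $m_a = 0$ does not arise). The same strand-counting argument gives $X$ at least $k - n > 0$ self-edges, and the same planar-nesting argument extracts a cyclically-adjacent self-edge that annihilates the diagram via the $\ccirc$ property of $X$. The main technical obstacle throughout is rigorously establishing the planar combinatorial step asserting that the existence of a self-edge of one of the boxes implies the existence of a cyclically-adjacent one; this requires a careful induction on the nesting of self-edges, taking into account both the placement of the other boxes and the absorbing action of the Jones--Wenzl projections.
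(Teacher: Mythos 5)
Your plan follows the paper's strategy: reduce to basis vectors by bilinearity, use the product formula~\eqref{eq_prod_FO} and Peter--Weyl to reduce the orthogonality to the vanishing of traces $\Tr\bigl(X'_{i',j'}(Y*_{m}X_{i,j})\bigr)$, expand the intertwiners and projections into Temperley--Lieb diagrams, count strings, and invoke Remark~\ref{rk_ccirc} to kill tangles that have a string joining two cyclically adjacent points of the big box. The paper does the same, and for the second assertion (two boxes) both arguments are clean: any self-edge of the big box bounds a region containing no other box, so nesting produces an adjacent self-edge.

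The step you correctly single out as ``the main technical obstacle'' is precisely the one the paper passes over in a single clause (``Since the strings are non crossing, this implies that there is even a string connecting two consecutive points\dots''), and it deserves scrutiny. With three internal boxes, a self-edge $s$ of the large box $X$ need not cut off a box-free region: $X'$ can lie on one side of $s$ and $Y$ on the other, with the two arcs of $X$ saturated by $X'$ and $Y$ respectively, so that $s$ is the \emph{only} self-edge and it is not adjacent. This happens exactly in the borderline case $|k-k'|=n+1$, i.e.\ $2k=2k'+2n+2$, which is included in the hypothesis $n<|k-k'|$. For instance with $k=3$, $k'=n=1$, $i=j=0$, $a=1$, $m=2$, one term in the expansion of $\Tr\bigl(X'^*_{1,0}(Y*_2X)\bigr)$ is the closed tangle whose value is $\sum_{p,q,c,e,d}Y_{pe}X'^{*}_{dc}X_{pqc,eqd}$, the middle partial trace of $X$ contracted against $Y$ and $X'^*$. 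A direct check (take $X_{abc,def}=\delta_{af}\delta_{be}\delta_{cd}\,[a,b,c \text{ distinct}]$, which satisfies all four absorbing conditions $e_iX=Xe_i=0$ and both $\ccirc$ conditions, and is even a $\rho$-eigenvector for $\mu=1$) shows this does not vanish, and the other three terms in the $P_2$-expansion vanish individually, so the total trace is non-zero. So the planar nesting alone does not suffice, and, at least as written, the argument is incomplete exactly at the step you flagged.

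Your proposed repair --- invoking the absorbing property $P_{m_a}(\id\otimes t\otimes\id)=0$ during the nesting induction --- does not obviously close this gap either. The Jones--Wenzl projections are expanded into Temperley--Lieb diagrams before the strand-count, and the absorbing property is not available for individual summands (in the explicit computation above, the ``bad'' term corresponds to replacing every $P_2$ by $\id_2$, and nothing cancels it). If you want to keep the JW boxes unexpanded, you get a different and more constrained diagrammatics, but then the combinatorial statement you need changes and is not the one you (or the paper) stated. In short, your proposal mirrors the paper's strategy, correctly identifies that the paper's one-line combinatorial assertion is the crux, but does not actually prove it, and the evidence above suggests that assertion is genuinely in trouble at the boundary $|k-k'|=n+1$. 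Any correct proof of the Proposition (or of a usable variant, e.g.\ with $n<|k-k'|-1$, which still suffices for the application in Section~\ref{sec_orthogonality} after adjusting constants) must deal with this case explicitly rather than by a generic planar-nesting argument.
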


\begin{proof}
  By bilinearity one can assume $\zeta = x_{i,j} = u_{i+k+j}(X_{i,j})$, $\zeta' = x'_{i',j'} = u_{i'+k'+j'}(X^{\prime *}_{i',j'})$, with $X\in B(H_k)^\ccirc$,
  $X' \in B(H_{k'})^\ccirc$. Denote also $y = u_n(Y)\in M$ with $Y\in B(H_n)$. Then the product $y\zeta$ is a linear combination of elements $u_m(Y*_m X_{i,j})$ with
  $m = n+i+k+j-2a$. Using the Peter-Weyl relations~\eqref{eq_peter_weyl} it thus suffices to prove that $\Tr (X_{i',j'}'(Y*_mX_{i,j})) = 0$, with
  $m = i'+k'+j' = i+k+j-2a$.

  By definition, the element in the trace is computed by the following formula:
  \begin{align*}
    &P_m(\id_{i'}\otimes X'\otimes\id_{j'})P_m(\id_{n-a}\otimes t_a^*\otimes\id_{i+j+k-a})(\id_n\otimes P_{i+k+j}) \\
    &\hspace{2cm} (Y\otimes \id_i\otimes X\otimes\id_j)(\id_n\otimes P_{i+k+j})(\id_{n-a}\otimes t_a\otimes\id_{i+j+k-a})P_m.
  \end{align*}
  Since $P_m$, $P_{i+k+j}$, $t_a$ are morphisms, this element is a linear combination of planar tangles on $m$ lower and upper points, with $3$ inside boxes,
  applied to $X$, $X'$, $Y$. Since $\Tr(Z) = t_m^*(Z\otimes\id_m)t_m$, the scalar $\Tr (X_{i',j'}'(Y*_mX_{i,j}))$ is itself a linear combination of such planar
  tangles $T$, without external points, applied to $X$, $X'$, $Y$.

  Fix one of these tangles and consider the strings starting at one of the $2k$ points on the internal box corresponding to $X$. These strings can have their
  second ends on $X$, $X'$ or $Y$.  If $2k > 2k' + 2n$, the first possibility must happen at least once, i.e.\ there is a string connecting two points of $X$. Since
  the strings are non crossing, this implies that there is even a string connecting two consecutive points of the internal box corresponding to $X$. But then the value of
  the tangle applied to $X$, $X'$, $Y$ is $0$ since $X\in B(H_k)^\ccirc$: see Remark~\ref{rk_ccirc}.

  If $k<k'-n$ we proceed in the same way by considering strings starting on the internal box corresponding to $X'$. The last assertion of the statement amounts
  to considering the trace $\Tr (Y*_mX_{i,j})$ which is again a linear combination of planar tangles without external points applied to $Y$ and $X$, and if
  $k>n$ the same argument as above applies.
\end{proof}

\section{Invertibility of the Gram Matrix}
\label{sec_gram}

In this section we fix $k\in\NN^*$, $x = u_k(X)\in p_kH^\ccirc$ with
$X\in B(H_k)^\ccirc$ an eigenvector of $\rho$ with associated eigenvalue $\mu$,
$|\mu| = 1$. Recall the notation $x_{i,j} = p_{i+k+j}(\chi_i x\chi_j)$,
$X_{i,j} = P_{i+k+j}(\id_i\otimes X\otimes\id_j)P_{i+k+j}$. We know from the
previous section that $(x_{i,j})$ spans a dense subspace of the bimodule
$AxA$. Our aim is now to show that it is a Riesz basis, i.e.\ it implements an
isomorphism between $H(x) = \overline{AxA}$ in $H$ and
$\ell^2(\NN\times\NN)$. We will only achieve this for small $q$, i.e.\ large
$N$. We thus consider the associated Gram matrix, which is block diagonal since
$p_m H \bot p_n H$ for $m\neq n$. Let us formalize this as follows:

\begin{notation} \label{not_gram} We fix $k\in \NN^*$ and a unital vector $x = u_k(X)\in \W_k \subset p_k H^{\circ\circ}$. We denote $G = G(x)$ the Gram matrix
  of the family $(x_{i,j})_{i,j}\subset H$, and $G_n = G_n(x)$ its diagonal block corresponding to indices $(i,j)$ such that $x_{i,j}\in p_n H$, i.e.\
  $i+k+j = n$. Since $k$ is fixed we drop the second index $j$ and denote $x_{n;i} = x_{i,j}$, $X_{n;i} = X_{i,j}$.  For $i$, $p\in \{0,\ldots,n-k\}$ we denote
  accordingly
  \begin{displaymath}
    G_{n;i,p} = (x_{n;i} \mid x_{n;p}) = d_n^{-1}(X_{n;i} \mid X_{n;p}).
  \end{displaymath}
\end{notation}

The second equality follows from the Peter-Weyl-Woronowicz orthogonality relations, using the Hilbert-Schmidt scalar product in $B(H_n)$. Let us record the
following symmetry properties of $G$:

\begin{lemma} \label{lem_symmetries}
  For any $n = i+k+j = p+k+q$ we have
  \begin{displaymath}
    G_{n;i,p}(x) = \overline{G_{n;p,i}(x)} = G_{n;q,j}(x^*) = G_{n;j,q}(S(x)).
  \end{displaymath}
\end{lemma}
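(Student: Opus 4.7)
I would split the lemma into three separate identities, each handled by a different mechanism: (i) conjugate symmetry of the inner product, (ii) the trace property of $h$, (iii) the antipode $S$ of $\FO_N$.

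The first identity $G_{n;i,p}(x) = \overline{G_{n;p,i}(x)}$ is just the Hermitian symmetry of the GNS inner product on $H$, so it is immediate. For the second identity I would observe that every $\chi_a$ is self-adjoint (it lies in the $*$-algebra generated by $\chi_1 = \chi_1^*$) and that $p_n$ commutes with the involution, hence
\begin{equation*}
  x_{i,j}^* = p_n(\chi_i x \chi_j)^* = p_n(\chi_j x^* \chi_i) = (x^*)_{j,i}.
\end{equation*}
Then the trace property of $h$ gives
\begin{equation*}
  G_{n;i,p}(x) = h(x_{i,j}^* x_{p,q}) = h(x_{p,q} x_{i,j}^*) = h\bigl((x^*)_{q,p}^* (x^*)_{j,i}\bigr) = G_{n;q,j}(x^*).
\end{equation*}

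The third identity $G_{n;i,p}(x) = G_{n;j,q}(S(x))$ is the only step that requires real input, and I expect it to be the main (though still mild) difficulty. I would use three standard facts about $S$, valid on the Hopf $*$-algebra of matrix coefficients: (a) the formula $S(u_{i,j}) = u_{j,i}^* = u_{j,i}$ (using $u = \bar u$ in $\FO_N$), which yields $S(\chi_1) = \chi_1$ and hence $S(\chi_a) = \chi_a$ for every $a$ since $\chi_a$ is a polynomial in $\chi_1$; (b) the preservation of the Peter-Weyl decomposition, so that $S$ commutes with $p_n$ (because $u_n$ is self-conjugate); (c) the identities $h \circ S = h$ and $S^2 = \id$, coming from unimodularity of $\FO_N$. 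Combining (a)--(b) with the antimultiplicativity of $S$ gives
\begin{equation*}
  S(x_{i,j}) = p_n(\chi_j\, S(x)\, \chi_i) = (S(x))_{j,i}.
\end{equation*}

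It then remains to upgrade $S$ into a $h$-isometry on the algebraic core, namely $(S(a) \mid S(b)) = (a \mid b)$. Using (c) (which in particular gives $S(a)^* = S(a^*)$) together with the antimultiplicativity of $S$ and the trace property of $h$, one computes
\begin{equation*}
  h(S(a)^* S(b)) = h(S(a^*)S(b)) = h(S(ba^*)) = h(ba^*) = h(a^*b).
\end{equation*}
Applied to $a = x_{i,j}$ and $b = x_{p,q}$, this yields $G_{n;j,q}(S(x)) = (S(x_{i,j}) \mid S(x_{p,q})) = (x_{i,j} \mid x_{p,q}) = G_{n;i,p}(x)$, which concludes the proof. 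No estimate or induction is needed; the only care required is in unfolding the compact quantum group identities (a)--(c).
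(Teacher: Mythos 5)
Your proposal is correct and follows essentially the same route as the paper, which packages the same three facts as the statement that $J:\xi\mapsto\xi^*$ and $U:\xi\mapsto S(\xi)$ are surjective isometries on $H$ (valid in the Kac case) that stabilize $p_nH$ and fix each $\chi_a$; your derivation of $x_{i,j}^* = (x^*)_{j,i}$, $S(x_{i,j}) = (S(x))_{j,i}$, and $h\circ S = h$ is exactly what makes the paper's one-line computation go through.
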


\begin{proof}
  As a Gram matrix, $G_n$ is self-adjoint, which corresponds to the first equality.  Define maps $J$, $U : H\to H$ by $J(x) = x^*$, $U(x) = S(x)$ where $S$ is
  the antipode. The maps are surjective isometries because we are in the Kac case, and since $u_n$ is orthogonal they stabilize $p_nH$ and send $\chi_n$ to itself. We have then
  \begin{align*}
      G_{n;i,p}(x) &= (p_n(\chi_i x \chi_j)\mid p_n(\chi_p x\chi_q)) = (J p_n(\chi_p x\chi_q)\mid J p_n(\chi_i x \chi_j)) \\
                   &= ( p_n(\chi_q x^*\chi_p)\mid p_n(\chi_j x^* \chi_i))=G_{n;q,j}(x^*) \\
                   &= (U p_n(\chi_i x\chi_j)\mid U p_n(\chi_p x \chi_q)) =  (p_n(\chi_j S(x)\chi_i) \mid p_n(\chi_q S(x) \chi_p)) = G_{n;j,q}(S(x)). \qedhere
  \end{align*}
\end{proof}

Our main aim is then to show the existence of a constant $C$ such that $\|G_n\|$, $\|G_n^{-1}\|\leq C$ for all $n$. In fact we even want the constant $C$ to be
uniform over $k$ and $x\in \W_k$, so that the map $\Phi$ from Remark~\ref{rk_isomorphism} will indeed be an isomorphism.

\bigskip

We shall first show that the Gram matrix $G = G(x)$ is bounded as an operator on $\ell^2(\NN\times\NN)$. We start with an easy estimate, which is not
sufficient for this purpose but will be useful later. We then prove an off-diagonal decay estimate for the coefficients of the Gram matrix, see
Lemma~\ref{lem_gram_decay}, using the improvement of the main estimate of \cite{FreslonVergnioux} established at Lemma~\ref{lem_trace_proj}. These two results
easily imply the boundedness of $G(x)$ on $\ell^2(\NN\times\NN)$, which we record at Proposition~\ref{prop_gram_bounded_0}. Note however that the constant $C$
obtained in this way depends on $k$, so that one cannot deduce the boundedness of the whole Gram matrix. This will be improved later.

\begin{lemma}
  \label{lem_gram_entry_bound}
  We have $\|x_{n;i}\|_2\leq (1-q^2)^{-3/2}\|x\|_2$, hence $|G_{n;i,p}| \leq (1-q^2)^{-3} \|x\|_2^2$, for all $n$, $0\leq i,p \leq n-k$. 
\end{lemma}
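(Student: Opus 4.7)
The plan is to reduce everything to Hilbert–Schmidt estimates at the level of the endomorphism spaces $B(H_m)$, and then apply the dimension bounds of Lemma~\ref{lem_dimensions} to cancel the $q$-numbers.

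First I would use the Peter–Weyl–Woronowicz relations~\eqref{eq_peter_weyl} together with $x_{n;i} = u_n(X_{n;i})$ and $x = u_k(X)$ to translate the inequality into the endomorphism algebras: one has $\|x_{n;i}\|_2^2 = d_n^{-1}\|X_{n;i}\|_2^2$ and $\|x\|_2^2 = d_k^{-1}\|X\|_2^2$, so the desired bound becomes
\begin{displaymath}
  \|X_{n;i}\|_2^2 \leq (1-q^2)^{-3}\,\frac{d_n}{d_k}\,\|X\|_2^2.
\end{displaymath}
Next, I would exploit the key observation recorded in Remark~\ref{rk_shift_tannaka}: $X_{n;i} = P_n(\id_i\otimes X\otimes\id_j)P_n$ is the orthogonal projection (for the Hilbert–Schmidt structure) of $\id_i\otimes X\otimes\id_j \in B(H_1^{\otimes n})$ onto $B(H_n)$. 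This immediately gives
\begin{displaymath}
  \|X_{n;i}\|_2 \leq \|\id_i\otimes X\otimes\id_j\|_2 = \sqrt{d_i d_j}\,\|X\|_2.
\end{displaymath}

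Finally, with $n = i+k+j$, Lemma~\ref{lem_dimensions} yields $d_i d_j d_k / d_n \leq q^{-(i+j+k-n)}/(1-q^2)^3 = (1-q^2)^{-3}$, so combining with the previous step gives $\|x_{n;i}\|_2^2 \leq (1-q^2)^{-3}\|x\|_2^2$, which is the first inequality. The second inequality then follows from Cauchy–Schwarz: $|G_{n;i,p}| = |(x_{n;i}\mid x_{n;p})| \leq \|x_{n;i}\|_2\|x_{n;p}\|_2 \leq (1-q^2)^{-3}\|x\|_2^2$. There is no real obstacle here; the only care needed is to remember that the extra factor $d_k$ on the right-hand side of the Peter–Weyl reformulation is exactly what combines with $d_i d_j/d_n$ to produce a telescoping product of $q$-powers.
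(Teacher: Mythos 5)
Your overall approach — project onto $B(H_n)$ in the Hilbert–Schmidt structure using Remark~\ref{rk_shift_tannaka}, then apply the dimension bounds — is sound in spirit and close to what the paper does, but there is a concrete error in the key step. You assert
\begin{displaymath}
  \|\id_i\otimes X\otimes\id_j\|_2 = \sqrt{d_i d_j}\,\|X\|_2,
\end{displaymath}
but this is false. Here $\id_i$ is the identity of $B(H_1^{\otimes i})$ (not the Jones–Wenzl projection $P_i$), so the trace factors as $\Tr(\id_i\otimes X^*X\otimes\id_j) = N^i\,\|X\|_2^2\,N^j$, giving $\|\id_i\otimes X\otimes\id_j\|_2 = N^{(i+j)/2}\|X\|_2$. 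Since $d_i < N^i$ for $i\geq 2$, the true value is strictly larger than what you wrote, and the difference matters: plugging $N^{(i+j)/2}$ into your final step produces $N^{i+j}d_k/d_n \approx (Nq)^{i+j}/(1-q^2) = (1+q^2)^{i+j}/(1-q^2)$, which is unbounded in $i+j$, so the argument as written does not prove the lemma.

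The fix is short and stays within your framework. Since $X = P_kXP_k$ and $P_n$ absorbs $P_i\otimes P_k\otimes P_j$, the element $X_{n;i}$ is equally the HS-orthogonal projection of $P_i\otimes X\otimes P_j$ onto $B(H_n)$: indeed $A \mapsto (P_i\otimes P_k\otimes P_j)A(P_i\otimes P_k\otimes P_j)$ is an HS-orthogonal projection whose range contains $B(H_n)$, so the two projections compose. Hence $\|X_{n;i}\|_2 \leq \|P_i\otimes X\otimes P_j\|_2 = \sqrt{d_i d_j}\|X\|_2$, which is the bound you need; from there your dimension calculation with Lemma~\ref{lem_dimensions} and the Cauchy–Schwarz conclusion go through. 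The paper's own proof reaches the same intermediate estimate by a direct positivity argument, $\Tr(P_n Y^*P_n Y P_n)\leq\Tr(P_n Y^*YP_n)\leq\Tr(P_i\otimes X^*X\otimes P_j)$, inserting the projections $P_i$, $P_j$ at the right moment; your projection viewpoint is a clean alternative once the $P_i$, $P_j$ are put in place.
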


\begin{proof}
  We have
  \begin{align*}
    \|X_{n;i}\|_2^2 &= \Tr(P_n (\id_i\otimes X^*\otimes \id_j) P_n (\id_i\otimes X\otimes \id_j) P_n) \\
    &\leq \Tr(P_n (\id_i\otimes X^*X\otimes \id_j) P_n) \leq
  \Tr(P_i\otimes X^*X\otimes P_j) = d_i d_j \|X\|_2^2,
  \end{align*}
  hence $\|x_{n;i}\|_2^2 \leq (d_id_jd_k/d_n)\|x\|_2^2$. The result then follows from
  Lemma~\ref{lem_dimensions}.
\end{proof}

\begin{lemma}
  \label{lem_gram_decay}
  For every $q_0\in\itv]0,1[$ there exists $\alpha\in\itv]0,1[$ and $C>0$ depending only on $q_0$ such that $|G_{n;i,p}|\leq Cq^{\alpha(|p-i|-k)-2-k}\|x\|_2^2$
  for all $n$, $i$, $p$ such that $|p-i| \geq k$, as soon as $q\in\itv]0,{q_0}]$.
\end{lemma}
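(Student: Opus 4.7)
The plan is to apply Lemma~\ref{lem_trace_proj} with the ``gap'' between the positions of $X^*$ and $X$ playing the role of the middle block. By the conjugation symmetry in Lemma~\ref{lem_symmetries} I may assume $p\geq i$; set $b = p-i\geq k$, $a = i+k$, $c = n-p = k+j'$, so that $a + (b-k) + c = n$ and the block $M=[a+1,p]$ of size $b-k\geq 0$ separates the support $[i+1,a]$ of $X^*$ from the support $[p+1,p+k]$ of $X$. I then write
\begin{equation*}
  d_n\,G_{n;i,p} = \Tr\bigl(P_n\, \tilde X^*\, P_n\, \tilde X\bigr),\quad \tilde X^* = \id_i\otimes X^*\otimes\id_{n-k-i},\quad \tilde X=\id_p\otimes X\otimes\id_{j'},
\end{equation*}
and aim to reduce this to a trace involving $\Pi_{a,b-k,c}$ via a partial trace over $M$; Lemma~\ref{lem_trace_proj} then provides $\Pi_{a,b-k,c} = \lambda(P_a\otimes P_c)+E$ with $\|E\|_\infty\leq Cq^{\lfloor\alpha(b-k)\rfloor}$.

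The main term of this approximation should reduce to $d_{b-k}\lambda\cdot\Tr(X^*_{a;i})\cdot\Tr(X_{c;0})$, which vanishes identically: indeed $X,X^*\in B(H_k)^\ccirc$, and Lemma~\ref{lem_trapped_strings} applied to $X^*$ gives $(\Tr_{i+1}\otimes\id_{k-1})(X^*_{a;i})=0$, whence $\Tr(X^*_{a;i})=0$; symmetrically $\Tr(X_{c;0})=0$. The error term can be bounded using $\|E\|_1\leq d_ad_c\|E\|_\infty$ (since $E$ is supported in $H_a\otimes H_c$) by
\begin{equation*}
  |d_n\,G_{n;i,p}|\leq C\, d_{b-k}\, d_a\, d_c\cdot\|X\|_\infty^2\cdot q^{\lfloor\alpha(b-k)\rfloor}.
\end{equation*}
By Lemma~\ref{lem_dimensions} one has $d_{b-k}d_ad_c/d_n \leq (1-q^2)^{-3}$ because $a+(b-k)+c=n$, and combined with $\|X\|_\infty^2\leq \|X\|_2^2 = d_k\|x\|_2^2\leq q^{-k}(1-q^2)^{-1}\|x\|_2^2$ this yields $|G_{n;i,p}|\leq C'\,q^{\alpha(b-k)-k-2}\|x\|_2^2$ after absorbing numerical constants.

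The most delicate technical point will be the first reduction. Expanding both copies of $P_n$ in a 3-block tensor decomposition $P_n=\sum_\mu A_\mu\otimes C_\mu\otimes E_\mu$ on the blocks $(L,M,R)$ of sizes $(a,b-k,c)$, a direct cyclic computation gives
\begin{equation*}
  d_n\,G_{n;i,p} = \sum_{\mu,\nu}\Tr\bigl((\id_i\otimes X^*)A_\mu A_\nu\bigr)\,\Tr(C_\mu C_\nu)\,\Tr\bigl((X\otimes\id_{j'})E_\nu E_\mu\bigr),
\end{equation*}
in which the reversed ordering $E_\nu E_\mu$ (forced by the cyclic trace structure) prevents an immediate identification with $\Tr\bigl((\id_i\otimes X^*)\otimes(X\otimes\id_{j'})\cdot d_{b-k}\Pi_{a,b-k,c}\bigr)$ via $P_n^2=P_n$. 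I expect to resolve this either by an averaging/symmetrisation argument (combining the expression with its complex conjugate and controlling a commutator remainder via $\|[P_\mu^R, X\otimes\id_{j'}]\|$-type bounds), or by first decomposing one of the $P_n$'s using the bilateral Wenzl recursion of Lemma~\ref{lem_bilateral_recursion} to isolate the $(b-k)$-gap cleanly on one side; in either case, the extra contributions should preserve the $q^{\alpha(b-k)}$ decay rate.
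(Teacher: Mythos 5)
Your proposal correctly identifies the key ingredients: the middle ``gap'' of length $|p-i|-k$ should be handled via Lemma~\ref{lem_trace_proj}, the leading term should vanish because $\Tr(X_{i,0})=\Tr(X_{0,c-k})=0$ for $X\in B(H_k)^\ccirc$, and the dimension bookkeeping via Lemma~\ref{lem_dimensions} will produce the factor $q^{-2-k}$. However, the reduction step that you flag in your last paragraph is exactly the place where the argument is incomplete, and neither of the two fixes you sketch is carried out. As you observe, expanding both copies of $P_n$ on the blocks $(L,M,R)$ produces a double sum with the ``reversed'' composition $E_\nu E_\mu$ on the $R$-block, and this genuinely obstructs the identification of the sum with $\Tr\bigl((\tilde Y^*\otimes\tilde Z)\,d_{b-k}\Pi_{a,b-k,c}\bigr)$: one cannot commute $\tilde Y^*$ out from between $A_\mu$ and $A_\nu$, nor $\tilde Z$ from the end of $E_\nu E_\mu$, without paying a price. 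Your suggestion of a symmetrisation argument is vague and would not obviously preserve the decay rate; your suggestion to invoke Lemma~\ref{lem_bilateral_recursion} gives an exact one-step recursion in $n$ rather than a controlled approximation, and iterating it $|p-i|-k$ times does not obviously yield the desired geometric gain.

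The paper sidesteps this entirely by using Lemma~\ref{lem_highest_weight} (not mentioned in your proposal) to approximate \emph{one} of the two occurrences of $P_n$ by a product of two smaller Jones--Wenzl projections, namely $P_n\simeq(\id_{i+k+a}\otimes P_{j-a})(P_p\otimes\id_{k+q})$ with $a=\lfloor(p-i-k)/2\rfloor$, at cost $O(q^a)$. The factor $P_p\otimes\id$ is then absorbed by $\id_p\otimes X\otimes\id_q$, and $P_{j-a}$ slides past $X^*$ and is absorbed by the remaining single copy of $P_n$; one is left with a single $P_n$ to partial-trace over the middle $a$ positions, which is precisely $\Pi_{i+k,a,j-a}$ and hence amenable to Lemma~\ref{lem_trace_proj}. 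Note also that the paper cuts the gap in the middle (at $a\approx(p-i-k)/2$) so that the two error terms --- from the factorization of $P_n$ and from the $\Pi$-approximation --- both decay like a fractional power of $q^{p-i-k}$, whereas your cut at $a=i+k$ would require a separate mechanism to control the factorization error. Until the reduction to a single $P_n$ is actually performed, the proposal does not prove the lemma.
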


\begin{proof}
  The reader will find after the proof a graphical ``explanation'' of the computations.  Write
  $n = i+k+j = p+k+q$.  We have
  $(x_{n;i}\mid x_{n;p}) = \Tr(X_{i,j}^*X_{p,q})/d_n$. We first assume
  $p-i\geq k$ and put $a = \lfloor(p-i-k)/2\rfloor$.  By
  Lemma~\ref{lem_highest_weight} we have
  $\|P_n - (\id_{i+k+a}\otimes P_{j-a})(P_p\otimes\id_{k+q})\| \leq D
  q^{p-(i+k+a)} \leq Dq^a$, where $D>0$ is a constant depending only on $q_0$. This yields
  \begin{align} \nonumber
    (x_{n;i}\mid x_{n;p}) &= d_n^{-1}\Tr_n[P_n (\id_i\otimes X^*\otimes\id_j)P_n(\id_p\otimes X\otimes\id_q)P_n ] \\ \nonumber
                          &\simeq d_n^{-1}\Tr_n[P_n(\id_i\otimes X^*\otimes \id_{a}\otimes P_{j-a})(P_p\otimes X\otimes\id_q)P_n] \\
    \label{eq_gram_decay}
                          &= d_n^{-1}(\Tr_{i+k}\otimes \Tr_{a}\otimes \Tr_{j-a})[(\id_i\otimes X^*\otimes \id_{a}\otimes P_{j-a})(P_p\otimes X\otimes\id_q)P_n].
  \end{align}
  Since $d_n^{-1}\Tr_n(P_n \cdot P_n)$ is a state, the error is bounded by $D q^a\|X\|^2$. In the last expression, the projection $P_p\otimes\id_k\otimes\id_q$
  is absorbed in $P_n$, and since $j-a\geq k+q$ the partial trace $(\id_{i+k}\otimes \Tr_{a}\otimes\id_{j-a})(P_n)$ appears. We know from
  Lemma~\ref{lem_trace_proj} that this partial trace is equal to a multiple $\lambda$ of the identity up to $E d_aq^{\lfloor\beta a\rfloor}$ if
  $q\in\itv]0,{q_0}]$, for some $\beta \in\itv]0,1[$ and $E>0$ depending only on $q_0$. Applying the remaining traces and dividing by $d_n$ the total error is
  controlled by
  \begin{align*}
    Dq^a\|X\|^2 + Eq^{\lfloor\beta a\rfloor} \frac{d_{i+k}d_{a}d_{j-a}}{d_n}\|X\|^2
    &\leq q^{\lfloor\beta a\rfloor} (D + E/(1-q_0^2)^3)\|X\|_2^2 \\
    &\leq C q^{\alpha(p-i-k) - 2-k}\|x\|^2_2,
  \end{align*}
  for $C = [D + E/(1-q_0^2)^3]/(1-q_0)$ and $\alpha = \beta/2$ --- recall that
  $\|X\|_2^2 = d_k \|x\|_2^2 \leq q^{-k} \|x\|_2^2/(1-q)$. But if we replace
  $(\id_{i+k}\otimes \Tr_{a}\otimes\id_{j-a})(P_n)$ by
  $\lambda(P_{i+k}\otimes P_{j-a})$ in~\eqref{eq_gram_decay} we can see the
  trace $\Tr((\id_i\otimes X^*)P_{i+k})$ which vanishes (as well as
  $\Tr((\id_{a'}\otimes X\otimes \id_q)P_{j-a})$, where $a' = \lceil (p-i-k)/2\rceil$).

  This proves the result if $p-i\geq k$. If $i-p\geq k$ we can proceed in the same way ``on the other side'' and the result follows because then
  $q-j = |p-i| \geq k$.
\end{proof}

We give below a graphical version of the above proof, for the convenience of the
reader, in the case $p-i\geq k$. Of course it is still necessary to carry out
the quantitative bookkeeping of approximations, as we did above.  It is
possible to draw similar graphical computations for many lemmata in this section
and the following ones.

\medskip

\begin{center}
\setlength{\unitlength}{1657sp}%
\begingroup\makeatletter\ifx\SetFigFont\undefined%
\gdef\SetFigFont#1#2#3#4#5{%
  \reset@font\fontsize{#1}{#2pt}%
  \fontfamily{#3}\fontseries{#4}\fontshape{#5}%
  \selectfont}%
\fi\endgroup%
\begin{picture}(11280,9204)(-4154,-12313)
\thinlines
{\color[rgb]{0,0,0}\put(3241,-4921){\line( 0,-1){1080}}
}%
{\color[rgb]{0,0,0}\put(4321,-4921){\line( 0,-1){1080}}
}%
{\color[rgb]{0,0,0}\put(6211,-5731){\line( 0,-1){1080}}
}%
{\color[rgb]{0,0,0}\put(4591,-5731){\dashbox{107}(1890,540){}}
}%
{\color[rgb]{0,0,0}\put(5401,-6541){\dashbox{107}(540,540){}}
}%
{\color[rgb]{0,0,0}\put(2971,-6541){\dashbox{107}(2160,540){}}
}%
{\color[rgb]{0,0,0}\put(3781,-4921){\line( 0,-1){270}}
}%
{\color[rgb]{0,0,0}\put(3781,-5731){\line( 0,-1){270}}
}%
{\color[rgb]{0,0,0}\put(4861,-5731){\line( 0,-1){270}}
}%
{\color[rgb]{0,0,0}\put(5671,-5731){\line( 0,-1){270}}
}%
{\color[rgb]{0,0,0}\put(2971,-7351){\dashbox{107}(3510,540){}}
}%
{\color[rgb]{0,0,0}\put(4051,-6541){\line( 0,-1){270}}
}%
{\color[rgb]{0,0,0}\put(5671,-6541){\line( 0,-1){270}}
}%
{\color[rgb]{0,0,0}\put(3781,-7351){\line( 0,-1){540}}
\put(3781,-7891){\line(-1, 0){1350}}
\put(2431,-7891){\line( 0, 1){4050}}
\put(2431,-3841){\line( 1, 0){1350}}
\put(3781,-3841){\line( 0,-1){540}}
}%
{\color[rgb]{0,0,0}\put(5671,-7351){\line( 0,-1){540}}
\put(5671,-7891){\line( 1, 0){1350}}
\put(7021,-7891){\line( 0, 1){4050}}
\put(7021,-3841){\line(-1, 0){1350}}
\put(5671,-3841){\line( 0,-1){540}}
}%
{\color[rgb]{0,0,0}\put(5671,-4921){\line( 0,-1){270}}
}%
{\color[rgb]{0,0,0}\put(3511,-5731){\dashbox{107}(540,540){}}
}%
{\color[rgb]{0,0,0}\put(2971,-4921){\dashbox{107}(3510,540){}}
}%
{\color[rgb]{0,0,0}\put(4321,-7351){\line( 0,-1){810}}
\put(4321,-8161){\line(-1, 0){2160}}
\put(2161,-8161){\line( 0, 1){4590}}
\put(2161,-3571){\line( 1, 0){2160}}
\put(4321,-3571){\line( 0,-1){810}}
}%
{\color[rgb]{0,0,0}\put(-2699,-4471){\line( 0,-1){1080}}
}%
{\color[rgb]{0,0,0}\put(-1619,-4471){\line( 0,-1){1080}}
}%
{\color[rgb]{0,0,0}\put(-2159,-4471){\line( 0,-1){270}}
}%
{\color[rgb]{0,0,0}\put(-2159,-5281){\line( 0,-1){270}}
}%
{\color[rgb]{0,0,0}\put(-269,-7711){\line( 0,-1){540}}
\put(-269,-8251){\line( 1, 0){1350}}
\put(1081,-8251){\line( 0, 1){4860}}
\put(1081,-3391){\line(-1, 0){1350}}
\put(-269,-3391){\line( 0,-1){540}}
}%
{\color[rgb]{0,0,0}\put(-2429,-5281){\dashbox{107}(540,540){}}
}%
{\color[rgb]{0,0,0}\put(-1619,-7711){\line( 0,-1){810}}
\put(-1619,-8521){\line(-1, 0){2160}}
\put(-3779,-8521){\line( 0, 1){5400}}
\put(-3779,-3121){\line( 1, 0){2160}}
\put(-1619,-3121){\line( 0,-1){810}}
}%
{\color[rgb]{0,0,0}\put(-2159,-7711){\line( 0,-1){540}}
\put(-2159,-8251){\line(-1, 0){1350}}
\put(-3509,-8251){\line( 0, 1){4860}}
\put(-3509,-3391){\line( 1, 0){1350}}
\put(-2159,-3391){\line( 0,-1){540}}
}%
{\color[rgb]{0,0,0}\put(-2969,-4471){\dashbox{107}(3510,540){}}
}%
{\color[rgb]{0,0,0}\put(-2969,-7711){\dashbox{107}(3510,540){}}
}%
{\color[rgb]{0,0,0}\put(-2969,-4471){\dashbox{107}(3510,540){}}
}%
{\color[rgb]{0,0,0}\put(-539,-6901){\dashbox{107}(540,540){}}
}%
{\color[rgb]{0,0,0}\put(-2969,-6091){\dashbox{107}(3510,540){}}
}%
{\color[rgb]{0,0,0}\put(-269,-6091){\line( 0,-1){270}}
}%
{\color[rgb]{0,0,0}\put(-2699,-6091){\line( 0,-1){1080}}
}%
{\color[rgb]{0,0,0}\put(-2159,-6091){\line( 0,-1){1080}}
}%
{\color[rgb]{0,0,0}\put(-1619,-6091){\line( 0,-1){1080}}
}%
{\color[rgb]{0,0,0}\put(271,-4471){\line( 0,-1){1080}}
}%
{\color[rgb]{0,0,0}\put(-269,-4471){\line( 0,-1){1080}}
}%
{\color[rgb]{0,0,0}\put(271,-6091){\line( 0,-1){1080}}
}%
{\color[rgb]{0,0,0}\put(-809,-6091){\line( 0,-1){1080}}
}%
{\color[rgb]{0,0,0}\put(-809,-4471){\line( 0,-1){1080}}
}%
{\color[rgb]{0,0,0}\put(-2969,-6091){\dashbox{107}(3510,540){}}
}%
{\color[rgb]{0,0,0}\put(-539,-6901){\dashbox{107}(540,540){}}
}%
{\color[rgb]{0,0,0}\put(-269,-6901){\line( 0,-1){270}}
}%
{\color[rgb]{0,0,0}\put(-2159,-11491){\line( 0,-1){540}}
\put(-2159,-12031){\line(-1, 0){1350}}
\put(-3509,-12031){\line( 0, 1){2430}}
\put(-3509,-9601){\line( 1, 0){1350}}
\put(-2159,-9601){\line( 0,-1){540}}
}%
{\color[rgb]{0,0,0}\put(-1619,-11491){\line( 0,-1){810}}
\put(-1619,-12301){\line(-1, 0){2160}}
\put(-3779,-12301){\line( 0, 1){2970}}
\put(-3779,-9331){\line( 1, 0){2160}}
\put(-1619,-9331){\line( 0,-1){1620}}
}%
{\color[rgb]{0,0,0}\put(-2159,-10681){\line( 0,-1){270}}
}%
{\color[rgb]{0,0,0}\put(-2429,-10681){\dashbox{107}(540,540){}}
}%
{\color[rgb]{0,0,0}\put(-2699,-11491){\line( 0,-1){270}}
\put(-2699,-11761){\line(-1, 0){540}}
\put(-3239,-11761){\line( 0, 1){1890}}
\put(-3239,-9871){\line( 1, 0){540}}
\put(-2699,-9871){\line( 0,-1){1080}}
}%
{\color[rgb]{0,0,0}\put(-1079,-11491){\line( 0,-1){810}}
\put(-1079,-12301){\line( 1, 0){2160}}
\put(1081,-12301){\line( 0, 1){2970}}
\put(1081,-9331){\line(-1, 0){2160}}
\put(-1079,-9331){\line( 0,-1){1620}}
}%
{\color[rgb]{0,0,0}\put(-539,-11491){\line( 0,-1){540}}
\put(-539,-12031){\line( 1, 0){1350}}
\put(811,-12031){\line( 0, 1){2430}}
\put(811,-9601){\line(-1, 0){1350}}
\put(-539,-9601){\line( 0,-1){540}}
}%
{\color[rgb]{0,0,0}\put(  1,-11491){\line( 0,-1){270}}
\put(  1,-11761){\line( 1, 0){540}}
\put(541,-11761){\line( 0, 1){1890}}
\put(541,-9871){\line(-1, 0){540}}
\put(  1,-9871){\line( 0,-1){1080}}
}%
{\color[rgb]{0,0,0}\put(-539,-10726){\line( 0,-1){270}}
}%
{\color[rgb]{0,0,0}\put(-809,-10681){\dashbox{107}(540,540){}}
}%
{\color[rgb]{0,0,0}\put(-2969,-11491){\dashbox{107}(3240,540){}}
}%
{\color[rgb]{0,0,0}\put(3511,-11491){\line( 0,-1){540}}
\put(3511,-12031){\line(-1, 0){1350}}
\put(2161,-12031){\line( 0, 1){2430}}
\put(2161,-9601){\line( 1, 0){1350}}
\put(3511,-9601){\line( 0,-1){540}}
}%
{\color[rgb]{0,0,0}\put(3511,-10681){\line( 0,-1){270}}
}%
{\color[rgb]{0,0,0}\put(3241,-10681){\dashbox{107}(540,540){}}
}%
{\color[rgb]{0,0,0}\put(2971,-11491){\line( 0,-1){270}}
\put(2971,-11761){\line(-1, 0){540}}
\put(2431,-11761){\line( 0, 1){1890}}
\put(2431,-9871){\line( 1, 0){540}}
\put(2971,-9871){\line( 0,-1){1080}}
}%
{\color[rgb]{0,0,0}\put(2701,-11491){\dashbox{107}(1350,540){}}
}%
{\color[rgb]{0,0,0}\put(5131,-11491){\line( 0,-1){540}}
\put(5131,-12031){\line( 1, 0){1350}}
\put(6481,-12031){\line( 0, 1){2430}}
\put(6481,-9601){\line(-1, 0){1350}}
\put(5131,-9601){\line( 0,-1){540}}
}%
{\color[rgb]{0,0,0}\put(5671,-11491){\line( 0,-1){270}}
\put(5671,-11761){\line( 1, 0){540}}
\put(6211,-11761){\line( 0, 1){1890}}
\put(6211,-9871){\line(-1, 0){540}}
\put(5671,-9871){\line( 0,-1){1080}}
}%
{\color[rgb]{0,0,0}\put(5131,-10681){\line( 0,-1){270}}
}%
{\color[rgb]{0,0,0}\put(4861,-10681){\dashbox{107}(540,540){}}
}%
{\color[rgb]{0,0,0}\put(4321,-11491){\dashbox{107}(1620,540){}}
}%
{\color[rgb]{0,0,0}\put(4591,-11491){\line( 0,-1){810}}
\put(4591,-12301){\line( 1, 0){2160}}
\put(6751,-12301){\line( 0, 1){2970}}
\put(6751,-9331){\line(-1, 0){2160}}
\put(4591,-9331){\line( 0,-1){1620}}
}%
\put(3781,-5596){\makebox(0,0)[b]{\smash{{\SetFigFont{10}{12.0}{\rmdefault}{\mddefault}{\updefault}{\color[rgb]{0,0,0}$X^*$}%
}}}}
\put(5491,-5596){\makebox(0,0)[b]{\smash{{\SetFigFont{10}{12.0}{\rmdefault}{\mddefault}{\updefault}{\color[rgb]{0,0,0}$P_{j-a}$}%
}}}}
\put(5671,-6406){\makebox(0,0)[b]{\smash{{\SetFigFont{10}{12.0}{\rmdefault}{\mddefault}{\updefault}{\color[rgb]{0,0,0}$X$}%
}}}}
\put(4051,-6406){\makebox(0,0)[b]{\smash{{\SetFigFont{10}{12.0}{\rmdefault}{\mddefault}{\updefault}{\color[rgb]{0,0,0}$P_p$}%
}}}}
\put(4681,-7216){\makebox(0,0)[b]{\smash{{\SetFigFont{10}{12.0}{\rmdefault}{\mddefault}{\updefault}{\color[rgb]{0,0,0}$P_n$}%
}}}}
\put(4771,-4786){\makebox(0,0)[b]{\smash{{\SetFigFont{10}{12.0}{\rmdefault}{\mddefault}{\updefault}{\color[rgb]{0,0,0}$P_n$}%
}}}}
\put(-2159,-5146){\makebox(0,0)[b]{\smash{{\SetFigFont{10}{12.0}{\rmdefault}{\mddefault}{\updefault}{\color[rgb]{0,0,0}$X^*$}%
}}}}
\put(-269,-6766){\makebox(0,0)[b]{\smash{{\SetFigFont{10}{12.0}{\rmdefault}{\mddefault}{\updefault}{\color[rgb]{0,0,0}$X$}%
}}}}
\put(-1169,-4336){\makebox(0,0)[b]{\smash{{\SetFigFont{10}{12.0}{\rmdefault}{\mddefault}{\updefault}{\color[rgb]{0,0,0}$P_n$}%
}}}}
\put(-1169,-7576){\makebox(0,0)[b]{\smash{{\SetFigFont{10}{12.0}{\rmdefault}{\mddefault}{\updefault}{\color[rgb]{0,0,0}$P_n$}%
}}}}
\put(-1169,-5956){\makebox(0,0)[b]{\smash{{\SetFigFont{10}{12.0}{\rmdefault}{\mddefault}{\updefault}{\color[rgb]{0,0,0}$P_n$}%
}}}}
\put(1621,-6001){\makebox(0,0)[b]{\smash{{\SetFigFont{10}{12.0}{\rmdefault}{\mddefault}{\updefault}{\color[rgb]{0,0,0}$\simeq$}%
}}}}
\put(-4139,-6001){\makebox(0,0)[rb]{\smash{{\SetFigFont{10}{12.0}{\rmdefault}{\mddefault}{\updefault}{\color[rgb]{0,0,0}$\Tr(X_{i,j}^*X_{p,q}) = $}%
}}}}
\put(-4139,-10951){\makebox(0,0)[rb]{\smash{{\SetFigFont{10}{12.0}{\rmdefault}{\mddefault}{\updefault}{\color[rgb]{0,0,0}$ = $}%
}}}}
\put(1621,-10951){\makebox(0,0)[b]{\smash{{\SetFigFont{10}{12.0}{\rmdefault}{\mddefault}{\updefault}{\color[rgb]{0,0,0}$\simeq$}%
}}}}
\put(7111,-10951){\makebox(0,0)[lb]{\smash{{\SetFigFont{10}{12.0}{\rmdefault}{\mddefault}{\updefault}{\color[rgb]{0,0,0}$ = 0 \times 0.$}%
}}}}
\put(-1259,-11356){\makebox(0,0)[b]{\smash{{\SetFigFont{10}{12.0}{\rmdefault}{\mddefault}{\updefault}{\color[rgb]{0,0,0}$P_n$}%
}}}}
\put(-2159,-10546){\makebox(0,0)[b]{\smash{{\SetFigFont{10}{12.0}{\rmdefault}{\mddefault}{\updefault}{\color[rgb]{0,0,0}$X^*$}%
}}}}
\put(-539,-10546){\makebox(0,0)[b]{\smash{{\SetFigFont{10}{12.0}{\rmdefault}{\mddefault}{\updefault}{\color[rgb]{0,0,0}$X$}%
}}}}
\put(3511,-10546){\makebox(0,0)[b]{\smash{{\SetFigFont{10}{12.0}{\rmdefault}{\mddefault}{\updefault}{\color[rgb]{0,0,0}$X^*$}%
}}}}
\put(5131,-10546){\makebox(0,0)[b]{\smash{{\SetFigFont{10}{12.0}{\rmdefault}{\mddefault}{\updefault}{\color[rgb]{0,0,0}$X$}%
}}}}
\put(3376,-11356){\makebox(0,0)[b]{\smash{{\SetFigFont{10}{12.0}{\rmdefault}{\mddefault}{\updefault}{\color[rgb]{0,0,0}$P_{i+k}$}%
}}}}
\put(5131,-11356){\makebox(0,0)[b]{\smash{{\SetFigFont{10}{12.0}{\rmdefault}{\mddefault}{\updefault}{\color[rgb]{0,0,0}$P_{j-a}$}%
}}}}
\end{picture}%
\end{center}

\medskip

\begin{proposition} \label{prop_gram_bounded_0} Fix $q\in\itv]0,1[$ and assume that $q\in\itv]0,{q_0}]$. There exists a constant $C>0$, depending on $k$ and
  $q_0$, such that $\|G_n\|\leq C \|x\|_2^2$ for all $n$. In particular $G(x)$ is bounded.
\end{proposition}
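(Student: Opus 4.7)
The plan is to apply the Schur test to the self-adjoint matrix $G_n$: since $G_n = G_n^*$, we have the standard bound
\[
\|G_n\| \;\leq\; \sup_i \sum_p |G_{n;i,p}|.
\]
So it suffices to show that each row-sum of $|G_{n;i,p}|$ is bounded by a constant (depending only on $k$ and $q_0$) times $\|x\|_2^2$, uniformly in $n$ and $i$.

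To do this, I would split the row-sum according to whether $|p-i|$ is small or large compared to $k$:
\[
\sum_{p=0}^{n-k} |G_{n;i,p}| \;=\; \sum_{|p-i|<k} |G_{n;i,p}| \;+\; \sum_{|p-i|\geq k} |G_{n;i,p}|.
\]
The first sum has at most $2k-1$ nonzero terms, and Lemma~\ref{lem_gram_entry_bound} bounds each of them by $(1-q_0^2)^{-3}\|x\|_2^2$, giving a total of at most $(2k-1)(1-q_0^2)^{-3}\|x\|_2^2$. For the second sum, Lemma~\ref{lem_gram_decay} provides the geometric decay $|G_{n;i,p}|\leq C_0 q^{\alpha(|p-i|-k)-2-k}\|x\|_2^2$ with $\alpha\in\itv]0,1[$ and $C_0$ depending only on $q_0$. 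Bounding by a geometric series we get
\[
\sum_{|p-i|\geq k} |G_{n;i,p}| \;\leq\; 2C_0 q^{-2-k}\|x\|_2^2 \sum_{m=0}^{\infty} q^{\alpha m} \;\leq\; \frac{2C_0\, q_0^{-2-k}}{1-q_0^{\alpha}}\|x\|_2^2.
\]

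Combining both estimates gives a uniform bound $\sup_i\sum_p|G_{n;i,p}|\leq C\|x\|_2^2$ with $C$ depending only on $k$ and $q_0$, which by the Schur test yields $\|G_n\|\leq C\|x\|_2^2$ for all $n$. Since $G = G(x)$ is block-diagonal with blocks $G_n$, the boundedness of $G$ on $\ell^2(\NN\times\NN)$ follows immediately. There is no real obstacle here; the work is entirely contained in the two preceding lemmas, and the only subtlety is to notice that the dependence of $C$ on $k$ (through both the $q^{-k}$ factor from the decay estimate and the $2k-1$ near-diagonal terms) is unavoidable at this stage but acceptable for the statement of the proposition—removing this $k$-dependence is exactly what the later, more delicate arguments in Section~\ref{sec_gram} are meant to achieve.
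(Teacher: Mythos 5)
The approach (Schur test / band decomposition) is essentially the same as the paper's; the paper bounds $\|\check G_n\|$ by a Cauchy--Schwarz computation that is just the Schur test in disguise, and $\|\hat G_n\|$ by counting the width of the band. However, your proof contains a genuine error in the tail estimate.

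You write
\[
\sum_{|p-i|\geq k} |G_{n;i,p}| \;\leq\; 2C_0\, q^{-2-k}\|x\|_2^2 \sum_{m=0}^{\infty} q^{\alpha m} \;\leq\; \frac{2C_0\, q_0^{-2-k}}{1-q_0^{\alpha}}\|x\|_2^2,
\]
but the inequality $q^{-2-k}\leq q_0^{-2-k}$ is \emph{backwards}: since $q\leq q_0<1$ we have $q^{-1}\geq q_0^{-1}$, hence $q^{-2-k}\geq q_0^{-2-k}$, and this factor blows up as $q\to 0$. So the constant you obtain is not uniform over $q\in\itv]0,{q_0}]$, contradicting the claim that $C$ depends only on $k$ and $q_0$.

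The fix is exactly the paper's choice of cut-off: rather than splitting at $|p-i|<k$ versus $|p-i|\geq k$, split at $|p-i|\leq l$ versus $|p-i|>l$ where $l = k + \lceil (2+k)/\alpha\rceil$. Then for $|p-i|>l$ one writes
\[
q^{\alpha(|p-i|-k)-2-k} \;=\; q^{\alpha(|p-i|-l)}\,q^{\alpha(l-k)-2-k}\;\leq\; q^{\alpha(|p-i|-l)},
\]
since $\alpha(l-k)\geq 2+k$ makes the second factor $\leq 1$. The troublesome $q^{-2-k}$ is thereby absorbed into the geometric decay, and the remaining sum $\sum_{m\geq 1}q^{\alpha m}\leq q_0^\alpha/(1-q_0^\alpha)$ really is bounded uniformly in $q$. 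The near-diagonal band then has at most $2l+1$ entries, each controlled by Lemma~\ref{lem_gram_entry_bound}, and $l$ depends only on $k$ and $\alpha$ (hence on $k$ and $q_0$). Everything else in your argument (Schur test for the self-adjoint $G_n$, block-diagonality of $G$) is fine.
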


\begin{proof}
  Take the constants $\alpha$, $C$ provided by Lemma~\ref{lem_gram_decay}. Put $l = k+\lceil (2+k)/\alpha\rceil$, so that $\alpha k +2 + k\leq \alpha l$, and
  decompose $G_n = \hat G_n + \check G_n$, where $\hat G_{n;i,p} = \delta_{|i-p|\leq l} G_{n;i,p}$.
  From Lemma~\ref{lem_gram_decay} we have $|\check G_{n;i,p}| \leq Cq^{\alpha(|p-i|-l)}\|x\|_2^2$ and it is then a standard
  fact that $\check G$ is bounded. More precisely for any $\lambda\in\ell^2(\NN)$ we have by Cauchy-Schwarz
  \begin{align*}
    \left|{\ts\sum_{i,p}} \bar\lambda_i\lambda_p \check G_{n;i,p}\right|
    &\leq ({\ts\sum_{i,p}} |\lambda_i|^2 |\check G_{n;i,p}|)^{1/2}({\ts\sum_{i,p}} |\lambda_p|^2 |\check G_{n;i,p}|)^{1/2} \\
    &\leq C\|x\|_2^2~ {\ts\sum_i} |\lambda_i|^2 {\ts\sum_{|p-i|>l}} q^{\alpha(|p-i|-l)}
      \leq \frac{2q^\alpha C\|x\|_2^2}{1-q^\alpha} \|\lambda\|^2.
  \end{align*}
  This shows that $\|\check G_n\|\leq 2q_0^\alpha C\|x\|_2^2/(1-q_0^\alpha)$ for all $n$ and $q\in\itv]0,{q_0}]$.

  On the other hand by Lemma~\ref{lem_gram_entry_bound} we have $|G_{n;i,p}|\leq (1-q_0^2)^{-3}\|x\|_2^2$ for all $n$, $i$, $p$ and it follows easily
  $\|\hat G_n\|\leq (2l+1) (1-q_0^2)^{-3}\|x\|_2^2$.
\end{proof}

\bigskip

Now we want to prove that $G$ has a bounded inverse and obtain uniform estimates with respect to $k$. This requires a finer analysis of the band matrices
$\hat G_n$ of the previous proof. We first show that for $m<n$ the diagonal blocks of size $m-k+1$ of $G_n$ ``resemble'' $G_m$, with a better approximation
order for blocks that are far away from the ``borders'' of $G_n$. This will allow to reduce the analysis of $\hat G_n$ to that of a ``fixed size'' matrix
$G_m$ (in fact $m$ will depend on $k$, but not on $n$).

\begin{lemma}
  \label{lem_gram_block}
  Fix $q_0\in\itv]0,1[$ and assume that $q\in\itv]0,{q_0}]$. Assume that $n = m+a+b$ and $m = i+k+j = p+k+q$. Then there exists a constant $C$ depending only on $q_0$ such that
  \begin{displaymath}
    |G_{m;i,p}-G_{n;i+a,p+a}| \leq \left\{
      \begin{array}{ll}
        C \|x\|_2^2\, q^{\max(j,q)-k} & \text{if $a = 0$,} \\
        C \|x\|_2^2\, q^{\max(i,p)-k} & \text{if $b = 0$.}
      \end{array}\right.
  \end{displaymath}
  We also have $|G_{m;i,p}-G_{n;i+a,p+a}| \leq C \|x\|_2^2\, q^{\min(i,j,p,q)-k}$ for $a$, $b$ arbitrary.
\end{lemma}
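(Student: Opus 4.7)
My plan is to handle the three bounds by reducing them to a single approximation argument. For the case $a = 0$, I would start by using the inclusion $H_n \subset H_m \otimes H_1^{\otimes b}$, which gives $(P_m \otimes \id_b)P_n = P_n$, so that $X_{n;i} = P_n(X_{m;i} \otimes \id_b) P_n$ and similarly for $X_{n;p}$. This turns $d_n G_{n;i,p}$ into
\[
  d_n G_{n;i,p} = \Tr_n\bigl[P_n(X_{m;i}^* \otimes \id_b) P_n (X_{m;p} \otimes \id_b) P_n\bigr].
\]
The core step is then to replace the central $P_n$ by $(P_m \otimes \id_b)(\id_{i+k} \otimes P_{j+b})$, which by Lemma~\ref{lem_highest_weight} approximates $P_n$ up to $Cq^j$ in operator norm. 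After the substitution, three simplifications should occur in succession: $P_m \otimes \id_b$ gets absorbed into $X_{m;i}^* \otimes \id_b$; the factor $\id_{i+k} \otimes P_{j+b}$ commutes with $X_{m;i}^* \otimes \id_b$ since they act on disjoint tensor legs; and the inclusion $H_n \subset H_{i+k} \otimes H_{j+b}$ yields $P_n (\id_{i+k} \otimes P_{j+b}) = P_n$ exactly. Together with cyclicity, these should collapse the double sandwich into $\Tr_n[P_n(X_{m;i}^* X_{m;p} \otimes \id_b)] = \Tr_m[X_{m;i}^* X_{m;p} \cdot (\id_m \otimes \Tr_b)(P_n)]$.

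The crucial point will be that $(\id_m \otimes \Tr_b)(P_n)$ is \emph{exactly} a scalar multiple of $\id_m$, not merely approximately as in Lemma~\ref{lem_trace_proj}: indeed, this partial trace is an intertwiner of $u_1^{\otimes m}$ supported in $H_m$ (using the same inclusion as above), so irreducibility of $u_m$ forces it to be $\lambda \id_m$, and trace matching gives $\lambda d_m = d_n/d_b$. The expression then reduces cleanly to $(d_n/d_m) \Tr_m(X_{m;i}^* X_{m;p}) = d_n G_{m;i,p}$, and the only error comes from the single application of Lemma~\ref{lem_highest_weight}. A routine Hilbert--Schmidt estimate bounds the error by $Cq^j \|X\|_\infty^2 d_n$, and dividing by $d_n$ while using the Peter--Weyl identity $\|X\|_\infty^2 \leq \|X\|_2^2 = d_k \|x\|_2^2 \leq q^{-k}(1-q^2)^{-1}\|x\|_2^2$ yields $Cq^{j-k}\|x\|_2^2$. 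Running the same argument with the adjoint approximation $P_n \simeq (\id_{p+k} \otimes P_{q+b})(P_m \otimes \id_b)$ of error $Cq^q$ would give $Cq^{q-k}\|x\|_2^2$, and taking the better of the two should produce the desired $Cq^{\max(j,q)-k}\|x\|_2^2$.

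The case $b = 0$ would be treated by the symmetric argument, using the inclusion $H_n \subset H_1^{\otimes a} \otimes H_m$ together with the analogous exact scalarity $(\Tr_a \otimes \id_m)(P_n) = (d_n/d_m)\id_m$, with the role of $j, q$ played by $i, p$; this should give the bound $Cq^{\max(i,p)-k}\|x\|_2^2$. Finally, for arbitrary $a, b$, I would factor the comparison as $G_{m;i,p} \to G_{m+a;i+a,p+a} \to G_{n;i+a,p+a}$, applying the $b = 0$ bound on the first step and the $a = 0$ bound on the second, and then observe that $\min(\max(i,p), \max(j,q)) \geq \min(i,j,p,q)$ to conclude $Cq^{\min(i,j,p,q)-k}\|x\|_2^2$. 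The main obstacle is the recognition that the partial trace of $P_n$ is \emph{exactly} scalar in this one-sided regime, which is a strict improvement over Lemma~\ref{lem_trace_proj} and is precisely what makes the final error bound independent of $b$.
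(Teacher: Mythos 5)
Your overall strategy matches the paper's: approximate the middle $P_n$ using Lemma~\ref{lem_highest_weight}, exploit the exact identity $(\id_m\otimes\Tr_b)(P_n)=(d_n/d_m)P_m$ to collapse the remaining trace to $d_n G_{m;i,p}$, take the better of the two one-sided approximations to obtain the $\max(j,q)$ exponent, and compose the cases $a=0$ and $b=0$ for general $a,b$. But there is a genuine gap in the central simplification. You assert that $\id_{i+k}\otimes P_{j+b}$ commutes with $X_{m;i}^*\otimes\id_b$ ``since they act on disjoint tensor legs.'' They do not: $X_{m;i}^*\otimes\id_b = (P_m\otimes\id_b)(\id_i\otimes X^*\otimes\id_{j+b})(P_m\otimes\id_b)$, and the outer Jones--Wenzl projection $P_m$ occupies legs $1,\dots,m=i+k+j$, which overlaps on $j$ legs with the support $i+k+1,\dots,n$ of $P_{j+b}$. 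Since $P_m\otimes\id_b$ and $\id_{i+k}\otimes P_{j+b}$ do not commute, neither does $X_{m;i}^*\otimes\id_b$ with $\id_{i+k}\otimes P_{j+b}$, and the subsequent absorption into the outer $P_n$ does not go through.

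What genuinely acts on disjoint legs is the \emph{uncompressed} operator $\id_i\otimes X^*\otimes\id_{j+b}$, with $X^*$ confined to legs $i+1,\dots,i+k$. The paper therefore works directly with $X_{n;i}^* = P_n(\id_i\otimes X^*\otimes\id_{j+b})P_n$, without pre-compressing through $X_{m;i}$. After replacing the middle $P_n$ by $(\id_{i+k}\otimes P_{j+b})(P_m\otimes\id_b)$, the legitimate commutation $(\id_i\otimes X^*\otimes\id_{j+b})(\id_{i+k}\otimes P_{j+b}) = \id_i\otimes X^*\otimes P_{j+b}$ lets $P_{j+b}$ be absorbed by the outer $P_n$, and the partial-trace identity then finishes. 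Your preliminary identity $X_{n;i}=P_n(X_{m;i}\otimes\id_b)P_n$ is correct but counterproductive here, precisely because it wraps $X$ inside $P_m$'s whose support extends into the legs of $P_{j+b}$. (A minor additional slip: the trace normalization should give $\lambda d_m = d_n$, not $d_n/d_b$ --- though you use the correct value $\lambda=d_n/d_m$ afterwards.)
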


\begin{proof}
  By definition we have $n = (i+a) + k + (j+b) = (p+a) + k + (q+b)$.
  The case $b=0$ follows from the case $a=0$ by symmetry. The ``general case'' follows from the first two cases by going first from $m$ to $n' = m+b$ and then
  from $n'$ to $n = n'+a$, and observing that $C q^{\max(j,q)} + C q^{\max(i,p)}\leq 2C q^{\min(i,j,p,q)}$. So we assume $a=0$.

  According to Lemma~\ref{lem_highest_weight} we have $\|P_n - (\id_{i+k}\otimes P_{j+b})(P_m\otimes \id_b)\| \leq C q^j$, which yields
  \begin{align*}
    (x_{n;i}\mid x_{n;p}) &= d_n^{-1} \Tr_n[P_n(\id_i\otimes X^*\otimes \id_{j+b})P_n
                            (\id_p\otimes X\otimes \id_{q+b})P_n] \\
                          &\simeq d_n^{-1} \Tr_n[P_n(\id_i\otimes X^*\otimes P_{j+b})
                            (P_m\otimes \id_b)(\id_p\otimes X\otimes \id_{q+b})P_n] \\
                          &= d_n^{-1}(\Tr_m\otimes\Tr_b)[P_n(\id_i\otimes X^*\otimes \id_{j+b})
                            (P_m\otimes \id_b)(\id_p\otimes X\otimes \id_{q+b})]
  \end{align*}
  up to $C q^j \|X\|^2 \leq Cq^j d_k\|x\|_2^2$, since $P_{j+b}$ is absorbed in $P_n$. Since by \cite[Proposition 1.13]{VaesVergnioux} we have $(\id\otimes\Tr_b)(P_n) = (d_n/d_m) P_m$, this reads
  \begin{displaymath}
    (x_{n;i}\mid x_{n;p}) \simeq d_m^{-1}\Tr_m[P_m(\id_i\otimes X^*\otimes \id_j)
    P_m(\id_p\otimes X\otimes \id_q)] = (x_{m;i}\mid x_{m;j})
  \end{displaymath}
  up to $Cq^j d_k\|x\|_2^2 \leq C q^{j-k} \|x\|_2^2 /(1-q_0)$. If $j\leq q$ we proceed in the same way starting with the estimate $P_n \simeq (P_m\otimes \id_b)(\id_{p+k}\otimes P_{q+b})$ up to
  $C q^q$.
\end{proof}

In the next Theorem we show that the blocks $G_n$ of the Gram matrix $G$ are related by a recursion formula, which allows at Lemma~\ref{lem_band_bound} to obtain
estimates on $G_m$ with a good behavior as $k\to\infty$, improving the ``naive'' Lemma~\ref{lem_gram_entry_bound}.

\begin{theorem}    
  \label{thm_gram_rec}
  Fix $n>k>0$ and $x = u_k(X) \in \W_k$ with $\rho(X) = \mu X$. For $0\leq i<n-k$ and
  $0\leq p\leq n-k$ we have:
  \begin{gather*}
    G_{n;i,p} = \delta_{p<n-k} (1-A^n_p) G_{n-1;i,p} + \delta_{p>0} B^n_p
    G_{n-1;i,p-1} + \delta_{p>1} C^n_p G_{n-1;i,p-2} \text{~~~where}\\
    \nonumber A^n_p = \frac{d_{p+k} d_{p+k-1}}{d_n d_{n-1}}, ~~ B^n_p = 2(-1)^k
    \Re(\mu) \frac{d_{p+k-1} d_{p-1}}{d_n d_{n-1}}, ~~ C^n_p = - \frac{d_{p-1}
      d_{p-2}}{d_n d_{n-1}}.
  \end{gather*}
\end{theorem}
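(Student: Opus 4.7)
The plan is to apply the one-sided Jones-Wenzl recursion \eqref{eq_wenzl_recursion} to each of the projections $P_n$ appearing in the expression
\[
d_n\,G_{n;i,p} = \Tr\bigl[(\id_i\otimes X^*\otimes\id_j)\,P_n\,(\id_p\otimes X\otimes\id_q)\,P_n\bigr],
\]
obtained from $P_n^2=P_n$ and cyclicity of the trace (with $n=i+k+j=p+k+q$). Writing $P_n=(P_{n-1}\otimes\id_1)-Q_{n-2}$, where $Q_{n-2}$ is the projection onto the $H_{n-2}$ summand of $H_{n-1}\otimes H_1 = H_n\oplus H_{n-2}$, and reading off from \eqref{eq_wenzl_recursion} the explicit formula
\[
Q_{n-2} = \sum_{l=1}^{n-1} (-1)^{n-l+1}\frac{d_{l-1}}{d_{n-1}}\bigl(\id_1^{\otimes(l-1)}\otimes t\otimes\id_1^{\otimes(n-l-1)}\otimes t^*\bigr)(P_{n-1}\otimes\id_1),
\]
I would expand both $P_n$'s to obtain four summands $T_{PP}, T_{PQ}, T_{QP}, T_{QQ}$, whose reorganization will produce the three desired terms on the right-hand side.

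First, when $q\geq 1$ (equivalently $p<n-k$; recall $j\geq 1$ is automatic from $i<n-k$), the $T_{PP}$ term factors cleanly: one may write $\id_p\otimes X\otimes\id_q = (\id_p\otimes X\otimes\id_{q-1})\otimes\id_1$ and likewise for the $X^*$ factor, so that $(P_{n-1}\otimes\id_1)$ absorbs both outer factors on the first $n-1$ strands and the last strand contributes a decoupled $\id_1$, giving $T_{PP} = d_1\,d_{n-1}\,G_{n-1;i,p}$. This accounts for the ``bulk'' of the $G_{n-1;i,p}$ coefficient; the remaining discrepancy with $(1-A_p^n)$ will come from identity-region parts of the other three terms combined via the fusion identity $d_1 d_{n-1} = d_n + d_{n-2}$.

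For the cross and correction terms $T_{PQ}, T_{QP}, T_{QQ}$, substituting Wenzl's formula for $Q_{n-2}$ expresses each as a sum of Temperley-Lieb diagrams in which a ``bubble'' (cap at positions $n-1,n$ and cup at positions $l,l+1$) is threaded through the rest of the product. The critical observation is that, by Remark~\ref{rk_ccirc}, any diagram whose cup or cap connects two consecutive strands of $X$ or $X^*$ vanishes. The surviving diagrams fall into three geometric types. Bubbles confined entirely to the identity region between $X^*$, $X$ and the boundaries merely rescale and produce a further multiple of $G_{n-1;i,p}$, combining with $T_{PP}$ to give $(1-A_p^n)G_{n-1;i,p}$. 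Bubbles that ``wrap around'' exactly one of $X$ or $X^*$ apply (by Lemma~\ref{lem_rho}) a rotation $\rho$ or $\rho^{-1}$ to the corresponding factor, producing $\mu X$ or $\bar\mu X$; summing over the two wrap orientations yields $\mu+\bar\mu = 2\Re(\mu)$, the accumulated Wenzl signs $(-1)^{n-l}$ across the $k$ strands of $X$ contribute the $(-1)^k$, and the resulting diagram matches an index-shifted block, giving $B_p^n\,G_{n-1;i,p-1}$. Finally, bubbles that wrap around both $X^*$ and $X$ occur only in $T_{QQ}$, and an analogous analysis yields $C_p^n\,G_{n-1;i,p-2}$ with a compounded minus sign.

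The main technical obstacle is the systematic bookkeeping of signs and dimensional factors: identifying precisely which values of $l$ (or pairs $(l_1,l_2)$ in the $QQ$ term) correspond to each geometric type, and then summing the raw Wenzl weights $(-1)^{n-l+1}d_{l-1}/d_{n-1}$ to match the compact products $d_{p+k}d_{p+k-1}$, $d_{p+k-1}d_{p-1}$, $d_{p-1}d_{p-2}$ that appear in $A_p^n, B_p^n, C_p^n$. The partial-trace identity $(\Tr_1\otimes\id)(P_n) = (d_n/d_{n-1})P_{n-1}$ applied at appropriate positions inside the diagrams, together with telescoping of the alternating $q$-number sums, should carry out this reduction. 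The Kronecker deltas in the statement arise naturally because the corresponding configurations make one of the $G_{n-1;i,p-s}$ ill-defined (no such block exists), corresponding geometrically to the bubble having ``nowhere to wrap''.
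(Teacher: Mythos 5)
Your proposal takes essentially the same route as the paper: apply Wenzl's recursion~\eqref{eq_wenzl_recursion} to both occurrences of $P_n$, use the vanishing criterion of Remark~\ref{rk_ccirc} to discard all diagrams whose bubble closes up on two consecutive strands of $X$ or $X^*$, and read off the factors $\mu,\bar\mu$ from the rotations $\rho^{\pm 1}(X)$ produced by bubbles wrapping around $X$. Your four-quadrant expansion $T_{PP},T_{PQ},T_{QP},T_{QQ}$ is a more symmetric packaging of the paper's sequential expansion (first one $P_n$, then selectively the other), but the surviving terms, the $(-1)^k$ from a bubble traversing the $k$ strands of $X$, and the eventual collection via $d_1 d_{n-1}=d_n+d_{n-2}$ are the same delicate bookkeeping carried out in Steps 2--5 of the paper.
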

Note that $A^n_p = 1$ if $p=n-k$, $B^n_p = 0$ if $p=0$ and $C^n_p = 0$ if
$p = 0$ or $1$, if one puts $d_{-l} = 0$ for $l>0$. Hence the corresponding
terms vanish ``naturally'' from the recursion equation.

The proof of the theorem will easily follow from the following Lemma, that we
will reuse in Section~\ref{sec_support_loc}, and which relies on two applications of
Wenzl's recursion relation~\eqref{eq_wenzl_recursion}.

\begin{lemma}\label{lem_general_rec}
  For $X\in B(H_k)^\ccirc$, $k\in\NN^*$, such that $\rho(X) = \mu X$, we have
  for all $p$, $q\in\NN$ and $n = p+k+q$:
  \begin{align*}
    \frac{d_{n-1}}{d_n} (\id\otimes\Tr_1)(X_{p,q})
    &= \delta_{q>0}(1-A^n_p)X_{p,q-1} + \delta_{p>0}B^n_p X_{p-1,q} 
      + \delta_{p>1} C^n_p X_{p-2,q+1}.
  \end{align*}
\end{lemma}

\begin{proof}
  \noindent {\bf Step 1.} In this proof we denote $T = X_{p,q}$.  We will use
  the Jones-Wenzl recursion formula for each projection $P_n$ appearing in the
  definition $T = P_n(\id_p\otimes X\otimes\id_q)P_n$, starting with the left
  occurrence. By the adjoint of~\eqref{eq_wenzl_recursion} we have
  $T = \sum_{l=1}^n (-1)^{n-l}(d_{l-1}/d_{n-1}) T_l$ where
  \begin{align*}
    T_n &= (P_{n-1}\otimes\id_1)(\id_p\otimes  X\otimes\id_q)P_n \qquad \text{and} \\
    T_l &= (P_{n-1}\otimes\id_1)(\id_{n-2}\otimes t)
        (\id_{l-1}\otimes t^*\otimes\id_{n-l-1})(\id_p\otimes X\otimes\id_q)P_n
          \qquad \text{for $l<n$.}
  \end{align*}

  \bigskip

  \noindent
  {\bf Step 2.} Denote $M = T_n$. Recall that
  $(\id\otimes\Tr_1)(P_n) = (d_n/d_{n-1}) P_{n-1}$, so that if $q\geq 1$ we have
  $(\id\otimes\Tr_1)(M) = (d_n/d_{n-1}) X_{p,q-1}$. If $q = 0$ we have to
  apply~\eqref{eq_wenzl_recursion} to the second occurrence of $P_n$. This yields
  $M = \sum_{l=1}^n (-1)^{n-l}$ $(d_{l-1}/d_{n-1}) M_l$ where
  \begin{align*}
    M_n &= (P_{n-1}\otimes\id_1)(\id_p\otimes X)(P_{n-1}\otimes\id_1) \qquad \text{and} \\
    M_l &= (P_{n-1}\otimes\id_1)(\id_p\otimes X)
          (\id_{l-1}\otimes t\otimes\id_{n-l-1})(\id_{n-2}\otimes t^*)(P_{n-1}\otimes\id_1)
          \qquad \text{for $l<n$.}    
  \end{align*}
  In $(\id\otimes\Tr_1)(M_n)$ we can factor $(\id\otimes\Tr_1)(X) = 0$
  so this term disappears. Moreover all terms $M_l$ vanish because $t$ hits $X = XP_k$ or
  $P_{n-1}$, except $M_p$. By the conjugate equation we have
  \begin{align*}
    (\id\otimes\Tr_1)(M_p)
    &= (\id_{n-1}\otimes t^*)(M_p\otimes\id_1)(\id_{n-1}\otimes t)  \\
    &= P_{n-1}(\id_{n-1}\otimes t^*)(\id_p\otimes X\otimes\id_1)
      (\id_{p-1}\otimes t\otimes\id_{n-p})P_{n-1}
  \end{align*}
  and we recognize
  $(\id\otimes\Tr_1)(M_p) = P_{n-1} (\id_{p-1}\otimes\rho(X)) P_{n-1}$.
  Altogether we can thus write
  $(\id\otimes\Tr_1)(M) = \delta_{p<n-k}(d_n/d_{n-1}) X_{p,q-1} +
  (-1)^k\delta_{p=n-k} \mu (d_{p-1}/d_{n-1}) X_{p-1,0}$.

  \bigskip

  \noindent
  {\bf Step 3.} Now we come back to the terms $T_l$ with $l<n$. Most of them vanish because
  $t^*$ hits either $X = P_k X$ or $P_n$. The only remaining terms are
  $M' := T_{p+k}$, which appears only if $p<n-k$ (i.e. $q\geq 1$), and $M'' := T_p$, which appears
  if $p\geq 1$. For these terms we apply as well~\eqref{eq_wenzl_recursion} to
  the second occurrence of $P_n$. This yields
  $M' = \sum_{l=1}^n (-1)^{n-l} (d_{l-1}/d_{n-1}) M'_l$ where
  \begin{align*}
    M'_n &= (P_{n-1}\otimes\id_1)(\id_{n-2}\otimes t)
           (\id_{p+k-1}\otimes t^*\otimes\id_{q-1})(\id_p\otimes X\otimes\id_q)
           (P_{n-1}\otimes\id_1) \quad \text{and} \\
    M'_l  &= (P_{n-1}\otimes\id_1)(\id_{n-2}\otimes t)
            (\id_{p+k-1}\otimes t^*\otimes\id_{q-1}) \\
         & \hspace{2cm}(\id_p\otimes X\otimes\id_q)
           (\id_{l-1}\otimes t\otimes\id_{n-l-1})(\id_{n-2}\otimes t^*)(P_{n-1}\otimes\id_1)
           \quad \text{for $l<n$.}
  \end{align*}
  One can simplify
  $(\id\otimes\Tr_1)(M'_n) = (\id_{n-1}\otimes
  t^*)(M'_n\otimes\id_1)(\id_{n-1}\otimes t)$ using the conjugate equation:
  \begin{displaymath}
    (\id\otimes\Tr_1)(M'_n) = P_{n-1}(\id_{p+k-1}\otimes t^*\otimes\id_q)(\id_p\otimes
    X\otimes\id_{q+1})(P_{n-1}\otimes t).
  \end{displaymath}
  This vanishes if $q\geq 2$ because in this case $t$ hits $P_{n-1}$. If $q=1$
  applying once again the conjugate equation we recognize
  $(\id\otimes\Tr_1)(M'_n) = P_{n-1}(\id_p\otimes
  X\otimes\id_{q-1})P_{n-1}$. Finally we have
  $(\id\otimes\Tr_1)(M'_n) = \delta_{p=n-k-1} X_{p,q-1}$.

  Again most of the terms $M'_l$ with $l<n$ vanish because the last $t$ hits
  either $X = X P_k$ or $P_{n-1}$. The first non-vanishing term, if $p\geq 1$,
  is $M'_p$ and we recognize
  $(\id_k\otimes t^*)(\id_1\otimes X\otimes\id_1)(t\otimes\id_k) = \rho(X) = \mu
  X$. By the conjugate equation we have
  $(\id\otimes\Tr_1)(L\otimes tt^*) = L\otimes\id_1$ so that
  \begin{align*}
    (\id\otimes\Tr_1)(M'_p) &= \mu (\id\otimes\Tr_1)[(P_{n-1}\otimes\id_1)(\id_{n-2}\otimes t) \\
    &\hspace{3cm}
         (\id_{p-1}\otimes X\otimes\id_{q-1})(\id_{n-2}\otimes t^*)(P_{n-1}\otimes\id_1)] \\
    &= \mu P_{n-1}(\id_{p-1}\otimes X\otimes\id_q)P_{n-1} = \mu X_{p-1,q}.
  \end{align*}

  The second non-vanishing term is $M'_{p+k}$, but it contains the term
  $(\id_{k-1}\otimes t^*){(X\otimes\id_1)}$
  ${(\id_{k-1}\otimes t)} = (\id\otimes\Tr_1)(X)$ hence it vanishes as
  well. Finally we have $M'_{p+k+1}$ which appears if $q\geq 2$ and by the
  conjugate equation can also be written
  \begin{displaymath}
    M'_{p+k+1} = (P_{n-1}\otimes\id_1)(\id_{n-2}\otimes t)
    (\id_p\otimes X\otimes\id_{q-2}) (\id_{n-2}\otimes t^*)(P_{n-1}\otimes\id_1). 
  \end{displaymath}
  As for $M'_p$ we have the further simplification
  $(\id\otimes\Tr_1)(M'_{p+k+1}) = P_{n-1}(\id_p\otimes X\otimes\id_{q-1})
  P_{n-1} = X_{p,q-1}$.

  \bigskip

  \noindent
  {\bf Step 4.} We proceed similarly for $M''$, writing
  $M'' = \sum_{l=1}^n (-1)^{n-l} (d_{l-1}/d_{n-1}) M''_l$ with
  \begin{align*}
    M''_n &= (P_{n-1}\otimes\id_1)(\id_{n-2}\otimes t)
            (\id_{p-1}\otimes t^*\otimes\id_{k+q-1})(\id_p\otimes X\otimes\id_q)
           (P_{n-1}\otimes\id_1) \quad \text{and} \\
    M''_l  &= (P_{n-1}\otimes\id_1)(\id_{n-2}\otimes t)
            (\id_{p-1}\otimes t^*\otimes\id_{k+q-1}) \\
         & \hspace{2cm}(\id_p\otimes X\otimes\id_q)
           (\id_{l-1}\otimes t\otimes\id_{n-l-1})(\id_{n-2}\otimes t^*)(P_{n-1}\otimes\id_1)
           \quad \text{for $l<n$.}
  \end{align*}
  As in the case of $M'_n$ we find
  \begin{displaymath}
    (\id\otimes\Tr_1)(M''_n) = P_{n-1}(\id_{p-1}\otimes t^*\otimes\id_{q+k})
    (\id_p\otimes X\otimes\id_{q+1})(P_{n-1}\otimes t),
  \end{displaymath}
  which vanishes as soon as $q\geq 1$ because $t$ then hits $P_{n-1}$. If $q = 0$ we recognize
  $(\id\otimes\Tr_1)$ $(M''_n) = P_{n-1}(\id_{p-1}\otimes\rho^*(X))P_{n-1}$. Altogether we have
  $(\id\otimes\Tr_1)(M''_n) = \delta_{p=n-k}\bar\mu X_{p-1,q}$.

  The first non-vanishing term $M''_l$ is $M''_{p-1}$, if $p\geq 2$, which by
  the conjugate equation reads
  $M''_{p-1} = (P_{n-1}\otimes\id_1)(\id_{n-2}\otimes t) (\id_{p-2}\otimes
  X\otimes\id_q) (\id_{n-2}\otimes t^*)(P_{n-1}\otimes\id_1)$. As for $M'$ it
  follows
  $(\id\otimes\Tr_1)(M''_{p-1}) = P_{n-1} (\id_{p-2}\otimes X\otimes\id_{q+1})
  P_{n-1} = X_{p-2,q+1}$. The second non-vanishing term would be $M''_p$ but it
  contains $(\Tr_1\otimes\id)(X)$ hence in fact it vanishes. The last term to
  consider is $M''_{p+k}$ which appears if $q = n-p-k > 0$ and we recognize
  \begin{displaymath}
    M''_{p+k} = (P_{n-1}\otimes\id_1)(\id_{n-2}\otimes t)(\id_{p-1}\otimes \rho^*(X)\otimes\id_{q-1})
                (\id_{n-2}\otimes t^*)(P_{n-1}\otimes\id_1)
  \end{displaymath}
  which yields as before $(\id\otimes\Tr_1)(M''_{p+k}) = \bar\mu X_{p-1,q}$.

  \bigskip

  \noindent {\bf Step 5.}
  Finally we can collect all terms as follows:
  \begin{align*}
    (\id\otimes\Tr_1)(T)
    &= (\id\otimes\Tr_1)\left[ M +
      (-1)^{n-p-k}\delta_{p<n-k}\frac{d_{p+k-1}}{d_{n-1}}M'_n +
      (-1)^k\delta_{n-k>p\geq 1}\frac{d_{p+k-1}d_{p-1}}{d_{n-1}^2}M'_p \right. \\
    &\hspace{2.2cm}  - \delta_{p<n-k-1}\frac{d_{p+k-1}d_{p+k}}{d_{n-1}^2}M'_{p+k+1} +
      (-1)^{n-p}\delta_{p\geq 1}\frac{d_{p-1}}{d_{n-1}}M''_n \\ 
    &\hspace{2.2cm} \left. - \delta_{p\geq 2}\frac{d_{p-1}d_{p-2}}{d_{n-1}^2}M''_{p-1} +
      (-1)^k\delta_{n-k>p\geq 1}\frac{d_{p-1}d_{p+k-1}}{d_{n-1}^2}M''_{p+k} \right].
  \end{align*}
  According to the computations carried above we obtain:
  \begin{align*}
    d_{n-1}^2 (\id\otimes\Tr_1)(T)
    &= \delta_{p<n-k}d_nd_{n-1}X_{p,q-1} +
      (-1)^k\delta_{p=n-k}\mu d_{p+k-1}d_{p-1} X_{p-1,q} \\
    &\hspace{0.4cm} - \delta_{p=n-k-1}d_{p+k-1}d_{p+k} X_{p,q-1}
      + (-1)^k\delta_{n-k>p\geq 1}\mu d_{p+k-1}d_{p-1} X_{p-1,q} \\
    &\hspace{0.4cm}  - \delta_{p<n-k-1} d_{p+k-1}d_{p+k} X_{p,q-1} +
      (-1)^k\delta_{p=n-k}\bar\mu d_{p-1}d_{p+k-1} X_{p-1,q} \\ 
    &\hspace{0.4cm} - \delta_{p\geq 2} d_{p-1}d_{p-2} X_{p-2,q+1} +
      (-1)^k\delta_{n-k>p\geq 1}\bar\mu d_{p-1}d_{p+k-1} X_{p-1,q}.
  \end{align*}
  Merging cases together as appropriate this yields the expression in the statement.
\end{proof}

\begin{proof}[Proof of Theorem~\ref{thm_gram_rec}]
  Recall that by assumption $j\geq 1$, but we allow $q = n-p-k = 0$. Multiplying
  the outcome of Lemma~\ref{lem_general_rec} by
  $(\id_i\otimes X^*\otimes\id_{j-1})$ on the left, we obtain
  \begin{align*}
    &\frac{d_{n-1}}{d_n} (\id\otimes\Tr_1)((\id_i\otimes X^*\otimes\id_j)X_{p,q})
      = \delta_{q>0}(1-A^n_p)(\id_i\otimes X^*\otimes\id_{j-1})X_{p,q-1}+ \\
    &\hspace{3cm}+ \delta_{p>0}B^n_p (\id_i\otimes X^*\otimes\id_{j-1})X_{p-1,q} 
      + \delta_{p>1} C^n_p (\id_i\otimes X^*\otimes\id_{j-1})X_{p-2,q+1}.
  \end{align*}
  We apply $\Tr_1^{\otimes(n-1)}$ to this identity. Since $X_{p,q} = P_n X_{p,q} P_n$ we have e.g.
  \begin{displaymath}
    \Tr_1^{\otimes n}((\id_i\otimes X^*\otimes\id_j)X_{p,q}) = \Tr_1^{\otimes n}(X_{i,j}^*X_{p,q}) = \Tr_n(X_{i,j}^*X_{p,q}),
  \end{displaymath}
  hence we obtain
  \begin{align*}
    & d_n^{-1}\Tr_n(X_{i,j}^*X_{p,q})
      = \delta_{q>0}(1-A^n_p)d_{n-1}^{-1}\Tr_{n-1}(X_{i,j-1}^*X_{p,q-1})+ \\
    &\hspace{2cm}+ \delta_{p>0}B^n_p d_{n-1}^{-1}\Tr_{n-1}(X_{i,j-1}^*X_{p-1,q}) 
      + \delta_{p>1} C^n_p d_{n-1}^{-1}\Tr_{n-1}(X_{i,j-1}^*X_{p-2,q+1}).
  \end{align*}
  This corresponds to the identity in the statement by definition of the Gram matrix $G_n$, using the Peter-Weyl expression of the scalar product.
\end{proof}

\begin{lemma}
  \label{lem_band_bound}
  Fix $q_0 \in \itv]0,1[$ and assume that $q\in\itv]0,{q_0}]$. Then there exists
  a constant $C>0$, depending only on $q_0$, such that
  \begin{align*}
  &|G_{m;i,p}| \leq C (m-k+1)q^{k+1}\|x\|_2^2 \qquad \text{and} \\
  &G_{m;p,p}\geq (C^{-1} - C(m-k)q^{k+1})\|x\|_2^2
  \end{align*}
  if $x\in \W_k$ and
  $0\leq i\neq p\leq m-k$.
\end{lemma}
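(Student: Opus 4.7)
The plan is to prove both bounds simultaneously by induction on $m\geq k$, applied jointly to $x$ and $x^*$, using the recursion of Theorem~\ref{thm_gram_rec} together with the a priori bound $|G_{m;i,p}|\leq D_\star\|x\|_2^2$ from Lemma~\ref{lem_gram_entry_bound}, where $D_\star := (1-q_0^2)^{-3}$. The first step is to estimate the recursion coefficients via Lemma~\ref{lem_dimensions}: one has $A^n_p \in [0,1]$ with $A^n_p \leq q^{2(n-p-k)}/(1-q^2)^2$, so in particular $\sum_{n>p+k} A^n_p\leq q_0^2/(1-q_0^2)^3$ uniformly, while $|B^n_p|, |C^n_p| \leq C_\star q^{k+1}$ for some $C_\star$ depending only on $q_0$. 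The base case $m=k$ consists only of $G_{k;0,0} = \|x\|_2^2$ and is trivial. For the inductive step, Hermitian symmetry (Lemma~\ref{lem_symmetries}) lets us assume $i\leq p$, so that the recursion applies (after exchanging $i,p$ if needed) for every $(i,p)\neq(m-k,m-k)$; the exceptional corner is reduced via the identity $G_{m;m-k,m-k}(x) = G_{m;0,0}(x^*)$.

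For the off-diagonal bound, set $F_m = \max_{i\neq p} |G_{m;i,p}|/\|x\|_2^2$. The recursion, combined with the crude bound $D_\star$ on any diagonal entry appearing in the right-hand side, gives
\[
  F_m \leq (1+2C_\star q^{k+1}) F_{m-1} + 2C_\star q^{k+1} D_\star.
\]
Starting from $F_k = 0$, this yields $F_m \leq C_1 (m-k) q^{k+1}$ as long as $(m-k) q^{k+1}$ remains below a fixed threshold, in which regime the exponential factor $(1+2C_\star q^{k+1})^{m-k}$ stays bounded. Past that threshold, the uniform bound $F_m \leq D_\star$ is itself of order $(m-k+1) q^{k+1}$, and choosing $C$ of order $C_\star D_\star$ glues both regimes into the claimed estimate.

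For the diagonal lower bound, the recursion applied to $G_{m;p,p}$ with $p<m-k$ produces two off-diagonal terms $G_{m-1;p,p-1}$, $G_{m-1;p,p-2}$, with coefficients of order $q^{k+1}$ and values controlled by $D_\star$ thanks to Lemma~\ref{lem_gram_entry_bound}. This gives
\[
  G_{m;p,p} \geq (1-A^m_p)\, G_{m-1;p,p} - 2C_\star q^{k+1} D_\star \|x\|_2^2,
\]
and iterating from level $p+k$ up to $m$ yields
\[
  G_{m;p,p} \geq \Big(\prod_{n=p+k+1}^m (1-A^n_p)\Big) G_{p+k;p,p} - 2 C_\star q^{k+1} D_\star (m-k)\|x\|_2^2.
\]
The product is bounded below by $c_0 := \exp(-2 q_0^2/(1-q_0^2)^3) > 0$ from the uniform bound on $\sum A^n_p$; the base value satisfies $G_{p+k;p,p}(x) = G_{p+k;0,0}(x^*) = \prod_{n=k+1}^{p+k}(1-A^n_0)\|x\|_2^2 \geq c_0 \|x\|_2^2$ via Lemma~\ref{lem_symmetries} and the vanishing of the $B,C$ terms in the recursion at $p=0$. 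Setting $C^{-1} := c_0^2$ and taking $C$ correspondingly large produces the claimed lower bound; the corner $p = m-k$ is handled by exchanging $x$ and $x^*$ once more.

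The main obstacle is the absence of a contracting coefficient in the recursion: $(1-A^m_p)$ is close to $1$ rather than strictly less, so iterating naively produces exponential blow-up $\prod(1+O(q^{k+1}))$. This is tamed by combining the bootstrapped estimate with the uniform a priori bound from Lemma~\ref{lem_gram_entry_bound}, which takes over precisely when the linear-in-$(m-k)$ error would otherwise exceed constant order.
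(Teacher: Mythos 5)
Your argument follows the paper's route: iterate the recursion of Theorem~\ref{thm_gram_rec} down to level $p+k$, absorb the $B$- and $C$-terms using the crude entry bound of Lemma~\ref{lem_gram_entry_bound}, and handle the corner $G_{m;m-k,m-k}$ via Lemma~\ref{lem_symmetries}. Two remarks, one cosmetic and one substantive.

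The off-diagonal argument is needlessly roundabout. You feed the running maximum $F_{m-1}$ back into the $B$- and $C$-terms, which creates the coefficient $(1+2C_\star q^{k+1})$ and the exponential blow-up $\prod(1+O(q^{k+1}))$ you then have to tame by a two-regime gluing. But since $F_{m-1}\leq D_\star$ always, one can simply bound \emph{every} $B$- and $C$-contribution directly by $O(q^{k+1})D_\star\|x\|_2^2$, keeping only the coefficient $(1-A^m_p)\leq 1$ in front of $|G_{m-1;i,p}|$. This gives the purely additive recursion $|G_{m;i,p}|\leq |G_{m-1;i,p}| + Cq^{k+1}\|x\|_2^2$, whose iteration terminates at $m=p+k$ (where the $\delta_{p<m-k}$ factor kills the main term) with linear accumulation and no exponential factor at all. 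The ``obstacle'' you describe is thus self-inflicted rather than intrinsic.

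The genuine gap is in the lower bound for the diagonal entries. You estimate $A^n_p\leq q^{2(n-p-k)}/(1-q^2)^2$ via Lemma~\ref{lem_dimensions} and then assert $\prod_{n>p+k}(1-A^n_p)\geq\exp(-2\sum_n A^n_p)$. This inequality requires the individual $A^n_p$ to be uniformly bounded away from $1$ (say $\leq 1/2$, since $\log(1-a)\geq -2a$ only on $[0,\tfrac12]$); knowing $A^n_p\in[0,1]$ together with a bound on $\sum A^n_p$ is not enough, as the infimum of $\prod(1-a_i)$ under a summability constraint alone is $0$. Your estimate $A^{p+k+1}_p\leq q^2/(1-q^2)^2$ exceeds $1/2$ (and even $1$) once $q_0$ is moderately large, so the claimed bound $c_0=\exp(-2q_0^2/(1-q_0^2)^3)$ is not justified uniformly over $q_0\in\itv]0,1[$. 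The paper closes this by computing $A^l_p$ from the exact $q$-number formula~\eqref{eq_dimensions}, which yields the sharper bound $A^l_p\leq q^{2(l-p-k)}$ with \emph{no} $(1-q^2)^{-2}$ loss; hence $1-A^l_p\geq 1-q_0^{2(l-p-k)}$ and the product is bounded below by the convergent Euler product $\prod_{i\geq 1}(1-q_0^{2i})>0$. Since the lemma asserts a constant depending only on $q_0$ for \emph{every} $q_0\in\itv]0,1[$, the argument as written does not establish the full statement; for the range of $q$ ultimately used in Theorem~\ref{thm_riesz} it would suffice, but the tighter bound on $A^n_p$ is what the paper's proof actually relies on.
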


\begin{proof}
  Since $G_m$ is symmetric we can assume $i<p\leq m-k$. We have $|\Re(\mu)|\leq 1$
  and for $p\leq m-k$ Lemma~\ref{lem_dimensions} shows that we have
  $|B^m_p|\leq 2 q^{k+1}/(1-q^2)^2$, $|C^m_p|\leq q^{2(k+1)}/(1-q^2)^2$. Since
  moreover $A^m_p\in\itv[0,1]$, the recursion formula of
  Theorem~\ref{thm_gram_rec} and Lemma~\ref{lem_gram_entry_bound} imply
  \begin{displaymath}
    |G_{m;i,p}|\leq \delta_{p<m-k} |G_{m-1;i,p}| + 3 q^{k+1}(1-q^2)^{-5}\|x\|_2^2.
  \end{displaymath}
  We iterate this inequality $m-p-k+1$ times, until we reach $G_{p+k-1;i,p}$, in which case the first term disappears. This yields the first estimate with
  $C = 3/(1-q_0^2)^5$.

  For the second one, let us start with $G_{m;0,0}$. In the recursion relation
  of Theorem~\ref{thm_gram_rec} only the first term is non zero when $i = p =
  0$. By an easy induction we have thus
  \begin{displaymath}
    G_{m;0,0} = G_{k;0,0}\prod_{l=k+1}^m (1-A_0^l)
    = \|x\|_2^2 \prod_{l=k+1}^m \left (1-\frac{d_kd_{k-1}}{d_l d_{l-1}}\right).
  \end{displaymath}
  Using the explicit expression of the dimensions $d_i$ and the fact that $1-q^{2k} \leq 1-q^{2l}$ if $l\geq k$ we obtain the following lower bound, which
  depends only on $q_0$:
  \begin{align*}
    G_{m;0,0} &=  \|x\|_2^2  \prod_{l=k+1}^m \left (1-q^{2(l-k)}
      \frac{(1-q^{2k+2})(1-q^{2k})}{(1-q^{2l+2})(1-q^{2l})}\right) \\
      &\geq \|x\|_2^2\prod_{l=k+1}^\infty \left (1-q^{2(l-k)}\right)
      \geq \|x\|_2^2\prod_{i=1}^\infty (1-q_0^{2i}) \geq C^{-1}\|x\|_2^2,
  \end{align*}
  increasing $C$ if necessary. Since $\|x\|_2 = \|x^*\|_2$, the same estimate is true for $G_{m;m-k,m-k}$ by Lemma~\ref{lem_symmetries}.

  For the other diagonal terms we use again the recursion equation, which yields
  for $p<m-k$:
  \begin{align*}
    G_{m;p,p}\geq (1-A_p^m) G_{m-1;p,p} - 3 q^{k+1}(1-q^2)^{-5}\|x\|_2^2.
  \end{align*}
  Again we iterate until $m = p+k+1$, obtaining
  \begin{displaymath}
    G_{m;p,p}
    \geq G_{p+k;p,p} {\ts\prod_{l=p+k+1}^m} (1-A_p^l) - 3(m-p-k)q^{k+1}(1-q^2)^{-5}\|x\|_2^2.
  \end{displaymath}
  The coefficients $1-A_p^l$ do not appear in the second term since they are
  dominated by $1$. We have already proved above that
  $\prod_{l=p+k+1}^m (1-A_p^l) \geq C^{-1}$ (replace $k$ by $k+p$) and so we obtain
  $G_{m;p,p} \geq C^{-2}\|x\|_2^2 - C(m-k)q^{k+1} \|x\|_2^2$.
\end{proof}

\bigskip

The estimates we have obtained about the ``fixed size'' matrix $G_m$ will be sufficient for our purposes only in the $q\to 0$ limit. This corresponds to letting
$q+q^{-1} = N\to\infty$, and apparently we are thus varying the spaces $H_1$, $H_k$. However, let us note that the numbers
\begin{equation*}\label{eq_scalar_product}
  G_{n;i,p} = d_n^{-1} \Tr_1^{\otimes n}[P_n(\id_i\otimes X^*\otimes\id_j)
  P_n(\id_p\otimes X\otimes\id_q)]
\end{equation*}
do not really depend on the precise form of the matrix $X\in B(H_k)^\ccirc$, but only on $k$, $\|x\|_2$ and on the eigenvalue $\mu$ of the rotation operator
$\rho$ corresponding to $X$. Indeed, we can expand the projections $P_n$ into linear combination of Temperley-Lieb diagrams $\pi$, whose coefficients depend on
$n$, $\pi$ and the parameter $q$. Moreover, after this expansion the evaluation of
\begin{equation*}\label{eq_TL_trace}
  \Tr_1^{\otimes n}[T_\pi (\id_i\otimes X^*\otimes\id_j)T_{\pi'}(\id_p\otimes
  X\otimes\id_q)]
\end{equation*}
is given by the evaluation of a Temperley-Lieb tangle at $X^*$ and $X$. Non-vanishing terms necessarily correspond to tangles where strings cannot start and end
on the same internal box, and so they are of the form $\Tr_k[\rho^r(X)^*\rho^s(X)] = \mu^{s-k}\|X\|_2^2 = d_k \mu^{s-k}\|x\|_2^2$. As a result $G_{n;i,p}$ can as
well be considered as a function of $k$, $\mu$, $\|x\|_2$ and $q$. Then it makes sense to take a limit $q\to 0$, and this will allow to prove results ``for
large $N$''.

\medskip

\begin{theorem}
  ~ \label{thm_riesz}
  \begin{enumerate}
  \item For all $q_0 \in\itv]0,1[$ there exists $C>0$ such that, assuming $q\leq q_0$, we have $\|G_n(x)\|\leq C$ for all
    $x\in \W$ and all $n$.
  \item There exists $q_1\in\itv]0,1[$ and $D>0$ such that, assuming $q\leq q_1$, we have $\|G_n(x)^{-1}\|\leq D$ for all
    $x\in \W$ and all $n$.
  \end{enumerate}
\end{theorem}\noindent
This shows in particular that $\{ x_{i,j} \mid x\in \W, i, j\in\NN \}$ is a Riesz basis of $H^\ccirc$ if $q\leq q_1$, and that the map
$\Phi : \ell^2(\NN)\otimes H^\ccirc\otimes\ell^2(\NN) \to H^\circ$, $\delta_i\otimes x\otimes \delta_j \mapsto x_{i,j}$ from Remark~\ref{rk_isomorphism} is an
isomorphism.

\begin{proof}
  Fix $q_0 \in\itv]0,1[$ and assume $q<q_0$. In this proof $C$ denotes a ``generic constant'' depending on $q_0$, that we will only modify a finite number of
  times. We take the constants $C>0$ and $\alpha > 0$ of Lemma~\ref{lem_gram_decay} and we fix the ``cut-off width'' $l = k + \lceil(3+k)/\alpha\rceil$. We will
  distinguish three regimes for the coefficients of our Gram matrix: the diagonal entries, for which we have the trivial estimate of
  Lemma~\ref{lem_gram_entry_bound} and the lower bound of Lemma~\ref{lem_band_bound} ; the entries $G_{n;i,p}$ with $0<|i-p|<2l$ for which we have the uniform
  estimate of Lemma~\ref{lem_band_bound} with a good behavior as $k\to\infty$ ; and the entries such that $|i-p| \geq 2l$ for which we have the off-diagonal
  decay estimate of Lemma~\ref{lem_gram_decay} with a bad behavior as $k\to\infty$.

  Recall that Lemma~\ref{lem_gram_decay} shows that $|G_{n;i,p}|\leq C q^{\alpha(|p-i|-k)-2-k} \|x\|_2^2$ if $|p-i|\geq k$, which by definition of $l$ yields
  $|G_{n;i,p}|\leq C q^{1+\alpha(|p-i|-l)} \|x\|_2^2$. In particular for $|p-i| \geq 2l$ we obtain
  $|G_{n;i,p}|\leq C q^{1+\alpha|p-i|/2} \|x\|_2^2$.

  We then deal with the entries such that $0<|p-i|<2l$. First assuming $n> 2l+5k+1$, we approximate each such entry $G_{n;i,p}$ by a corresponding entry
  $G_{m;i-a,p-a}$ of the smaller matrix $G_m$ with $m-k = 2l + 4k + 1$, using Lemma~\ref{lem_gram_block}. Write $n = i+k+j = p+k+q = m+a+b$. If $i$, $j$, $p$,
  $q > 2k$ we can choose $a$, $b$ such that $i-a$, $p-a$, $j-b$, $q-b \geq 2k+1$ --- we can e.g.\ take $a = \min(i,p)-2k-1$, and since $|i-p|< 2l$ we have
   $i-a < 2l+2k+1$ hence $j-b = 2l+4k - (i - a) + 1 > 2k$ and, similarly, $q-b>2k$. We have then
  \begin{displaymath}
    |G_{n;i,p} - G_{m;i-a,p-a}| \leq C q^{1+k} \|x\|_2^2.
  \end{displaymath}
  If $i\leq 2k$ or $p\leq 2k$ we use the case $a=0$, we have then $j-b\geq 2k+1$ (resp. $q-b\geq 2k+1$) and the estimate still holds. Similarly if $j\leq 2k$ or $q\leq 2k$ we use
  the case $b=0$.
  
  Now if $i\neq p$ Lemma~\ref{lem_band_bound} shows that $|G_{m;i-a,p-a}| \leq C(m-k+1)q^{k+1} \|x\|_2^2$. Altogether we have obtained the
  estimate $|G_{n;i,p}| \leq C (2l+4k+3) q^{1+k}\|x\|_2^2$ if $0 < |p-i| < 2l$. It holds also if $n\leq 2l+5k+1$ by applying directly Lemma~\ref{lem_band_bound} with $m=n$.
  Observe moreover that $l\leq (1+\alpha^{-1})k + 1+3\alpha^{-1} \leq 6\alpha^{-1}k$.  In particular the sequence $v_k = (2l + 4k + 3) q_0^{k/2}$ is bounded, hence we
  can modify $C$ so that $|G_{n;i,p}| \leq C q^{1+k/2} \|x\|_2^2$ for $0<|p-i|<2l$. In that case we have $|p-i| < 12\alpha^{-1}k$, hence we have as
  well $|G_{n;i,p}| \leq C q^{1+\alpha |p-i|/24} \|x\|_2^2$. Merging this with the case $|p-i|\geq 2l$ we have finally
  $|G_{n;i,p}| \leq C q^{1+\alpha |p-i|/24} \|x\|_2^2$ for all $i\neq p$, where $C$ and $\alpha$ depend only on $q_0$.

  Decompose $G_n = \hat G_n + \check G_n$, where $\hat G_n$ is diagonal with the same diagonal entries as $G_n$. The previous estimate shows that $\check G_n$
  is bounded, more precisely for any $\lambda\in\ell^2(\NN)$ we have by Cauchy-Schwarz
  \begin{align*}
    \left|{\ts\sum_{i,p}} \bar\lambda_i\lambda_p \check G_{n;i,p}\right|
    &\leq ({\ts\sum_{i,p}} |\lambda_i|^2 |\check G_{n;i,p}|)^{1/2}({\ts\sum_{i,p}} |\lambda_p|^2 |\check G_{n;i,p}|)^{1/2} \\
    &\leq C q \|x\|_2^2~ {\ts\sum_i} |\lambda_i|^2 {\ts\sum_{|p-i|\geq 1}} q^{\alpha|p-i|/24}
      \leq \frac{2q^{1+\alpha/24} C\|x\|_2^2}{1-q^{\alpha/24}} \|\lambda\|^2.
  \end{align*}
  This shows that $\|\check G_n\|\leq Cq\|x\|_2^2$ for all $n$ and $x$, after dividing $C$ by $2/(1-q_0^{\alpha/24})$. On the other hand we also have
  $\|\hat G_n\|\leq \|x\|_2^2/(1-q_0^2)^3$ by Lemma~\ref{lem_gram_entry_bound} and the first assertion is proved.

  For the inverse of $G$, we need a lower bound on the diagonal entries. We proceed as above, approximating each coefficient $G_{n;p,p}$ by a diagonal
  coefficient $G_{m;p-a,p-a}$ of a smaller matrix $G_m$, with $m - k = 1 + 4k$, and either $a=0$, $b=0$, or $p - a = 2k+1 = q-b$. This yields
  $|G_{n;p,p} - G_{m;p-a,p-a}| \leq C q^{k+1} \|x\|_2^2$. Then we use the lower bound of Lemma~\ref{lem_band_bound}, obtaining
  \begin{displaymath}
    G_{n;p,p} \geq C^{-1}\|x\|_2^2 - C (m-k+1)q^{k+1} \|x\|_2^2
    \geq C^{-1}\|x\|_2^2 - C q (4k+2)q_0^k \|x\|_2^2.
  \end{displaymath}
  Again if $m = 1+5k \geq n$ we obtain this estimate directly from Lemma~\ref{lem_band_bound}, without using $G_m$.
  Since the sequence $v_k = (4k+2)q_0^k$ is bounded, we can modify $C$ so as to obtain $G_{n;p,p} \geq (C^{-1}-Cq)\|x\|_2^2$. For $q$ small enough,
  $C^{-1}-Cq > 0$ and this shows $\hat G_n^{-1}\leq (C^{-1}-Cq)^{-1}\|x\|_2^{-2}\I$.

  Now we write $G_n = (\I +  \check G_n\hat G_n^{-1})\hat G_n$. The estimates obtained above show that we have
  $\|\check G_n \hat G_n^{-1}\| \leq D(q):= Cq/(C^{-1}-Cq)$. For $q_1$ small enough and $q\leq q_1$ we have $D(q)\leq D(q_1) < 1$ so that $G_n$
  is invertible. Moreover we have
  \begin{equation}\label{eq_inverse_diag}
    G_n^{-1} = \hat G_n^{-1}{\ts \sum_{i=0}^\infty} (-1)^i [\check G_n\hat G_n^{-1}]^i
  \end{equation}
  so that $\|G_n^{-1}\|\leq (C^{-1}-Cq_1)^{-1}(1-D(q_1))^{-1}\|x\|_2^{-2}$.
\end{proof}

\begin{remark}
  Using the recursion relation of Theorem~\ref{thm_gram_rec} and the symmetry properties of $G_n$, one can compute $G_n$ by induction on $n$. Numerical
  experiments then show the existence, for all $q\in\itv]0,1[$, of a constant $C>0$ such that $\|G_n\|\leq C\|x\|_2^2$, $\|G_n^{-1}\|\leq C\|x\|_2^{-2}$ for all
  $n$, $k$, $x\in \W_k$. Thus our proof is far from optimal and we strongly believe that the results of Theorem~\ref{thm_riesz} hold for all $q\in\itv]0,1[$
  (with constants depending on $q$).
\end{remark}

\section{An Orthogonality Property}
\label{sec_orthogonality}

Recall from Sections~\ref{sec_bimod} and~\ref{sec_gram} that we have an isomorphism of normed spaces
$\Phi : \ell^2(\NN)\otimes H^\ccirc\otimes \ell^2(\NN) \to H^\circ$. In this section we shall establish a crucial asymptotic orthogonality property of the
following subspaces:
\begin{notation}\label{not_Vm}
  For every $m\in\NN$ we consider the following subspace of $H^\circ$:
  \begin{displaymath}
    V_m = \Phi\left(\ell^2(\NN_{\geq m})\otimes H^\ccirc\otimes \ell^2(\NN_{\geq
        m})\right) = \overline{\Span}\{x_{i,j} \mid x\in H^\ccirc, i, j\geq m\}. 
  \end{displaymath}
\end{notation}
In the rest of this section we will prove that for $y\in p_n(H^\circ) \subset M$
the scalar product $(\zeta y\mid y\zeta)$ becomes small, uniformly on unit
vectors $\zeta\in V_m$, as $m\to\infty$, cf. Theorem~\ref{thm_ortho_global}. We
start by computations in $\Corep(\FO_N)$ which culminate in the ``local estimate'' of
Theorem~\ref{thm_orth_XY_YX}. In these computations $x = u_k(X)$ is a fixed
element of $p_k H^\ccirc$, which is not assumed to be an eigenvector of the
rotation map $\rho$. We then assemble the pieces to come back to $H(k)$ and finally $H^\circ$.

Recall from Section~\ref{sec_prelim} that by Tannaka-Krein duality products
$x_{i,j}y$, $yx_{i,j}$ can be computed from the elements $X_{i,j}*_m Y$,
$Y*_m X_{i,j} \in B(H_m)$ if $x = u_k(X)$ and $y = u_n(Y)$. Recall also that we
use the Hilbert-Schmidt norm $\|X\|_2 := \Tr(X^*X)^{1/2}$ on $B(H_k)$. We have
$\|AXB\|_2 \leq \|A\|\|X\|_2\|B\|$, where $\|A\|$, $\|B\|$ are the operator
norms of $A$, $B\in B(H_k)$. This yields for instance the inequality
$\|X_{i,j}*_m Y\|_2 \leq d_a \|X_{i,j}\|_2 \|Y\|_2$, where $m = i+k+j+n-2a$, and
we recall moreover that $\|X_{i,j}\|_2 \leq \sqrt{d_id_j} \|X\|_2$, see e.g.\ the
proof of Lemma~\ref{lem_gram_entry_bound}.  We still make repeated use of
Lemma~\ref{lem_dimensions} which allows to replace $d_l$ with $q^{-l}$ up to
multiplicative constants.

\begin{lemma}
  \label{lem_orth_YX}
  Fix $k$, $n \in\NN$ and $X \in B(H_k)^\ccirc$, $Y\in B(H_n)^\circ$. Then for
  all $i\geq n$, $r\in\NN$, $j\geq 2r$, $m = n+i+k+j-2a$ with $0\leq a\leq n$,
  there exists $Z\in B(H_{m-2r})$ such that
  \begin{displaymath}
    \|Y *_m X_{i,j} - P_m(Z\otimes\id_{2r})P_m\|_2 \leq C d_a q^{i+k+j-a-2r} \sqrt{d_id_j}\|X\|_2 \|Y\|_2,
  \end{displaymath}
  where $C$ is a constant depending only on $q$, and $\|Z\|_2\leq d_a\sqrt{d_id_{j-2r}}\|X\|_2\|Y\|_2$.
\end{lemma}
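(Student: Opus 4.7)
The key structural observation is the fusion rule $u_{i+k+j}\subset u_{i+k+j-2r}\otimes u_{2r}$, which yields the inclusion $H_{i+k+j}\subset H_{i+k+j-2r}\otimes H_1^{\otimes 2r}$, hence the identities $(P_{i+k+j-2r}\otimes\id_{2r})P_{i+k+j}=P_{i+k+j}$ and $(P_{m-2r}\otimes\id_{2r})P_m=P_m$. The first of these gives the exact rewriting $X_{i,j}=P_{i+k+j}(X_{i,j-2r}\otimes\id_{2r})P_{i+k+j}$.

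Because $i\ge n\ge a$, the cap $t_a$ touches only the $\id_i$ part of $X_{i,j}$ and, crucially, not the last $2r$ identity strands (since $j\ge 2r$); thus the cap operator factors as $\mathcal{C}=\id_{n-a}\otimes t_a\otimes\id_{i+k+j-a}=\mathcal{C}_0\otimes\id_{2r}$, with $\mathcal{C}_0=\id_{n-a}\otimes t_a\otimes\id_{i+k+j-a-2r}$. Substituting the rewriting of $X_{i,j}$ into $Y*_mX_{i,j}=V_m^{n,i+k+j*}(Y\otimes X_{i,j})V_m^{n,i+k+j}$ and systematically absorbing the Jones--Wenzl projections using the two identities above together with $Y=P_nYP_n$ and $X_{i,j-2r}=P_{i+k+j-2r}X_{i,j-2r}P_{i+k+j-2r}$, one derives the \emph{exact} identity
\begin{equation*}
Y*_mX_{i,j}=P_m\bigl((Y*_{m-2r}X_{i,j-2r})\otimes\id_{2r}\bigr)P_m.
\end{equation*}

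Setting $\hat Z:=Y*_{m-2r}X_{i,j-2r}\in B(H_{m-2r})$ and $\lambda:=\Tr(\hat Z)/d_{m-2r}$, take $Z:=\hat Z-\lambda\,\id_{m-2r}$, which lies in $B(H_{m-2r})^\circ$ by construction. The exact identity above then yields $Y*_mX_{i,j}-P_m(Z\otimes\id_{2r})P_m=\lambda\,P_m$, so the Hilbert--Schmidt error equals $|\lambda|\sqrt{d_m}$.

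The main analytic task is to bound $|\lambda|=|\Tr(\hat Z)|/d_{m-2r}$. By cyclicity one rewrites $\Tr(\hat Z)=\Tr((Y\otimes X_{i,j-2r})\,V'V^{'*})$ with $V'=V_{m-2r}^{n,i+k+j-2r}$ and expands the Jones--Wenzl projection $P_{m-2r}$ inside $V'V^{'*}$ via Wenzl's recursion. Each resulting Temperley--Lieb tangle applied to $Y\otimes X_{i,j-2r}$ either (a) connects two adjacent points of $X$'s internal box, and hence vanishes by $X\in B(H_k)^\ccirc$ (cf.\ Remark~\ref{rk_ccirc} and Lemma~\ref{lem_trapped_strings}), or (b) closes $Y$ trivially and thus contributes a factor $\Tr(Y)=0$, or (c) genuinely couples $Y$ and $X$ across the central block of $i+k+j-a-2r$ ``free'' identity strands; for this last class, applying Lemma~\ref{lem_trace_proj} to the partial trace of $P_{m-2r}$ over those central strands extracts the factor $q^{i+k+j-a-2r}$. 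Combined with the dimension estimates of Lemma~\ref{lem_dimensions}, this yields the claimed $|\lambda|\sqrt{d_m}\le Cd_aq^{i+k+j-a-2r}\sqrt{d_id_j}\|X\|_2\|Y\|_2$.

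The norm bound on $Z$ follows from $\|Z\|_2\le\|\hat Z\|_2$ (which is a Pythagorean identity, as $Z$ is $\hat Z$ minus its scalar component) together with the operator-norm estimate $\|V'\|^2\lesssim d_a$ from Lemma~\ref{lem_kappa}: this gives $\|\hat Z\|_2\le d_a\|Y\|_2\|X_{i,j-2r}\|_2\le d_a\sqrt{d_id_{j-2r}}\|X\|_2\|Y\|_2$.

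The hardest step is the combinatorial/analytic bound on $|\lambda|$: one must carefully combine the $\ccirc$-geometry of $X$, the tracial condition $\Tr(Y)=0$, and the near-scalar behaviour of $\Pi_{a,b,c}$ from Lemma~\ref{lem_trace_proj} to match exactly the exponent $q^{i+k+j-a-2r}$ dictated by the width of the central block over which partial tracing is performed.
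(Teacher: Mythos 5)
Your proof hinges on the claim that
\begin{displaymath}
  Y*_mX_{i,j}=P_m\bigl((Y*_{m-2r}X_{i,j-2r})\otimes\id_{2r}\bigr)P_m
\end{displaymath}
holds \emph{exactly}. This is false whenever $a\geq 1$, and the failure is precisely the source of the error term in the statement. Expanding both sides, the left-hand side equals $P_m\mathcal C^*(\id_n\otimes P_{i+k+j})(Y\otimes X_{i,j-2r}\otimes\id_{2r})(\id_n\otimes P_{i+k+j})\mathcal C P_m$ while the right-hand side equals $P_m\mathcal C^*(Y\otimes X_{i,j-2r}\otimes\id_{2r})\mathcal C P_m$, where $\mathcal C=\id_{n-a}\otimes t_a\otimes\id_{i+k+j-a}$. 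Equality would require the image of $\mathcal C P_m$, restricted to its last $i+k+j$ tensor slots, to lie in $H_{i+k+j}$ (so that $\id_n\otimes P_{i+k+j}$ could be dropped). But after inserting the cap $t_a$, the last $i+k+j$ slots live in $H_a\otimes H_{i+k+j-a}\simeq\bigoplus_c H_c$, and the image of $\mathcal C P_m$ has nonzero components in the lower-weight summands $H_c$, $c<i+k+j$ (a direct computation with $n=2$, $a=1$, $k=1$, $i=2$, $j=2$, $r=1$ shows that $(t^*\text{ on slots }n+1,n+2)\circ\mathcal C P_m\neq 0$). These components are annihilated by $\id_n\otimes P_{i+k+j}$ but not by $\id_n\otimes(P_{i+k+j-2r}\otimes\id_{2r})$, so the two expressions genuinely differ. (Your identity \emph{is} exact in the special case $a=0$, where there is no cap.)

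The paper's proof instead uses Lemma~\ref{lem_highest_weight} to \emph{approximate} $P_{i+k+j}$ by $(\id_a\otimes P_{i+k+j-a})(P_{i+k+j-2r}\otimes\id_{2r})$: the first factor \emph{is} exactly absorbed by $P_m$ through the cap (since $P_m\leq P_{n-a}\otimes P_{i+k+j-a}$), and the second factor leads directly to $Z=Y*_{m-2r}X_{i,j-2r}$. The operator-norm error $Cq^{i+k+j-a-2r}$ from Lemma~\ref{lem_highest_weight}, multiplied by $\|t_a\|^2=d_a$ and the Hilbert--Schmidt norm $\sqrt{d_id_j}\|X\|_2\|Y\|_2$ of the middle factor, gives exactly the bound in the statement. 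No trace adjustment is involved, and the whole second half of your plan --- bounding $|\lambda|$ via Temperley--Lieb case analysis and Lemma~\ref{lem_trace_proj} --- is therefore misdirected; even the trivial bound on $|\lambda|\sqrt{d_m}$ is off from the target by the full factor $q^{i+k+j-a-2r}$, and your sketch of how to extract that factor does not constitute a proof.

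One legitimate observation buried in your attempt: the paper takes $Z=Y*_{m-2r}X_{i,j-2r}$ and asserts $Z\in B(H_{m-2r})^\circ$, but does not verify this, and for $k\leq n$ the tracelessness argument of Proposition~\ref{prp_propagation} does not apply, so $Z$ need not be traceless. However, this condition is never used downstream (Lemma~\ref{lem_orth_scalar} and Theorem~\ref{thm_orth_XY_YX} work with arbitrary $Z\in B(H_{m-2r})$), so it is a harmless overstatement in the paper rather than a reason to alter $Z$.
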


\begin{proof}
  We have by definition
  \begin{align*}
    Y*_m X_{i,j} &= P_m (\id_{n-a}\otimes t_a^*\otimes\id_{i+k+j-a})(\id_n\otimes
                   P_{i+k+j})(Y\otimes\id_i\otimes X\otimes\id_j) \\
                 &\hspace{5cm} (\id_n\otimes
                   P_{i+k+j})(\id_{n-a}\otimes t_a\otimes\id_{i+k+j-a})P_m.
  \end{align*}
  We use the estimate from Lemma~\ref{lem_highest_weight} as follows:
  $P_{i+k+j} \simeq (\id_a\otimes P_{i+k+j-a})$ $(P_{i+k+j-2r}\otimes\id_{2r})$
  up to $C q^{i+k+j-a-2r}$ in operator norm. Since
  $(\id_{n-a}\otimes P_{i+k+j-a})$ is absorbed by $P_m$ we can write
  \begin{align*}
    Y*_m X_{i,j} &\simeq P_m (\id_{n-a}\otimes t_a^*\otimes\id_{i+k+j-a})(\id_n\otimes
                   P_{i+k+j-2r}\otimes\id_{2r})(Y\otimes\id_i\otimes X\otimes\id_j) \\
                 &\hspace{4.5cm} (\id_n\otimes
                   P_{i+k+j-2r}\otimes\id_{2r})(\id_{n-a}\otimes t_a\otimes\id_{i+k+j-a})P_m
  \end{align*}
  up to $2C \|t_a\|^2 q^{i+k+j-a-2r} \|Y\otimes\id_i\otimes X\otimes\id_j\|_2 = 2C d_a q^{i+k+j-a-2r} \sqrt{d_id_j}\|X\|_2 \|Y\|_2$ in HS norm.

  This yields the result with
  \begin{align*}
    Z &= P_{m-2r}(\id_{n-a}\otimes t_a^*\otimes\id_{i+k+j-a-2r})(\id_n\otimes
        P_{i+k+j-2r})(Y\otimes\id_i\otimes X\otimes\id_{j-2r})\\
      &\hspace{5cm} (\id_n\otimes
        P_{i+k+j-2r})(\id_{n-a}\otimes t_a^*\otimes\id_{i+k+j-a-2r})P_{m-2r} 
  \end{align*}
  which satisfies the right norm estimate. Note that we have $Z = Y*_{m-2r}X_{i,j-2r}$. 
\end{proof}

\begin{lemma}
  \label{lem_orth_XY}
  Fix $k$, $n \in\NN$ and $X \in B(H_k)^\ccirc$, $Y\in B(H_n)^\circ$. Then for
  all $i\geq n$, $p\in\NN$, $j \geq 2n+3p$, $m = n+i+k+j-2a$ with
  $0\leq a\leq n$, we have
  \begin{displaymath}
    \|(\id\otimes\Tr_{n+2p})(X_{i,j} *_m Y)\|_2\leq C q^{\alpha p}q^{-p}q^{-a}q^{-(i+j+n)/2} \|X\|_2 \|Y\|_2, 
  \end{displaymath}
  where $C > 0$, $\alpha\in\itv]0,1[$ are constants depending only on $q$.
\end{lemma}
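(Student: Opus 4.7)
The plan is to adapt the strategy of Lemma~\ref{lem_orth_YX}, now exploiting the fact that the partial trace $\Tr_{n+2p}$ covers all of $Y$'s $n-a$ remaining output legs together with an additional $2p+a$ legs drawn from the $j$-tail of $X_{i,j}$. The $\sim p$ ``buffer'' between $X$ and the end of that tail is what will yield the decay $q^{\alpha p}$ via Lemma~\ref{lem_trace_proj}, while the identity $\Tr_n(Y)=0$ (which holds since $Y\in B(H_n)^\circ$) will annihilate the main scalar term that would otherwise survive.

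Concretely, I would write $X_{i,j}*_m Y=P_m A P_m$ with
\[
A=(\id_{i+k+j-a}\otimes t_a^*\otimes\id_{n-a})(X_{i,j}\otimes Y)(\id_{i+k+j-a}\otimes t_a\otimes\id_{n-a}),
\]
and set $r=i+k+j-2a-2p$, $q'=n+2p$, so $m=r+q'$. Choosing $b=\lfloor p\rfloor$, Lemma~\ref{lem_highest_weight} gives
\[
P_m\simeq (P_r\otimes\id_{q'})(\id_{r-b}\otimes P_{b+q'})
\]
up to $Cq^b$ in operator norm. Applied on both sides of $A$, the outer factors $(P_r\otimes\id_{q'})$ commute with $\id\otimes\Tr_{q'}$ and factor out as left/right multiplications by $P_r$. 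The hypothesis $j\geq 2n+3p$ guarantees $b\leq j-2a-2p$, which places $X$ (at positions $i+1,\ldots,i+k$) strictly inside the first $r-b$ legs, so that it is left untouched by the inner projection $(\id_{r-b}\otimes P_{b+q'})$.

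Next I would invoke Lemma~\ref{lem_trace_proj} to replace $(\id_b\otimes\tr_{q'})(P_{b+q'})$ by a scalar $\lambda\cdot\id_b$ up to $Cq^{\alpha'(n+2p)}\leq Cq^{2\alpha' p}$ in operator norm, where $\alpha'$ is the constant provided there. In the resulting scalar main term, the $t_a/t_a^*$ glue is closed up by the identity on the trace region, so a full trace $\Tr_n(Y)$ appears as a factor and vanishes; only the errors from Lemmas~\ref{lem_highest_weight} and~\ref{lem_trace_proj} survive. Converting each operator-norm error to Hilbert--Schmidt norm via Lemma~\ref{lem_RD} (using $\|X_{i,j}\|_2\leq\sqrt{d_id_j}\|X\|_2$ and the partial-trace bound $\|(\id\otimes\Tr_{q'})(\cdot)\|_2\leq\sqrt{d_{q'}}\|\cdot\|_2$), and tracking the dimension factors $d_a\sim q^{-a}$ and $d_{2p}\sim q^{-2p}$ that arise respectively from the $t_a$-glue and from renormalizing $\tr_{q'}$ back to $\Tr_{q'}$, one reaches the claimed bound after taking $\alpha$ to be a suitable fraction of $\alpha'$.

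The hard part will be to confirm that the scalar approximation of Lemma~\ref{lem_trace_proj} genuinely produces a $\Tr_n(Y)$ factor, rather than some incomplete partial trace of $Y$. Geometrically, $P_{b+q'}$ must enclose both the entire $t_a/t_a^*$ glue (of total size $2a\leq 2n$) and the full $(n-a)$-leg output block of $Y$; once it is replaced by the identity on the trace region, the two halves of $t_a^*\cdot t_a$ reconstitute the missing $a$ input legs of $Y$, so that the partial trace over the $q'$ rightmost legs loops through the glue and produces a complete $\Tr_n(Y)$. These geometric constraints — enclosing the whole glue, leaving $X$ untouched, and preserving a buffer of size $\sim p$ — are precisely what force both the choice $b=\lfloor p\rfloor$ and the hypothesis $j\geq 2n+3p$.
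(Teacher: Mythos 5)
Your proposal correctly identifies the overall shape of the argument — factorize $P_m$ via Lemma~\ref{lem_highest_weight} so that a buffer of size $\sim p$ appears, convert error terms to Hilbert--Schmidt norm, and engineer a vanishing $\Tr_n(Y)$ in the main term — but it has a genuine gap at the step where you invoke Lemma~\ref{lem_trace_proj}.

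After your first approximation, the main term is
\[
P_r\,(\id_r\otimes\Tr_{q'})\bigl[(\id_{r-b}\otimes P_{b+q'})\,A\,(\id_{r-b}\otimes P_{b+q'})\bigr]\,P_r ,
\]
with $A$ sitting \emph{between} the two copies of $P_{b+q'}$. The quantity $(\id_b\otimes\tr_{q'})(P_{b+q'})$, to which you want to apply Lemma~\ref{lem_trace_proj}, simply never appears as a factor here: a partial trace does not pull through a three-fold product, and there is no cyclicity available since $\Tr_{q'}$ is only a partial trace. What makes matters worse is that $A$ (through the internal $P_{i+k+j}$ in $X_{i,j}=P_{i+k+j}(\id_i\otimes X\otimes\id_j)P_{i+k+j}$) entangles the $q'$ traced legs with the retained legs, so the map $M\mapsto(\id_b\otimes\Tr_{q'})(P_{b+q'}MP_{b+q'})$ cannot be evaluated on the ``$Y$-part'' of $A$ independently of the rest. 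Even in the toy case where $X_{i,j}$ is replaced by $\id_i\otimes X\otimes\id_j$ (so that $A$ genuinely factors as $A_1\otimes(\id_{a+3p}\otimes Y')$ with $Y'=(\Tr_a\otimes\id_{n-a})(Y)$), one is still left with $(\id_b\otimes\Tr_{q'})[P_{b+q'}(\id_{a+3p}\otimes Y')P_{b+q'}]$, which for $p>0$ does not factor into $\bigl[(\id_b\otimes\tr_{q'})(P_{b+q'})\bigr]\cdot\Tr(Y')$; so the promised $\Tr_n(Y)=0$ never materializes by this route. This is precisely the obstruction the paper removes with its \emph{second} approximation step, namely
\[
P_{i+k+j}\simeq (P_{i+k+p}\otimes\id_{j-p})(\id_{i+k}\otimes P_j),
\]
which disentangles the trace region from $X$ and isolates a standalone Jones--Wenzl projection $P_j$. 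Only then is Lemma~\ref{lem_trace_proj} applied — and note, it is applied to $P'_j=(\id_{j-2a-2p}\otimes\Tr_p\otimes\id_{p+2a})(P_j)$, a \emph{middle} trace of width $p$, not to a right-end trace of width $q'=n+2p$; the resulting decay is $q^{\alpha p}$, exactly as in the statement. Your claimed $q^{\alpha(n+2p)}$, which would be strictly better than the paper's bound, is a further sign the step is misapplied. Finally, after the paper's Step~3 the vanishing is not another application of Lemma~\ref{lem_trace_proj} but the elementary fact that $(\Tr_{p+a}\otimes\id_{n-a})(P_{p+n})$ is a multiple of $\id_{n-a}$, after which the full trace $\Tr_n(Y)=0$ emerges. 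To repair your argument you would need to insert an analogue of the paper's Step~2 before attempting the Lemma~\ref{lem_trace_proj} step.
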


\begin{proof}
  In this proof $C$ denotes a generic constant, depending only on $q$ and that we will modify only a finite number of times.
  
  We write $\Tr_{n+2p} = (\Tr_p\otimes\Tr_{p+a}$ $\otimes\Tr_{n-a})$ $(P_{n+2p} \,\cdot\, P_{n+2p})$. Applying this to $X_{i,j} *_m Y$, the projections
  $\id\otimes P_{n+2p}$ are absorbed in $P_m$:
  \begin{align*}
    (\id\otimes\Tr_{n+2p})(X_{i,j} *_m Y) &= (\id_{m-2p-n}\otimes \Tr_p\otimes\Tr_{p+a}\otimes\Tr_{n-a}) [\\
                                          & \qquad P_m (\id_{m-n+a}\otimes t_a^*\otimes\id_{n-a})(X_{i,j}\otimes Y)(\id_{m-n+a}\otimes t_a\otimes\id_{n-a})P_m]
  \end{align*}
  We shall proceed to three successive approximations to show that this quantity is almost zero.

  We first use the estimate
  $P_m \simeq (\id_{m-p-n}\otimes P_{p+n})(P_{m-n+a}\otimes \id_{n-a})$ up to
  $Cq^{p+a}$ in operator norm, from Lemma~\ref{lem_highest_weight}. The projections
  $P_{m-n+a}\otimes\id$ are absorbed by $X_{i,j}$ so that
  \begin{align*}
    &P_m (\id_{m-n+a}\otimes t_a^*\otimes\id_{n-a})(X_{i,j}\otimes Y)(\id_{m-n+a}\otimes t_a\otimes\id_{n-a})P_m \simeq \\
    &\hspace{3cm} \simeq (\id_{m-p-n}\otimes P_{p+n})
      (\id_{m-n+a}\otimes t_a^*\otimes \id_{n-a})\\
    &\hspace{5.5cm}(X_{i,j}\otimes Y) (\id_{m-n+a}\otimes t_a\otimes \id_{n-a}) (\id_{m-p-n}\otimes P_{p+n}),
  \end{align*}
  with an error controlled by $2Cq^{p+a} \|t_a\|^2 \|X_{i,j}\otimes Y\|_2 \leq 2Cq^{p+a} d_a \sqrt{d_id_j}\|X\|_2\| Y\|_2$ in Hilbert-Schmidt norm.
  Observing that $\Tr_p$ hits only $X_{i,j}$, we obtain
  \begin{align*}
    (\id\otimes \Tr_{2p+n})(X_{i,j} *_m Y) &\simeq (\id_{m-2p-n}\otimes\Tr_{p+a}\otimes \Tr_{n-a})[ (\id_{m-2p-n}\otimes P_{p+n})
    \\ & \hspace{-2cm} (\id_{m-n-p+a}\otimes t_a^*\otimes \id_{n-a})(Z\otimes Y) (\id_{m-n-p+a}\otimes t_a\otimes \id_{n-a}) (\id_{m-2p-n}\otimes P_{p+n}) ],
  \end{align*}
  where $Z = (\id_{m-2p-n}\otimes \Tr_p\otimes \id_{p+2a})(X_{i,j})$. We denote the right-hand side by $\Phi(Z)$, with $\Phi : B(H_{m-2p-n}\otimes H_{p+2a}) \to B(H_{m-2p-n})$.
  After
  applying the  trace $\Tr_p\otimes$ $\Tr_{p+a}\otimes\Tr_{n-a}$, see e.g.\ Lemma~\ref{lem_RD}, the error is controlled as follows:
  \begin{align*}
    \|(\id\otimes \Tr_{2p+n})(X_{i,j} *_m Y) - \Phi(Z)\|_2 &\leq 2Cq^{p+a} d_a \sqrt{d_pd_{p+a}d_{n-a}d_id_j} \|X\|_2\|Y\|_2 \\
    & \leq 2C q^{-(i+j+n)/2} \|X\|_2\|Y\|_2,
  \end{align*}
  up to dividing $C$ by the appropriate power of $1/(1-q^2)$, cf. Lemma~\ref{lem_dimensions}. This error is less that the upper bounded in the statement.

  Then we use the estimate $P_{i+j+k} \simeq (P_{i+k+p}\otimes\id_{j-p})(\id_{i+k}\otimes P_j)$, up to $C q^p$ in operator norm, in the expression of $Z$. We have by assumption
  $j\geq 3p+2a$, and in particular we can write
  \begin{align*}
    Z &\simeq (\id_{i+k+j-2p-2a}\otimes \Tr_p\otimes \id_{p+2a})[(P_{i+k+p}\otimes\id_{j-p})
        (\id_i\otimes X\otimes P_j)(P_{i+k+p}\otimes\id_{j-p})] \\
      &= (P_{i+k+p}\otimes\id_{j-2p})
        (\id_i\otimes X\otimes P'_j)(P_{i+k+p}\otimes\id_{j-2p}) =: Z'
  \end{align*}
  where $P'_j = (\id_{j-2p-2a}\otimes\Tr_p\otimes\id_{p+2a})(P_j) \in B(H_{j-2p-2a}\otimes H_{p+2a})$. The error in $Z$ is controlled in HS norm by
  $2Cq^p\sqrt{d_p} \|P_i\otimes X\otimes P_j\|_2 = 2Cq^p\sqrt{d_pd_id_j} \|X\|_2$, so that
  \begin{align*}
    \|\Phi(Z) - \Phi(Z')\|_2 &\leq 2Cq^pd_a\sqrt{d_p d_{p+a} d_{n-a}d_id_j} \|X\|_2\|Y\|_2 \\
    &\leq 2C q^{-a} q^{-(i+j+n)/2} \|X\|_2\|Y\|_2.
  \end{align*}
  Again this is better than the estimate we are trying to prove.

  Now Lemma~\ref{lem_trace_proj} shows that $P'_j \simeq \lambda (\id_{j-2p-2a}\otimes\id_{p+2a})$  in $B(H_{j-2p-2a}\otimes H_{p+2a})$, up to $Cq^{\alpha p}d_p$ in operator norm, for some constant
  $\lambda$ depending on all parameters (and $\alpha > 0$ depending only on $q$). In HS norm we can control this error by
  $Cq^{\alpha p}d_p\sqrt{d_{j-2p-2a}d_{p+2a}}$. This yields
  \begin{displaymath}
    Z'\simeq Z'' := \lambda [(P_{i+k+p}\otimes\id_{j-3p-2a})(\id_i\otimes X\otimes\id_{j-2p-2a})(P_{i+k+p}\otimes\id_{j-3p-2a})]\otimes\id_{p+2a},
  \end{displaymath}
  and we have the control
  \begin{align*}
    \|\Phi(Z') - \Phi(Z'')\|_2 &\leq Cq^{\alpha p}d_ad_p\sqrt{d_id_{j-2p-2a}d_{p+2a}d_{p+a}d_{n-a}}\|X\|_2\|Y\|_2 \\
    &\leq C q^{\alpha p} q^{-a} q^{-p} q^{-(i+j+n)/2}\|X\|_2\|Y\|_2,
  \end{align*}
  which corresponds to the estimate in the statement.

  We finally arrived at
  \begin{align*}
    \Phi(Z'') &= \lambda (P_{i+k+p}\otimes\id_{j-3p-2a})(\id_i\otimes X\otimes\id_{j-2p-2a})(P_{i+k+p}\otimes\id_{j-3p-2a}) \times \\
    & \hspace{1cm} \times
      (\Tr_{p+a}\otimes \Tr_{n-a})[(\id_{p+a}\otimes t^*_a\otimes\id_{n-a})(\id_{p+2a}\otimes Y) (\id_{p+a}\otimes t_a\otimes\id_{n-a}) P_{p+n}].
  \end{align*}
  We claim that the second line above vanishes. Indeed $(\Tr_{p+a}\otimes\id_{n-a})(P_{p+n})$ is a multiple of $\id_{n-a}$, since it is an
  intertwiner of $H_{n-a}$. We are then left with
  \begin{displaymath}
    \Tr_{n-a}[(t^*_a\otimes\id_{n-a})(\id_a\otimes Y)(t_a\otimes\id_{n-a})] = (\Tr_a\otimes\Tr_{n-a})(Y),
  \end{displaymath}
  which vanishes because $y\in p_n H^\circ$. Hence $\Phi(Z'') = 0$ and the result is proved.
\end{proof}

\begin{lemma}
  \label{lem_orth_scalar}
  For $r\leq m/2$, $Z \in B(H_{m-2r})$, $S = P_m(Z\otimes\id_{2r})P_m$ and $T \in B(H_m)$ we have
  \begin{displaymath}
    |(S\mid T)| \leq \sqrt{d_r}\|(\id\otimes\Tr_r)(T)\|_2 \|Z\|_2 + C \|Z\|_2 \|T\|_2, 
  \end{displaymath}
  for some constant $C$ depending only on $q$.
\end{lemma}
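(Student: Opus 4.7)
The plan is to rewrite $S$ so that its pairing with $T$ directly produces the partial trace $(\id\otimes\Tr_r)(T)$ as the relevant quantity, and then apply Cauchy--Schwarz.

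The key observations are the exact projection identities coming from the subspace inclusions $H_m\subset H_{m-r}\otimes H_r$ and $H_{m-r}\subset H_{m-2r}\otimes H_r$:
\begin{align*}
(\id_{m-r}\otimes P_r)P_m &= P_m, \\
(P_{m-r}\otimes\id_r)P_m &= P_m, \\
(\id_{m-2r}\otimes P_r)P_{m-r} &= P_{m-r}.
\end{align*}
The first identity yields $P_m(\id_{m-r}\otimes(\id_r-P_r))=0$, which allows me to replace the rightmost $\id_r$ in $\id_{2r}=\id_r\otimes\id_r$ by $P_r$: thus $S=P_m(Z\otimes\id_r\otimes P_r)P_m$. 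Next, inserting the second identity (and its adjoint) on both sides collapses the factor $Z\otimes\id_r$ into $\tilde W:=P_{m-r}(Z\otimes\id_r)P_{m-r}\in B(H_{m-r})$, giving $S=P_m(\tilde W\otimes P_r)P_m$. The third identity further simplifies $\tilde W=P_{m-r}(Z\otimes P_r)P_{m-r}$, which will be crucial for the HS-norm bound.

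Now view $B(H_m)\subset B(H_{m-r}\otimes H_r)$ via the compression by the projection $P'_m\colon H_{m-r}\otimes H_r\to H_m$. On $H_{m-r}\otimes H_r$ the operator $\tilde W\otimes P_r$ acts as $\tilde W\otimes\id_{H_r}$, so
\[
(S\mid T)=\Tr_{H_{m-r}\otimes H_r}\bigl((\tilde W^*\otimes\id_{H_r})\,P'_m T P'_m\bigr)=\Tr_{H_{m-r}}\bigl(\tilde W^*\,(\id\otimes\Tr_r)(T)\bigr),
\]
where in the first step I used $P'_m T P'_m=T$ under the identification. Cauchy--Schwarz in $B(H_{m-r})$ then gives $|(S\mid T)|\le\|\tilde W\|_2\,\|(\id\otimes\Tr_r)(T)\|_2$.

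It remains to bound $\|\tilde W\|_2$. Using $\tilde W=P_{m-r}(Z\otimes P_r)P_{m-r}$ and the standard inequality $\Tr(PAP)\le\Tr(A)$ for $A\ge 0$ and $P$ a projection, one gets
\[
\|\tilde W\|_2^2=\Tr\bigl(P_{m-r}(Z^*\otimes P_r)P_{m-r}(Z\otimes P_r)P_{m-r}\bigr)\le\Tr\bigl((Z^*Z)\otimes P_r\bigr)=d_r\|Z\|_2^2,
\]
since $\Tr_{H_1^{\otimes r}}(P_r)=d_r$. Hence $|(S\mid T)|\le\sqrt{d_r}\,\|Z\|_2\,\|(\id\otimes\Tr_r)(T)\|_2$, which implies the stated inequality (in fact with $C=0$). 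The only delicate point is keeping track of which identity acts on which space ($\id_r$ on $H_1^{\otimes r}$ versus $\id_{H_r}$ on the irreducible $H_r$, and the role of $P_r$ as the mediator between the two); once this bookkeeping is done, all the manipulations are exact and no approximation from Lemma~\ref{lem_highest_weight} is needed.
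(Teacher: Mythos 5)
Your proof is correct, and it takes a genuinely different and in fact cleaner route than the paper's. The paper replaces one copy of $P_m$ by $(P_{m-r}\otimes\id_r)(\id_{m-2r}\otimes P_{2r})$ using the approximation of Lemma~\ref{lem_highest_weight}, which is where the error term $C\|Z\|_2\|T\|_2$ comes from; after absorbing $(\id_{m-2r}\otimes P_{2r})$ into $T$ and tracing, they land on the same main term $\Tr_{m-r}\bigl(\tilde W^*(\id\otimes\Tr_r)(T)\bigr)$ with $\tilde W = P_{m-r}(Z\otimes\id_r)P_{m-r}$, then apply Cauchy--Schwarz. Your observation is that one never needs the approximation: the exact absorptions $(\id_{m-r}\otimes P_r)P_m=P_m$, $(P_{m-r}\otimes\id_r)P_m=P_m$ and $(\id_{m-2r}\otimes P_r)P_{m-r}=P_{m-r}$ (all direct consequences of $P_k(P_a\otimes P_b)=P_k$ for $k=a+b$) already yield the identity $S=P_m(\tilde W\otimes P_r)P_m$, hence $(S\mid T)=\Tr_{m-r}\bigl(\tilde W^*(\id\otimes\Tr_r)(T)\bigr)$ exactly. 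Combined with $\|\tilde W\|_2\le\|Z\otimes P_r\|_2=\sqrt{d_r}\,\|Z\|_2$ this proves the lemma with $C=0$, which is strictly stronger than what the paper states and does not affect any downstream use. One small nit on your HS-norm bound: your displayed chain needs the contractivity $\Tr(D^*PD)\le\Tr(D^*D)$ applied twice (once per $P_{m-r}$), or you can simply use $\|P_{m-r}(Z\otimes P_r)P_{m-r}\|_2\le\|Z\otimes P_r\|_2$ directly; the conclusion is the same.
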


\begin{proof}
  Recall once again from Lemma~\ref{lem_highest_weight} that
  $P_m \simeq (P_{m-r}\otimes\id_r)(\id_{m-2r}\otimes P_{2r})$ up to $C q^r$,
  where $C$ is a constant depending only on $q$.  Since
  $T(\id_{m-2r}\otimes P_{2r}) = T = P_mT$ we have
  \begin{align*}
    (S\mid T) & = \Tr_m(P_m(Z^*\otimes\id_{2r})P_m T) \\
              &\simeq (\Tr_{m-2r}\otimes\Tr_{2r})((\id_{m-2r}\otimes P_{2r})(P_{m-r}\otimes\id_r)(Z^*\otimes \id_{2r}) T) \\
              &= (\Tr_{m-2r}\otimes\Tr_r\otimes\Tr_r)((P_{m-r}\otimes\id_r)(Z^*\otimes \id_r\otimes\id_r) T) \\
              &= \Tr_{m-r}[P_{m-r} (Z^*\otimes \id_r) P_{m-r} (\id\otimes\Tr_r)(T)].
  \end{align*}
  By Cauchy-Schwarz the last quantity is dominated by
  $ \sqrt{d_r} \|Z\|_2\|(\id\otimes\Tr_r)(T)\|_2$. Moreover the error term in
  the second line is similarly bounded by
  $C q^r \|Z^*\otimes\id_r\otimes\id_r\|_2 \|T\|_2 = Cq^r \sqrt{d_rd_r} \|Z\|_2$
  $\|T\|_2 \leq C'\|Z\|_2 \|T\|_2$.
\end{proof}

\begin{theorem}
  \label{thm_orth_XY_YX}
  Fix $k$, $k'$, $n \in\NN$ and $X \in B(H_k)^\ccirc$, $X' \in B(H_{k'})^\ccirc$, $Y\in B(H_n)^\circ$. Then for all $i$, $j$, $i'$, $j'\geq 10n$ and
  $m = n+i+k+j-2a = n+i'+k'+j'-2a'$ with $0\leq a, a'\leq n$, we have
  \begin{displaymath}
    |(X_{i,j}*_mY\mid Y*_mX'_{i',j'})| \leq C d_m (q^{\alpha(i'+j')}+q^{\alpha\min(j,j')}) q^{(k+k')/2}
    \|X\|_2 \|X'\|_2 \|Y\|_2^2,
  \end{displaymath}  
  where $\alpha > 0$ is a constant depending only on $q$, and $C$ is a constant depending on $q$ and $n$.
\end{theorem}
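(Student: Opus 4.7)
The plan is to combine Lemmas \ref{lem_orth_YX}, \ref{lem_orth_scalar}, and \ref{lem_orth_XY}, each of which treats a different aspect of the asymptotic orthogonality. Writing $T = X_{i,j}*_m Y$ and $S = Y*_m X'_{i',j'}$, I would first apply Lemma \ref{lem_orth_YX} to $S$ with a parameter $r$ (subject to $j'\geq 2r$, which holds since $j'\geq 10n$), decomposing
\[
S = S_0 + E, \qquad S_0 = P_m(Z\otimes\id_{2r})P_m,
\]
with the explicit controls $\|Z\|_2\leq d_{a'}\sqrt{d_{i'}d_{j'-2r}}\,\|X'\|_2\|Y\|_2$ and $\|E\|_2\leq C\, d_{a'}q^{i'+k'+j'-a'-2r}\sqrt{d_{i'}d_{j'}}\,\|X'\|_2\|Y\|_2$. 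Then split $(T\mid S) = (T\mid S_0) + (T\mid E)$ and estimate the two pieces separately.

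For the main piece $(T\mid S_0)$, apply Lemma \ref{lem_orth_scalar}, taking $r = n+2p$ so that the partial trace $(\id\otimes\Tr_r)(T)$ is the one estimated in Lemma \ref{lem_orth_XY} (which requires $j\geq 2n+3p$, comfortably ensured by $j\geq 10n$ in the relevant range of $p$). Substituting the bound of Lemma \ref{lem_orth_XY} together with the crude estimate $\|T\|_2\leq d_a\sqrt{d_id_j}\,\|X\|_2\|Y\|_2$, and converting every dimension into a power of $q$ using Lemma \ref{lem_dimensions}, both summands of Lemma \ref{lem_orth_scalar} simplify (after the algebraic identity $i+j+i'+j'+2(a+a')+2n+k+k' = 2m$ used to recognize $d_m$) to
\[
|(T\mid S_0)|\leq C_n\, d_m\, q^{\alpha p + (k+k')/2}\,\|X\|_2\|X'\|_2\|Y\|_2^2,
\]
where $C_n$ absorbs the $q^{-O(n)}$ factors arising from $a,a'\leq n$.

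For the error piece, Cauchy--Schwarz gives $|(T\mid E)|\leq \|T\|_2\|E\|_2$; substituting the bounds above and performing the analogous dimensional accounting yields
\[
|(T\mid E)|\leq C_n\, d_m\, q^{(i'+j'-2r) + (k+k')/2}\,\|X\|_2\|X'\|_2\|Y\|_2^2.
\]
The two terms in the statement then emerge from two different choices of $p$: taking $p\sim \min(j,j')/6$ (allowed by $j,j'\geq 10n$ and $r\leq j'/2$), the main piece produces $q^{\alpha\min(j,j')}$ up to rescaling $\alpha$, while taking $p=0$ (so $r=n$) turns the error piece into $q^{i'+j'-2n}$, which becomes $q^{\alpha(i'+j')}$ after a further rescaling of $\alpha$. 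Taking the minimum of the two resulting estimates and using $\min(a,b)\leq a+b$ yields the claimed bound.

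The principal technical obstacle is the dimensional bookkeeping: the many factors $d_i,d_j,d_{i'},d_{j'},d_a,d_{a'},d_r,d_{j'-2r}$ appearing in the norm estimates of $T$, $Z$, $E$ and in the prefactors of Lemmas \ref{lem_orth_XY} and \ref{lem_orth_scalar} must be tracked through Lemma \ref{lem_dimensions} and verified to collapse into $d_m\,q^{(k+k')/2}$ times a manageable $q^{-O(n)}$ correction; the hypothesis $i,j,i',j'\geq 10n$ is calibrated precisely to make room for both the constraints of Lemma \ref{lem_orth_YX} ($i',j'\geq n$ and $j'\geq 2r$) and those of Lemma \ref{lem_orth_XY} ($i\geq n$ and $j\geq 2n+3p$) while allowing $p$ to grow proportionally to $\min(j,j')$.
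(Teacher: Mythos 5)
Your plan correctly identifies the three lemmata and assembles them in the same order as the paper, so the structural outline is right, but the final assembly step contains a genuine logical flaw. You split $(T\mid S)$ into a main piece and an error piece, both of which depend on your auxiliary parameter $p$ through $r=n+2p$; call the two contributions $M(p)$ and $E(p)$. You then propose to establish the bound $M(p_1) + E(p_2)$ with two \emph{different} values $p_1\sim\min(j,j')/6$ and $p_2 = 0$, by ``taking the minimum of the two resulting estimates.'' This does not follow: you genuinely prove $|(T\mid S)|\leq M(p_1)+E(p_1)$ and $|(T\mid S)|\leq M(p_2)+E(p_2)$, and the minimum of those two numbers need not be bounded by $M(p_1)+E(p_2)$ (take $M(p_1)=E(p_2)=0$, $E(p_1)=M(p_2)=1$). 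The invocation of $\min(a,b)\leq a+b$ does not repair this; it moves in the wrong direction.

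Fortunately, the gap is unnecessary to create in the first place. A \emph{single} choice of $p$ — yours, $p\sim\min(j,j')/6$, or the paper's $p=\lfloor\min(j,j')/10\rfloor-n$ — already delivers both exponents. With $r = n+2p \sim n + \tfrac13\min(j,j')$ one has $i'+j'-2r \geq i'+j'-2n-\tfrac23 j' = i'+\tfrac13 j'-2n$, and since $j'\geq 10n$ this is $\geq i'+\tfrac{2}{15}j' \geq \tfrac{2}{15}(i'+j')$, so the error piece already decays as $q^{\Omega(i'+j')}$ while the main piece (via Lemma~\ref{lem_orth_XY} applied with this same $p$) decays as $q^{\alpha_0 p}\sim q^{\Omega(\min(j,j'))}$. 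Adding these two contributions from the same decomposition is legitimate and gives exactly the sum $q^{\alpha(i'+j')}+q^{\alpha\min(j,j')}$ in the statement. This is what the paper does; your hypothesis $i,j,i',j'\geq 10n$ is indeed, as you observe, calibrated precisely to make this single choice of $p$ satisfy all the side constraints. One further small slip: your dimensional identity should read $i+j+i'+j'-2(a+a')+2n+k+k'=2m$, with a minus sign on $2(a+a')$.

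Two secondary remarks on the bookkeeping. First, you use the crude bound $\|T\|_2\leq d_a\sqrt{d_id_j}\|X\|_2\|Y\|_2$ inside the second term of Lemma~\ref{lem_orth_scalar}; combined with the bound on $\|Z\|_2$ this produces the factor $\sqrt{d_{j'-2r}}$ and it is precisely the gain $q^{r/2}\sim q^{\tfrac1{10}\min(j,j')}$ there, rather than any trickery, that makes that term decay — be sure your accounting tracks $d_{j'-2r}$ (not $d_{j'}$) in $\|Z\|_2$. Second, the constant $C_n$ really does absorb factors as large as $q^{-O(n)}$ coming from $d_a,d_{a'},d_n,d_r/q^{-r}$; nothing worse appears because $a,a'\leq n$ and the only unbounded quantity $r$ always occurs with a compensating $q^{r}$ from the error of Lemma~\ref{lem_highest_weight} or from the factor $q^{-(j')/2}\cdot\sqrt{d_{j'-2r}}\sim q^{r}$.
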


\begin{proof}
  We put $p = \lfloor \min(j,j')/10\rfloor - n$ and $r = n+2p$. Thanks to the assumption on $j$, $j'$ we have $m\geq 2r$. We first apply Lemma~\ref{lem_orth_YX}
  to find $Z\in B(H_{m-2r})$ such that $\|Y *_m X'_{i',j'} - S\|_2 \leq C q^{i'+k'+j'-a'-2r}d_{a'} \sqrt{d_{i'}d_{j'}}\|X'\|_2 \|Y\|_2$ with
  $S = P_m(Z\otimes\id_{2r})P_m$. The condition $j'\geq 2r$ is satisfied since $p\leq \frac 1{10}j'-n$. We have then
  \begin{displaymath}
    |(X_{i,j}*_mY\mid Y*_mX'_{i',j'})| \leq |(X_{i,j}*_mY\mid S)|
    + \|X_{i,j}*_m Y\|_2 \|Y *_m X'_{i',j'} - S\|_2.
  \end{displaymath}
  Note that $\|X_{i,j}*_m Y\|_2\leq d_a \sqrt{d_id_j} \|X\|_2 \|Y\|_2$, and since $2r\leq j'/2$ we have
  \begin{align*}
    \|X_{i,j}*_m Y\|_2 \|Y *_m X'_{i',j'} - S\|_2
    &\leq  C q^{i'+k'+j'-a'-2r}d_ad_{a'} \sqrt{d_id_jd_{i'}d_{j'}}\|X\|_2\|X'\|_2 \|Y\|_2^2 \\
    &\leq C q^{i'/2}q^{-(i+j)/2}q^{k'}q^{-a-2a'} \|X\|_2\|X'\|_2 \|Y\|_2^2 \\
    &\leq C_n d_m q^{i'+j'/2}q^{(k+3k')/2} \|X\|_2\|X'\|_2 \|Y\|_2^2,
  \end{align*}
  were $C_n$ is a constant depending on $n$ and $q$.  We apply then
  Lemma~\ref{lem_orth_scalar} to $T = X_{i,j}*_m Y$ and our $S$. This yields
  \begin{displaymath}
    |(X_{i,j}*_mY\mid S)| \leq \sqrt{d_r} \|(\id\otimes\Tr_r)(X_{i,j}*_m Y)\|_2 \|Z\|_2
    + C \|Z\|_2 \|X_{i,j}*_m Y\|_2.
  \end{displaymath}
  Lemma~\ref{lem_orth_YX} also provides a bound on $\|Z\|_2$, in particular
  the second term on the right-hand side above is bounded by
  \begin{align*}
    Cd_{a'}d_a\sqrt{d_id_jd_{i'}d_{j'-2r}}\|X\|_2\|X'\|_2\|Y\|_2^2
    &\leq C  q^{-a-a'} q^{-(i+j+i'+j')/2}q^r\|X\|_2\|X'\|_2\|Y\|_2^2 \\
    &\leq C'_nd_m q^{\frac 15 \min(j,j')}q^{(k+k')/2}\|X\|_2\|X'\|_2\|Y\|_2^2,
  \end{align*}
  since we have $r\geq \frac 15\min(j,j')-n-2$.

  We finally apply Lemma~\ref{lem_orth_XY}. Again the condition $j\geq 2n+3p$ is
  satisfied because $p\leq \frac1{10}j-n$. This yields constants $\alpha_0 \in \itv]0,1[$, $C>0$ depending only on $q$ such that
  \begin{align*}
    \sqrt{d_r} \|(\id\otimes\Tr_r)(T)\|_2 \|Z\|_2
    &\leq C \sqrt{d_r}q^{\alpha_0 p}q^{-p}q^{-a}q^{-(i+j+n)/2} d_{a'}\sqrt{d_{i'}d_{j'-2r}}\|X\|_2 \|X'\|_2 \|Y\|_2^2 \\
    &\leq C q^{\alpha_0 p}q^{-p+r/2}q^{-a-a'}q^{-(i+j+n)/2} q^{-(i'+j')/2}\|X\|_2 \|X'\|_2 \|Y\|_2^2 \\
    &\leq C''_n d_m q^{\alpha_0 p} q^{(k+k')/2} \|X\|_2 \|X'\|_2 \|Y\|_2^2.
  \end{align*}
  Since $p\geq \frac 1{10}\min(j,j')-n-1$, this yields the result, with $\alpha = \min(\alpha_0/10,1/5)$.
\end{proof}

\begin{corollary}
  \label{crl_orth_xy_yx}
  Fix $k$, $k'$, $n\in\NN$ and $x\in p_k H^\ccirc$, $x'\in p_{k'} H^\ccirc$, $y\in p_n H^\circ$. Then for $i$, $i'$, $j$, $j' \geq 10n$ we have
  \begin{displaymath}
    |(x_{i,j}y\mid yx'_{i',j'})| \leq C (q^{\alpha(i+j)} + q^{\alpha(i'+j')} + q^{\alpha \max(\min(i,i'),\min(j,j'))}) \|x\|_2 \|x'\|_2\|y\|_2^2, 
  \end{displaymath}
  where $\alpha > 0$ is a constant depending only on $q$, and $C$ is a constant depending on $q$ and $n$.
\end{corollary}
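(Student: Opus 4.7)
My plan is to translate the scalar product $(x_{i,j}y \mid y x'_{i',j'})$ into a sum of Hilbert--Schmidt scalar products of the form bounded by Theorem~\ref{thm_orth_XY_YX}, and then derive three further estimates by symmetry whose pointwise minimum yields the asserted bound.

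Write $x = u_k(X)$, $x' = u_{k'}(X')$, $y = u_n(Y)$ with $X \in B(H_k)^\ccirc$, $X' \in B(H_{k'})^\ccirc$, $Y \in B(H_n)^\circ$; by Remark~\ref{rk_shift_tannaka} one has $x_{i,j} = u_{i+k+j}(X_{i,j})$. The product formula~\eqref{eq_prod_FO} expands $x_{i,j}y$ and $yx'_{i',j'}$ as sums of coefficients $u_m(X_{i,j}*_m Y)$ and $u_{m'}(Y*_{m'} X'_{i',j'})$, indexed by $a,a'\in\{0,\dots,n\}$ with $m=n+i+k+j-2a$, $m'=n+i'+k'+j'-2a'$. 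Peter--Weyl orthogonality~\eqref{eq_peter_weyl} reduces $(x_{i,j}y \mid yx'_{i',j'})$ to a sum over the (at most $(n+1)^2$) pairs $(a,a')$ with $m=m'$ of terms of the form $(\kappa_m^{i+k+j,n})^2 (\kappa_m^{n,i'+k'+j'})^2\, d_m^{-1}(X_{i,j}*_m Y \mid Y *_m X'_{i',j'})$.

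To each such term I apply Theorem~\ref{thm_orth_XY_YX}, whose hypothesis $i,i',j,j'\geq 10n$ is exactly ours. Lemma~\ref{lem_kappa} bounds each $\kappa^2$-factor by a constant; the factor $d_m$ from the theorem cancels the $d_m^{-1}$ from Peter--Weyl; and the identity $\|X\|_2 = \sqrt{d_k}\|x\|_2$ together with Lemma~\ref{lem_dimensions} turns $q^{(k+k')/2}\|X\|_2\|X'\|_2$ into $\|x\|_2\|x'\|_2$ up to a constant depending only on $q$. Since $n$ and $y$ are fixed, $\|Y\|_2^2 = d_n\|y\|_2^2$ is absorbed into the constant. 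Summing over the finitely many contributing pairs yields the first intermediate estimate
\[
|(x_{i,j}y \mid y x'_{i',j'})| \leq C \bigl(q^{\alpha(i'+j')} + q^{\alpha\min(j,j')}\bigr) \|x\|_2\|x'\|_2.
\]
Three further estimates are obtained by elementary symmetries. Traciality of $h$ gives $(x_{i,j}y \mid yx'_{i',j'}) = (x'^*_{i',j'} y^* \mid y^* x^*_{i,j})$, to which the argument above applies after swapping the roles of $(x,k,i,j)$ and $(x',k',i',j')$: this replaces $q^{\alpha(i'+j')}$ by $q^{\alpha(i+j)}$. A second symmetry, obtained by mirroring the proofs of Lemmas~\ref{lem_orth_YX} and~\ref{lem_orth_XY} (i.e.\ using $P_{i+k+j}\simeq (P_{i+k+j-a}\otimes\id_a)(\id_{2r}\otimes P_{i+k+j-2r})$ and its dual in place of the approximations actually carried out there), replaces $q^{\alpha\min(j,j')}$ by $q^{\alpha\min(i,i')}$. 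Combining the four resulting upper bounds by their pointwise minimum produces $q^{\alpha(i+j)} + q^{\alpha(i'+j')} + \min(q^{\alpha\min(i,i')},q^{\alpha\min(j,j')})$, which coincides with the claimed bound since $\min(q^{\alpha\min(i,i')},q^{\alpha\min(j,j')})=q^{\alpha\max(\min(i,i'),\min(j,j'))}$.

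The main technical content is already packaged in Theorem~\ref{thm_orth_XY_YX}, so the remaining work is the bookkeeping of constants and the verification of the left--right mirror symmetry; the latter is the only step requiring genuine (if formal) care, since the proofs of Lemmas~\ref{lem_orth_YX} and~\ref{lem_orth_XY} make a definite choice of side on which the projection approximations are performed, so one must rewrite them in the mirror convention to obtain the estimate with $\min(i,i')$ in place of $\min(j,j')$.
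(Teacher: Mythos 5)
Your proposal is correct and follows the same route as the paper: expand $(x_{i,j}y\mid yx'_{i',j'})$ via the product formula and Peter--Weyl, apply Theorem~\ref{thm_orth_XY_YX} term by term, and then symmetrize. The only difference is in how the symmetry step is implemented. You propose two distinct operations --- traciality plus adjoint to swap $(i,j)\leftrightarrow(i',j')$, and a left--right mirror of Lemmas~\ref{lem_orth_YX}, \ref{lem_orth_XY}, \ref{lem_orth_scalar} to swap $j$'s with $i$'s --- and combine all four resulting estimates. The paper instead performs the composite symmetry in a single step using the antipode $S$, which is an isometry of $\ell^2(\GGamma)$ in the Kac case, satisfies $S(\chi_i)=\chi_i$, and is anti-multiplicative; this gives $(x_{i,j}y\mid yx'_{i',j'}) = (S(x')_{j',i'}S(y)\mid S(y)S(x)_{j,i})$ directly, yielding the estimate $C(q^{\alpha(i+j)}+q^{\alpha\min(i,i')})$ without re-proving any of the Lemmata in mirror form. (The paper does mention ``switching left and right in Lemmata~\ref{lem_orth_YX}, \ref{lem_orth_XY} and~\ref{lem_orth_scalar}'' as an alternative, so your idea is acknowledged there, but the antipode argument is the one actually carried out precisely.) Note also that you only need two of your four estimates --- the original and the one with both swaps applied --- to conclude; the intermediate two are harmless but superfluous. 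In short: correct, essentially the same proof, with a slightly heavier-handed but valid symmetry argument that the antipode streamlines.
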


\begin{proof}
  We have $x = u_k(X)$, $x' = u_{k'}(X')$, $y = u_n(Y)$ with
  $X\in B(H_k)^\ccirc$, $X'\in B(H_{k'})^\ccirc$, $y\in B(H_n)^\circ$.  Recall
  from Remark~\ref{rk_shift_tannaka} that we have then
  $x_{i,j} = u_{i+k+j}(X_{i,j})$. Following the reminder in
  Section~\ref{sec_prelim} --- specifically Equation~\eqref{eq_prod_FO} and
  Notation~\ref{not_convol} --- we obtain
  $x_{i,j}y = \sum_{a=0}^n (\kappa_m^{i+k+j,n})^2 u_m(X_{i,j}*_mY)$,
  where $m = i+k+j+n-2a$ as usual. The same holds for $yx'_{i',j'}$, and 
  the Peter--Weyl--Woronowicz Equation~\eqref{eq_peter_weyl} yields
  \begin{displaymath}
    (x_{i,j}y\mid yx'_{i',j'}) = \sum_{a=0}^n \frac 1{d_m}
    \big(\kappa_m^{i+k+j,n}\kappa_m^{n,i'+k'+j'}\big)^2 (X_{i,j}*_mY \mid Y *_m X'_{i',j'}).
  \end{displaymath}
  According to Lemma~\ref{lem_kappa}, the constants $\kappa$ are uniformly
  bounded by a constant depending only on $q$. Applying
  Theorem~\ref{thm_orth_XY_YX} and noticing that
  $q^{k/2}\|X\|_2 = q^{k/2}\sqrt {d_k} \|x\|_2\leq C \|x\|_2$ we obtain
  \begin{displaymath}
    |(x_{i,j}y\mid yx'_{i',j'})| \leq C (q^{\alpha(i'+j')} + q^{\alpha \min(j,j')}) \|x\|_2 \|x'\|_2\|y\|_2^2,
  \end{displaymath}
  where $C$ is a constant depending only on $q$ and $n$.

  The estimate in the statement follows from this one by symmetry, by switching left and right in Lemmata~\ref{lem_orth_YX}, \ref{lem_orth_XY} and
  \ref{lem_orth_scalar}. More precisely, recall that the antipode $S$ is isometric on $\ell^2(\GGamma)$ in the Kac case, and observe that $S(\chi_i) = \chi_i$,
  so that $S(x_{i,j}) = S(x)_{j,i}$. Applying the first part of this proof we thus get
  \begin{align*}
    |(x_{i,j}y \mid yx'_{i',j'})| &= |(yx'_{i',j'}\mid x_{i,j}y)|
                                  = |(S(x')_{j',i'} S(y)\mid S(y) S(x)_{j,i})| \\
    &\leq C(q^{\alpha(i+j)} + q^{\alpha \min(i',i)}) \|x\|_2 \|x'\|_2\|y\|_2^2.
  \end{align*}
  Taking the best of this estimate and the previous one yields the result.
\end{proof}

To pass from the ``local'' result of Corollary~\ref{crl_orth_xy_yx} to the
``global'' results of Proposition~\ref{prp_orth_local} and
Theorem~\ref{thm_ortho_global} we will need to analyze the kernel appearing on
the right-hand side in Corollary~\ref{crl_orth_xy_yx}. We state separately the
following elementary lemma which will be useful for this purpose.

\begin{lemma}
  \label{lem_max_min_kernel}
  Let $A$, $B\in\ell^2(\NN\times\NN)$ and put $q_{p;i,k} = q^{\max(\min(i,k),\min(p-i,p-k))}$. Then there exists a constant $C>0$ depending only on $q$ such that
  \begin{displaymath}
    {\ts\sum_{i\geq 0}\sum_{k\geq 0}\sum_{p\geq i,k}} ~ q_{p;i,k} |A_{i,p-i} B_{k,p-k}| \leq C \|A\|_2 \|B\|_2.
  \end{displaymath}
\end{lemma}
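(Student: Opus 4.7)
The plan is to view the triple sum as a bilinear pairing on $\ell^2(\NN\times\NN)$ and apply Schur's test. First I would reindex by setting $j = p-i$ and $l = p-k$ (which are both in $\NN$ thanks to the constraint $p\geq i,k$). The sum becomes
\begin{displaymath}
  \sum_{\substack{(i,j),(k,l)\in\NN^2 \\ i+j = k+l}} q^{\max(\min(i,k),\min(j,l))} |A_{i,j}|\,|B_{k,l}| = \langle |A|, K |B|\rangle,
\end{displaymath}
where $K$ is the linear operator on $\ell^2(\NN\times\NN)$ with kernel $K_{(i,j),(k,l)} = q^{\max(\min(i,k),\min(j,l))}$ when $i+j=k+l$ and $0$ otherwise. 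It then suffices to show $\|K\|\leq C$.

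The kernel is symmetric under exchange $(i,j)\leftrightarrow(k,l)$, so by Schur's test with uniform weights it is enough to prove a uniform row-sum bound: there exists $C>0$ such that for every $(i,j)$, writing $p = i+j$,
\begin{displaymath}
  S(i,j) := \sum_{k=0}^{p} q^{\max(\min(i,k),\,\min(j,p-k))} \leq C.
\end{displaymath}
To estimate $S(i,j)$ I would count the number of $k\in\{0,\ldots,p\}$ whose exponent is at most $m$. The inequality $\max(\min(i,k),\min(j,p-k))\leq m$ amounts to ``($i\leq m$ or $k\leq m$) and ($j\leq m$ or $k\geq p-m$)''. A direct case split on whether $i\leq m$ or $i>m$ (and similarly for $j$) shows that in every case this count is bounded by $2(m+1)$, uniformly in $(i,j)$; indeed if both $i,j>m$ then the admissible $k$ lie in $[0,m]\cup[p-m,p]$, while in the other cases the auxiliary constraint either fixes one side to size $m+1$ or forces $p\leq 2m$.

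Once this combinatorial estimate is established, we conclude
\begin{displaymath}
  S(i,j) = \sum_{m\geq 0} q^m\, \#\{k : \text{exponent}(k)=m\} \leq 2\sum_{m\geq 0}(m+1)q^m = \frac{2}{(1-q)^2},
\end{displaymath}
so $\|K\|\leq 2/(1-q)^2$ by Schur's test, and $\langle |A|,K|B|\rangle \leq \|K\|\,\|A\|_2\,\|B\|_2$ by Cauchy--Schwarz. There is no real obstacle beyond the case analysis in the counting step; the main care is simply to handle the edge cases where $p$ is small or one of $i,j$ is already $\leq m$, so that the two admissible $k$-intervals may overlap or collapse.
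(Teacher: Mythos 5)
Your approach is essentially the same as the paper's. Schur's test with constant weights is, under the hood, exactly the paper's first step (Cauchy--Schwarz applied to $\sum q_{p;i,k}|A_{i,p-i}||B_{k,p-k}|$), and the symmetry of the kernel under $(i,j)\leftrightarrow(k,l)$ reduces both arguments to the same uniform row-sum bound $\sum_{k=0}^p q^{\max(\min(i,k),\min(p-i,p-k))}\leq C$. The paper bounds this sum by using the symmetry $q_{p;i,k}=q_{p;p-i,p-k}$ to assume $i\leq p-i$ and splitting the $k$-range into $[0,i)$, $[i,p-i]$, $(p-i,p]$, yielding $iq^{p-i}+\sum_{k=i}^{p-i}q^{p-k}+iq^i$; you instead count the sublevel sets $\{k:\text{exponent}\leq m\}$ and sum over levels. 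Both computations are correct, comparable in length, and give uniform bounds ($2/(1-q)^2$ in your case versus $2\sup_i(iq^i)+1/(1-q)$ in the paper's).

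One small inaccuracy in your counting step: in the case $i,j>m$, the condition $\max(\min(i,k),\min(j,p-k))\leq m$ unpacks to $k\leq m$ \emph{and} $k\geq p-m$, i.e.\ $k\in[0,m]\cap[p-m,p]$, which is \emph{empty} when $p>2m$ (always true here since $p=i+j>2m$), not the union $[0,m]\cup[p-m,p]$ as you wrote. Your stated bound $2(m+1)$ still holds since the union over-counts the intersection, so this slip does not break the argument, but it is worth correcting: in this case the exact count is $0$.
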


\begin{proof}
  Denote $T = \{(i,k,p) \in\NN^3 \mid p\geq i, p\geq k\}$.
  We start by applying Cauchy-Schwarz:
  \begin{displaymath}
    \left( {\ts\sum_{T}}~ q_{p;i,k} |A_{i,p-i} B_{k,p-k}|\right)^2 \leq
    {\ts\sum_{T}}~ q_{p;i,k} |A_{i,p-i}|^2 ~\times~ {\ts\sum_{T}}~ q_{p;i,k} |B_{k,p-k}|^2.
  \end{displaymath}
  By the symmetry in $i$ and $k$ it suffices to prove that ${\ts\sum}_{T}$ $q_{p;i,k} |A_{i,p-i}|^2 \leq C \|A\|_2^2$, which we can also write
  $\sum_{i=0}^\infty\sum_{p=i}^\infty s_{p;i} |A_{i,p-i}|^2 \leq C \|A\|_2^2$ with $s_{p;i} := \sum_{k=0}^p q_{p;i,k}$.  This holds for all $A$ if and only if
  $s_{p;i}$ is bounded independently of $i$ and $p$. Since $q_{p;i,k} = q_{p;p-i,p-k}$ we have $s_{p;i} = s_{p;p-i}$, thus we can assume $0\leq i\leq p-i$. We
  write then
  \begin{align*}
    s_{p;i} = \sum_{k=0}^p  q_{p;i,k}
    &= \Big({\ts\sum_{k=0}^{i-1} + \sum_{k=i}^{p-i} + \sum_{k=p-i+1}^p}\Big) q_{p;i,k} \\
    &= {\ts\sum_{k=0}^{i-1}} q^{\max(k,p-i)} +
      \Big({\ts\sum_{k=i}^{p-i} + \sum_{k=p-i+1}^p}\Big) q^{\max(i,p-k)} \\
    &= iq^{p-i} + \Big({\ts\sum_{k=i}^{p-i}} q^{p-k}\Big) + i q^i \leq 2\sup_i(iq^i) + {\ts\frac 1{1-q}}. \qedhere
  \end{align*}
\end{proof}

Recall from Notation~\ref{not_basis} that for $w\in \W$, $k\in\NN^*$ we
denote $H(w)$ resp.\ $H(k)$ the closure of $AwA$ resp.\ $A\W_k A$  in $H^\circ$,
where $\W$ is our privileged basis of $H^\ccirc$. Recall from
Notation~\ref{not_gram} that we denote $G(w)$ the Gram matrix of the family of
vectors $w_{i,j}$, for $w\in \W$.

\begin{proposition}
  \label{prp_orth_local}
  Fix $k$, $k'$, $n\in\NN^*$ and $y\in p_n H^\circ$. Assume that we have a
  common upper bound $\|G(w)^{-1}\| \leq D \|w\|_2^{-2}$,
  $\|G(w)\| \leq D \|w\|_2^2$ for all $w\in \W_k\cup \W_{k'}$.  Then for any $m\geq 10n$ and  
  $\zeta \in V_m\cap H(k)$, $\zeta' \in V_m\cap H(k')$ we have
  $|(\zeta y\mid y\zeta')|\leq C D q^{\alpha (m-|k-k'|)} \|\zeta\| \|\zeta'\|$, where
  $\alpha > 0$ is a constant depending only on $q$, and $C$ is a constant
  depending on $q$, $n$ and $y$.
\end{proposition}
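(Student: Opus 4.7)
The plan is to invoke the isomorphism $\Phi : \ell^2(\NN) \otimes H^\ccirc \otimes \ell^2(\NN) \to H^\circ$ from Theorem~\ref{thm_riesz} and Remark~\ref{rk_isomorphism} to express $\zeta$ and $\zeta'$ in coordinates. Write $\zeta = \sum_{i,j \geq m} x^{(i,j)}_{i,j}$ with $x^{(i,j)} \in p_k H^\ccirc$, and similarly $\zeta' = \sum_{i',j'\geq m} x'^{(i',j')}_{i',j'}$ with $x'^{(i',j')} \in p_{k'} H^\ccirc$. Combining the hypotheses on $\|G(w)\|$ and $\|G(w)^{-1}\|$ with the orthogonal decomposition of $H^\circ$ into cyclic bimodules $H(w)$, for distinct $w$ (Proposition~\ref{prp_decomp_bimod}), yields the norm estimates
\[\sum_{i,j\geq m}\|x^{(i,j)}\|_2^2 \leq D\|\zeta\|^2, \qquad \sum_{i',j'\geq m}\|x'^{(i',j')}\|_2^2 \leq D\|\zeta'\|^2.\]

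Next, expand $(\zeta y \mid y\zeta') = \sum (x^{(i,j)}_{i,j} y \mid y x'^{(i',j')}_{i',j'})$ by bilinearity and apply Corollary~\ref{crl_orth_xy_yx} termwise. This bounds $|(\zeta y\mid y\zeta')|$ by $C\sum A_{i,j} B_{i',j'} K((i,j),(i',j'))$, where $A_{i,j} := \|x^{(i,j)}\|_2$ and $B_{i',j'}:=\|x'^{(i',j')}\|_2$ are $\ell^2$ sequences, and
\[K = q^{\alpha(i+j)} + q^{\alpha(i'+j')} + q^{\alpha\max(\min(i,i'),\min(j,j'))}.\]
Moreover, Peter--Weyl--Woronowicz orthogonality together with the fusion rules force individual summands to vanish unless $|i+j+k-i'-j'-k'|\leq 2n$: the product $x^{(i,j)}_{i,j}y$ decomposes over $u_m$ with $m\in\{i+j+k+n-2a : 0\leq a\leq n\}$, and similarly $y x'^{(i',j')}_{i',j'}$ over $u_{m'}$, so the Peter--Weyl orthogonality selects only pairs with matching $m$. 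Effectively the kernel is supported on a band of width $O(n)$ around the shifted diagonal $i'+j' = i+j+(k-k')$.

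It then remains to estimate the bilinear form with kernel $K$ restricted to this band, acting on the $\ell^2$ sequences $A$ and $B$. The first two terms of $K$ are handled by Cauchy--Schwarz, using $q^{\alpha(i+j)}\leq q^{2\alpha m}$ on the range and the fact that the band constraint contributes only a polynomial factor in $|k-k'|+m$, comfortably absorbed into the exponential decay. For the third and main term, substitute $\tilde i = i-m$ etc.\ to rewrite it as $q^{\alpha m}\cdot q^{\alpha\max(\min(\tilde i,\tilde i'),\min(\tilde j,\tilde j'))}$; on the diagonal $\tilde i+\tilde j = \tilde i'+\tilde j'$ this is precisely the kernel controlled by Lemma~\ref{lem_max_min_kernel}, yielding the desired bound $q^{\alpha m}\|A\|_2\|B\|_2$. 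The main obstacle is the off-diagonal case when $k\neq k'$: the constraint then couples index sums differing by approximately $k-k'$, so Lemma~\ref{lem_max_min_kernel} does not apply directly, and one must translate one family of indices by the discrepancy. This shift decreases the relevant $\min$ quantities by roughly $|k-k'|$, producing an additional loss $q^{-\alpha|k-k'|}$. Summing the $O(n)$ shifted-diagonal contributions and combining with the norm estimates of the first paragraph yields the claimed bound $|(\zeta y\mid y\zeta')|\leq CDq^{\alpha(m-|k-k'|)}\|\zeta\|\|\zeta'\|$.
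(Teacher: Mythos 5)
Your proposal follows the paper's proof essentially step by step: coordinate expansion via the Riesz basis of Theorem~\ref{thm_riesz} and the orthogonality of Proposition~\ref{prp_decomp_bimod}, termwise application of Corollary~\ref{crl_orth_xy_yx}, the fusion-rule band constraint $|i+k+j-i'-k'-j'|\leq 2n$, and the shifted-diagonal reduction to Lemma~\ref{lem_max_min_kernel} incurring the loss $q^{-\alpha|k-k'|}$. One small imprecision worth fixing: for the $q^{\alpha(i+j)}$ term you should not simply bound it by $q^{2\alpha m}$ and then sum, as that discards all the decay that makes the Cauchy--Schwarz sum converge; instead, as in the paper, split $q^{\alpha(i+j)} = q^{\alpha(i+j)/2}q^{\alpha(i+j)/2}$ and use the band constraint to convert one factor into decay in $i'+j'$, then apply Cauchy--Schwarz against the resulting geometric weights to extract $\|x\|_2\|x'\|_2$ and a factor $q^{\alpha(2m-O(|k-k'|+n))}$.
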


\begin{proof}
  By assumption the map $(w,i,j) \mapsto w_{ij}$ induces a bicontinuous
  isomorphism between $p_kH^\ccirc\otimes\ell^2(\NN\times\NN)$ and $H(k)$.  More
  precisely, since $AwA \bot Aw'A$ for $w\neq w'$ in $\W_k$, the Gram matrix
  $G(k)$ of $(w_{i,j})_{i,j,w}$, with $w\in \W_k$, $i$, $j\in\NN$, is block
  diagonal with $G(w)$, $w\in \W_k$, as diagonal blocks, and thus it is bounded
  with bounded inverse by hypothesis. We can in particular decompose
  $\zeta = \sum_{i,j}\x x{(i,j)}_{i,j}$ with $\x x{(i,j)}\in p_kH^\ccirc$ and,
  denoting $x = (\x x{(i,j)})_{i,j}$, we have
  $\|x\|_2^2 = \sum_{i,j} \|\x x{(i,j)}\|^2 \leq D \|\zeta\|^2$. Similarly we
  write $\zeta' = \sum_{i,j}\x{x'}{(i,j)}_{i,j}$ with
  $\x{x'}{(i,j)} \in p_{k'}H^\ccirc$ and $\|x'\|_2^2 \leq D \|\zeta'\|^2$. We
  have then by Corollary~\ref{crl_orth_xy_yx}:
  \begin{align} \nonumber
    |(\zeta y\mid y\zeta')|
    & \leq {\ts \sum_{i,j} \sum_{i',j'}} |(\x x{(i,j)}_{i,j}y\mid y\x{x'}{(i',j')}_{i',j'})| \\
    \label{eq_three_sums}
    &\leq C~ {\ts \sum_{i,j} \sum_{i',j'}} 
      (q^{\alpha(i+j)} + q^{\alpha(i'+j')} + q^{\alpha \max(\min(i,i'),\min(j,j'))})
      \|\x x{(i,j)}\| \|\x{x'}{(i',j')}\|
  \end{align}
  where $C$ depends on $q$, $n$ and $y$. Since $\zeta$, $\zeta'\in V_m$ we have
  $\x x{(i,j)} = \x{x'}{(i',j')} = 0$ unless $i$, $j$, $i'$, $j'\geq
  m$. Moreover the scalar product
  $(\x x{(i,j)}_{i,j}y\mid y\x{x'}{(i',j')}_{i',j'})$ vanishes unless
  $u_{i+k+j}\otimes u_n$ and $u_n\otimes u_{i'+k'+j'}$ have a common subobject,
  which entails $|i+k+j-i'-k'-j'|\leq 2n$. We remove from~\eqref{eq_three_sums}
  the terms that do not satisfy these conditions. Moreover we regroup the three
  powers of $q$ that appear in~\eqref{eq_three_sums} into three distinct sums
  $S_1$, $S_2$, $S_3$ over $i$, $i'$, $j$, $j'$. 
  
  We start with $S_3$. Denote $p = i+j-2m$, $p'=i'+j'-2m$, $l = p-p'$. This yields a bijection $(i,i',j,j') \mapsto (l,i,i',p)$ in $\ZZ^4$ and we shall compute
  $S_3$ by summing over $(l,i,i',p)$ in the appropriate subset of $\ZZ^4$. If $l\geq 0$, we put further $\ub i = i-m\in\NN$, $\ub i'=i'-m+l\in\NN_{\geq l}$. Note that
  $j-m=p-\ub i$ and $j'- m=p- \ub i'$, so that
  \begin{align*}
    \max(\min(i,i'),\min(j,j')) &= \max(\min(\ub i,\ub i'-l),\min(p-\ub i,p-\ub i')) + m \\
                                & \geq \max(\min(\ub i,\ub i'),\min(p-\ub i,p-\ub i')) + m - |l| \\
    \Rightarrow\quad q^{\alpha \max(\min(i,i'),\min(j,j'))} &\leq q^{\alpha (m - |l|)} q_{p;\ub i,\ub i'}^\alpha,
  \end{align*}
  using the notation of Lemma~\ref{lem_max_min_kernel}.  If $l<0$, we put rather $\ub i = i-m-l\in\NN_{\geq -l}$, $\ub i'=i'-m\in\NN$ so that $j-m=p'-\ub i$ and
  $j'- m=p'- \ub i'$, and we obtain:
  \begin{align*}
    q^{\alpha \max(\min(i,i'),\min(j,j'))} &\leq q^{\alpha (m - |l|)} q_{p';\ub i,\ub i'}^\alpha.
  \end{align*}
  The constraints $j$, $j'\geq m$ translate to $p\geq \ub i,\ub i'$ when $l\geq 0$, and to $p'\geq\ub i,\ub i'$ when $l<0$.
  Re-organizing $S_3$ we thus obtain
  \begin{align*}
    S_3 ~ \leq ~ &\sum_{l\geq 0} Cq^{\alpha (m-|l|)} ~{\sum_{\ub i=0}^\infty \sum_{\ub i'=l}^\infty \sum_{p\geq \ub i,\ub i'}}
         \|\x{x}{(\ub i+m,p-\ub i+m)}\| \|\x{x'}{(\ub i'+m-l    ,p-\ub i'+m)}\|
         ~ q_{p;\ub i,\ub i'}^\alpha\\
            +&\sum_{l< 0} Cq^{\alpha (m-|l|)} ~{\sum_{\ub i= -l}^\infty \sum_{\ub i'=0}^\infty \sum_{p'\geq \ub i,\ub i'}}
            \|\x{x}{(\ub i+m+l,p'-\ub i+m)}\| \|\x{x'}{(\ub i'+m    ,p'-\ub i'+m)}\|
            ~ q_{p';\ub i,\ub i'}^\alpha.
  \end{align*}
  
  For the terms $l\geq 0$ we apply Lemma~\ref{lem_max_min_kernel} with $A_{r,s} = \|\x{x}{(r+m,s+m)}\|$, $B_{r,s} = \|\x{x'}{(r+m-l,s+m)}\|$, which satisfy
  $\|A\|_2 = \|x\|_2$, $\|B\|_2 = \|x'\|_2$. By adding vanishing terms to the sum we can assume that the sum over $\ub i'$ starts at $\ub i'=0$ to apply this
  Lemma. We apply Lemma~\ref{lem_max_min_kernel} similarly to each term $l<0$.  Observe finally that $|l - (k'-k)| = |i+k+j-i'-k'-j'|\leq 2n$, so
  that $l$ takes at most $4n+1$ values and $|l|\leq |k-k'|+2n$. Lemma~\ref{lem_max_min_kernel} thus yields the following upper bound:
  \begin{align*}
    S_3&\leq CC'{(4n+1)}q^{\alpha (m-2n-|k-k'|)}\|x\|_2 \|x'\|_2 
    \leq C''D q^{\alpha(m-|k-k'|)} \|\zeta\| \|\zeta'\|
  \end{align*}
  with $C''$ depending on $q$, $n$ and $y$.

  The case of $S_1$ (and of $S_2$) is similar but the counterpart of
  Lemma~\ref{lem_max_min_kernel} is simpler. We put $\ub i = i-m$,
  $\ub j = j-m$, $\ub i'=i'-m$, $\ub j'=j'-m$. Observe that for non-vanishing
  terms in the sum we have
  $i+j -2m = \frac 12(p+p'+l) = \frac 12(\ub i+\ub j) + \frac 12(\ub i'+\ub
  j')+\frac 12 l$, and still $|l| \leq |k-k'|+2n$. This yields, using again
  Cauchy-Schwarz:
  \begin{align*}
    S_1& = C \sum_{\ub i,\ub j\geq 0}\sum_{\ub i',\ub j'\geq 0} q^{\alpha (2m + l/2)} 
    (q^{\frac\alpha 2(\ub i+\ub j)} \|\x x{(\ub i+m,\ub j+m)}\|)
     (q^{\frac\alpha 2(\ub i'+\ub j')}\|\x{x'}{(\ub i'+m,\ub j'+m)}\|)\\
    &\leq  C q^{\alpha (2m - \frac 12|k-k'|-n)} \sum_{\ub i,\ub j\geq 0}
             q^{\frac\alpha 2(\ub i+\ub j)} \|\x x{(\ub i+m,\ub j+m)}\|
             \sum_{\ub i',\ub j'\geq 0} q^{\frac\alpha 2(\ub i'+\ub j')}\|\x{x'}{(\ub i'+m,\ub j'+m)}\| \\
           &\leq  C q^{\alpha (2m - \frac 12|k-k'|-n)} \|x\|_2 \|x'\|_2 ~
             {\ts\sum_{\ub i,\ub j}} q^{\alpha(\ub i+\ub j)}
             \leq C'''Dq^{\alpha(m-|k-k'|)} \|\zeta\| \|\zeta'\|,
  \end{align*}
  with $C'''$ depending on $q$, $n$ and $y$.
\end{proof}

Taking into account the finite propagation result established at the end of
Section~\ref{sec_bimod} we can finally prove the following global estimate.

\begin{theorem}
  \label{thm_ortho_global}
  Fix $n\in\NN$ and $y\in p_n H^\circ \subset M$. Take the constant $q_1$ given by
  Theorem~\ref{thm_riesz} and assume $q\leq q_1$. Then for any $m\geq 10n$ and $\zeta \in V_m$
  we have $|(\zeta y\mid y\zeta)|\leq C q^{\alpha m} \|\zeta\|^2$,
  where $\alpha > 0$ is a constant depending only on $q$, and $C$ is a constant
  depending on $q$, $n$ and $y$.
\end{theorem}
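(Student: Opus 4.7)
The plan is to combine the local estimate of Proposition~\ref{prp_orth_local} with the finite-propagation orthogonality of Proposition~\ref{prp_propagation}, after decomposing $\zeta$ along the orthogonal sum $H^\circ = \bigoplus_{k\geq 1} H(k)$ of Proposition~\ref{prp_decomp_bimod}. Since $V_m = \Phi(\ell^2(\NN_{\geq m})\otimes H^\ccirc\otimes \ell^2(\NN_{\geq m}))$ is built from the basis $\W = \bigsqcup_k \W_k$ in a way that respects this decomposition, one gets the orthogonal splitting $V_m = \bigoplus_{k\geq 1}\bigl(V_m\cap H(k)\bigr)$, so any $\zeta\in V_m$ writes as $\zeta=\sum_k\zeta_k$ with $\zeta_k\in V_m\cap H(k)$ and $\|\zeta\|^2=\sum_k\|\zeta_k\|^2$. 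Bilinearity then gives
$$(\zeta y\mid y\zeta)=\sum_{k,k'\geq 1}(\zeta_k y\mid y\zeta_{k'}),$$
and the task is to show that only an $O(n)$-wide band of pairs $(k,k')$ contributes, each with a contribution small in $m$.

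For the finite-propagation step, apply the orthogonal decomposition $H=A\oplus\bigoplus_{k''\geq 1}H(k'')$ to both factors of each inner product. Proposition~\ref{prp_propagation} says that the $H(k'')$-component of $y\zeta_{k'}$ vanishes whenever $|k''-k'|>n$, and that its $A$-component vanishes when $k'>n$; by the symmetric left-right version (which follows from the same planar-tangle argument, or equivalently via the antipode, which is an isometry of $H$ stabilising each $H(k)$ in the Kac case), the analogous statements hold for $\zeta_k y$ with $k$ in place of $k'$. If $|k-k'|>2n$, there is no common $k''$ with $|k''-k|\leq n$ and $|k''-k'|\leq n$, and moreover one of $k,k'$ must exceed $n$, so the $A$-cross-term vanishes as well; hence $(\zeta_k y\mid y\zeta_{k'})=0$ for $|k-k'|>2n$.

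For the remaining pairs $|k-k'|\leq 2n$, Theorem~\ref{thm_riesz} supplies (since $q\leq q_1$) a single constant $D$ with $\|G(w)\|\leq D\|w\|_2^2$ and $\|G(w)^{-1}\|\leq D\|w\|_2^{-2}$ for every $w\in\W$, so Proposition~\ref{prp_orth_local} applies uniformly in $k,k'$ and yields
$$|(\zeta_k y\mid y\zeta_{k'})|\leq C D\,q^{\alpha(m-|k-k'|)}\|\zeta_k\|\|\zeta_{k'}\|\leq C D\,q^{\alpha(m-2n)}\|\zeta_k\|\|\zeta_{k'}\|,$$
with a constant $C$ depending on $q$, $n$ and $y$. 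Summing over the band and using $\sum_{|k-k'|\leq 2n}\|\zeta_k\|\|\zeta_{k'}\|\leq \tfrac12\sum_{|k-k'|\leq 2n}(\|\zeta_k\|^2+\|\zeta_{k'}\|^2)\leq (4n+1)\|\zeta\|^2$ gives $|(\zeta y\mid y\zeta)|\leq C'q^{\alpha m}\|\zeta\|^2$ after absorbing $(4n+1)q^{-2\alpha n}$ into the constant.

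The only delicate point is really the orthogonality analysis at the band boundary: one must make sure that both the $H(k'')$-components and the $A$-components cancel simultaneously for $|k-k'|>2n$, which is what the extra observation ``$\max(k,k')>n$ as soon as $|k-k'|>2n$'' takes care of. Everything else is bookkeeping, as the geometric content has been packaged in Proposition~\ref{prp_orth_local} and the required uniformity of the Riesz constants follows from Theorem~\ref{thm_riesz}, which is why the hypothesis $q\leq q_1$ is imposed.
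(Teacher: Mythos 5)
Your proof is correct and follows the paper's argument precisely: decompose $\zeta$ orthogonally over the subspaces $H(k)$, use Proposition~\ref{prp_propagation} to discard the pairs with $|k-k'|>2n$, then apply Proposition~\ref{prp_orth_local} (uniformly, thanks to Theorem~\ref{thm_riesz}) on the remaining band and conclude with Cauchy-Schwarz. The extra care you take in justifying $\zeta_k y\bot y\zeta_{k'}$ for $|k-k'|>2n$ --- using both left- and right-handed versions of the propagation statement and the observation that $\max(k,k')>n$ kills the $A$-component --- is exactly the verification the paper leaves implicit.
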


\begin{proof}
  We have the orthogonal decomposition $\zeta = \sum_{k\in\NN^*} \zeta_k$ with $\zeta_k \in \overline{A\W_kA} = H(k)$. Proposition~\ref{prp_propagation} shows
  that $y\zeta_{k'}$ decomposes into subspaces $H(l)$ with $|k'-l|\leq n$, and similarly $\zeta_k y$ decomposes into subspaces $H(l)$ with $|k-l|\leq n$, so
  that we have $\zeta_k y\bot y\zeta_{k'}$ if $|k-k'|>2n$. Proposition~\ref{prp_orth_local} applies thank to Theorem~\ref{thm_riesz} and the assumption on
  $q$. Thus we can write, using Cauchy-Schwarz:
  \begin{align*}
    |(\zeta y\mid y\zeta)| &\leq {\ts\sum_{|k'-k|\leq 2n}} |(\zeta_k y\mid y\zeta_{k'})|
    \leq C q^{\alpha (m-2n)} {\ts\sum_{|k'-k|\leq 2n}} \|\zeta_k\| \|\zeta_{k'}\| \\
                           &\leq C q^{\alpha (m-2n)} {\ts\sum_{|k'-k|\leq 2n}} \|\zeta_k\|^2
                             \leq C q^{\alpha m}q^{-2\alpha n}(4n+1) \|\zeta\|^2.\qedhere
  \end{align*}
\end{proof}

\section{Support localization}
\label{sec_support_loc}

In this section we will show that for any $m\in\NN$, elements $z\in A^\bot\cap M$ which almost commute to the generator $\chi_1\in A$ have a small component in
the subspace spanned by vectors $w_{i,j}$ with $i \leq m$ or $j\leq m$, and from this we deduce the Asymptotic Orthogonality Property for the MASA $A\subset M$.

Our strategy starts with algebraic arguments, using the constant structures for the left multiplication by $\chi_1\in A$ on the basis $(w_{i,j})$, obtained at
Proposition~\ref{prop_struct_const}, to deduce relations between a vector $z\in H(w)$ and its commutator $[\chi_1,z]$, cf.\ Proposition~\ref{prop_commut_relations}. The main analytical input is then an estimate on coefficients appearing in these relations that we establish at
Lemma~\ref{lem_technical_estimate}, and which allows to establish the main result of this section, Theorem~\ref{thm_support_estimate}. We can then prove
Theorem~\ref{thm_main} by assembling the results of the article, following Popa's classical strategy.

%

\bigskip

The following computation of the structure constants is mainly a reformulation of Lemma~\ref{lem_general_rec} that we already used for the study of the Gram
matrix. Recall Notation~\ref{not_basis} for the orthonormal family $W = \bigsqcup_{k\geq 1} W_k$ which spans the $A,A$-bimodule $H^\circ$. For $w\in W$ we have the
associated vectors $w_{i,j}\in AwA$, where $i$, $j\in\NN$. We agree to denote moreover $w_{i,j} = 0$ if $i<0$ or $j<0$. Finally, let us recall the definition of
the coefficients $A$, $B$, $C$ from the statement of Theorem~\ref{thm_gram_rec}:
\begin{displaymath}
  A^n_p = \frac{d_{p+k} d_{p+k-1}}{d_n d_{n-1}}, \quad
  B^n_p = 2(-1)^k \Re(\mu) \frac{d_{p+k-1} d_{p-1}}{d_n d_{n-1}}, \quad
  C^n_p = - \frac{d_{p-1} d_{p-2}}{d_n d_{n-1}}.
\end{displaymath}
They depend on $p\in\NN$ and $n\in\NN^*$, but also on $k\in\NN$ and $\mu\in\CC$ which will be fixed most of the time. Recall moreover that we are using the
convention $d_p = 0$ for $p<0$.

\begin{proposition}\label{prop_struct_const}
  Let $w\in \W_k$ with associated eigenvalue $\mu$ of the rotation map, and consider the associated coefficients $A$, $B$, $C$. Then for any $i$, $j\in\NN$ we
  have, with $n = i + k + j$:
  \begin{displaymath}
  \chi_1 w_{i,j} = w_{i+1,j} + (1- A^n_j)w_{i-1,j} + B^n_jw_{i,j-1}+C^n_j w_{i+1,j-2}.
\end{displaymath}
\end{proposition}

\begin{proof}
  According to the fusion rules we have $\chi_1 w_{i,j} = p_{n+1}(\chi_1 w_{i,j}) + p_{n-1}(\chi_1 w_{i,j})$, and moreover
  $p_{n+1}(\chi_1 w_{i,j}) = p_{n+1}(\chi_1\chi_iw\chi_j) = w_{i+1,j}$ because $p_{n+1}(\chi_1 p_l(\chi_iw\chi_j)) = 0$ if $l<n$. We compute the second term
  $p_{n-1}(\chi_1 w_{i,j})$ in the Tannaka-Krein picture: putting $w = u_k(X)$ with $X\in B(H_k)^\ccirc$, we have by~\eqref{eq_prod_FO}:
  \begin{displaymath}
    p_{n-1}(\chi_1 w_{i,j}) = (\kappa_{n-1}^{1,n})^2~ u_{n-1}(\id_1*_{n-1}X_{i,j}).
  \end{displaymath}
  Recall the basic intertwiner $V_{n-1}^{1,n} = (P_1\otimes P_n)(t\otimes\id_{n-1})P_{n-1}$. We have
  $V_{n-1}^{1,n*}V_{n-1}^{1,n} = {(t^*\otimes\id_{n-1})}$ $(\id_1\otimes P_n)(t\otimes \id_{n-1}) = (\Tr_1\otimes\id)(P_n) = (d_n/d_{n-1})\id_{n-1}$, so that
  $(\kappa_{n-1}^{1,n})^2 = d_{n-1}/d_n$. Moreover we have by definition
  \begin{displaymath}
    \id_1*_{n-1} X_{i,j} = (t^*\otimes\id_{n-1})(\id_1\otimes X_{i,j})(t\otimes\id_{n-1})
    = (\Tr_1\otimes\id)(X_{i,j}).
  \end{displaymath}
  Switching left and right in Lemma~\ref{lem_general_rec} (or applying the
  antipode as in the proof of Corollary~\ref{crl_orth_xy_yx}) we thus obtain
  \begin{align*}
    (\kappa_{n-1}^{1,n})^2 (\id_1*_{n-1} X_{i,j}) 
    = \delta_{i>0}(1-A^n_j)X_{i-1,j} + \delta_{j>0}B^n_j X_{i,j-1} 
    + \delta_{j>1} C^n_j X_{i+1,j-2}.
  \end{align*}
  This yields the formula in the statement.
\end{proof}

\begin{corollary}\label{cor_commutator}
  Fix $w\in W_k$, assume that $\{w_{i,j} \mid i, j\in\NN\}$ is a Riesz basis, and take an element $z = \sum_{i,j} z_{i,j}w_{i,j}$ in $H(w)$. We put moreover
  $z_{i,j} = 0$ if $i<0$ or $j<0$. Writing similarly $[\chi_1,z] = \sum_{i,j} [\chi_1,z]_{i,j} w_{i,j}$ in $H(w)$, we have
  \begin{displaymath}
    [\chi_1,z]_{i,j} = z_{i-1,j} - z_{i,j-1} + D^{n+1}_j z_{i+1,j} - D^{n+1}_i z_{i,j+1}
    + C^{n+1}_{j+2}z_{i-1,j+2} - C^{n+1}_{i+2} z_{i+2,j-1},
  \end{displaymath}
  where we take $n = i+k+j$ and denote $D^n_j = 1 - A^n_j-B^n_{n-k-j}$. 
\end{corollary}

\begin{proof}
  The proposition gives, by summing in $H(w)$ over $i$, $j\in\NN$:
  \begin{align*}
    \chi_1 z &= \ts\sum_{i,j} z_{i,j} w_{i+1,j} + \sum_{i\geq 1,j}(1- A^{i+k+j}_{j})z_{i,j}w_{i-1,j} \\
    &\hspace{2cm} + \ts\sum_{j\geq 1,i}B^{i+k+j}_{j}z_{i,j}w_{i,j-1} +  \sum_{j\geq 2,i} C^{i+k+j}_{j} z_{i,j} w_{i+1,j-2} \\
             &= \ts\sum_{i\geq 1,j} z_{i-1,j} w_{i,j} + \sum_{i,j}(1- A^{i+k+j+1}_{j})z_{i+1,j}w_{i,j} \\
    &\hspace{2cm} + \ts\sum_{i,j}B^{i+k+j+1}_{j+1}z_{i,j+1}w_{i,j} +  \sum_{i\geq 1,j} C^{i+k+j+1}_{j+2} z_{i-1,j+2} w_{i,j}.
  \end{align*}
  With our convention we can add the terms $i=0$ in the first and last sum, and
  for fixed $i$, $j\in\NN$ this yields
  $(\chi_1 z)_{i,j} = z_{i-1,j} + (1- A^{n+1}_{j})z_{i+1,j} + B^{n+1}_{j+1}z_{i,j+1}
  + C^{n+1}_{j+2} z_{i-1,j+2}$, where $n = i+k+j$. We also have
  $(z\chi_1 )_{j,i} = z_{j,i-1} + (1- A^{n+1}_{j})z_{j,i+1} + B^{n+1}_{j+1}z_{j+1,i}
  + C^{n+1}_{j+2} z_{j+2,i-1}$ by symmetry (or by applying the
  antipode). Switching $i$ and $j$ this reads
  $(z\chi_1 )_{i,j} = z_{i,j-1} + (1 - A^{n+1}_{i})z_{i,j+1} + B^{n+1}_{i+1}z_{i+1,j}
  + C^{n+1}_{i+2} z_{i+2,j-1}$ and a substraction yields the result, since $D_j^{n+1} = 1 - A^{n+1}_j - B^{n+1}_{i+1}$.
\end{proof}

Iterating Corollary~\ref{cor_commutator}, we shall obtain more relations between a vector $z$ and the commutator $[\chi_1,z]$: cf
Equation~\eqref{eq_iterated_move} below where the case $p=1$ corresponds in fact to Corollary~\ref{cor_commutator}. For fixed $m$, $l$, the {\em collection} of
relations~\eqref{eq_iterated_move} for varying $p$ will yield the crucial estimate of Theorem~\ref{thm_support_estimate}. The coefficients appearing
in~\eqref{eq_iterated_move} are introduced inductively as follows.

\begin{notation}\label{not_recursive_coeffs}
  We fix $k\in\NN^*$, $|\mu| = 1$, and $m\in\NN$.  We define families of
  coefficients $f_{i,j}^{l,p}$, $g_{i,j}^{l,p}$ for $i$, $j$, $l$, $p\in\NN$,
  and $\phi^{l,p}_i$ for $l$, $p\in\NN$, $i\in\ZZ$, by induction on $p$, as
  follows. For $p = 0$ we first put $f^{l,0}_{i,j} = \delta_{(i,j) = (m,l)}$ and
  $g^{l,0}_{i,j} = 0$ for all $l$, $i$, $j\in\NN$. Then assuming that
  $f^{l,p}_{i,j}$, $g^{l,p}_{i,j}$ are constructed for a given $p$ and all $l$,
  $i$, $j\in\NN$ we first put
  \begin{displaymath}
    \phi_i^{l,p} = -\sum_{s=-p}^i f^{l,p}_{m+p-s,l+p+s}
  \end{displaymath}
  for $-p\leq i\leq m+p$ and $\phi_i^{l,p} = 0$ for the other values of $i\in\ZZ$. Then we define $g_{i,j}^{l,p+1} = g_{i,j}^{l,p}$ if $i+j\leq m+l+2p-1$,
  $g_{i,j}^{l,p+1} = \phi_{m+p-i}^{l,p}$ if $i+j = m+l+2p+1$ and $g_{i,j}^{l,p+1} = 0$ else. Finally we put $f_{i,j}^{l,p+1} = 0$ if $i+j \neq m+l+2p+2$, and
  \begin{displaymath}
    f_{i,j}^{l,p+1} = D^n_{i} \phi^{l,p}_{m+p-i}  - D^n_{j} \phi^{l,p}_{m+p-i+1}
    + C^n_{i} \phi^{l,p}_{m+p-i+2} - C^n_{j}\phi^{l,p}_{m+p-i-1}
  \end{displaymath}
  if $i+j = m+l+2p+2$, with $n = i+k+j$.
\end{notation}

\begin{remark}
  The last relation in fact implies also $f_{i,j}^{l,p+1} = 0$ if $i+j = m+l+2p+2$ and $i>m+2p+2$, because then $m+p-i < -p-2$. Hence $f^{l,p}_{i,j} = 0$ unless
  $i+j = m+l+2p$ and $i\leq m+2p$. By definition, one can recover the coefficients $f$ from $\phi$ as follows:
  $f_{i,m+l+2p-i}^{l,p} = \phi^{l,p}_{m+p-i-1} - \phi^{l,p}_{m+p-i}$ for $i = 0, \ldots, m+2p$ (which for $i=m+2p$ also reads
  $f_{m+2p,l}^{l,p} = - \phi^{l,p}_{-p}$), and $f_{i,j}^{l,p} = 0$ if $i+j \neq m+l+2p$ or $i> m+2p$. Also, we have $\phi^{l,0}_i = -1$ for $0\leq i\leq m$ and $0$ otherwise. On the
  other hand, we also record the fact that $g_{i,j}^{l,p} = 0$ unless $m+l+1\leq i+j \leq m+l+2p-1$ and $i+j$, $m+l+1$ have the same parity.
\end{remark}

\begin{proposition}\label{prop_commut_relations}
  Fix $w \in W_k$ with associated $\rho$-eigenvalue $\mu$, and $m\in\NN$. Assume that $\{w_{i,j} \mid i, j\in\NN\}$ is a Riesz basis. For any
  $z = \sum_{i,j} z_{i,j} w_{i,j} \in H(w)$ we have, using the coefficients of Notation~\ref{not_recursive_coeffs}:
  \begin{equation}\label{eq_iterated_move}
    \forall p\in\NN, l\in\NN \quad z_{m,l} = \sum_{i,j\in\NN} f_{i,j}^{l,p} z_{i,j}
    + \sum_{i,j\in\NN} g_{i,j}^{l,p} [\chi_1,z]_{i,j}.
  \end{equation}
\end{proposition}

\begin{proof}
  We proceed by induction over $p$, noting that~\eqref{eq_iterated_move} holds trivially for $p=0$. Assume now that it is satisfied for some fixed $p\in\NN$.
  Using Corollary~\ref{cor_commutator} we then write, with the convention $i' = m+l+2p-i$ in each term of the sums:
  \begin{align*}
    z_{m,l} = &\ts\sum_{i=0}^{m+2p} f_{i,i'}^{l,p} z_{i,i'} -\sum_{i=0}^{m+2p}\phi^{l,p}_{m+p-i}(z_{i-1,i'+1} - z_{i,i'}) \\
              &\hspace{1cm}+\ts\sum_{i,j} g_{i,j}^{l,p} [\chi_1,z]_{i,j} + \sum_{i=0}^{m+2p}\phi^{l,p}_{m+p-i}[\chi_1,z]_{i,i'+1}  \\
              &\hspace{1cm}-\ts\sum_{i=0}^{m+2p}\phi^{l,p}_{m+p-i}(D^n_{i'+1} z_{i+1,i'+1} - D^n_{i} z_{i,i'+2}  + C^n_{i'+3}z_{i-1,i'+3} - C^n_{i+2} z_{i+2,i'}),
  \end{align*}
  where $n = k+m+l+2p+2 = i+k+i'+2$.  By definition of the coefficients $\phi$, the first two sums cancel each other: indeed $z_{i,i'}$, for $0\leq i\leq m+2p$,
  appears with the factor $\phi^{l,p}_{m+p-i-1} - \phi^{l,p}_{m+p-i} = f^{l,p}_{i,m+l+2p-i}$ in the second one. The fourth sum contains exactly the terms
  missing to the third one to pass from $g^{l,p}_{i,j}$ to $g^{l,p+1}_{i,j}$, so that we have
  \begin{align*}
    z_{m,l} &= \ts\sum_{i,j} g_{i,j}^{l,p+1} [\chi_1,z]_{i,j}-\sum_{i=0}^{m+2p}\phi^{l,p}_{m+p-i}(D^n_{i'+1} z_{i+1,i'+1} - D^n_{i} z_{i,i'+2}  \\
    &\hspace{8cm} + C^n_{i'+3}z_{i-1,i'+3} - C^n_{i+2} z_{i+2,i'}) \\
            &= \ts\sum_{i,j} g_{i,j}^{l,p+1} [\chi_1,z]_{i,j}+\sum_{i=0}^{m+2p+2} z_{i,i'+2}(\phi^{l,p}_{m+p-i}D^n_{i}-\phi^{l,p}_{m+p-i+1}D^n_{i'+2} \\
    &\hspace{8cm} + \phi^{l,p}_{m+p-i+2}C^n_{i} - \phi^{l,p}_{m+p-i-1} C^n_{i'+2}),
  \end{align*}
  still with $n = k+m+l+2p+2$, and using $\phi^{l,p}_i=0$ for $i<-p$ or $i>m+p$. We recognize in the last sum the definition of $f^{l,p+1}_{i,j}$, with
  $j = m+l+2p-i+2 = i'+2$, so that~\eqref{eq_iterated_move} holds for $p+1$.
\end{proof}

Now the main tool to obtain Theorem~\ref{thm_support_estimate}, together with the relations~\eqref{eq_iterated_move}, is an estimate on the coefficients $\phi$
that we establish at the elementary but technical Lemma~\ref{lem_technical_estimate} below. First we prove the following easy estimates on the coefficients $C$
and $D$.

\begin{lemma} \label{lemma_struct_coeff_estim} For any $a\in\NN$,
  $b\in\NN \cup \{-1\}$ and $k\in\NN^*$ we have, putting $n = a+b+k+2$:
  \begin{gather*}
    |D^n_{b+1}-D^n_{a}| \leq \frac{3q^{a+b+3}q^{-|b-a+1|}}{(1-q^{2n}) (1-q^{2n+2})}, \\
    |D^n_{b+1}| \leq 1 + \frac{2q^{n-a+b+1}}{(1-q^{2n}) (1-q^{2n+2})}, \qquad
    |C^n_{b+1}| \leq \frac{q^{2n-2b}}{(1-q^{2n}) (1-q^{2n+2})}.
  \end{gather*}
\end{lemma}

\begin{proof}
  We start from the identity
  \begin{displaymath}
    d_{n-b-2}d_{n-b-3} - d_{n-a-1}d_{n-a-2} = \pm d_{2n-a-b-3} d_{|b-a+1|-1}.
  \end{displaymath}
  This can be seen by a direct computation using $q$-numbers, or using the fusion rules as follows. Both sides vanish if $a = b+1$, according to our convention
  $d_{-1} = 0$ (and otherwise all indices are in $\NN$). Assume for instance $a < b+1$. Then $d_{n-b-2}d_{n-b-3}$ (resp.\ $d_{n-a-1}d_{n-a-2}$) is the sum of
  the dimensions $d_c$ where $c$ is odd and ranges from $1$ to $2n-2b-5$ (resp.\ $2n-2a-3$). On the other hand the right-hand side is the sum of dimensions
  $d_c$ where $c$ is odd and ranges from $2n-2b-3$ to $2n-2a-3$, so that the relation holds with a negative sign. The other case follows (with a positive sign)
  by exchanging $a$ and $b+1$. Dividing out by $d_nd_{n-1}$ and using the estimate $q^{-c}(1-q^{2c+2}) = (1-q^2)d_c \leq q^{-c}$ we obtain
  \begin{displaymath}
    |(1-A^n_{b+1}) - (1-A^n_{a})| = \frac{d_{2n-a-b-3} d_{|b-a+1|-1}}{d_nd_{n-1}}
    \leq \frac{q^{a+b+3-|b-a+1|}}{(1-q^{2n+2})(1-q^{2n})}.
  \end{displaymath}

  One can proceed in the same way with the constants $B$. By the same reasoning as above, or by a direct computation, we have
  \begin{displaymath}
    d_a d_{n-b-2} - d_{b+1} d_{n-a-1} = \pm d_n d_{|b-a+1|-1},
  \end{displaymath}
  indeed the products on the left are equal to the sum of the dimensions $d_c$ where $c$ has the same parity as $k$ and ranges from $k$ to $n+a-b-2$ (resp. from
  $k$ to $n-a+b$), so that in the difference $c$ ranges from $n+a-b$ to $n-a+b$ if $b>a-1$ (resp. from $n-a+b+2$ to $n+a-b-2$ if $b<a-1$), and we find exactly
  the same terms on the right. Dividing out by $d_nd_{n-1}$ this yields
  \begin{displaymath}
    |B^n_{a+1} - B^n_{b+2}| = 2 |\Re\mu| \frac{d_n d_{|b-a+1|-1}}{d_nd_{n-1}}
    \leq 2 \frac{q^{n-|b-a+1|}}{(1-q^{2n+2})(1-q^{2n})}.
  \end{displaymath}
  Since $n\geq a+b+3$, adding this estimate and the previous one yields the first estimate of the statement.

  The other estimates are easier. We have clearly $0\leq A^n_{b+1}\leq 1$, hence $D^n_{b+1}\leq 1+|B^n_{a+1}|$, and using again the estimates
  $(1-q^2)d_c \leq q^{-c}$ we obtain
  \begin{displaymath}
    |B^n_{a+1}| = 2 |\Re\mu| \frac{d_{a}d_{n-b-2}}{d_nd_{n-1}}
    \leq \frac{2q^{n-a+b+1}}{(1-q^{2n+2})(1-q^{2n})}.
  \end{displaymath}
  Finally the estimate for $C^n_{b+1}$ follows exactly like the one for
  $B^n_{a+1}$ above.
\end{proof}

\begin{lemma}\label{lem_technical_estimate}
  Assume that one can choose $3.4<R<0.995/(2q^2)$. Take $k\in\NN^*$, $|\mu| = 1$, and $m\in\NN$. Then there exists a constant $K$, depending only on $q$ and
  $m$, such that $|\phi^{l,p}_i| \leq K q^{2|i|}(2R)^{i}$ for all $l\geq m$, $p\in\NN$ and $-p\leq i\leq m+p$.
\end{lemma}

\begin{proof}
  Note that the assumption implies the inequalities $6q^2<2Rq^2<1$, and in particular $q^2<1/6$, which we will use frequently in this proof.  The
  recursive construction of the coefficients $f$ yields the following recursion relation over $p$ for the coefficients $\phi$. For $-p-1\leq i\leq m+p+1$ we
  have, putting $n = m+l+2p+k+2$:
  \begin{align*}
    \phi^{l,p+1}_i
    &= -\sum_{s=-p-1}^i f_{m+p+1-s,l+p+1+s}^{l,p+1} \\
    &= \sum_{s=-p-1}^i  (\phi^{l,p}_{s}D^n_{l+p+1+s} - \phi^{l,p}_{s-1}D^n_{m+p+1-s}
      + \phi^{l,p}_{s-2} C^n_{l+p+1+s} - \phi^{l,p}_{s+1}C^n_{m+p+1-s}) \\
    &= \phi^{l,p}_i (D^n_{l+p+1+i} - C^n_{m+p-i+2}) - \phi^{l,p}_{i+1} C^n_{m+p-i+1} 
    \\ & \hspace{2cm} + \sum_{s=-p}^{i-1} \phi^{l,p}_s
         \Bigr(D^n_{l+p+1+s} - D^n_{m+p-s} + \delta_{s\leq i-2}C^n_{l+p+s+3} - C^n_{m+p-s+2}\Bigl),
  \end{align*}
  since $\phi^{l,p}_s = 0$ if $s<-p$.

  We now combine this recursion relation with the estimates of
  Lemma~\ref{lemma_struct_coeff_estim}. We still take $n = k+m+l+2p+2$ and we
  denote $\rho_n = (1-q^{2n})^{-1}(1-q^{2n+2})^{-1}$.  Note that we have
  $\rho_n \leq \rho_t$ if $t\leq n$, and $\rho_3 < 1.005$ (by comparing with the
  value at $q^2 = 0.995/6.8$). Since $k\geq 1$, we have $n \geq 3$, hence the
  estimate $\rho_n \leq 1.005$ that we will use later in the proof. Now
  Lemma~\ref{lemma_struct_coeff_estim} gives, for $-p-1\leq i \leq m+p+1$:
  \begin{align*}
    |\phi^{l,p+1}_i|
    &\leq |\phi^{l,p}_i| \Bigl(1+2\rho_nq^{n-m+l+2i+1}+\rho_nq^{2n-2m-2p+2i-2}\Bigr)
      + |\phi^{l,p}_{i+1}| \rho_n q^{2n-2m-2p+2i}
    \\ & \hspace{1em}
         + \sum_{s=-p}^{i-1} |\phi^{l,p}_s| \times \bigl( 3\rho_n q^{m+l+2p+3-|l-m+2s+1|}
         + \rho_nq^{2n-2l-2p-2s-4}+\rho_n q^{2n-2m-2p+2s-2}\bigr).
  \end{align*}
  Observe that the sum vanishes for $i = -p-1$, as well as $\phi^{l,p}_{-p-1}$,
  $\phi^{l,p}_{m+p+1}$ and $\phi^{l,p}_{m+p+2}$ by convention. Since $k\geq 1$
  and $l\geq m$, and using the value of $n$, we have the following lower bounds for $-p\leq s\leq i-1$:
  \begin{itemize}
  \item[\textbullet] $n-m+l+2i+1\geq 2(p+i+m+2)$,
  \item[\textbullet] $2n-2m-2p+2i \geq 2(p+i+m+3)$,
  \item[\textbullet] $m+l+2p+3-|l-m+2s+1| \geq 2(p-|s|+m+1)$,
  \item[\textbullet] $2n-2l-2p-2s-4 \geq 2(p-|s|+m+1)$,
  \item[\textbullet] $2n-2m-2p+2s-2 \geq 2 (p-|s|+m+2)$.
  \end{itemize}
  Applying these bounds and factoring $3+1+q^2 \leq \frac{25}6$ in the sum we arrive at the slightly simpler estimate:
  \begin{align*}
    |\phi^{l,p+1}_i|
    &\leq |\phi^{l,p}_i| \bigl(1+ 3\rho_nq^{2(p+i+m+2)}\bigr)
      + |\phi^{l,p}_{i+1}| \rho_n q^{2(p+i+m+3)}
      + {\ts\frac{25}6} \rho_n\sum_{s=-p}^{i-1} |\phi^{l,p}_s|  q^{2(p-|s|+m+1)}.
  \end{align*}
  
  Then we denote $\psi^{l,p}_i = |\phi^{l,p}_i| q^{-2|i|}(2R)^{-i}$, so that our
  aim is now to find $K$ such that $\psi^{l,p}_i \leq K$ for all $l$, $p\in\NN$,
  $-p\leq i\leq m+p$.  For $\psi$ the previous estimate becomes
  \begin{align*}
    \psi^{l,p+1}_i
    &\leq \psi^{l,p}_i \bigl(1 + 3\rho_nq^{2(p+i+m+2)}\bigr)
      + \psi^{l,p}_{i+1} \rho_n 2R q^{2(p+i+m+2)}
      + {\ts\frac{25}6} \rho_n q^{2(p-|i|+m+1)} \sum_{s=-p}^{i-1} (2R)^{s-i}\psi^{l,p}_s ,
  \end{align*}
  where we have used $|i+1|-|i| \geq -1$ in the second term.  Let us denote
  $K_r = (6q^2)^{-m} \times \prod_{t=0}^{r-1} {(1 + q^{2t})}$, which is increasing with $r$ and starts with
  $K_0 = (6q^2)^{-m}$. We will prove, for each $l\geq m$ by induction on $p$, the
  following estimate:
  \begin{displaymath}
    (H_p) \quad \forall i\in\{-p,\ldots,p+m\} \quad \psi^{l,p}_i \leq K_{p+m-|i|}.
  \end{displaymath}
  For $p=0$ and $0\leq i\leq m$ we have indeed $\phi^{l,p}_i = -1$, hence
  $\psi^{l,p}_i = (2Rq^2)^{-i} \leq (6q^2)^{-m} \leq K_{m-|i|}$.  Assume now that the
  estimates hold for a fixed $p\in\NN$, and let us establish them at
  $p+1$. Together with $(H_p)$, our recursive estimate on $\psi$ yields, for
  $-p-1\leq i\leq m+p+1$:
  \begin{align} \nonumber
    \psi^{l,p+1}_i
    &\leq \delta_{-p\leq i\leq p+m} K_{p+m-|i|} \bigl(1 + 3\rho_nq^{2(p+i+m+2)}\bigr)
        + \delta_{i\leq p+m-1}K_{p+m-|i+1|} \rho_n 2R q^{2(p+i+m+2)} \\
    &\hspace{6cm} \label{eq_rec_estimate} +
       {\ts\frac{25}6} \rho_n q^{2(p-|i|+m+1)} \sum_{s=-p}^{i-1} (2R)^{s-i} K_{p+m-|s|}.
  \end{align}

  Consider first $i = -p-1$. The above estimate reads in this case
  $\psi^{l,p+1}_{-p-1} \leq K_m \rho_n 2Rq^{2(m+1)}$. Since $2Rq^2<0.995$
   and $\rho_n\leq 1.005$, we obtain $\psi^{l,p+1}_{-p-1} \leq K_m$
  as needed.

  Then we consider the case $-p\leq i\leq -1$. For $s\leq i$ we have
  $K_{p+m-|s|} = K_{p+m+s} \leq K_{p+m-|i|}$. Moreover we have
  $K_{p+m-|i+1|} 2R q^{4} = K_{p+m-|i|} (1+q^{2(p+m+i)})2Rq^4 \leq \frac 13
  K_{p+m-|i|}$ because $2Rq^2<1$ and
  $q^2(1+q^{2(p+m+i)}) \leq 2q^2\leq\frac 13$.  Hence~\eqref{eq_rec_estimate}
  yields
  \begin{align*}
    \psi^{l,p+1}_i
    &\leq K_{p+m-|i|} \bigl( 1+{\ts\frac 3{36}}\rho_nq^{2(p+i+m)} + {\ts\frac 13}\rho_n q^{2(p+i+m)}
      + {\ts\frac{25}{36}}\rho_n q^{2(p-|i|+m)} \ts\sum_{s=-\infty}^{i-1} (2R)^{s-i}\bigr) \\
    &\leq K_{p+m-|i|} \bigl( 1+{\ts\frac 5{12}}\rho_nq^{2(p+i+m)}
      + {\ts\frac{25}{36}} \rho_n q^{2(p+m-|i|)}/(2R-1) \bigr) \\
    &\leq K_{p+m-|i|} \bigl( 1+{\ts\frac 59}\rho_nq^{2(p+m-|i|)} \bigr)
      \leq K_{p+m-|i|} \bigl( 1 + q^{2(p+m-|i|)} \bigr) = K_{p+1+m-|i|},
  \end{align*}
  where we used $2R-1\geq 5$ and $\rho_n \leq \frac 95$.

  Now we consider the case $0\leq i\leq p+m$. Let us observe that for any
  $t\in \NN$ we have $(1+q^{2t})/2 \leq 1$, hence
  $2^{-t}\prod_{r=0}^{t-1}(1+q^{2r}) \leq 1$. Then for $|s|\leq i$ we can write
  $K_{p+m-|s|} 2^{s-i} = K_{p+m-i} 2^{s-i}
  \prod_{t=p+m-i}^{p+m-|s|-1}(1+q^{2t}) \leq K_{p+m-i} 2^{|s|-i}
  \prod_{r=0}^{i-|s|-1}(1+q^{2r}) \leq K_{p+m-i}$. On the other hand for
  $s\leq -i$ we clearly have $K_{p+m-|s|} \leq K_{p+m-i}$ (and $2^{s-i}\leq
  1$). Using this, our estimate~\eqref{eq_rec_estimate} thus yields
  \begin{align*}
    \psi^{l,p+1}_i
      &\leq K_{p+m-i}\bigl(1+3\rho_nq^{2(p+i+m+2)}
        +  \rho_n 2R q^{2(p+i+m+2)} 
        + {\ts\frac{25}6} \rho_n q^{2(p-i+m+1)} \ts\sum_{s=-p}^{i-1} R^{s-i}\bigr) \\
      &\leq K_{p+m-i}\bigl(1+q^{2(p-i+m)}\bigl({\ts\frac 3{36}}\rho_n
        +  {\ts\frac 1{6}}\rho_n 
        + {\ts\frac{25}{36}} \rho_n/ (R-1)\bigr)\bigr) \\
      &\leq K_{p+m-i} (1+{\ts\frac{43}{72}}\rho_n q^{2(p+m-i)})
        \leq K_{p+m-i} (1+q^{2(p+m-i)}) = K_{p+1+m-i},
  \end{align*}
  where we used $2Rq^2 < 1$, $q^2\leq \frac 16$, then $R-1\geq 2$
  and $\rho_n\leq\frac{72}{43}$.
  
  Finally when $i = p+m+1$ the first two terms in the
  estimate~\eqref{eq_rec_estimate} vanish and we are left with the sum which can
  be dealt with as before:
  \begin{align*}
    \psi_{p+m+1}^{l,p+1}
    &\leq  {\ts\frac{25}6} \rho_n  \ts\sum_{s=-p}^{p+m} (2R)^{s-p-m-1}K_{p+m-|s|} \\
    &\leq  {\ts\frac{25}6}K_0 \rho_n  \ts\sum_{s=-p}^{p+m} R^{s-p-m-1}2^{-1}
    \leq {\ts\frac{25}6} K_0 \rho_n /(2(R-1)) \leq K_0,
  \end{align*}
  since $R\geq 3.4$ and $\rho_n\leq 1.1$. This is the required estimate to conclude
  the proof of $(H_{p+1})$.
  
  We have now proved by induction that $(H_p)$ holds for all $p\in\NN$. Moreover
  we have $K_r \leq K := \lim_{s\to\infty} K_s$ for all $r\in\NN$, with
  $K<+\infty$ because $q<1$. Hence the lemma is proved.
\end{proof}

\begin{notation}
  We consider the following {\em non-orthogonal} projections $E_m$, $Q_m$  defined as follows: for all $w\in \W$, $i$, $j\in\NN$
  \begin{align*}
    E_m(w_{i,j}) &= w_{i,j} \quad \text{if $i\geq m$ and $j\geq m$,}  \quad 0 \quad\text{otherwise ;} \\
    Q_m(w_{i,j}) &= w_{i,j} \quad \text{if $j\geq i = m$,}  \quad 0 \quad\text{otherwise.}
  \end{align*}
  Observe that if $\{w_{i,j} \mid w\in W, i, j\in\NN\}$ is a Riesz basis, these projections extend to idempotents in $B(H^\circ)$, and the range of $E_m$ is the subspace $V_m$ from Notation~\ref{not_Vm}.
\end{notation}

\begin{theorem} \label{thm_support_estimate} Assume that $\{w_{i,j} \mid w\in W, i, j\in\NN\}$ is a Riesz basis and that $N\geq 3$.  Then there exist 
  constants $L_m$ such that we have, for any $p\in\NN^*$, $m\in\NN$ and $z\in H^\circ$:
  \begin{displaymath}
    \|(1-E_m)z\|_2^2 \leq L_m p^{-1}\|z\|_2^2 + L_m p\|[\chi_1,z]\|_2^2 .
  \end{displaymath}
\end{theorem}


\begin{proof}
  Note that for $N=3$ we have $q^{-2} = \frac 12(N^2+N\sqrt{N^2-4}-2) \simeq 6.854$, so that for $N\geq 3$ we have $q^{-2}> 6.85$ and $0.995/(2q^2) > 3.4$. As a
  result we can find a number $R$ such as in the hypothesis of Lemma~\ref{lem_technical_estimate}.  To start with, we deduce from that lemma estimates on
  various sums of coefficients $f$ and $g$. For each $m$ we denote $K_m$ the constant provided by the lemma.

  Recall that $f_{i,j}^{l,p}$ vanishes except for the following entries: $f^{l,p}_{m+p-i,l+p+i} = \phi^{l,p}_{i-1}-\phi^{l,p}_i$ with $-p\leq i\leq p+m$, and
  the convention $\phi^{l,p}_{-p-1} = 0$. Thus for fixed $l\geq m$ and $p$ we have $\sum_{i,j} |f^{l,p}_{i,j}| \leq 2 \sum_{i=-p}^{p+m} |\phi^{l,p}_i|$. As a
  result Lemma~\ref{lem_technical_estimate} yields $\sum_{i,j} |f^{l,p}_{i,j}| \leq 2K_m S$ with $S = \sum_{i=-\infty}^{+\infty} q^{2|i|}(2R)^i$, which is
  finite because $2Rq^2<1$ and $q^2/2R<q^2/6<1$ by choice of $R$. In the same way for fixed $i$, $j$ we have $f^{l,p}_{i,j} = 0$ unless $i+j\geq m$ and $l\geq m$,
  $p\in\NN$ satisfy $l = i+j-m-2p$. Thus we obtain, putting $r = p+m-i$, the same upper bound for the sum over $l$ and $p$:
  \begin{align*}
    \sum_{p,l\geq m}|f^{l,p}_{i,j}|
    &= \sum_{p=0}^{\lfloor (i+j)/2-m\rfloor}
      \bigl|\phi^{i+j-m-2p,p}_{m+p-i} - \phi^{i+j-m-2p,p}_{m+p-i-1}\bigr| \\
    &= \sum_{r=m-i}^{\lfloor (j-i)/2\rfloor}
      \bigl|\phi^{m+j-i-2r,r+i-m}_{r} - \phi^{m+j-i-2r,r+i-m}_{r-1}\bigl|
      \leq 2K_m S.
  \end{align*}

  On the other hand, recall that if $g^{l,p}_{i,j} \neq 0$ then there exists
  $0\leq r\leq p-1$ such that $i+j = m+l+2r+1$, and then
  $g^{l,p}_{i,j} = \phi^{l,r}_{r+m-i}$. Hence for fixed $l\geq m$ and $p$ we have
  \begin{align*}
  \sum_{i,j}|g^{l,p}_{i,j}| \leq \sum_{r=0}^{p-1}
  \sum_{i=0}^{m+l+2r+1}|\phi^{l,r}_{r+m-i}| \leq
  \sum_{r=0}^{p-1} \sum_{i=-\infty}^{+\infty}|\phi^{l,r}_i| \leq
  pK_mS.
  \end{align*}
  Similarly, for fixed $i$, $j$, $p$ we have $\sum_{l\geq m}|g^{l,p}_{i,j}| = \sum_{r=0}^\rho |\phi^{i+j-m-2r-1,r}_{r+m-i}|$ where
  $\rho = \min(p-1,$ ${\lfloor(i+j-1)/2\rfloor}-m)$, hence once again $\sum_{l\geq m}|g^{l,p}_{i,j}| \leq K_mS$.

  Now we can proceed to the main part of the proof. We start by $Q_m$ instead of $1-E_m$. By decomposing $H^\circ$ into the pairwise orthogonal sub-bimodules
  $H(w)$ we can assume that $z$ belongs to $H(w)$ for some $w$ --- indeed $Q_m$ and the commutator with $\chi_1$ commute with the projections onto these
  submodules.  Since $(w_{i,j})$ is a Riesz basis we can replace $\|Q_mz\|_2^2$ with $\sum_{l\geq m} |z_{m,l}|^2$, $\|z\|_2^2$ with $\sum_{i,j}|z_{i,j}|^2$ and
  $\|[\chi_1,z]\|_2^2$ with $\sum_{i,j}|[\chi_1,z]_{i,j}|^2$. Then for any fixed $p$ we have the following estimates, using
  Proposition~\ref{prop_commut_relations} and Cauchy-Schwartz:
  \begin{align*}
    \sum_{l\geq m} |z_{m,l}|^2
    &= \sum_{l\geq m} \Bigl|\sum_{i,j} f_{i,j}^{l,p} z_{i,j}
      + \sum_{i,j} g_{i,j}^{l,p} [\chi_1,z]_{i,j}\Bigr|^2 \\
    &\leq 2\sum_{l\geq m} \Bigl|\sum_{i,j} f_{i,j}^{l,p} z_{i,j}\Bigr|^2
      + 2\sum_{l\geq m} \Bigl|\sum_{i,j}  g_{i,j}^{l,p} [\chi_1,z]_{i,j}\Bigr|^2 \\
    &\leq 2\sum_{l\geq m} \sum_{i,j} |f_{i,j}^{l,p}|\sum_{i,j} |f_{i,j}^{l,p}z_{i,j}^2| +
      2\sum_{l\geq m}\sum_{i,j} |g_{i,j}^{l,p}|\sum_{i,j} |g_{i,j}^{l,p}[\chi_1,z]_{i,j}^2| \\
    &\leq 4K_mS\sum_{l\geq m,i,j} |f_{i,j}^{l,p}z_{i,j}^2|
      + 2pK_mS\sum_{i,j} |[\chi_1,z]_{i,j}|^2 \sum_{l\geq m}|g_{i,j}^{l,p}| \\
    &\leq 4K_mS\sum_{l\geq m,i,j} |f_{i,j}^{l,p}z_{i,j}^2| + 2pK_m^2S^2\sum_{i,j} |[\chi_1,z]_{i,j}|^2.
  \end{align*}
  Then we take the average of these inequalities over $p = 0, \ldots, r-1$:
  \begin{align*}
    \sum_{l\geq m} |z_{m,l}|^2
    &\leq \frac{4K_mS}{r}\sum_{i,j}|z_{i,j}^2|\sum_{p<r,l\geq m} |f_{i,j}^{l,p}|
      + \frac{2K_m^2S^2}r \sum_{p<r}p\sum_{i,j} |[\chi_1,z]_{i,j}|^2, \\
    &\leq \frac{8K_m^2S^2}{r}\sum_{i,j} |z_{i,j}|^2
      + rK_m^2S^2\sum_{i,j} |[\chi_1,z]_{i,j}|^2.
  \end{align*}
  It is then easy to upgrade this estimate from $Q_m$ to $1-E_n$. First of all by symmetry we have the same estimate for the sum of $|z_{l,m}|^2$ over $l>m$,
  for fixed $m$. Then for fixed $n$ we put $L_n = \sum_{m=0}^{n-1}K_m^2$, and by summing over $0\leq m\leq n-1$ we obtain for all $r$:
  \begin{displaymath}
    \sum_{l<n ~\text{or}~ m<n} |z_{m,l}|^2
    \leq 16L_nS^2 r^{-1} \sum_{i,j} |z_{i,j}|^2
      + 2L_nS^2 r \sum_{i,j} |[\chi_1,z]_{i,j}|^2.
    \end{displaymath}
    This gives the required estimate by the Riesz basis property.
\end{proof}

\begin{proof}[\bf Proof of Theorem~A]
  Take the constant $q_1$ provided by Theorem~\ref{thm_riesz}, and $N_0\in\NN$, $N_0\geq 3$, such that the associated constant $q_0$ satisfies $q_0<q_1$. Then
  for $N\geq N_0$ we have $q<q_1$, so that $\{w_{i,j} \mid w\in W, i, j\in\NN\}$ is a Riesz basis.

  To prove the AOP, take elements $z_r\in A^\bot\cap M$ such that $\|z_r\|\leq 1$ and $\|[\chi_1,z_r]\|_2\to_\omega 0$. We want to prove that
  $(yz_r\mid z_ry) \to_\omega 0$ for any $y\in A^\bot\cap M$. By Kaplansky's density theorem and linearity, we can assume that $y\in p_n H^\circ\cap M$ for some
  fixed $n\in\NN^*$, with $\|y\|\leq 1$. Now for any $m\in\NN$ we can write
  \begin{displaymath}
    |(yz_r\mid z_ry)| \leq |(y E_m(z_r)\mid E_m(z_r)y)| + \|(1-E_m)(z_r)\|_2(\|z_r\|_2+\|E_m(z_r)\|_2).
  \end{displaymath}
  We apply our Theorem~\ref{thm_ortho_global} to $\zeta = E_m(z_r)$, obtaining $|(y E_m(z_r)\mid E_m(z_r)y)|\leq C q^{\alpha m} \|E_m(z_r)\|_2^2$ for
  $m\geq 10 n$. The projections $E_m$ are not orthogonal, but since $(w_{i,j})_{w,i,j}$ is a Riesz basis they are bounded independently of $m$. Thus we get
  \begin{displaymath}
    |(yz_r\mid z_ry)| \leq C q^{\alpha m} + C \|(1-E_m)(z_r)\|_2.
  \end{displaymath}
  for some new constant $C$ independent of $m$ and $r$.
  We now take $\epsilon>0$ and choose a fixed $m\geq 10 n$ such that $C q^{\alpha m} \leq \epsilon/2$. Then we apply Theorem~\ref{thm_support_estimate}: for all $p\in\NN^*$ and $r$ we have
  \begin{displaymath}
    \|(1-E_m)(z_r)\|_2^2 \leq L_mp^{-1} + L_mp \|[\chi_1,z_r]\|_2^2.
  \end{displaymath}
  We choose $p$ such that
  $L_mp^{-1} \leq \epsilon^2/8C^2$. Finally by assumption for $\omega$-almost all $r$ we have $\|[\chi_1,z_r]\|_2^2 \leq \epsilon^2 /8C^2L_mp$. The above estimates then
  show that for the same $r$'s we have ${|(yz_r\mid z_ry)|}\leq\epsilon$, and this concludes the proof of the AOP.  
\end{proof}

\subsection*{Acknowledgments}

The authors wish to thank the anonymous referee for the careful reading of this article.  R.V.\ and X.W.\ were partially supported by the french Agence
Nationale de la Recherche (grant ANR-19-CE40-0002). X.W.\ was partially supported by the CEFIPRA project 6101-1, the National Research Foundation of Korea
(grants NRF-2022R1A2C1092320 and NRF-2020R1C1C1A01009681), and the National Natural Science Foundation of China (grants No.~12031004 and No.~W2441002).

\bibliographystyle{alpha}
\renewcommand{\bibliofont}{\normalsize}


\end{document}